\def\xyellowspace{%
  \sbox0{\colorbox{yellow}{\strut\ }}%\mathbf{}
  \dimen0=\wd0\relax
  \hskip0pt\cleaders\box0\hskip\dimen0\hskip0pt}
\gdef\makeyellowspace{\let \xyellowspace\catcode`\ =\active}%
\def\?#1{\colorbox{yellow}{\strut#1}}
\DeclareFontFamily{OT1}{rsfs10}{}
\DeclareFontShape{OT1}{rsfs10}{m}{n}{ <-> rsfs10 }{}
\DeclareMathAlphabet{\mathscript}{OT1}{rsfs10}{m}{n}
\DeclareMathOperator{\im}{Im}       % \im    = immagine (di una funzione)
\DeclareMathOperator{\Spec}{Spec}   % \Spec  = Spec di un anello
\DeclareMathOperator{\Hom}{Hom}     % \Hom   = gruppo Hom (diritto)
\DeclareMathOperator{\Tors}{Tors}    % \Hom   = sottogruppo di torsione
\DeclareMathOperator{\Pic}{Pic}     % \Pic   = gruppo di Picard
\DeclareMathOperator{\Cl}{Cl}       % \Cl    = gruppo dei divisori di Weil mod. linear equivalence
\DeclareMathOperator{\Cox}{Cox}     % \Cox   = anello (algebra) di Cox
\DeclareMathOperator{\rk}{rk}       % \rk    = rango
\DeclareMathOperator{\coker}{coker} % \coker = conucleo
\DeclareMathOperator{\Vol}{Vol}     % \Vol   = volume
\DeclareMathOperator{\Mov}{Mov}     % \Mov   = moving cone
\DeclareMathOperator{\Nef}{Nef}     % \Nef   = cono dei divisori nef
\DeclareMathOperator{\Eff}{Eff}     % \Eff   = cono dei divisori effettivi
\DeclareMathOperator{\Relint}{Relint}  % \Relint   = interno di un cono nel suo span lineare
\DeclareMathOperator{\codim}{codim} % \codim = codimensione
\DeclareMathOperator{\diag}{diag}   % \diag  = matrice diagonale
\DeclareMathOperator{\conv}{Conv}   % \conv  = inviluppo convesso
\DeclareMathOperator{\mult}{mult}   % \mult  = multiplicity
\DeclareMathOperator{\car}{char}    % \char  = characteristic
\def\widebreve{\mathpalette\wide@breve}
\def\wide@breve#1#2{\sbox\z@{$#1#2$}%
     \mathop{\vbox{\m@th\ialign{##\crcr
\kern0.08em\brevefill#1{0.8\wd\z@}\crcr\noalign{\nointerlineskip}%
                    $\hss#1#2\hss$\crcr}}}\limits}
\def\brevefill#1#2{$\m@th\sbox\tw@{$#1($}%
  \hss\resizebox{#2}{\wd\tw@}{\rotatebox[origin=c]{90}{\upshape(}}\hss$}
\title[The multiplicity of a Mori Dream Space]{The multiplicity of a Mori Dream Space}
\author[M. Rossi]{Michele Rossi}
\date{\today}
\address{Dipartimento di Matematica, Universit\`a di Milano Bicocca,
Via Roberto Cozzi, 55, 20126 Milano} \email{michele.rossi@unimib.it}
\thanks{The author was partially supported by the I.N.D.A.M. as a member of the G.N.S.A.G.A. and by PRIN 2022L34E7W, Moduli spaces and birational geometry.\\
 Author's ORCID:0000-0001-6191-2087}
\def \wrt{with respect to }
\def \a{\alpha }
\def \b{\beta }
\def\mm{\boldsymbol{\mu}}
\def \s{\sigma }
\def \D{\Delta }
\def \Ga{\Gamma }
\def \Si{\Sigma }
\def \Th{\Theta }
\def \g{\gamma}
\def \vf{\varphi}
\def \ét{\'{e}tale}
\def \q{\mathbf{q}}
\def \pp{\mathbf{p}}
\def \v{\mathbf{v}}
\def \m{\mathbf{m}}
\def \w{\mathbf{w}}
\def \tt{\mathbf{t}}
\def \x{\mathbf{x}}
\def \1{\mathbf{1}}
\def \0{\mathbf{0}}
\def\P{{\mathbb{P}}}
\def\p2{\mathbb{P}^2}
\def\p3{\mathbb{P}^3}
\def\p4{\mathbb{P}^4}
\def\cO{\mathcal{O}}
\def\rk{\operatorname{rk}}
\def\GL{\operatorname{GL}}
\def\End{\operatorname{End}}
\def\Z{\mathbb{Z}}
\def\C{\mathbb{C}}
\def\K{\mathbb{K}}
\def\R{\mathbb{R}}
\def\Q{\mathbb{Q}}
\def\N{\mathbb{N}}
\def\T{\mathbb{T}}
\def\B{\mathcal{B}}
\def\L{\Lambda}
\def\X{\mathfrak{X}}
\def\irr{\mathcal{I}rr}
\def\SF{\mathcal{SF}}
\def\G{\mathcal{G}}
\def\I{\mathcal{I}}
\def\Ls{\mathcal{L}}
\def\Ga{\Gamma}
\def\De{\Delta}
\def\Weil{\mathcal{W}_T}
\def\pet{\pi_1^{\text{\rm{\'{e}t}}}}
\def\U1{\mathfrak{U}^{(1)}}
\theoremstyle{plain}
\newtheorem{theorem}{Theorem}[section]
\newtheorem{proposition}[theorem]{Proposition}
\newtheorem{prop-def}[theorem]{Proposition--Definition}
\newtheorem{thm-def}[theorem]{Theorem--Definition}
\newtheorem{corollary}[theorem]{Corollary}
\newtheorem{conjecture}[theorem]{Conjecture}
\newtheorem{lemma}[theorem]{Lemma}
\newtheorem*{a-proposition}{Proposition}
\theoremstyle{remark}
\newtheorem{remark}[theorem]{Remark}
\newtheorem{example}[theorem]{Example}
\theoremstyle{definition}
\newtheorem{definition}[theorem]{Definition}
\newtheorem*{step I}{Step I}
\newtheorem*{step II}{Step II}
\newtheorem*{step III}{Step III}
\newtheorem*{step IV}{Step IV}
\newtheorem*{acknowledgements}{Acknowledgements}
\newcommand{\oneline}{\vskip12pt}
\newcommand{\halfline}{\vskip6pt}
\begin{document}

 %\pagestyle{empty}
 %\DefineParaStyle{Maple Heading 4}
 %\DefineParaStyle{Maple Heading 2}
 %\DefineParaStyle{Maple Text Output}
 %\DefineParaStyle{Maple Bullet Item}
 %\DefineParaStyle{Maple Warning}
 %\DefineParaStyle{Maple Error}
 %\DefineParaStyle{Maple Dash Item}
% \DefineParaStyle{Maple Heading 3}
 %\DefineParaStyle{Maple Heading 1}
 %\DefineParaStyle{Maple Title}
 %\DefineParaStyle{Maple Normal}
% \DefineCharStyle{Maple 2D Input}
% \DefineCharStyle{Maple Maple Input}
 %\DefineCharStyle{Maple 2D Output}
 %\DefineCharStyle{Maple 2D Math}
 %\DefineCharStyle{Maple Hyperlink}

\begin{abstract} In this paper we extend the concept of multiplicity from fake weighted projective spaces, as considered by Averkov, Kasprzyk, Lehmann and Nill in 2021, to Mori Dream Spaces, exploring interesting connections between the algebraic, geometric, and topological properties of these varieties. To this end, we introduce the weight group $G_Q$ and the weight modulus $|Q|$ of a complete toric variety. Their topological interpretation provides a framework for classifying Fano and $\Q$-Fano toric varieties, offering an alternative approach for a further understanding of this rich and fascinating area of algebraic geometry. In particular, we exhibit an algebraic interpretation of Batyrev's polar duality between Fano toric varieties as a direct sum decomposition of their common weight group.
\end{abstract}
\keywords{Fano variety, polar duality, fan, polytope, toric variety, Gale duality, fan matrix, weight matrix, Mori dream space,  fake weighted projective space, secondary fan, Galfand-Kaparanov-Zelevinsky decomposition, quasi-isomorphism, small $\Q$-factorial modifications}
% \PACS{PACS code1 \and PACS code2 \and more}
\subjclass[2010]{14M25\and 14J25\and 52B20 }

\maketitle

\tableofcontents

\section*{Introduction}

The concept of multiplicity was introduced for a fake weighted projective space $X$ (fake wps, in the following) and denoted $\mult X$, by Averkov, Kasprzyk, Lehmann and Nill in \cite{AKLN}. Given a fake wps $X=\P(Q)/G$,  the Authors defined $\mult X$ as the order $|G|$ of the group defining $X$ starting from a covering wps $\P(Q)$. They gave the following sharp upper bound for the multiplicity

\begin{theorem}[Thm.~1.1. in \cite{AKLN}]\label{thm:AKLN}
Let $X$ be a $n$-dimensional fake wps with at worst canonical singularities.
\begin{enumerate}
  \item[(i)]  If $n\le 3$ then $\mult X\le (n+1)^{n-1}$.
  \item[(ii)] If $n=4$ then $\mult X\le 128$.
  \item[(iii)] If $n\ge 5$ then $\mult X\le 3(s_{n-1}-1)^2$, where $\{s_n\}$ is the \emph{Sylvester sequence} iteratively defined by setting $s_1:=2, s_n:=\prod_{i=1}^{n-1}s_i +1$\,.
\end{enumerate}
In each case, equality is achieved by a unique $X$.
\end{theorem}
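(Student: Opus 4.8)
The plan is to recast the statement as a purely combinatorial optimization over lattice simplices. An $n$-dimensional fake wps $X=\mathbb{P}(Q)/G$ is the toric variety whose fan is the face fan of an $n$-dimensional simplex $\Delta=\conv(v_0,\dots,v_n)\subset\overline N_{\mathbb{R}}$, where $\overline N\cong\Z^n$ is the ambient lattice, $v_0,\dots,v_n$ are the primitive ray generators, and $Q=(q_0,\dots,q_n)$ is the primitive weight vector fixed by the unique relation $\sum_i q_iv_i=0$. Writing $N:=\langle v_0,\dots,v_n\rangle_{\Z}\subseteq\overline N$ for the sublattice generated by the vertices, the covering wps $\mathbb{P}(Q)$ is the toric variety attached to $\Delta$ in $N$, the group is $G=\overline N/N$, and $\mult X=|G|=[\overline N:N]$. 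First I would record the normalized-volume identity $\Vol_{\overline N}(\Delta)=[\overline N:N]\,\Vol_N(\Delta)=|G|\sum_i q_i$, which follows by cutting $\Delta$ into the $n+1$ simplices $\conv(0,\{v_j\}_{j\neq i})$ and using $|\det(v_j)_{j\neq i}|=q_i$. The canonical-singularity hypothesis is equivalent to $\Delta$ being a canonical simplex, i.e.\ $\Int(\Delta)\cap\overline N=\{0\}$. Thus the theorem becomes: over all canonical simplices, the ratio $\Vol_{\overline N}(\Delta)/\sum_i q_i$ is maximized by the asserted values.

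The core tool is a sharp bound, in the spirit of Hensley and Lagarias--Ziegler, for this ratio. Placing $0$ in the interior of $\Delta$ and passing to barycentric coordinates, the condition $\Int(\Delta)\cap\overline N=\{0\}$ translates into a family of unit-fraction inequalities on the lattice heights above the facets of $\Delta$, of the form $\sum_i 1/a_i\le 1$ with positive integers $a_i$. I would then invoke the classical greedy mechanism: maximizing a volume-type product subject to such a constraint is optimized by the Sylvester sequence $s_1=2,s_2=3,\dots$, because the greedy choice $a_i=s_i$ leaves the minimal residual slack $1-\sum_{i\le k}1/s_i=1/(s_{k+1}-1)$ at each step. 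Carried through for the ratio $\Vol_{\overline N}(\Delta)/\sum_i q_i$, this greedy extremum yields a bound of the form $c\,(s_{n-1}-1)^2$ for an explicit constant $c$, together with the elementary denominator estimate $\sum_i q_i\ge n+1$.

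With the optimization in place I would split by dimension exactly as in the statement, since three different families of maximizers occur. For small $n$ the ratio is largest not for a Sylvester-type simplex but for a fake projective space $\mathbb{P}^n/(\Z/(n+1))^{\,n-1}$, where $\sum_i q_i=n+1$ is minimal and $G$ is as large as canonicity allows, giving $\mult X=(n+1)^{n-1}$; for $n\le 3$ I would confirm this by directly classifying the admissible groups $G$. The dimension $n=4$ is a genuine crossover---both $(n+1)^{n-1}=125$ and $3(s_3-1)^2=108$ are beaten by $128$---so neither family is extremal and a separate finite analysis is required to isolate the unique maximizer. For $n\ge 5$ the Sylvester contribution dominates, the bound $3(s_{n-1}-1)^2$ follows from the estimate of the previous step, and the extremal $X$ is the quotient built on the Sylvester weight system.

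The hardest part will be disentangling the weight system $Q$ from the group $G$. Since the quantity to be bounded is the ratio $\Vol_{\overline N}(\Delta)/\sum_i q_i$, enlarging $|G|$ (equivalently the volume) tends to force $\sum_i q_i$ upward as well, so the two effects cannot be estimated in isolation. Making the greedy argument sharp---and, above all, proving \emph{uniqueness} of the maximizer in each range, including the anomalous case $n=4$---demands a careful equality analysis through every inequality used: primitivity of the $v_i$, the canonical unit-fraction inequalities, and the $\GL_n(\Z)$-normalization of the extremal simplex. I expect the crossover behaviour, where the optimizer is neither the fake projective space nor the Sylvester simplex, to require the most delicate bookkeeping.
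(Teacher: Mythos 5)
First, a point of order: the paper does not prove this statement. Theorem~\ref{thm:AKLN} is imported verbatim from \cite{AKLN} as background (alongside Theorem~\ref{thm:Conrads}), and the present paper only uses it as the model for its own generalizations via the identity $\mult X = n!\,\Vol(\conv(V))/|Q|$. So there is no internal proof to compare your attempt against; the relevant comparison is with the argument of Averkov--Kasprzyk--Lehmann--Nill themselves.

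On its own terms, your reformulation is correct and matches the paper's framework: $\mult X=[\overline N:N]=n!\,\Vol(\conv(v_0,\dots,v_n))/\sum_i q_i$ is exactly Theorem~\ref{thm:QFano-bounds}(1) specialized to rank $r=1$, and canonicity is indeed $\Int(\Delta)\cap\overline N=\{0\}$. But past that point the proposal is an outline rather than a proof, and the central quantitative step is missing. You assert that a greedy unit-fraction argument ``yields a bound of the form $c\,(s_{n-1}-1)^2$ for an explicit constant $c$,'' without deriving $c=3$ or explaining why the answer is governed by $s_{n-1}$ rather than $s_n$. This matters: the volume bound for canonical simplices (Theorem~\ref{thm:KXn}, i.e.\ $2(s_n-1)^2$) combined with the trivial estimate $\sum_i q_i\ge n+1$ gives something astronomically larger than $3(s_{n-1}-1)^2$ (for $n=5$: roughly $10^6$ versus $5292$), so the numerator and denominator really cannot be bounded separately --- the ``disentangling'' difficulty you flag in your last paragraph is precisely the theorem, and flagging it does not discharge it. Likewise, the $n=4$ crossover value $128$ and all four uniqueness claims are deferred to ``a separate finite analysis'' and ``a careful equality analysis'' that are never carried out. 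As written, this is a plausible research plan in the spirit of Hensley and Lagarias--Ziegler, but the steps that produce these specific constants and extremal examples are absent.
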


By adapting some classical result by Conrads, one can also conclude the following

\begin{theorem}[Prop.~5.5 in \cite{Conrads}]\label{thm:Conrads} Let $X$ be a $n$-dimensional Gorenstein fake wps $X=\P(Q)/G$, with $Q=(q_1,\ldots,q_{n+1})$. Then
  \begin{equation*}
    \mult X\,|\quad{\sum_{i=1}^{n+1}q_i\over \prod_{i=1}^{n+1} q_i}={(-K_{\P(Q)})^n\over \sum_{i=1}^{n+1}q_i}=:\frac{\deg\P(Q)}{|Q|}\in\N
  \end{equation*}
  being $(-K_{\P(Q)})^n$ the self-intersection of the Cartier anti-canonical divisor $-K_{\P(Q)}$\,.
\end{theorem}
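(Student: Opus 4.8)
The plan is to reduce the whole assertion---both the integrality of $\deg\P(Q)/|Q|$ and its divisibility by $\mult X$---to the single statement that $|Q|=\sum_i q_i$ divides the anticanonical self-intersection $(-K_X)^n$ of $X$ itself. Realise $X=X_\Sigma$ via a complete simplicial fan whose primitive generators $v_1,\dots,v_{n+1}$ in a lattice $N\cong\Z^n$ satisfy the weight relation $\sum_i q_i v_i=\0$ with $\gcd(q_i)=1$; the covering $\P(Q)$ is the same fan read in the sublattice $N':=\sum_i\Z v_i$, so that $\mult X=[N:N']=|G|$ and $\P(Q)=X_{\Sigma,N'}\to X_{\Sigma,N}=X$ is the quotient morphism. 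As this morphism is finite of degree $\mult X$ and \'etale in codimension one, it is crepant, whence the projection formula gives $(-K_{\P(Q)})^n=\mult X\cdot(-K_X)^n$. Thus $\deg\P(Q)/|Q|=\mult X\cdot\bigl((-K_X)^n/|Q|\bigr)$, and the theorem becomes equivalent to $|Q|\mid(-K_X)^n$, the witnessing cofactor turning out to be a lattice index computed below.

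Second, I would bring in the Gorenstein hypothesis on $X$. Since $-K_X$ is an ample Cartier divisor, its anticanonical polytope $P=\{u\in M_\R:\langle u,v_i\rangle\ge-1\ \text{for all }i\}\subset M_\R$, with $M=N^\vee$, is a reflexive simplex, and $(-K_X)^n$ equals the normalized volume of $P$, i.e. the lattice index $[M:\Lambda]$, where $m_1,\dots,m_{n+1}\in M$ are its vertices---$m_j$ being cut out by $\langle m_j,v_i\rangle=-1$ for all $i\ne j$---and $\Lambda:=\sum_{j=2}^{n+1}\Z(m_j-m_1)$. Pairing the weight relation against $m_j$ yields $\langle m_j,v_j\rangle=(|Q|-q_j)/q_j$, and then a direct pairing against each $v_i$ gives the barycentric identity $\sum_{j}q_j m_j=\0$ in $M$. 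That the vertices $m_j$ are genuine points of the lattice $M$ (and not merely of $M_\Q$) is exactly the content of the Gorenstein assumption on $X$, and this, together with $\gcd(q_j)=1$, is the crux of the argument.

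Finally I would extract the factor $|Q|$ from the barycentric relation. Set $M_0:=\sum_j\Z m_j$ and consider $\psi\colon\Z^{n+1}\to M$, $e_j\mapsto m_j$; since the $m_j$ span $M_\R$, the lattice $\ker\psi$ has rank $1$ and is saturated, so primitivity of $(q_1,\dots,q_{n+1})$ forces $\ker\psi=\Z(q_1,\dots,q_{n+1})$. With $E:=\sum_{j=2}^{n+1}\Z(e_j-e_1)$ one has $M_0/\Lambda\cong\Z^{n+1}/(E+\ker\psi)$, and the coordinate--sum map $\Z^{n+1}\to\Z$, whose kernel is precisely $E$, carries $(q_1,\dots,q_{n+1})$ to $\sum_j q_j=|Q|$; hence $M_0/\Lambda\cong\Z/|Q|\,\Z$ and $[M_0:\Lambda]=|Q|$. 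Therefore $(-K_X)^n=[M:\Lambda]=[M:M_0]\,[M_0:\Lambda]=|Q|\cdot[M:M_0]$, which proves $|Q|\mid(-K_X)^n$ and, through the reduction of the first paragraph, $\mult X\mid\deg\P(Q)/|Q|=\mult X\cdot[M:M_0]\in\N$. The main obstacle I anticipate is the second step: pinning down the $m_j$ as lattice vertices (the force of the Gorenstein hypothesis) and verifying $\sum_j q_j m_j=\0$; once that relation is in hand, the index computation extracting $|Q|$ is routine.
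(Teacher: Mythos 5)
Your argument is correct. Note first that the paper does not actually prove this statement: it quotes Conrads and only later derives the rank-$r$ generalization through Gale and polar duality, namely Theorem~\ref{thm:Fano-bounds} ($\mult X=(-K_{X^\circ})^n/|Q|$ and $g_Q=\mult X\cdot\mult X^\circ=(-K_Y)^n/|Q^\circ|$) together with Corollary~\ref{cor:Conrads}, which for $r=1$ (where $Q^\circ=Q$ by Conrads' Lemma~5.3) gives exactly $\mult X\mid\deg\P(Q)/|Q|$. Your proof is an elementary, self-contained instance of that mechanism, with two genuine differences of technique. You pass from $\deg\P(Q)$ to $\deg X$ by the projection formula along the crepant $1$-covering $\P(Q)\twoheadrightarrow X$, whereas the paper works purely with lattice volumes via the determinant identity $|\det V^I|=[\Z^n:\Ls_c(V)]\,|\det Q_I|$; both yield $\deg\P(Q)=\mult X\cdot(-K_X)^n$. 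And your cofactor $[M:M_0]$, the index of the lattice generated by the vertices of $\Delta_{-K_X}$, is precisely $\mult X^\circ$ in the paper's language, while your computation $[M_0:\Lambda]=|Q|$ is the $r=1$ case of the paper's identity $|Q|=n!\Vol(\conv(\G(Q)))$; so you have in effect re-derived the structural decomposition $\deg\P(Q)/|Q|=\mult X\cdot\mult X^\circ$ that underlies Conrads' divisibility, gaining a proof that needs neither the weight-group formalism nor polar duality as a black box, at the cost of being specific to the simplex ($r=1$) case. One cosmetic remark: the first member of the displayed chain in the statement, $\sum_i q_i/\prod_i q_i$, should read $\left(\sum_i q_i\right)^{n-1}/\prod_i q_i$, since $\deg\P(Q)=|Q|^n/\prod_i q_i$; your argument correctly targets $\deg\P(Q)/|Q|$, which is the substantive quantity.
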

See the following \S~\ref{ssez:mod&deg} for a first explanation of the last equality.

\subsection{Multiplicity beyond Fake Weighted Projective Spaces} A first contribution of this paper is the extension of the concept of multiplicity beyond the confines of fake wps, reaching into the more general world of \emph{Mori Dream Spaces} (MDS). Mori Dream Spaces, introduced by Hu and Keel in \cite{Hu-Keel}, are algebraic varieties that exhibit a particularly well-behaved structure in terms of their divisor class groups and birational geometry. They include a wide range of varieties, such as toric varieties, $\Q$-factorial $\Q$-Fano varieties, and more generally, varieties with finitely generated class group and Cox ring. The extension of the concept of multiplicity to MDS allows us to explore deeper connections between the algebraic, geometric, and topological properties of these spaces and those of their canonical toric ambient varieties (see Theorem~\ref{thm:MDSbounds} and Corollary~\ref{cor:MDSbounds}).

The definition of $\mult X$ given in \cite{AKLN} can be  combinatorially understood as the index of the sublattice generated by the primitive generators of the fan's rays within the ambient lattice. This interpretation admits a generalization to more general toric varieties of higher Picard number and to Mori Dream Spaces, by means of their canonical toric embedding. We extend the multiplicity by successive steps so getting successive generalizations of Theorems~\ref{thm:AKLN} and \ref{thm:Conrads}. Namely:
 \begin{itemize}
   \item Theorems~\ref{thm:bounds} and Corollary~\ref{cor:bounds} extend those results to Fano toric varieties, where all the introduced quantities have a well-defined geometric interpretation;
   \item Theorem~\ref{thm:Qbounds} and Corollary~\ref{cor:Q-bounds} extend them to $\Q$-Gorenstein toric varieties: notice that, in this generalized context, to get effective bounds on the multiplicity, the Fano condition has to be replaced by conditions on admissible singularities (at worst \emph{canonical singularities}) that bound the anti-canonical volume;
   \item Corollary~\ref{cor:MDSbounds} extends Theorem~\ref{thm:AKLN} to MDS.
 \end{itemize}
 These bounds are not sharp for dimension greater than 2 due to the combinatorial approach used, based on the facet enumeration given by McMullen \cite{McMullen}. In any case, what clearly emerges is that \emph{$\mult X$ approaches 1 as the Picard number of $X$ increases} (see Remark~\ref{rem:sharpness} for a discussion of this aspect).

 \subsection{The Weight Group and its relation with Multiplicity} A central object in our study of multiplicity is the \emph{weight group} $G_Q$, associated with a weight matrix $Q$. Given a toric variety $X$, a weight matrix $Q$ is essentially a representative matrix of the free part of the class morphism $D\mapsto [D]$ for torus invariant Weil divisors. The associated weight group $G_Q$ is a finite abelian group that encodes essential information about the toric variety $X$ and its universal 1-covering $Y$. The weight group $G_Q$ is defined as the cokernel of a lattice map induced by the weight matrix $Q$ (see \S~\ref{ssez:GQ}). The order of $G_Q$, denoted by $g_Q$ and called \emph{weight order}, plays a crucial role in bounding the multiplicity of $X$.
We establish that the multiplicity of a Fano toric variety divides the weight order, that is,
\[
\mult X \,|\, g_Q
\]
(see Corollary~\ref{cor:Conrads}).
This divisibility condition generalizes the above cited Conrads' result for fake wps and provides a powerful tool for studying the multiplicity of more general toric varieties and Mori Dream Spaces.

Moreover, we show that the weight group $G_Q$ behaves quite well under Batyrev's polar duality of Fano toric varieties, so giving an algebraic interpretation of polar duality. In particular, weight groups of a Fano toric variety $X$ and its polar partner $X^\circ$ turn out to be isomorphic (see Proposition~\ref{prop:isoGQ}) and: \emph{the polar duality of $X$ and $X^\circ$ can be algebraically resumed by a direct sum decomposition of their common weight group $G_Q\cong G_{Q\circ}$} (see Proposition~\ref{prop:splittingGQ}).

\subsection{Topological significance of the Weight Group} The weight group $G_Q$ is not merely an algebraic construct; it has profound topological significance. Given a Fano toric variety $X$ and its universal 1-covering $Y$ such that $X\cong Y/G$, (by Cox's construction), we show that the weight group $G_Q$ is isomorphic to the \emph{étale fundamental group in codimension 1} of the maximal quotient $Z$ of $Y$ which is still a Fano toric variety admitting $Y$ as the universal 1-covering, that is
$$G_Q\cong\pi_1^{\text{ét}}(Z)^{(1)}$$
(see Theorem~\ref{thm:fattorizzazione} and the definition of $G_Q$ in the following \S~\ref{ssez:GQ}).
This connection allows us to interpret the multiplicity $\mult X$ as a measure of the topological complexity of $X$ in codimension 1.

This topological interpretation leads to a \emph{topological classification theorem} for Fano toric varieties and their polar partners, namely Theorem~\ref{thm:classificazione}. We prove that, up to isomorphism in codimension 1, Fano toric varieties and their polar partners are parameterized by subgroups of the weight group $G_Q$ and their complementary direct summand, respectively. Then, up to isomorphism in codimension 1, this classification depends on the unique invariant given  by the (isomorphism class of the) weight matrix $Q$. It is a topological classification, as it reflects the structure of the étale fundamental group in codimension 1. Furthermore, we extend this classification to $\Q$-Gorenstein and $\Q$-Fano toric varieties, where the Gorenstein index and the factor $h$ play a crucial role in determining the possible multiplicities (see Theorem~\ref{thm:Qclassificazione}).

\subsection{The Weight Modulus and the Degree of a Fano toric variety}\label{ssez:mod&deg} Another useful concept introduced in this work is the \emph{weight modulus} $|Q|$ of a complete toric variety $X$, defined as the sum of certain maximal minors of the weight matrix $Q$. This modulus represents the integral volume of the polytope spanned by the fan of the universal 1-covering of $X$. The weight modulus $|Q|$ is a measure of the complexity of the weight matrix and plays a key role in bounding the multiplicity of a toric variety. When $X$ is a fake wps $X\cong\P(Q)/G$, the weight modulus $|Q|$ turns out to be just the sum of weights in the weight vector $Q$. When $X$ is a Fano toric variety, we show that the multiplicity $\mult X$ is related to the weight modulus by the formula
\[
\mult X = \frac{(-K_{X^\circ})^n}{|Q|}=\frac{\deg(X^\circ)}{|Q|},
\]
where $(-K_{X^\circ})^n$ is the self-intersection of the anti-canonical divisor, called \emph{degree}, of the polar variety $X^\circ$ (see Theorem~\ref{thm:Fano-bounds}). This formula gives an interesting specialization of the relationship between the multiplicity and the degree of a fake wps, given by Conrads, and here recalled by Theorem~\ref{thm:Conrads}.
\halfline

The present paper is organized as follows. In \S~\ref{sez:preliminari} we introduce notation and foundational concepts as definitions and notation for toric varieties and matrices, the GKZ decomposition and small $\mathbb{Q}$-factorial modifications. In \S~\ref{sez:bounds} we give bounds on the multiplicity of Fano toric varieties (Theorem~\ref{thm:bounds}). In this section we extend the definition of multiplicity to a complete toric variety (Definition~\ref{def:mult}) and define the weight group and the weight modulus for a Fano toric variety.  The topological classification theorem of Fano toric varieties and their polar partners is given in \S~\ref{sez:Fano} (namely, Theorem~\ref{thm:classificazione}). This section ends up with a retelling of the well-known degree classification of Fano toric surfaces (see e.g. \cite[\S~8.3, Fig.~2]{CLS}) from the point of view of weight matrices and groups (see Corollary~\ref{cor:dim2}) and, as an application, with the classification of all Fano toric quotients of a small contraction of the blow-up of $\P^3$ in two points (see Example~\ref{ex:blupP3}). In \S~\ref{sez:QFanotv}, bounds obtained in \S~\ref{sez:bounds} for Fano toric varieties are extended, as far as possible, to $\Q$-Fano (and actually $\Q$-Gorenstein) toric varieties (see Theorems~\ref{thm:QFano-bounds} and \ref{thm:Qbounds} and Corollary~\ref{cor:Q-bounds}). In this context, Batyrev's polar duality is no more well-defined, hence some invariants like the weight modulus  and the degree of the polar partner and its universal 1-covering turn out to be only algebraic quantities but still well-working in bounding the multiplicity. Moreover, to bound the volume extension given by the increasing of the Gorenstein index, one has to impose condition on the admissible singularities (at worst \emph{canonical singularities}). In \S~\ref{sez:Qclassification} we propose an extension of the topological classification theorem to $\Q$-Gorenstein toric varieties (see Theorem~\ref{thm:Qclassificazione}), depending on the weight matrix $Q$ and the \emph{factor} $h$ (defined in Remark~\ref{rem:C}). Examples~\ref{ex:Bauerle} and \ref{ex:QFanoCanonica2} give explicit applications. Finally, \S~\ref{sez:MDS} extends the concepts and results previously introduced for toric varieties to Mori Dream Spaces, as far as possible (see Theorem~\ref{thm:MDSbounds} and Corollary~\ref{cor:MDSbounds}). The strategy is bounding the multiplicity of a MDS by linking it to the one of its canonical toric embedding. Example~\ref{ex:MDS} provides a concrete application of these results.

\begin{acknowledgements}
  First of all I would like to thank Lea Terracini for her helpful hints in the initial stage of this work: we started together trying to prove a wrong conjecture, which led us to the main ideas on which the results here proposed are based. Furthermore, I would like to thank Cinzia Casagrande for introducing me to her classical result in \cite{Casagrande06}, linking the dimension and the Picard number (or, better, the rank) of a Fano toric variety.

  Many results explained in the present paper were announced in May 2024, as the subject of a seminar of mine held at the S.~Banach International Mathematical Center in Warsaw. I am pleased to take this opportunity to thank J.~Buczy\'{n}ski for the invitation and the warm hospitality to I.M.P.A.N. Moreover, I thank him for fixing a couple of mistakes in a previous version of this paper.

  Figures \ref{fig:mov} and \ref{fig:politopo} have been produced by graphic output of \texttt{GeoGebra} \cite{GeoGebra}.
\end{acknowledgements}

\section{Preliminaries and notation}\label{sez:preliminari}

In the following we will work over an algebraically closed field $\K=\overline{\K}$,  with $\car \K =0$\,. Sometimes we will assume $\K$ to be the field $\C$ of complex numbers.

\subsection{Notation on toric varieties}
An \emph{$n$--dimensional toric variety} is an algebraic normal variety $X$ containing the \emph{torus} $T:=(\K^*)^n$ as a Zariski open subset such that the natural multiplicative self--action of the torus can be extended to an action $T\times X\rightarrow X$.

$M$ denotes the \emph{group of characters} $\chi : T \to \K^*$ of $T$ and $N$ the \emph{group of 1--parameter subgroups} $\lambda : \K^* \to T$. It follows that $M$ and $N$ are $n$--dimensional dual lattices via the pairing
\begin{equation*}
\begin{array}{ccc}
M\times N & \longrightarrow & \Hom(\K^*,\K^*)\cong\K^*\\
 \left( \chi,\lambda \right) & \longmapsto
& \chi\circ\lambda
\end{array}
\end{equation*}
For standard notation on toric varieties and their defining \emph{fans} we refer to the extensive treatment \cite{CLS}.

\subsubsection{List of notation}\label{sssez:lista}
\begin{eqnarray*}
% \nonumber to remove numbering (before each equation)
  &M_{\R},N_{\R}& \text{denote the tensor products of lattices $M$ and $N$ with $\R$, respectively;} \\
  &\Si(i)& \text{is the \emph{$i$--skeleton of a fan $\Si$};}\\
  &\langle\v_1,\ldots,\v_s\rangle\subseteq L_{\R}& \text{is the cone generated by $\v_1,\ldots,\v_s\in L_{\R}$, being $L=N,M$;}\\
  && \text{if $s=1$ this cone is called the \emph{ray} generated by $\v_1$;} \\
  &\mathcal{L}(\v_1,\ldots,\v_s)\subseteq L& \text{is the sublattice spanned by $\v_1,\ldots,\v_s\in L$\,.}\\
\end{eqnarray*}

\subsection{Notation on matrices}\label{ssez:Not-matrici}
We will consider matrices with  either integer entries in $n$ rows and $m$ columns, that is $A\in\Z^{n,m}$, or rational entries, that is $A\in\Q^{n,m}$.

Given $A\in\Z^{n,m}$,
\begin{eqnarray*}
% \nonumber to remove numbering (before each equation)
  &\mathcal{L}_r(A)\subseteq\Z^m& \text{is the sublattice spanned by the rows of $A$;} \\
  &\mathcal{L}_c(A)\subseteq\Z^n& \text{is the sublattice spanned by the columns of $A$.} \\
  \end{eqnarray*}

  Given $A\in\Q^{n,m}$,
\begin{eqnarray*}
&A_I\,,\,A^I& \text{$\forall\,I\subseteq\{1,\ldots,m\}$ the former is the submatrix of $A$ given by}\\
  && \text{the columns indexed by $I$ and the latter is the submatrix}\\
  && \text{of $A$ whose columns are indexed by the complementary }\\
  && \text{subset $\{1,\ldots,m\}\backslash I$;} \\
  &\text{\emph{positive}}& \text{means a matrix $A$ (vector $\v$) whose entries are non-negative,}\\
  && \text{we will also write either $A\ge\0$ or $\v\ge\0$;}\\
 & A^*:=(A\cdot A^T)^{-1}\cdot A& \text{is the \emph{transverse} matrix of $A$: if $m=n$ then}\ A^*=(A^T)^{-1}
\end{eqnarray*}

\subsubsection{Matrices, cones and polytopes}
Given an integer matrix
$$V=(\v_1,\ldots,\v_{m})\in\Z^{n,m}$$
\begin{eqnarray*}
% \nonumber to remove numbering (before each equation)
  &\langle V\rangle=\langle\v_1,\ldots,\v_{m}\rangle\subseteq N_{\R}& \text{is the cone generated by the columns of $V$;} \\
  &\conv(V)& \text{is the convex hull of the columns of $V$,}\\
  && \text{treated as lattice points.}\\
  &\SF(V)=\SF(\v_1,\ldots,\v_{m})& \text{is the set of all simplicial fans $\Si$ such that}\\
  && \text{$\Sigma(1)=\{\langle\v_1\rangle,\ldots,\langle\v_{m}\rangle\}\subseteq N_{\R}$ and} \\ && \text{$|\Si|=\langle V\rangle$ \cite[Def.~1.3]{RT-LA&GD}.}\\
  & \I_\Si& :=\{I\subseteq\{1,\dots,m\}\,|\,\langle V^I\rangle\in\Si\}\\
  &\G(V)& \text{is a \emph{Gale dual} matrix of $V$ \cite[\S~3.1]{RT-LA&GD};} \\
  \end{eqnarray*}
\begin{definition}[Fan and weight matrices]\label{def:F,W-matrix}
Given a fan $\Si$  in $L_\R$, with $\rk L=n$, a \emph{fan matrix} of $\Si$ is a matrix $V\in\Z^{n,m}$ whose columns are given by all the primitive generators of rays in the 1-skeleton $\Si(1)$. Then $m=|\Si(1)|$ and $V$ is defined \emph{up to $\GL$-equivalence}, that is up to the left multiplication by a matrix in $\GL_n(\Z)$ and a right multiplication of a permutation matrix in $\GL_m(\Z)$\,.

We will say that a fan $\Si$ is \emph{over} a matrix $V$ if $V$ is a fan matrix of $\Si$.

A Gale dual matrix $Q=\G(V)$ is called a \emph{weight matrix} of either $\Si$ or the toric variety $X(\Si)$ defined by $\Si$\,.
\end{definition}

\begin{definition}[Bunch of cones associated with a fan]
  Given a fan $\Si$  in $L_\R$, with $\rk L=n$, a fan matrix $V$ of $\Si$ and a weight matrix $Q=\G(V)$, the \emph{bunch of cones} associated with $\Si$ is the set of cones
  \begin{equation*}
    \B(\Si):=\{\langle Q_I\rangle\,|\,I\in\I_\Si\}=\{\langle Q_I\rangle\,|\langle V^I\rangle\in\Si\}
  \end{equation*}
  In a sense, the bunch of cones $\B(\Si)$ is \emph{a Gale dual entity} of the fan $\Si$.

  The converse also holds, that is, given a \emph{bunch of cones $\B$ over a matrix $Q$} (see \S~2.2.1 and in particular Def.~2.2.1.10 in \cite{ADHL}) the associated fan $\Si(\B)$ is defined as a fan over $W=\G(Q)$ as
  \begin{equation*}
    \Si(\B):=\{\langle W^I\rangle\,|\,\langle Q_I\rangle\in\B\}
  \end{equation*}
  Notice that, in general, if $Q=\G(V)$ then $W=\G(Q)$ is not $\GL$-equivalent to $V$ (see also the following Remark~\ref{rem:Univ-costruzione}).
\end{definition}

\subsubsection{$F$-matrices, $W$-matrices and $CF$-matrices}
An \emph{$F$--matrix} \cite[Def.~3.10]{RT-LA&GD} is a matrix  $V\in \Z^{n,m}$ with integer entries, satisfying the following conditions:
\begin{itemize}
\item[(F.a)] $\rk(V)=n$;
\item[(F.b)] $V$ is \emph{$F$--complete} i.e. $\langle V\rangle=\R^n$ \cite[Def.~3.4]{RT-LA&GD};
\item[(F.c)] all the columns of $V$ are non zero;
\item[(F.d)] if ${\bf  v}$ is a column of $V$, then $V$ does not contain another column of the form $\lambda  {\bf  v}$ where $\lambda>0$ is real number.
\end{itemize}
An $F$--matrix $V$ is called \emph{reduced} if every column of $V$ is composed by coprime entries \cite[Def.~3.13]{RT-LA&GD}.
The most significant example of a reduced $F$-matrix is given by a fan matrix $V$ of a complete fan $\Si$.
\halfline

 A \emph{$W$--matrix} \cite[Def.~3.9]{RT-LA&GD} is a matrix $Q\in\Z^{r,m}$ satisfying the following conditions:
\begin{itemize}
\item[(W.a)] $\rk(Q)=r$;
\item[(W.b)] ${\mathcal L}_r(Q)$ does not have cotorsion in $\Z^{m}$;
\item[(W.c)] $Q$ is \emph{$W$--positive}, that is, $\mathcal{L}_r(Q)$ admits a basis consisting of positive vectors \cite[Def.~3.4]{RT-LA&GD}.
\item[(W.d)] Every column of $Q$ is non-zero.
\item[(W.e)] ${\mathcal L}_r(Q)$   does not contain vectors of the form $(0,\ldots,0,1,0,\ldots,0)$.
\item[(W.f)]  ${\mathcal L}_r(Q)$ does not contain vectors of the form $(0,a,0,\ldots,0,b,0,\ldots,0)$, with $ab<0$.
\end{itemize}
A $W$--matrix is called \emph{reduced} if $V=\G(Q)$ is a reduced $F$--matrix \cite[Def.~3.14, Thm.~3.15]{RT-LA&GD}

The most significant example of a reduced $W$-matrix $Q$ is given by a weight matrix $Q$ of a complete fan $\Si$. Notice that, by condition (W.c), a $W$-matrix is always $\GL$-equivalent to a positive matrix. In particular, this fact holds for the weight matrix of a complete fan.
\halfline

A \emph{$CF$-matrix} is an $F$-matrix satisfying the further condition:
\begin{itemize}
  \item[(F.e)] $\Ls_c(V)=\Z^n$
\end{itemize}
In particular, if $Q$ is a $W$-matrix, then $W=\G(Q)$ is a $CF$-matrix \cite[Prop.~3.12]{RT-LA&GD}.

\begin{remark}\label{rem:fan-over}
  Given a reduced $F$-matrix $V\in\Z^{n,n+r}$, with $r\ge 1$, there always exists a complete fan $\Si$ over $V$, as $\SF(V)\neq\emptyset$. Then, there always exists a $\Q$-factorial complete toric variety $X(\Si)$ admitting $V$ as a fan matrix.

  On the other hand, given a reduced $W$-matrix $Q\in\Z^{r,n+r}$, with $r,n\ge 1$, there always exists a $\Q$-factorial, complete toric variety admitting $Q$ as a weight matrix: in fact $W=\G(Q)\in\Z^{n,n+r}$ is a reduced $F$-matrix (actually also $CF$) with $r\ge 1$. Then, there exists a $\Q$-factorial complete toric variety $Y(\Theta)$, with $\Theta\in\SF(W)$, whose fan matrix is $W$ and whose weight matrix is $\G(W)=Q$: in fact, since $W=\G(Q)$ is a $CF$-matrix, then $\G(W)$ is $\GL$-equivalent to $Q$ by the extra-condition (F.e).
\end{remark}
\oneline

\subsection{The GKZ-decomposition: cells and chambers}\label{ssez:GKZ}
Given a toric variety $X(\Si)$, consider the pseudo-effective cone $\overline{\Eff}(X)$ and its subcone $\overline{\Mov}(X)$, determined by \emph{movable} divisors, that are divisors do not admitting any fixed component up to linear equivalence, which are cones inside
$$N^1(X)=\Cl(X)\otimes\R\cong \R^r$$
being $r:=\rk\Cl(X)$ the, so called, \emph{rank of $X$}. Both $\overline{\Eff}(X)$ and $\overline{\Mov}(X)$ support a fan structure, the so called \emph{Gelfand-Kapranov-Zelevinsky (GKZ)-decomposition} or \emph{secondary fan} (see e.g. \cite[\S\,14.4]{CLS} and \cite[\S\,2.2.2]{ADHL}). Here we will adopt notation introduced in \cite[\S~2.5]{R-wMDS}, to which the interested reader is referred for any further detail. A cone of the secondary fan is called a \emph{cell} of the GKZ-decomposition. A full dimensional cell is called a \emph{chamber} of the GKZ-decomposition.

If $X$ admits weight matrix $Q$ then
\begin{eqnarray*}
% \nonumber to remove numbering (before each equation)
  \overline{\Eff}(X) = \langle Q\rangle &=:&\Eff(Q) \subset \R^r\\
  \overline{\Mov}(X) = \bigcap_{i=1}^{n+r}\left\langle Q^{\{i\}}\right\rangle&=:&\Mov(Q) \subset \R^r
\end{eqnarray*}
Moreover, the cone $\Nef(X)$ generated by classes of nef divisors is a sub-cone of $\overline{\Mov}(X)$ and it turns out that
\begin{equation*}
  \Nef(X) = \bigcap_{I\in \I_\Si} \langle Q_I\rangle \subseteq \Mov(Q)\subseteq\Eff(Q)
\end{equation*}
In other words:
\begin{itemize}
  \item[($*$)] \emph{$\Nef(X)$ is the intersection of all the cones in the bunch of cones $\B(\Si)$ associated with the fan $\Si$ of $X$.}
\end{itemize}
In particular, if $X$ is complete then $Q$ is a reduced $W$-matrix and it can be assumed to be a positive matrix, so that
$$\Nef(X)\subseteq\Mov(Q)\subseteq\Eff(Q)\subseteq\R^r_+$$
can be thought of cones inside the positive orthant $\R^r_+$ of $\R^r$.

For what follows, one can always assume that:
 \begin{itemize}
   \item[($**$)] \emph{$\Mov(Q)$ is of full dimension $r=\rk Q$ inside $\Eff(Q)$} \cite[Thm.~2.2.2.6~(i)]{ADHL}.
 \end{itemize}

\begin{definition}[Def.~15 in \cite{R-wMDS}]
  A cell $\g$ of the GKZ-decomposition of $\Mov(Q)$ is called a \emph{geometric cell} (or simply a \emph{g-cell}) if there exists a fan $\Si$ over $W=\G(Q)$ such that $\g=\Nef(X(\Si))$. A full dimensional geometric cell inside $\Mov(Q)$ is called a \emph{geometric chamber} (or simply a \emph{g-chamber}).
\end{definition}

Let us recall some key facts: for a proof consider \cite[Prop.~9 and 10]{R-wMDS}.
\begin{proposition}
  Let $Q$ be a reduced $W$-matrix. Then the following assertion hold:
  \begin{enumerate}
    \item the $\Nef$ cone of a $\Q$-factorial and complete toric variety, whose weight matrix is $Q$, is always a g-cell,
    \item every chamber $\g$ of the GKZ-decomposition of $\Mov(Q)$ is a g-chamber.
  \end{enumerate}
\end{proposition}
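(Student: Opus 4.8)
The plan is to reduce both assertions to the Gale-dual dictionary between fans over $W=\G(Q)$ and bunches of cones over $Q$, combined with the formula recorded in ($*$), namely $\Nef(X(\Si))=\bigcap_{I\in\I_\Si}\langle Q_I\rangle$. The essential observation is that this formula, and hence the entire $\Nef$ datum, is expressed purely in terms of $Q$ and of the combinatorial set $\I_\Si$, and therefore transports unchanged across the two sides of Gale duality. Throughout I write $r=\rk Q=\rk\Cl(X)$, so that $Q\in\Z^{r,n+r}$ and the indexing sets $I$ relevant to maximal cones satisfy $|I|=r$.

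For (1), let $X=X(\Si)$ be $\Q$-factorial and complete with weight matrix $Q$, so that $\Si$ is a fan over some fan matrix $V$ with $\G(V)=Q$ up to $\GL$-equivalence. I would emphasize that $V$ need not coincide with $W=\G(Q)$: this is exactly the situation in which $V$ fails to be a $CF$-matrix, i.e. $\Ls_c(V)\subsetneq\Z^n$ and $\Cl(X)$ carries torsion. Nonetheless, the bunch of cones $\B(\Si)=\{\langle Q_I\rangle\mid I\in\I_\Si\}$ depends only on $Q$ and on $\I_\Si$, not on $V$. Since $\Si$ is a genuine simplicial complete fan, $\B(\Si)$ is a genuine bunch of cones over $Q$, so the inverse construction returns a fan $\Si':=\Si(\B(\Si))$ over $W=\G(Q)$ with $\I_{\Si'}=\I_\Si$. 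Applying ($*$) on both sides then yields
\[
\Nef(X(\Si'))=\bigcap_{I\in\I_{\Si'}}\langle Q_I\rangle=\bigcap_{I\in\I_\Si}\langle Q_I\rangle=\Nef(X),
\]
and since $\Si'$ is a fan over $W$ this exhibits $\Nef(X)$ as a g-cell.

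For (2), let $\g$ be a chamber of the GKZ-decomposition of $\Mov(Q)$ and pick $\w$ in the relative interior of $\g$; by full dimensionality $\w$ is a generic, wall-free class. I would set
\[
\I(\w):=\{I\mid |I|=r,\ \w\in\Int\langle Q_I\rangle\}
\]
and argue that, by genericity of $\w$, the family $\{\langle Q_I\rangle\mid I\in\I(\w)\}$ satisfies the bunch-of-cones axioms, so that $\Si:=\Si(\B)$ over $W$ is a complete simplicial fan with $\I_\Si=\I(\w)$; the variety $X(\Si)$ is then $\Q$-factorial, complete, and (as $\g$ is full-dimensional) projective. Finally ($*$) gives $\Nef(X(\Si))=\bigcap_{I\in\I(\w)}\langle Q_I\rangle\ni\w$, and since $\w$ lies on no wall this intersection is exactly the chamber $\g$ containing it, so $\g$ is a g-chamber.

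The crux in both parts is the verification that the combinatorial data ($\B(\Si)$ in (1), $\I(\w)$ in (2)) genuinely define a legitimate bunch of cones, so that the inverse Gale construction returns an honest complete fan over $W$; in (2) one must additionally check that the intersection of the bunch cones recovers the chamber $\g$ exactly, neither a proper subcone nor a larger one. This is the substantive step, and it is precisely the content of the theory of the secondary fan and of variation of GIT for toric quotients: the walls of the GKZ-decomposition are exactly the loci where $\I(\w)$ jumps, and chambers are in bijection with the maximal projective simplicial fans over $W$. I would discharge it by appealing to that established machinery (as in \cite[\S\,2.2.2]{ADHL} and \cite[\S\,14.4]{CLS}) rather than re-deriving it.
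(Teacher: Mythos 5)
The paper does not actually prove this proposition: it is stated as a recollection of known facts and the proof is delegated entirely to the citation \cite[Prop.~9 and 10]{R-wMDS}. Your sketch reconstructs what is, in substance, the standard Gale-duality/secondary-fan argument that the cited reference relies on, and it correctly isolates the crux (that the combinatorial data really do define an admissible bunch, and that the intersection of the bunch cones is exactly the chamber). Two wrinkles in your specific formulation are worth flagging. First, in (1) the round trip $\Si\mapsto\B(\Si)\mapsto\Si(\B(\Si))$ is slightly fragile: distinct index sets $I,J$ can produce the same cone $\langle Q_I\rangle=\langle Q_J\rangle$, in which case $\Si(\B(\Si))$ as defined in the paper could acquire extra maximal cones not in the transferred fan, and $\I_{\Si'}=\I_\Si$ would need an argument. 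The cleaner route, which the paper itself sets up in Remark~\ref{rem:Univ-costruzione}, is to transfer $\Si$ directly via the quotient matrix $B$ with $V=B\cdot W$: the collection $\{\langle W^I\rangle\mid I\in\I_\Si\}$ is automatically a complete simplicial fan over $W$ (since $B$ is a $\Q$-linear isomorphism), it tautologically has the same indexing set, and ($*$) then gives equality of nef cones with no bunch axioms to verify. Second, in (2) the one genuinely non-formal point that your appeal to \cite[\S\,2.2.2]{ADHL} and \cite[\S\,14.4]{CLS} is covering is the role of the hypothesis $\g\subseteq\Mov(Q)$: it is precisely membership of $\w$ in $\Mov(Q)=\bigcap_i\langle Q^{\{i\}}\rangle$ that guarantees the fan associated with $\I(\w)$ uses \emph{all} the columns of $W$ as rays, so that the resulting variety really has $Q$ (and not a submatrix of it) as weight matrix. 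With those two points made explicit, your argument is a legitimate self-contained proof of what the paper only cites.
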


\subsection{Isomorphisms in codimension 1 and small $\Q$-factorial modification}

A birational morphism $f:X\dashrightarrow Y$ between algebraic varieties is called an \emph{isomorphism in codimension 1} if it is biregular in codimension 1, that is, there exist Zariski open subset $U\subseteq X$ and $V\subseteq Y$ such that $f|_U:U\stackrel{\cong}{\longrightarrow}V$ is a biregular morphism and
$\codim (X\setminus U)\ge 2$ and $\codim(Y\setminus V)\ge 2$. We will write
\begin{equation*}
  f:X\cong_1 Y
\end{equation*}
If $X$ and $Y$ are both $\Q$-factorial and complete then the isomorphism in codimension 1 $f:X\cong_1 Y$ is called a \emph{small $\Q$-factorial modification}.

Furthermore, if $X$ and $Y$ are also toric varieties, the small $\Q$-factorial modification $f:X\cong_1Y$ implies that they have same weight and fan matrices, $Q=\G(V)$, respectively, so that they correspond to the choice of two fans $\Si,\Theta\in\SF(V)$, respectively, and their nef cones $\Nef(X)$ and $\Nef(Y)$ give two g-cells of the GKZ-decomposition of $\Mov(Q)$. In particular (see \cite[\S~15.3]{CLS} and \cite[Cor.~2.4]{Hu-Keel}):
\begin{enumerate}
  \item if $\Nef(X)$ and $\Nef(Y)$ are adjacent chambers, then $f$ is called a \emph{wall crossing},
  \item if $\Nef(X)$ and $\Nef(Y)$ are (not necessarily adjacent) chambers, then the small $\Q$ factorial modification $f$ is the composition of a finite number of wall crossings,
  \item if $\Nef(X)$ and $\Nef(Y)$ are just g-cells, then $f$ can be obtained by composing a finite number of small birational contractions, small birational resolutions and wall crossings.
\end{enumerate}

\section{Bounds on Fano Toric Varieties}\label{sez:bounds}
In the following:
\begin{itemize}
  \item a Gorenstein algebraic variety is a normal complete algebraic variety $X$ whose canonical divisor $K_X$ is Cartier;
  \item a $\Q$-Gorenstein algebraic variety is a normal complete algebraic variety $X$ whose canonical divisor $K_X$ admits a Cartier multiple $kK_X$, for some $k\in\N$;
  \item a Fano algebraic variety is a Gorenstein projective variety $X$ whose anti-canonical divisor $-K_X$ is ample;
  \item a $\Q$-Fano algebraic variety is a $\Q$-Gorenstein projective variety $X$ whose anti-canonical divisor $-K_X$ admits an ample multiple $-kK_X$, for some $k\in\N$.
\end{itemize}
Let us underline that, under the adopted notation, an ample divisor is a Cartier divisor.

\subsection{Fano-Gorenstein condition on the weight matrix}
\begin{theorem}\label{thm:Gorenstein}
Let $X$ be a complete toric variety with
$$\dim X=n,\ \rk \Cl(X)=r,\ m:=n+r$$
Let $\Si$ be the fan of $X$ and $V$ be a fan matrix associated with $\Si$. Let $Q=\mathcal{G}(V)$ be a weight matrix of $X$. Define
\begin{equation*}
  \mathcal{I}_\Si(n):=\{I\subset\{1,\ldots,m\}\,|\,\langle V^I\rangle\in\Si(n)\}
\end{equation*}
Then:
\begin{enumerate}
  \item if $X$ is Gorenstein then
  \begin{equation*}
    \forall\,I\in\mathcal{I}_\Si(n)\quad\exists\,\mathbf{q}_I\in\Z^{|I|}:Q_I\cdot\q_I= Q\cdot \mathbf{1}_m\in \Z^r
  \end{equation*}
  \item $X$ is $\Q$-Fano if and only if
  \begin{equation*}
    \forall\,I\in\mathcal{I}_\Si(n)\quad\exists\,\mathbf{q}_I\in\Q^{|I|}:\q_I>\0_{|I|}\ \text{and}\  Q_I\cdot\q_I= Q\cdot \mathbf{1}_m\in \Z^r
  \end{equation*}
  where $\q_I>\0_{|I|}$ means that $\q_{|I|}$ admits strictly positive entries only.
\end{enumerate}
\end{theorem}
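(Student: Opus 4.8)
The plan is to convert the (\(\Q\)-)Cartier and ampleness properties of the anti-canonical divisor into linear conditions on $Q$ through Gale duality. Identify $\Div_T(X)\cong\Z^m$ with basis the torus-invariant prime divisors $D_1,\dots,D_m$; then $-K_X=\sum_{j=1}^m D_j$ is represented by $\mathbf{1}_m$, and its free class is $Q\cdot\mathbf{1}_m$. The defining relation of Gale duality between $V$ and $Q=\G(V)$ is $Q\cdot V^T=0$ (the columns of $V^T$, i.e. the principal divisors coming from $M$ via $m\mapsto(\langle m,\v_j\rangle)_j$, lie in $\ker Q$); over $\Q$ this sharpens to $\ker_\Q Q=\im_\Q V^T$, since both are subspaces of $\Q^m$ of dimension $m-r=n$ and one contains the other.

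For (1), I would fix $I\in\mathcal{I}_\Si(n)$, so that $\sigma:=\langle V^I\rangle$ is a maximal cone whose rays are exactly the columns $\v_j$ with $j\notin I$. The Gorenstein hypothesis is that $-K_X$ is Cartier, i.e. for each such $\sigma$ there is a lattice character $m_\sigma\in M=\Z^n$ with $\langle m_\sigma,\v_j\rangle=-1$ for all $j\notin I$. I would then introduce the defect vector $\mathbf{b}_\sigma:=\mathbf{1}_m+V^Tm_\sigma\in\Z^m$: by the defining equations its entries indexed by the complement of $I$ all vanish, so $\mathbf{b}_\sigma$ is supported on $I$ with integral $I$-part $\q_I$. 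Multiplying by $Q$ and using $Q V^T=0$ gives $Q_I\q_I=Q(\mathbf{1}_m+V^Tm_\sigma)=Q\cdot\mathbf{1}_m$, as claimed. Only this implication survives in general: when $\Cl(X)$ has torsion, $\ker Q$ strictly contains $\im V^T$ over $\Z$, so the converse may fail.

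For (2) the forward direction is the same computation over $\Q$. The $\Q$-Gorenstein hypothesis produces $m_\sigma\in M_\Q$ with $\langle m_\sigma,\v_j\rangle=-1$ for $j\notin I$, and ampleness of $-K_X$ is, by the standard toric criterion (see \cite[\S~6.1]{CLS}), the strict convexity of its support function, equivalently $\langle m_\sigma,\v_k\rangle>-1$ for every $k\in I$. Hence the $I$-entries of $\mathbf{b}_\sigma=\mathbf{1}_m+V^Tm_\sigma$ equal $1+\langle m_\sigma,\v_k\rangle>0$, giving $\q_I:=(\mathbf{b}_\sigma)_I>\0_{|I|}$ and again $Q_I\q_I=Q\cdot\mathbf{1}_m$. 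For the converse I would reverse the construction: given $\q_I>\0_{|I|}$ with $Q_I\q_I=Q\cdot\mathbf{1}_m$, extend $\q_I$ by zero to $\mathbf{b}_\sigma\in\Q^m$ supported on $I$; then $Q(\mathbf{b}_\sigma-\mathbf{1}_m)=0$, so $\mathbf{b}_\sigma-\mathbf{1}_m\in\ker_\Q Q=\im_\Q V^T$ yields $m_\sigma\in M_\Q$ with $V^Tm_\sigma=\mathbf{b}_\sigma-\mathbf{1}_m$. Reading the complement-of-$I$ entries gives $\langle m_\sigma,\v_j\rangle=-1$ for $j\notin I$ (so $-K_X$ is $\Q$-Cartier), while the $I$-entries give $\langle m_\sigma,\v_k\rangle=(\q_I)_k-1>-1$, i.e. strict convexity, hence $-K_X$ ample. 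As a consistency check, the condition in (2) says precisely that $Q\cdot\mathbf{1}_m$ lies in $\Int\langle Q_I\rangle$ for every maximal $I$, matching the nef-cone description $\Nef(X)=\bigcap_I\langle Q_I\rangle$ of \S~\ref{ssez:GKZ}.

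The main obstacle I anticipate is the converse of (2): one must ensure that the locally defined characters $\{m_\sigma\}_\sigma$ assemble into a single strictly convex support function, so that $-K_X$ is a genuine ample $\Q$-Cartier class rather than a disjoint collection of local data. This requires checking compatibility across the walls of $\Si$ and, crucially, covering the non-simplicial case, where a maximal cone carries more than $n$ rays and the mere existence of $m_\sigma$ solving $\langle m_\sigma,\v_j\rangle=-1$ for all $j\notin I$ is itself a nontrivial constraint (this is exactly where $\Q$-Cartierness can fail). I would dispatch this by appealing to the standard strict-convexity characterization of ampleness on complete toric varieties: the strict inequalities $\langle m_\sigma,\v_k\rangle>-1$ for $k\in I$ are equivalent to the polytope $\{m\in M_\Q:\langle m,\v_j\rangle\ge-1\ \forall j\}$ having normal fan equal to $\Si$, which is the definition of ampleness of $-K_X$.
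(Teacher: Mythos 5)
Your argument is correct, and it arrives at the same algebraic conditions as the paper, but by a more self\-/contained route. The paper's proof is short because it invokes two Gale--dual descriptions as black boxes: the identity $\Pic(X)=\bigcap_{I\in\I_\Si(n)}\Ls_c(Q_I)$ (cited from the literature) for item (1), and $\Nef(X)=\bigcap_{I\in\I_\Si(n)}\langle Q_I\rangle$ together with the characterization $[-lK_X]\in\Relint\bigl(\Nef(X)\bigr)$ for item (2); the conditions on $\q_I$ are then simply the statement that $Q\cdot\1_m$ lies in $\Ls_c(Q_I)$, respectively in the set of strictly positive combinations of the columns of $Q_I$, for every $I$. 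You instead unwind the Cartier and ampleness conditions directly through the local Cartier data $m_\sigma$ and the single Gale relation $Q\cdot V^T=\0$, which amounts to reproving from scratch exactly the instances of those cited results that are needed here. Your version buys transparency: it makes visible where integrality is lost when $\Cl(X)$ has torsion (hence why only one implication is claimed in item (1)), and it handles the non-simplicial cones explicitly, which you rightly identify as the place where $\Q$-Cartierness is a genuine constraint. The paper's version buys brevity and defers all fan-combinatorics to the cited statements. Two minor remarks: the gluing issue you raise in your last paragraph is in fact automatic, since every $m_\sigma$ takes the value $-1$ on each ray generator of its cone, so any two of them agree on a common face; and in your consistency check the strictly positive rational combinations of the columns of $Q_I$ describe $\Relint\langle Q_I\rangle$ rather than the topological interior $\Int\langle Q_I\rangle$, a distinction that matters when $\langle Q_I\rangle$ fails to be full-dimensional.
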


\begin{proof}
  By definition, $X$ is Gorenstein if and only if $K_X$ is a Carter divisor. Then the canonical class $[K_X]\in \Pic(X)$. Item (1) then follows by recalling that
  \begin{equation}\label{Pic}
    \Pic(X)=\bigcap_{I\in\I_\Si(n)} \Ls_c(Q_I)
  \end{equation}
  as
  \begin{eqnarray*}
   [-K_X]\in\Pic(X)&\Longrightarrow& Q\cdot \1_m\in \bigcap_{I\in\I_\Si(n)} \Ls_c(Q_I) \\
                    &\Longleftrightarrow& \forall\,I\in\I_\Si(n)\quad \exists\, \mathbf{q}_I\in \Z^{|I|} :Q_I\cdot\mathbf{q}_I=Q\cdot \1_m
  \end{eqnarray*}
Equality (\ref{Pic}) is proven in \cite[Thm.~2.9]{RT-LA&GD}, although under the avoidable assumption that $X$ is $\Q$-factorial. Alternatively, by adapting notation, it follows from \cite[Cor.~2.4.2.4]{ADHL}.

For item (2), recall that, by definition, $X$ is $\Q$-Fano if and only if
\begin{eqnarray}
  \exists\,l\in\N &:& l[K_X]=[lK_X]\in\Pic(X)\label{QCartier}\\
   &&  [-lK_X]\in\Relint(\Nef(X))\label{QFano}
\end{eqnarray}
Condition (\ref{QCartier}) is equivalent to asking for
\begin{equation*}
  \forall\,I\in\I_\Si(n)\quad \exists\,\mathbf{q}_I\in\Q^{|I|}:Q_I\cdot\q_I=Q\cdot\1_m
\end{equation*}
Recall that
\begin{equation*}
  \Nef(X)=\bigcap_{I\in\I_\Si(n)}\langle Q_I\rangle
\end{equation*}
(see e.g. \cite[Prop.~2.4.2.6]{ADHL}) where, for every $I\in\I_\Si(n)$, the cone $\langle Q_I\rangle$ is the locus in $\Cl(X)$ of classes of locally effective divisors over the open affine subset of $X$ determined by the cone $V_I$. Then condition (\ref{QFano}) is equivalent to asking for $\mathbf{q}_I$ to admitting only strictly positive entries, for every $I$ in $\I_\Si(n)$.
\end{proof}

\begin{definition}\label{def:Fano-Gorenstein_Q}
   Let $Q\in\Z^{r,n+r}$ be a reduced $W$-matrix with $r,n\ge1$. If there exists a fan  $\Th$ \emph{over} the Gale dual matrix $W=\G(Q)$ (recall Def.~\ref{def:F,W-matrix}), such that the toric variety $Y(\Th)$ satisfies:
    \begin{itemize}
      \item condition (1) in Theorem~\ref{thm:Gorenstein} then $Q$ is called a \emph{Gorenstein weight matrix},
      \item condition (2) in Theorem~\ref{thm:Gorenstein}  then $Q$ is called a \emph{$\Q$-Fano weight matrix}.
      \item both conditions (1) and (2) then $Q$ is called a \emph{Fano weight matrix}.
    \end{itemize}
\end{definition}

\begin{remark}
  Notice that, for $r=1$, the reduced weight matrix $Q=(q_0,q_1,\ldots,q_n)$ is actually the reduced weight system of the wps $\P(q_0,q_1,\ldots,q_n)$ and Theorem 3 reduces to state that \emph{if a fake wps $X$ is Gorenstein then}
  \begin{equation*}
    \forall\,i=0,1,\ldots,n\quad q_i\,|\,Q\cdot\1_{n+1}=\sum_{i=0}^n q_i=:|Q|
  \end{equation*}
  which is a well known condition. In this case, $Q$ admits the bunch of cones determined by the  fan $\Si$ of $\P(Q)$, with $\I_\Si(n)=\{\{0\},\{1\},\ldots,\{n\}\}$, so that
   $$\forall\,i\quad Q_{\{i\}}=q_i\ \text{and}\ \q_{\{i\}}= |Q|/q_i>0$$
   Then a Gorenstein weight matrix $Q=(q_0,q_1,\ldots,q_n)$ is always a Fano one. Actually the latter holds for higher values $r\ge 1$ of the rank:
\end{remark}

\begin{proposition}\label{prop:GorenFano}
  A reduced $W$-matrix $Q\in\Z^{r,n+r}$ is always a $\Q$-Fano weight matrix and $Q$ is Gorenstein if and only if it is Fano.
\end{proposition}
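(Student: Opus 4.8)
The plan is to reduce everything to the position of a single vector, namely the anticanonical class, which in the weight picture is $[-K]=Q\cdot\1_m=\sum_{j=1}^{m}q_j$, the sum of all the columns $q_j$ of $Q$. The engine of the whole argument is the elementary fact that the sum of all the generators of a cone lies in its relative interior; hence
$Q\cdot\1_m\in\Relint(\Eff(Q))$, where $\Eff(Q)=\langle Q\rangle=\langle q_1,\dots,q_m\rangle$. For the first assertion I would then locate $Q\cdot\1_m$ inside the GKZ-decomposition: it sits in the relative interior of a unique cell $\g$ of the secondary fan, and by the realization results recalled above (every chamber is a g-chamber, and a g-cell is the $\Nef$ cone of a toric variety over $W=\G(Q)$) one produces a fan $\Th$ over $W$ with $Q\cdot\1_m\in\Relint(\Nef(Y(\Th)))$. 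For such $Y(\Th)$ the representation $Q_I\cdot\q_I=Q\cdot\1_m$ then forces $\q_I>\0_{|I|}$ for every $I\in\I_\Th(n)$, since a point in the relative interior of $\langle Q_I\rangle$ is a strictly positive combination of its generators; this is precisely condition (2) of Theorem~\ref{thm:Gorenstein}, so $Q$ is a $\Q$-Fano weight matrix.

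For the equivalence ``Gorenstein if and only if Fano'', the implication Fano $\Rightarrow$ Gorenstein is immediate, as a Fano weight matrix satisfies both (1) and (2) by definition, hence in particular (1). The substance is Gorenstein $\Rightarrow$ Fano, i.e. that condition (1) forces condition (2) on a suitable fan. Here I would reuse the Fano fan $\Th$ from the first part, on which $Q\cdot\1_m$ already lies in the relative interior of each $\langle Q_I\rangle$, so that (when $\Th$ is simplicial, $|I|=r$ and $Q_I$ is invertible) the solution $\q_I=Q_I^{-1}(Q\cdot\1_m)$ of $Q_I\cdot\q_I=Q\cdot\1_m$ is automatically strictly positive. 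The Gorenstein hypothesis contributes the integrality $\q_I\in\Z^{|I|}$, equivalently $Q\cdot\1_m\in\bigcap_{I\in\I_\Th(n)}\Ls_c(Q_I)=\Pic(Y(\Th))$. Combining integrality and positivity yields a fan on which $-K$ is an ample Cartier class, i.e. a genuinely Fano $Y(\Th)$, so that $Q$ is a Fano weight matrix. This is exactly the higher-rank analogue of the rank-one computation in the preceding Remark, where $\q_{\{i\}}=|Q|/q_i$ is positive as soon as it is an integer.

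The delicate point, and the step I expect to be the main obstacle, is the passage from ``$Q\cdot\1_m\in\Relint(\Eff(Q))$'' to ``there is a complete fan over the \emph{full} matrix $W$ realizing $Q\cdot\1_m$ as an ample class'', together with the coordination of the two conditions on one and the same fan. Concretely, one must control whether the GKZ-cell containing $Q\cdot\1_m$ is a geometric cell of $\Mov(Q)$, so that it is the $\Nef$ cone of a variety still using every column of $W$ as a ray, and, for the Gorenstein $\Rightarrow$ Fano direction, whether the integrality furnished by an a priori different Gorenstein fan transports to the Fano fan. I would handle this through the anticanonical polytope $\{m:\langle m,w_i\rangle\ge-1\}$ attached to $-K=\sum_i D_i$: its being a lattice polytope encodes the Gorenstein condition, while its normal fan realizes the ample model, so that reflexivity ties both conditions to the single invariant $Q$.
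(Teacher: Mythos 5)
Your overall architecture is the right one and in fact mirrors the paper's: the paper proves this proposition in two lines by quoting Lemma~\ref{lem:iso1} (every complete toric variety is isomorphic in codimension~1 to a $\Q$-Fano one with the same weight matrix, and Gorenstein-ness is preserved), and what you are doing is reconstructing the proof of that lemma. However, you have left the one genuinely hard step open. From $Q\cdot\1_m\in\Relint(\Eff(Q))$ (which is correct: the sum of the generators of a cone lies in its relative interior) you cannot yet conclude that the GKZ cell whose relative interior contains $Q\cdot\1_m$ is a \emph{geometric} cell, i.e.\ the $\Nef$ cone of a toric variety whose fan uses \emph{every} column of $W=\G(Q)$ as a ray. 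The realization statements you invoke (every chamber is a g-chamber, etc.) concern the GKZ decomposition of $\Mov(Q)$, not of $\Eff(Q)$; a cell of the secondary fan lying outside $\Mov(Q)$ corresponds to a variety that drops some rays of $W$, whose weight matrix is therefore no longer $Q$. So the missing ingredient is precisely $Q\cdot\1_m\in\Mov(Q)=\bigcap_i\langle Q^{\{i\}}\rangle$, i.e.\ that the anticanonical class is movable. This is the paper's Lemma~\ref{lem:anticanonico}, and it is not formal: one must, for each $i$, exhibit a nonnegative solution of $Q^{\{i\}}\cdot\x=Q\cdot\1_m$, which the paper does by Gale duality, identifying such solutions with rational points of $\D_{-K}$ lying on the face where $\langle\,\cdot\,,\w_i\rangle=-1$. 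Your proposed patch --- ``reflexivity ties both conditions to the single invariant $Q$'' --- does not work here, because reflexivity of $\D_{-K}$ is exactly the Gorenstein/Fano condition you are trying to arrange, and is not available when proving the first (unconditional) assertion that $Q$ is always $\Q$-Fano; moreover the normal fan of $\D_{-K}$ need not be a fan over the full $W$ until movability is known.

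The second point you flag --- transporting the integrality $Q\cdot\1_m\in\bigcap_I\Ls_c(Q_I)$ from the a priori different Gorenstein fan $\Th_0$ to the Fano fan $\Th$ --- is also a real issue, since $\Pic(Y(\Th))=\bigcap_{I\in\I_\Th(n)}\Ls_c(Q_I)$ genuinely depends on $\Th$. The paper closes it with the last assertion of Lemma~\ref{lem:iso1}: under an isomorphism in codimension~1 the canonical divisors correspond and Cartier-ness is preserved, so $Y(\Th_0)$ Gorenstein forces the $\Q$-Fano model $Y'$ to be Gorenstein, hence Fano. Once you import Lemmas~\ref{lem:anticanonico} and~\ref{lem:iso1} (or supply their proofs), your argument goes through and coincides with the paper's; as written, these two steps are identified but not proved.
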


Here and below we will offen use the following

\begin{lemma}\label{lem:iso1}
  Let $X$ be a complete toric variety. Then $X$ is isomorphic in codimension 1 to a $\Q$-Fano toric variety $X'$. Moreover, $X$ is Gorenstein if and only if $X'$ is Gorenstein.
\end{lemma}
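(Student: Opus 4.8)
The plan is to keep the fan matrix $V$ of $X$ fixed and to change only the fan, producing a $\Q$-Fano variety $X'$ that shares the $1$-skeleton of $X$; the isomorphism in codimension $1$ is then automatic. Write $X=X(\Si)$ with fan matrix $V$ and weight matrix $Q=\G(V)$, and observe that the anticanonical class $\mathbf a:=[-K_X]=Q\cdot\1_m\in\Cl(X)\otimes\R\cong\R^r$ is the \emph{same} vector for every complete fan over $V$. The whole argument rests on the remark that the $\Q$-Fano property of a toric variety with weight matrix $Q$ — condition (2) of Theorem~\ref{thm:Gorenstein}, i.e.\ solvability of $Q_I\cdot\q_I=Q\cdot\1_m$ with $\q_I>\0$ — depends only on $Q$ and on the combinatorial index family $\I_\Si(n)$ of the fan, not on the lattice in which the rays sit.

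First I would invoke Proposition~\ref{prop:GorenFano}: the reduced $W$-matrix $Q$ is always a $\Q$-Fano weight matrix, so by Definition~\ref{def:Fano-Gorenstein_Q} there is a complete fan $\Th$ over $W=\G(Q)$ whose variety $Y(\Th)$ satisfies condition (2). Since $V$ and $W=\G(\G(V))$ are Gale dual to one and the same $Q$, they are linearly isomorphic as real vector configurations; hence they have the same oriented matroid, the same secondary fan / GKZ-decomposition of $\Mov(Q)$, and the same realizable (geometric) cells. I would therefore realize the combinatorial triangulation $\I_\Th(n)$ by a complete fan $\Si'\in\SF(V)$ over $V$ with $\I_{\Si'}(n)=\I_\Th(n)$. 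By the lattice-independence just noted, $X':=X(\Si')$ again satisfies condition (2), so it is $\Q$-Fano; equivalently, $\mathbf a=Q\cdot\1_m$ lies in the relative interior of the nef cell $\bigcap_{I\in\I_{\Si'}(n)}\langle Q_I\rangle$.

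For the isomorphism in codimension $1$ I would use that a toric variety is rigid in codimension $1$ with respect to its rays: the open subvariety obtained from the torus by adjoining the codimension-$1$ orbits has complement of codimension $\ge2$ and depends only on $\Si(1)$. As $\Si$ and $\Si'$ are both fans over $V$, we have $\Si(1)=\Si'(1)$, and identifying these open sets yields $X\cong_1 X'$ — exactly the mechanism underlying the wall-crossings and small $\Q$-factorial modifications recalled in \S\ref{sez:preliminari}. For the Gorenstein clause I would run the same transfer through condition (1) of Theorem~\ref{thm:Gorenstein}: if $X$ is Gorenstein then $Q\cdot\1_m\in\bigcap_{I\in\I_\Si(n)}\Ls_c(Q_I)$, i.e.\ $Q$ is a Gorenstein weight matrix, whence by the equivalence ``Gorenstein $\Leftrightarrow$ Fano'' in Proposition~\ref{prop:GorenFano} the fan $\Si'$ may be chosen so that $X'$ satisfies both (1) and (2); thus $X'$ is an honest (Gorenstein) Fano variety whenever $X$ is Gorenstein.

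I expect the main obstacle to be precisely this lattice- and fan-independence, together with the reverse Gorenstein implication. Making rigorous that solvability of $Q_I\cdot\q_I=Q\cdot\1_m$, with the positivity (resp.\ integrality) constraints, is governed by $Q$ and the index family $\I_\Si(n)$ alone — so that a realization over $W=\G(Q)$ furnished by Proposition~\ref{prop:GorenFano} transports to a fan over $V$ sharing the $1$-skeleton of $\Si$ — is the delicate structural point. The truly subtle half is showing that the constructed $\Q$-Fano $X'$ is Gorenstein \emph{only} when $X$ is: passing from $\Si$ to $\Si'$ changes $\Pic$, hence a priori the loci of Cartier and of ample classes, so Cartierness of $\mathbf a$ must be compared carefully across the two fans. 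By contrast, the codimension-$1$ identification, once the fans share their rays, is routine.
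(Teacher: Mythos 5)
Your proposal contains a genuine circularity within the paper's logical architecture. The existence statement you need --- a complete fan over $W=\G(Q)$ (equivalently, after the linear change of coordinates $V=B\cdot W$, over $V$) whose toric variety satisfies condition (2) of Theorem~\ref{thm:Gorenstein} --- is exactly Proposition~\ref{prop:GorenFano}, which you invoke as your starting point. But the paper proves Proposition~\ref{prop:GorenFano} \emph{from} Lemma~\ref{lem:iso1}: it takes an arbitrary complete $Y(\Th)$ over $W$ and uses the lemma to replace it by a $\Q$-Fano model in codimension 1. So your argument defers the entire content of the lemma to a proposition that presupposes it. The step that actually does the work in the paper is Lemma~\ref{lem:anticanonico}: $[-K_X]=Q\cdot\1_m$ always lies in $\Mov(Q)$. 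Granting that, $\Mov(Q)$ is covered by the pulled-back nef chambers of the small $\Q$-factorial modifications of $X$ (Hu--Keel), so $[-K_X]$ lies in the relative interior of some g-cell, and the corresponding variety $X'$ --- reached by wall crossings and, if necessary, small birational contractions, all isomorphisms in codimension 1 (preceded by a simplicial refinement when $X$ is not $\Q$-factorial) --- is $\Q$-Fano. Your proposal never establishes that $[-K_X]$ is movable, and without that there is no reason the anticanonical class should sit in the relative interior of \emph{any} nef cell of \emph{any} fan over $V$.

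Second, the Gorenstein clause. You correctly flag the ``only if'' direction as the delicate point, but you leave it unresolved, and the route you sketch through condition (1) of Theorem~\ref{thm:Gorenstein} cannot close it: that condition is stated only as a \emph{necessary} consequence of Gorensteinness (it concerns the free part of the class morphism and ignores torsion in $\Cl(X)$), so arranging it for $\Si'$ does not yield $K_{X'}$ Cartier, nor conversely. The paper's proof instead transfers Cartierness of the canonical divisor directly across the isomorphism in codimension 1, using that $f$ and $f^{-1}$ have indeterminacy loci of codimension at least 2. Your codimension-1 identification of the two varieties via their shared $1$-skeleton is correct and is the same mechanism the paper relies on; the gaps are the circular appeal to Proposition~\ref{prop:GorenFano} in place of the movability of $-K_X$, and the missing argument for the Gorenstein equivalence.
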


This should be a well-known result. For the reader convenience and lack of references, we will prove it in \S~\ref{ssez:lemmaiso1}.

\begin{proof}[Proof of Prop.~\ref{prop:GorenFano}]
  Remark~\ref{rem:fan-over} ensures that there always exists a $\Q$-factorial complete toric variety $Y(\Th)$ admitting a given reduced $W$-matrix $Q$ as a weight matrix. Then, Lemma~\ref{lem:iso1} implies that $Y$ is always isomorphic in codimension 1 to a $\Q$-Fano toric variety $Y'$, and $Y'$ admits the same weight matrix $Q$. Then $Q$ is always $\Q$-Fano w.r.t. $Y'$.
Finally, $Y$ is Gorenstein if and only if $Y'$ is Gorenstein.
\end{proof}

\subsection{Multiplicity of a complete toric variety}\label{ssez:molteplicità}

Let $X=X(\Si)$ be a complete toric variety. Set $n:=\dim X, r:=\rk(\Cl(X)), m:=n+r=|\Si(1)|$ and let
\begin{eqnarray*}
% \nonumber to remove numbering (before each equation)
  V \in \Z^{n,m}&&\text{be a fan matrix of $X(\Si)$} \\
  Q = \G(V)\in \Z^{r,m}&&\text{be a weight matrix of $X$}
\end{eqnarray*}
Recall the following results (see \cite[\S~2.6]{R-CovMDS}).
\subsubsection{Universal 1-covering}\label{sssez:U1covering} There exists a \emph{universal 1-covering} $\pi:Y\twoheadrightarrow X$, where $Y=Y(\Theta)$ is a complete toric variety with trivial fundamental \'{e}tale group in codimension 1: $\pi_1^{\text{ét}}(Y)^{(1)}=0$. In particular, $\pi$ is a \emph{1-covering}, that is a finite and \ét morphism whose branching locus is a Zariski closed subset of $Y$ whose codimension is greater than 1. Moreover $\pi$ is induced by the natural action of the finite abelian group
\begin{equation*}
  G:=\pi_1^{\text{ét}}(X)^{(1)}\cong\Tors(\Cl(X))\cong N/N_1
\end{equation*}
where $N_1\subset N$ is the sublattice spanned by $\Si(1)\cap N$, meaning that $X\cong Y/G$. This fact allows us to adopt a notation introduced by Kasprzyk \cite[Def.~2.7]{Kasprzyk} for fake wps (i.e. in case $r=1$):

\begin{definition}\label{def:mult}
  The \emph{multiplicity} of $X$, denoted by $\mult X$, is the index of the sublattice $N_1$ spanned by $\Si(1)\cap N$ inside $N$. That is, the order of the acting group $G$
  \begin{equation*}
    \mult X=|G|
  \end{equation*}
\end{definition}

\begin{remark}\label{rem:Univ-costruzione}
  For the important role that the universal covering $\pi:Y\twoheadrightarrow X$ will play in the following, let us recall how the toric variety $Y$ is constructed.
  \begin{itemize}
    \item Start from $X$, whose fan matrix is $V\in \Z^{n,m}$, whose fan is generated as the fan of faces of maximal cones in $\Si(n)=\{\langle V^I\rangle\,|\,I\in \I_\Si(n)\}$ and whose weight matrix is given by $Q=\G(V)$.
    \item Construct the fan matrix $W=\G(Q)$ and the fan $\Th$ generated as the fan of faces of maximal cones in $\Th(n):=\{\langle W_I\rangle\,|\,I\in\I_\Si(n)\}$.
    \item The toric variety $Y=Y(\Th)$ is the universal 1-covering $\pi:Y\twoheadrightarrow X$, by means of the map of toric varieties $\pi$ induced by the obvious map of fans $\b:N\to N$, such that $\b(\langle W^I\rangle)=\langle V^I\rangle$, for every $I\in\I_\Si(n)$. In particular $Y$ has the same weight matrix $Q$ of $V$, as
        $\G(W)=\G(V)=Q$.
  \end{itemize}
\end{remark}

\subsubsection{Gorenstein universal 1-covering} For a wps $\P(Q)=\P(q_0,\ldots,q_n)$, condition (1) in Theorem~\ref{thm:Gorenstein} is actually equivalent to asking that $\P(Q)$ is Gorenstein, that is
\begin{equation}\label{Gorenstein wps}
  \P(q_0,\ldots,q_n)\ \text{is Gorenstein}\ \Longleftrightarrow\ \forall\,i=0,\ldots,n\quad q_i\,|\,\sum_{i=0}^n q_i
\end{equation}
This condition can be extended, for rank $r>1$, to complete toric varieties which are \emph{simply connected in codimension 1}. Namely

\begin{proposition}\label{prop:GorenstienPWS}
  Let $Y(\Th)$ be a complete, toric variety, simply connected in codimension 1, i.e. $\pi_1^{\text{ét}}(Y)^{(1)}=0$. Let $W$ be a fan matrix of $Y$ and $Q=\G(W)$ an associated weight matrix. Then, $Y$ is Gorenstein if and only if $Q$ is a Gorenstein weight matrix.
   \end{proposition}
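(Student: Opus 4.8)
The plan is to exploit the hypothesis $\pi_1^{\text{ét}}(Y)^{(1)}=0$ in order to reduce the Gorenstein condition entirely to the free part of the class group, which is precisely what the weight matrix $Q$ records. First I would observe that, by the isomorphism $\pi_1^{\text{ét}}(Y)^{(1)}\cong\Tors(\Cl(Y))$ recalled in \S~\ref{sssez:U1covering}, the hypothesis forces $\Tors(\Cl(Y))=0$; hence $\Cl(Y)\cong\Z^r$ is free and the class morphism is faithfully represented by $Q$. In particular the anticanonical class is literally $[-K_Y]=Q\cdot\1_m\in\Z^r$, with no torsion correction term.

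The central step is then to upgrade Theorem~\ref{thm:Gorenstein}(1) to an equivalence in this torsion-free setting. By definition $Y$ is Gorenstein if and only if $K_Y$ is Cartier, i.e. if and only if $[-K_Y]\in\Pic(Y)$; and by equality (\ref{Pic}) applied to $Y(\Th)$ one has $\Pic(Y)=\bigcap_{I\in\I_\Th(n)}\Ls_c(Q_I)$. Since $\Cl(Y)$ is torsion-free, the membership $Q\cdot\1_m\in\bigcap_{I}\Ls_c(Q_I)$ is equivalent to the existence, for every $I\in\I_\Th(n)$, of $\q_I\in\Z^{|I|}$ with $Q_I\cdot\q_I=Q\cdot\1_m$, that is, to condition (1) of Theorem~\ref{thm:Gorenstein} for the fan $\Th$. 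This is exactly where torsion-freeness is essential: in general only the implication ``Gorenstein $\Rightarrow$ condition (1)'' holds, because condition (1) detects only the free part of $[-K_Y]$; here there is nothing else to detect.

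The forward implication of the proposition is now immediate: if $Y(\Th)$ is Gorenstein, then condition (1) holds for $\Th$, so $\Th$ itself is a fan over $W$ witnessing that $Q$ is a Gorenstein weight matrix in the sense of Definition~\ref{def:Fano-Gorenstein_Q}. For the converse I would argue that Gorenstein-ness is constant along the fans over $W$. Indeed, since $W=\G(Q)$ is a reduced $CF$-matrix by Remark~\ref{rem:fan-over}, every $\Th'\in\SF(W)$ yields a complete $\Q$-factorial toric variety $Y(\Th')$ with the same fan and weight matrices as $Y(\Th)$, hence simply connected in codimension 1, and $Y(\Th')\cong_1 Y(\Th)$ is a small $\Q$-factorial modification. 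By Lemma~\ref{lem:iso1}, together with the uniqueness of the $\Q$-Fano model attached to $Q$ (which exists by Proposition~\ref{prop:GorenFano}), all these varieties share the Gorenstein status of that single model. Therefore, if $Q$ is a Gorenstein weight matrix, so that condition (1) — equivalently, by the previous paragraph, Gorenstein-ness — holds for some $\Th'$ over $W$, then the same holds for our $\Th$, and $Y(\Th)$ is Gorenstein.

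The main obstacle is precisely this last reconciliation between the existential quantifier in the definition of a Gorenstein weight matrix and the fixed fan $\Th$ of the given $Y$: condition (1) is a priori fan-dependent, since passing from $\Th$ to another fan over $W$ replaces the index sets $\I_\Th(n)$ by different ones. The argument hinges on the fact, supplied by Lemma~\ref{lem:iso1} and the uniqueness of the $\Q$-Fano model associated with $Q$, that the Gorenstein property is nevertheless invariant on the whole of $\SF(W)$.
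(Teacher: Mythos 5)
Your proposal is correct and follows essentially the same route as the paper: the forward direction via condition (1) of Theorem~\ref{thm:Gorenstein} applied to the given fan $\Th$, and the converse by transferring the Gorenstein property from a witnessing fan over $W$ to $\Th$ through an isomorphism in codimension 1 supplied by Lemma~\ref{lem:iso1}. Your explicit observation that $\Tors(\Cl(Y))=0$ upgrades condition (1) to an equivalence is exactly the point the paper leaves implicit in the phrase ``a suitable Gorenstein $Y'$''.
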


\begin{proof}
  Il $Y$ is Gorenstein then $Q$ is clearly Gorenstein by condition (1) in Theorem~\ref{thm:Gorenstein}. Conversely, if $Q$ is a Gorenstein weight matrix with respect to a suitable Gorenstein $Y'$, then $Y$ and $Y'$ are isomorphic in codimension 1, by Lemma~\ref{lem:iso1}. Therefore, $Y$ is Gorenstein, too.
\end{proof}

\begin{corollary}\label{cor:Univ1covGorestein}
  Let $\pi:Y\twoheadrightarrow X$ be the universal 1-covering of a complete, toric variety (constructed as in Remark~\ref{rem:Univ-costruzione}). Let $Q$ be a weight matrix of $X$. Then, $Y$ is Gorenstein if and only if $Q$ is Gorenstein.
\end{corollary}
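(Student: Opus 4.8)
The plan is to read this off directly from Proposition~\ref{prop:GorenstienPWS}, since the universal 1-covering $Y$ satisfies that proposition's hypotheses essentially by construction. First I would collect, from \S~\ref{sssez:U1covering} and Remark~\ref{rem:Univ-costruzione}, the three features of $Y=Y(\Th)$ that are needed as input: that it is a complete toric variety; that it is simply connected in codimension 1, i.e. $\pet(Y)^{(1)}=0$ (this is the very definition of the universal 1-covering); and that its fan matrix is $W=\G(Q)$, with the same weight matrix $Q$ as $X$, since $\G(W)=\G(V)=Q$ (recorded at the end of Remark~\ref{rem:Univ-costruzione}). Once these are in hand, the corollary is exactly the instance of Proposition~\ref{prop:GorenstienPWS} obtained by applying it to $Y$, to its fan matrix $W$, and to the associated weight matrix $\G(W)$: the proposition then yields that $Y$ is Gorenstein if and only if this weight matrix is Gorenstein.

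The single identification that deserves a word is that the weight matrix $Q$ fixed in the statement (a weight matrix of $X$, namely $Q=\G(V)$) coincides, up to $\GL$-equivalence, with the weight matrix $\G(W)$ of $Y$ to which the proposition is applied. This is precisely the Gale-duality fact used in Remark~\ref{rem:fan-over}: since $W=\G(Q)$ is a $CF$-matrix (it satisfies condition (F.e)), its Gale dual $\G(W)$ is $\GL$-equivalent to $Q$. Thus the $Q$ of the statement is, up to $\GL$-equivalence, the weight matrix of $Y$, and the clause ``$Y$ is Gorenstein iff $\G(W)$ is a Gorenstein weight matrix'' is the same as ``$Y$ is Gorenstein iff $Q$ is Gorenstein'' in the sense of Definition~\ref{def:Fano-Gorenstein_Q}.

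The last thing to confirm is that the conclusion is insensitive to this $\GL$-equivalence, i.e. that being a Gorenstein weight matrix is an invariant of the $\GL$-equivalence class. This is immediate, because condition (1) of Theorem~\ref{thm:Gorenstein} is phrased purely in terms of the column lattices $\Ls_c(Q_I)$ and the membership of $Q\cdot\1_m$, all of which transform compatibly under left multiplication by $\GL_r(\Z)$ and right multiplication by a permutation matrix. I do not expect a genuine obstacle here: the entire content of the corollary is the verification that $Y$ meets the hypotheses of Proposition~\ref{prop:GorenstienPWS}, and the only mild bookkeeping is the Gale-duality identification $\G(\G(Q))$ is $\GL$-equivalent to $Q$ just described. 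In particular no new use of Lemma~\ref{lem:iso1} is required beyond what already underlies the proposition.
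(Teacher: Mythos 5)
Your proof is correct and is essentially the paper's own argument: the paper derives the corollary from Proposition~\ref{prop:GorenstienPWS} by noting that the universal 1-covering $Y$ is simply connected in codimension 1 and shares the weight matrix $Q$. Your extra bookkeeping on the $\GL$-equivalence $\G(\G(Q))\sim Q$ via condition (F.e) is accurate and merely makes explicit what the paper leaves implicit.
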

In fact, as a universal 1-covering, $Y$ turns out to be simply connected in codimension 1 \cite[Thm.2.15]{R-CovMDS}.

\begin{corollary}\label{cor:WeightGorenstein}
  Let $Q$ be a Gorenstein weight matrix, $W=\G(Q)$ and $\Th$ be a fan over $W$ making $Q$ Gorenstein, as in Definition~\ref{def:Fano-Gorenstein_Q}. Then, the toric variety $Y=Y(\Th)$, determined by the fan $\Th$, is Gorenstein.
\end{corollary}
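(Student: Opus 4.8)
The plan is to derive the statement directly from Proposition~\ref{prop:GorenstienPWS}, whose only substantial hypothesis is that the variety be simply connected in codimension 1. The crux is therefore to check that $Y = Y(\Th)$ satisfies $\pi_1^{\text{\'{e}t}}(Y)^{(1)} = 0$. For this I would use that, $Q$ being a reduced $W$-matrix, its Gale dual $W = \G(Q)$ is a $CF$-matrix by \cite[Prop.~3.12]{RT-LA&GD}; in particular $W$ satisfies the extra condition (F.e), i.e. $\Ls_c(W) = \Z^n$, so the columns of $W$ generate the full lattice $N$. Since these columns are exactly the primitive ray generators of $\Th(1)$, the sublattice $N_1$ they span is all of $N$, whence $\mult Y = [N:N_1] = 1$ by Definition~\ref{def:mult}. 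The isomorphism $G \cong N/N_1$ recorded in \S~\ref{sssez:U1covering} then forces $G = \pi_1^{\text{\'{e}t}}(Y)^{(1)}$ to be trivial. (Completeness of $Y$ is part of the setting of Definition~\ref{def:Fano-Gorenstein_Q}, through its appeal to Theorem~\ref{thm:Gorenstein} for complete varieties.)

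Next I would match the combinatorial data with the exact hypotheses of Proposition~\ref{prop:GorenstienPWS}. The weight matrix canonically attached to the fan matrix $W$ is $\G(W)$, which by Remark~\ref{rem:fan-over} is merely $\GL$-equivalent to $Q$ rather than literally equal to it. I would observe that the conditions defining a Gorenstein weight matrix---the $\Z$-solvability of $Q_I \cdot \q_I = Q \cdot \1_m$ for every $I \in \I_\Th(n)$, that is condition (1) of Theorem~\ref{thm:Gorenstein}---are preserved under left multiplication by $\GL_r(\Z)$ and under column permutation. Hence the Gorenstein property of $Q$ transfers verbatim to $\G(W)$, relative to the same fan $\Th$.

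At this point Proposition~\ref{prop:GorenstienPWS} applies to $Y = Y(\Th)$: it is complete, simply connected in codimension 1, has fan matrix $W$, and its weight matrix $\G(W)$ is Gorenstein. The ``if'' direction of that equivalence then gives that $Y$ is Gorenstein, as claimed.

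I expect the only delicate point to be the bookkeeping of the second step: one must ensure that the $\GL$-equivalence of Remark~\ref{rem:fan-over} leaves the divisibility relations of condition (1) in Theorem~\ref{thm:Gorenstein} intact, so that the Gorenstein datum attached to $Q$ genuinely coincides with the one attached to the canonical weight matrix $\G(W)$ of $Y(\Th)$. Everything else reduces to the $CF$-matrix property of $W$ together with a citation of Proposition~\ref{prop:GorenstienPWS}.
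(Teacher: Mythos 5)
Your proposal is correct and follows essentially the same route as the paper: the paper's own justification is precisely that $Y$ is simply connected in codimension 1 by construction (because $W=\G(Q)$ is a $CF$-matrix), after which Proposition~\ref{prop:GorenstienPWS} applies. Your additional check that the Gorenstein condition of Theorem~\ref{thm:Gorenstein}(1) is invariant under the $\GL$-equivalence relating $Q$ to $\G(W)$ is a harmless (and reasonable) piece of bookkeeping that the paper leaves implicit.
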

In fact, by construction $Y$ is simply connected in codimension 1 \cite[Prop.~2.6, Prop.~3.11]{RT-LA&GD}.

\subsubsection{Divisibility of fan matrices}\label{sssez:divisione} Let $W\in\Z^{n,m}$ be a fan matrix of the universal 1-covering $Y$ of $X$. Recall that
$$W=\G(Q)=\G(\G(V))$$
is uniquely determined up to $\GL$-equivalence \cite[\S~3.1]{RT-LA&GD}. Then there exists a unique matrix $B\in\GL_n(\Q)\cap\Z^{n,n}$ giving \emph{the quotient} of the division of $V$ by $W$, that is
\begin{equation*}
  V=B\cdot W
\end{equation*}
$B$ is called the \emph{quotient matrix of $V$}. In particular, by the construction of $\pi:Y\twoheadrightarrow X$, as recalled in Remark~\ref{rem:Univ-costruzione}, there is a commutative diagram
 \begin{equation}\label{div-diagram-covering}
      \xymatrix{&&&0\ar[d]&\\
& 0 \ar[d] & 0 \ar[d] & \ker(\pi^*)=\Tors(\Cl(X)) \ar[d] & \\
0 \ar[r] & M \ar[r]^-{div_X}_-{V^T}\ar[d]^-{\b^T}_-{B^T} &
\mathcal{W}_T (X)=\Z^{|\Si(1)|} \ar[r]^-{d_X}_-{Q\oplus \Ga}\ar[d]_-{\mathbf{I}_{n+r}} & \Cl(X) \ar[r]\ar[d]^-{\pi^*} & 0 \\
0 \ar[r] & M \ar[r]^-{div_Y}_-{W^T}\ar[d]&\mathcal{W}_T(Y)=\Z^{|\Th(1)|}\ar[r]^-{d_Y}_{Q}\ar[d] & \Cl (Y) \ar[r]\ar[d] & 0 \\
 & \coker(\b^T)\cong\Tors(\Cl(X))\ar[d] & 0 & 0 & \\
 &0&&&}
\end{equation}
where $d_X$ and $d_Y$ are the class morphisms on torus invariant Weil divisors. In the diagram, representative matrices of the involved morphisms are made explicit and the isomorphism $\coker(\b^T)\cong\Tors(\Cl(X))$ comes from the Snake Lemma. Then one obtains the following interpretation of $\mult X$
\begin{equation}\label{molteplicità}
  \mult X=\deg(\pi)=|G|=|\coker(\b^T)|=|\det(B)|
\end{equation}

\begin{remark}\label{azioneGa}
  The matrix $\Ga$ appearing in diagram (\ref{div-diagram-covering}) represents the torsion part of the class morphism $d_X:\Weil(X)\to\Cl(X)$. Recalling the Cox's quotient presentation of a non-degenerate toric variety, $\Ga$ encodes the action, of the finite group $G\cong\Tors(\Cl(X))$ on $Y$, giving rise to the quotient $X\cong Y/G$.
\end{remark}

\subsection{Polar duality} Let $X=X(\Si)$ be a Fano toric variety and $Y=Y(\Th)$ its universal 1-covering. Then, their \emph{polar} Fano varieties $X^\circ=X^\circ(\Si^\circ)$ and $Y^\circ=Y^\circ(\Th^\circ)$, respectively, are well defined, after Batyrev \cite{Batyrev94}. Let $V,W,V^\circ,W^\circ$ be fan matrices of $X,Y,X^\circ,Y^\circ$, respectively. Therefore
\begin{eqnarray}\label{polarità}
  \conv(V^\circ) = \De_{-K_X} &\text{and}& \conv(V) = \De_{-K_{X^\circ}} \\
  \nonumber
  \conv(W^\circ) = \De_{-K_Y} &\text{and}& \conv(W) = \De_{-K_{Y^\circ}}
\end{eqnarray}
Call $r^\circ:=\rk(\Cl(X^\circ))$ and $m^\circ:=n+r^\circ=|\Si^\circ(1)|$. Then $V^\circ\in Z^{n,m^\circ}$.

\begin{theorem}\label{thm:1-coveringpolare}
  The lattice map $\b^T:M\to M$ induces a map of fans $\b^T:\Si^\circ\to \Th^\circ$ giving rise to a 1-covering $\pi^T: X^\circ\twoheadrightarrow Y^\circ$. This means that
  \begin{equation*}
    \deg(\pi^T)=\deg(\pi)=|\det(B)|
  \end{equation*}
  Moreover, $Y^\circ\cong X^\circ/\coker(\b)$ by the natural action of
  \begin{equation*}
    \coker(\b)\cong \ker {\pi^T}^*
  \end{equation*}
  being ${\pi^T}^*:\Cl(Y^\circ)\to\Cl(X^\circ)$ the pull-back morphism on classes induced by the projection $\pi^T$, as defined e.g. in \cite[\S~1.5]{R-CovMDS}.
  \end{theorem}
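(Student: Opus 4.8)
The plan is to transport the factorization $V=B\cdot W$ of diagram~(\ref{div-diagram-covering}) across Batyrev's polar duality and then run the Snake Lemma argument of that diagram in the dual direction. First I would translate $V=B\cdot W$ into a statement about the reflexive anticanonical polytopes. Since the columns of $V$ and of $W$ are the primitive ray generators, i.e. the vertices of the reflexive polytopes $\conv(V)=\De_{-K_{X^\circ}}$ and $\conv(W)=\De_{-K_{Y^\circ}}$, the identity $V=B\cdot W$ gives $\conv(V)=B\cdot\conv(W)$ in $N_\R$. Taking polar duals and using the elementary rule $(B\cdot P)^\circ=(B^T)^{-1}(P^\circ)$, which follows from $\langle \uu,B\v\rangle=\langle B^T\uu,\v\rangle$, one gets $\De_{-K_X}=(B^T)^{-1}\De_{-K_Y}$, that is $B^T\cdot\De_{-K_X}=\De_{-K_Y}$. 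By the polarity relations~(\ref{polarità}) this reads $B^T\cdot\conv(V^\circ)=\conv(W^\circ)$. Thus the injective integral map $\b^T=B^T:M\to M$ carries the reflexive polytope $\conv(V^\circ)$ isomorphically onto $\conv(W^\circ)$, sending vertices to vertices; as these vertices are exactly the columns of $V^\circ$ and of $W^\circ$, I obtain $B^T\cdot V^\circ=W^\circ$ up to a permutation of columns, and hence a combinatorial isomorphism of the face fans $\b^T:\Si^\circ\to\Th^\circ$ in $M_\R$.

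Next I would read off that $\pi^T$ is a 1-covering of the stated degree. The map $\b^T$ sends each cone of $\Si^\circ$ bijectively onto a cone of $\Th^\circ$ and, by the previous step, sends the primitive generators $V^\circ$ to the primitive generators $W^\circ$; hence the induced toric morphism $\pi^T:X^\circ\to Y^\circ$ restricts on the big tori to the isogeny $\b^T\otimes\id$, which is \'etale, and is unramified along every invariant divisor because primitivity of ray generators is preserved. Its branch locus therefore has codimension $\ge 2$, so $\pi^T$ is a 1-covering, and
\begin{equation*}
  \deg(\pi^T)=[M:B^TM]=|\det(B^T)|=|\det(B)|=\deg(\pi).
\end{equation*}
This is the exact mirror of the fact that $\pi$ is a 1-covering because $B$ sends the primitive columns of $W$ to the primitive columns of $V$.

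Finally I would identify the acting group by mirroring diagram~(\ref{div-diagram-covering}) on the polar side, where the fans live in $M_\R$ and the character lattices are $N$. The pullback ${\pi^T}^*$ sits in a commutative ladder whose exact rows are the divisor sequences of $Y^\circ$ (top) and $X^\circ$ (bottom), whose left vertical map is the character pullback $\b:N\to N$ (the transpose of $\b^T$, of matrix $B$), and whose middle vertical map is the identity $\mathbf{I}_{m^\circ}$, since each invariant prime divisor pulls back with multiplicity $1$. Commutativity of the left square is precisely $(V^\circ)^T\cdot B=(W^\circ)^T$, i.e. the relation $B^T V^\circ=W^\circ$ obtained above. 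The Snake Lemma then gives $\ker{\pi^T}^*\cong\coker(\b)$, which is $\Tors(\Cl(Y^\circ))=\pet(Y^\circ)^{(1)}$; by Cox's quotient presentation this finite abelian group acts on $X^\circ$ with quotient $Y^\circ$, yielding $Y^\circ\cong X^\circ/\coker(\b)$.

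The main obstacle is the upgrade in the first step from the polytope identity $B^T\De_{-K_X}=\De_{-K_Y}$ to an honest combinatorial isomorphism of \emph{face fans} that preserves primitivity of the ray generators: this is exactly what separates a 1-covering, with all ramification absorbed in codimension $\ge2$, from a general finite toric cover, and it rests on the fact that the vertices of a reflexive polytope are primitive and are precisely the columns of the corresponding fan matrix. Once $B^T V^\circ=W^\circ$ is secured, the degree computation and the quotient description are the formal duals of the already established content of diagram~(\ref{div-diagram-covering}).
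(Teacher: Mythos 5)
Your proof is correct and follows essentially the same route as the paper: establish the key identity $W^\circ=B^T\cdot V^\circ$ (the paper's Lemma~\ref{lem:quozientepolare}), read off the induced map of fans $\b^T:\Si^\circ\to\Th^\circ$ and the degree $|\det B|$, and then identify $\ker{\pi^T}^*\cong\coker(\b)$ by the Snake Lemma applied to the dual divisor-sequence ladder, which is exactly the paper's diagram~(\ref{dual-div-diagram-covering}). The only difference is cosmetic: you derive $W^\circ=B^T\cdot V^\circ$ from the polarity rule $(B\cdot P)^\circ=(B^T)^{-1}(P^\circ)$ applied to the whole reflexive polytopes, whereas the paper computes vertex by vertex via the Cramer systems $(V^I)^T\x=-\1_n$; your polytope-level argument is a clean repackaging of the same computation and, as you note, the primitivity of the vertices of a reflexive polytope is precisely what makes $\pi^T$ unramified in codimension~1.
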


  The statement follows from the following two lemmas.

  \begin{lemma}\label{lem:polar}
    Recall the definition of $\I_\Si(n)$ given in Theorem~\ref{thm:Gorenstein}. Then, up to a column permutation,
    \begin{equation*}
      V^\circ=\left(-(V^I)^{*'}\cdot\1_n\right)_{I\in\I_\Si(n)}\in \Z^{n,m^\circ}
    \end{equation*}
    where $(V^I)^{*'}$ is the \emph{transverse} matrix of a $n\times n$ non-singular submatrix of $V^I$ (recall notation in \S~\ref{ssez:Not-matrici}) and $$\v^\circ_I:=-(V^I)^{*'}\cdot\1_n$$
    is the $I$-th column of $V^\circ$. Notice that: $|\I_\Si(n)|=|\Si(n)|=|\Si^\circ(1)|=m^\circ$.
  \end{lemma}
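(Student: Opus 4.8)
The plan is to read off the columns of $V^\circ$ as the vertices of the polar polytope $\De_{-K_X}$, equal to the dual $\De^\circ$ of $\De:=\conv(V)\subset N_\R$, and then to compute each such vertex by linear algebra from the facet of $\De$ to which it is dual. First I would use that $X$ is Fano: the polytope $\De$ is then reflexive, its vertices are exactly the columns of $V$ (the primitive ray generators), and $\Si$ is the face fan over $\De$. Consequently the maximal cones $\langle V^I\rangle\in\Si(n)$ are in bijection with the facets of $\De$, and $\I_\Si(n)$ is precisely an indexing set for these facets; the columns of $V^I$ are the vertices of the corresponding facet $F_I$. Dually, $\De_{-K_X}=\De^\circ$ has as vertices the columns of the fan matrix $V^\circ$ of $X^\circ$, and these correspond bijectively to the facets of $\De$. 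This already yields the stated count $m^\circ=|\Si^\circ(1)|=\#\{\text{vertices of }\De^\circ\}=\#\{\text{facets of }\De\}=|\Si(n)|=|\I_\Si(n)|$.

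Next I would fix $I\in\I_\Si(n)$ and let $F_I$ be the associated facet. Reflexivity provides a unique lattice normal $u_I\in M$ with $\langle u_I,x\rangle=-1$ on $F_I$ and $\langle u_I,x\rangle\ge -1$ on all of $\De$, the sign coming from $-K_X=\sum_\rho D_\rho$ and $\De_{-K_X}=\{m\in M_\R\mid\langle m,v_\rho\rangle\ge -1\}$; this $u_I$ is exactly the vertex $\v^\circ_I$ of $\De^\circ$ dual to $F_I$, i.e. the $I$-th column of $V^\circ$. To compute it, note that since $\langle V^I\rangle$ is $n$-dimensional its columns span $N_\R$, so $\rk(V^I)=n$ and one may select an $n\times n$ non-singular submatrix $A$ of $V^I$. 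Each column $v$ of $A$ is a vertex of $F_I$, hence satisfies $\langle u_I,v\rangle=-1$; in matrix form $A^{T}u_I=-\1_n$, so that $u_I=-(A^{T})^{-1}\1_n=-A^{*}\1_n=-(V^I)^{*'}\cdot\1_n$, using $A^{*}=(A^{T})^{-1}$ for square non-singular $A$.

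Finally I would check independence of the auxiliary choice and reassemble. Every column of $V^I$ is a vertex of the \emph{same} facet $F_I$ and therefore lies on the hyperplane $\langle u_I,\cdot\rangle=-1$, so the solution of $A^{T}u_I=-\1_n$ does not depend on which non-singular $n$-subset $A$ of the columns of $V^I$ is chosen, and the formula $\v^\circ_I=-(V^I)^{*'}\cdot\1_n$ is well posed. Letting $I$ run over $\I_\Si(n)$ and recording the columns in this order produces $V^\circ$ up to the column permutation that matches the intrinsic ordering of the vertices of $\De^\circ$ with the indexing by $\I_\Si(n)$. I expect the only delicate points to be the two facts that both rest on the Gorenstein (reflexivity) hypothesis: that $u_I$ is \emph{integral}, so that $V^\circ\in\Z^{n,m^\circ}$ as claimed, and that all primitive generators collected in $V^I$ sit exactly at lattice distance $1$ from the origin along $F_I$ — it is this that makes the non-simplicial case go through and renders the choice of $A$ irrelevant.
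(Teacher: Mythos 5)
Your proposal is correct and takes essentially the same approach as the paper's proof: each column $\v^\circ_I$ is identified as the unique solution of the Cramer system $H^T\cdot\x=-\1_n$ for a non-singular $n\times n$ submatrix $H$ of $V^I$, with independence of the choice of $H$ following because all columns of $V^I$ lie on the same supporting hyperplane of $\conv(V)$ at level $-1$. You merely spell out the reflexivity facts (facets of $\conv(V)$ corresponding to maximal cones, integrality of the dual vertices) that the paper leaves implicit.
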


  \begin{lemma}\label{lem:quozientepolare}
    Let $W^\circ$ be a fan matrix of $Y^\circ(\Th^\circ)$. Then $V^\circ$ divides $W^\circ$ and the quotient matrix is given by the transposed matrix $B^T\in\GL_n(\Q)\cap\Z^{n,n}$, that is
    \begin{equation*}
      W^\circ = B^T \cdot V^\circ
    \end{equation*}
    up to a permutation on columns.
  \end{lemma}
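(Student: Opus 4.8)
The plan is to derive the divisibility directly from the explicit column description of polar fan matrices supplied by Lemma~\ref{lem:polar}, applied simultaneously to $X$ and to its universal 1-covering $Y$, and then to propagate the quotient relation $V=B\cdot W$ through the transverse operation.

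First I would record that Lemma~\ref{lem:polar} applies to $Y$ as well. Since $X$ is Fano, its weight matrix $Q$ is a Fano (hence Gorenstein) weight matrix, so by Corollary~\ref{cor:Univ1covGorestein} the universal 1-covering $Y$ is Gorenstein, and, being simply connected in codimension 1 with the same weight data, it carries the well-defined polar $Y^\circ$ of the present \S. Crucially, by the construction recalled in Remark~\ref{rem:Univ-costruzione} the maximal cones of $\Th$ are indexed by the \emph{same} set $\I_\Si(n)=\I_\Th(n)$ as those of $\Si$. Hence Lemma~\ref{lem:polar}, applied to both varieties, yields, up to the same column ordering,
$$\v^\circ_I=-(V^I)^{*'}\cdot\1_n,\qquad \w^\circ_I=-(W^I)^{*'}\cdot\1_n,\qquad I\in\I_\Si(n),$$
where $\v^\circ_I,\w^\circ_I$ are the $I$-th columns of $V^\circ,W^\circ$ and $(V^I)^{*'},(W^I)^{*'}$ are transverses of $n\times n$ nonsingular submatrices of $V^I,W^I$.

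Next I would fix, for each $I$, a set $J$ of $n$ column indices inside the complement of $I$ for which $\widetilde{V^I}:=(V^I)_J$ is nonsingular, and then use $V=B\cdot W$. Since left multiplication by $B$ does not move columns, $V^I=B\cdot W^I$, so $\widetilde{V^I}=B\cdot\widetilde{W^I}$ with $\widetilde{W^I}:=(W^I)_J$; as $\det B\neq0$, the block $\widetilde{W^I}$ is nonsingular exactly when $\widetilde{V^I}$ is, so the same index set $J$ serves both sides. Recalling that for a square nonsingular matrix the transverse is $A^{*}=(A^T)^{-1}$, I compute
$$\v^\circ_I=-(\widetilde{V^I}{}^{T})^{-1}\1_n=-\big((B\,\widetilde{W^I})^{T}\big)^{-1}\1_n=-(B^T)^{-1}(\widetilde{W^I}{}^{T})^{-1}\1_n=(B^T)^{-1}\,\w^\circ_I.$$
Equivalently $\w^\circ_I=B^T\,\v^\circ_I$ for every $I\in\I_\Si(n)$, which, collecting the columns in the common order, is precisely $W^\circ=B^T\cdot V^\circ$ up to a permutation on columns.

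I expect the main obstacle to be bookkeeping rather than conceptual: one must invoke the well-definedness clause of Lemma~\ref{lem:polar} (independence of the chosen $n\times n$ nonsingular block, reflecting flatness of the facets of the reflexive polytope $\conv(V)$) in order to pick the block index set $J$ compatibly for $V^I$ and $W^I$, and one must verify that the integral matrix $B^T\in\GL_n(\Q)\cap\Z^{n,n}$ indeed carries the primitive generator $\v^\circ_I$ to the primitive generator $\w^\circ_I$; this last point is automatic once the two columns are identified through the transverse computation, since both are columns of genuine fan matrices by Lemma~\ref{lem:polar}. A secondary check is the alignment of the two column orderings, which is guaranteed by the common index set $\I_\Si(n)=\I_\Th(n)$.
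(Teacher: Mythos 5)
Your proposal is correct and follows essentially the same route as the paper: both use $V^I=B\cdot W^I$, pass to a common nonsingular $n\times n$ block (nonsingularity being preserved in both directions since $\det B\neq 0$), and push $B$ through the transverse via $(B\,K)^*=B^*\cdot K^*$ to get $\w^\circ_I=B^T\cdot\v^\circ_I$ columnwise. The extra remarks on the shared index set $\I_\Si(n)$ and the independence of the chosen block are exactly the well-definedness points already settled in Lemma~\ref{lem:polar} and Remark~\ref{rem:Univ-costruzione}, so nothing further is needed.
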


  \begin{proof}[Proof of Lemma \ref{lem:polar}]
    If $X$ is $\Q$-factorial, that is $\Si$ is a simplicial fan, then $V^I$ is a $n\times n$ integer matrix, for every $I\in\I_\Si(n)$. In this case, $(V^I)^{*'}$ is defined as the transverse matrix $(V^I)^*$ of $V^I$, that is
    \begin{equation*}
      (V^I)^{*'}:=(V^I)^*=((V^I)^T)^{-1}
    \end{equation*}
    Notice that the column vector $\v^\circ_I:=-(V^I)^{*}\cdot\1_n$ is the unique solution of the Cramer linear system $(V^I)^T\cdot \x=-\1_n$ so giving the $I$-th vertex of the polar polytope $\conv(V^\circ)=\De_{-K_X}$.
    \noindent When $X$ is not $\Q$-factorial, there must exist $I\in \I_\Si(n)$ such that $|I|<r$ and $V^I$ turns out to admit a column number greater than $n$. Anyway, for any choice of a non-singular $n\times n$ submatrix $H$ of $V^I$, the resulting Cramer linear system $H^T\cdot\x=-\1_n$ always admits the same unique solution $\v^\circ_I:=-H^{*}\cdot\1_n$, which does not depend on the choice of $H$ in $V^I$, as it always gives the same $I$-th vertex of $\conv(V^\circ)$.
  \end{proof}

  \begin{proof}[Proof of Lemma \ref{lem:quozientepolare}] Let $B$ the quotient matrix of $V$, that is
  \begin{equation*}
    V=B\cdot W
  \end{equation*}
  Then, for every $I\in\I_\Si(n)$, one has $V^I=B\cdot W^I$. In particular, if $K$ is any non-singular $n\times n$ submatrix of $W^I$ then $H=B\cdot K$ is a non-singular $n\times n$ submatrix of $V^I$. This is enough to guarantee that
  \begin{equation*}
    \v^\circ_I=-H^*\cdot\1_n= - B^*\cdot K^*\cdot \1_n \ \Longrightarrow\ \w^\circ_I=-K^*\cdot \1_n= - B^T\cdot H^*\cdot\1_n=B^T\cdot\v^\circ_I
  \end{equation*}
  \end{proof}

  \begin{proof}[Proof of Theorem \ref{thm:1-coveringpolare}] Recalling that $N$ is the dual lattice of $M$, the quotient matrix $B^T$ turns out to represent a lattice map $\b^T:M\to M$ giving rise to a map of fans $\b^T:\Si^\circ\to\Th^\circ$ and then to the map of Fano toric varieties $\pi^T:X^\circ\twoheadrightarrow Y^\circ$. The following commutative diagram on torus invariant Weil divisors is the dual one w.r.t. diagram (\ref{div-diagram-covering}),
  \begin{equation}\label{dual-div-diagram-covering}
      \xymatrix{&&&0\ar[d]&\\
& 0 \ar[d] & 0 \ar[d] & \ker{\pi^T}^* \ar[d] & \\
0 \ar[r] & N \ar[r]^-{div_{Y^\circ}}_-{(W^\circ)^T}\ar[d]^-{\b}_-{B} &
\mathcal{W}_T (Y^\circ)\cong\Z^{|\Th^\circ(1)|} \ar[r]^-{d_{Y^\circ}}_-{Q^\circ\oplus \Ga_1^\circ}\ar[d]_-{\mathbf{I}_{m^\circ}} & \Cl(Y^\circ) \ar[r]\ar[d]^-{{\pi^T}^*} & 0 \\
0 \ar[r] & N \ar[r]^-{div_{X^\circ}}_-{(V^\circ)^T}\ar[d]&\mathcal{W}_T(X^\circ)\cong\Z^{|\Si^\circ(1)|}
\ar[r]^-{d_{X^\circ}}_{Q^\circ\oplus\Ga_2^\circ}\ar[d] & \Cl (X^\circ) \ar[r]\ar[d] & 0 \\
 & \coker(\b)\cong\ker {\pi^T}^*\ar[d] & 0 & 0 & \\
 &0&&&}
\end{equation}
Then the statement follows from the Cox's presentation of a non-degenerate toric variety.
  \end{proof}

\begin{corollary}\label{cor:dualGale}
  $\G(V^\circ)=Q^\circ=\G(W^\circ)$ that is, as for $X$ and $Y$, also their polar Fano varieties admit the same weight matrix.
\end{corollary}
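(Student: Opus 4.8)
The plan is to reduce Corollary~\ref{cor:dualGale} to the single structural fact already exploited on the direct side, namely that a Gale dual is insensitive to left multiplication by a rationally invertible integer matrix. Concretely, by definition $Q^\circ=\G(V^\circ)$, so the only assertion left to prove is $\G(W^\circ)=\G(V^\circ)$ up to $\GL$-equivalence. I would obtain this directly from Lemma~\ref{lem:quozientepolare}, which gives $W^\circ=B^T\cdot V^\circ$ with $B^T\in\GL_n(\Q)\cap\Z^{n,n}$.

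First I would isolate the invariance principle underlying the equality $\G(V)=\G(W)=Q$ recalled in \S~\ref{sssez:divisione}. For a full-rank integer matrix $A\in\Z^{n,m}$, the row lattice $\mathcal{L}_r(\G(A))$ is precisely the saturation in $\Z^m$ of the kernel $\{\x\in\Z^m : A\cdot\x=\0\}$, since a Gale dual is characterized by $A\cdot\G(A)^T=\0$ together with the primitivity (no-cotorsion) conditions. Now if $A=C\cdot A'$ with $C\in\GL_n(\Q)$, then $A\cdot\x=\0$ is equivalent to $A'\cdot\x=\0$, so $A$ and $A'$ share the same rational kernel and hence the same saturated integer kernel; consequently $\G(A)$ and $\G(A')$ have the same row lattice and are $\GL$-equivalent. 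This is exactly the mechanism that forces $\G(V)=\G(W)$ once $V=B\cdot W$ in \S~\ref{sssez:divisione}.

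Then I would apply this principle verbatim to the polar fan matrices. By Lemma~\ref{lem:quozientepolare} one has $W^\circ=B^T\cdot V^\circ$, and $B^T$, being the transpose of an element of $\GL_n(\Q)\cap\Z^{n,n}$, is again an integer matrix invertible over $\Q$. The invariance principle then yields $\G(W^\circ)=\G(V^\circ)$ up to $\GL$-equivalence, and since $Q^\circ:=\G(V^\circ)$ by definition, this gives $Q^\circ=\G(W^\circ)=\G(V^\circ)$, as claimed.

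I do not expect a genuine obstacle here: the entire content is already packaged in Lemma~\ref{lem:quozientepolare} together with the elementary invariance of Gale duality under left multiplication by a $\Q$-invertible integer matrix. The only point demanding a little care is verifying that the primitivity and saturation conditions defining a Gale dual are preserved under passing from $V^\circ$ to $W^\circ=B^T\cdot V^\circ$; but these conditions depend only on the saturated integer kernel, which is left unchanged by $B^T$, so the check is routine and parallels the one already implicit for $V$ and $W$.
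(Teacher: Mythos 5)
Your proof is correct and follows essentially the same route as the paper: the paper presents the corollary as an immediate consequence of Lemma~\ref{lem:quozientepolare} (i.e.\ $W^\circ=B^T\cdot V^\circ$) via the dual diagram (\ref{dual-div-diagram-covering}), in which the free part of both class morphisms $d_{Y^\circ}$ and $d_{X^\circ}$ is represented by the same matrix $Q^\circ$. You merely make explicit the underlying mechanism --- invariance of the Gale dual under left multiplication by a matrix in $\GL_n(\Q)\cap\Z^{n,n}$, since it depends only on the saturated integer kernel --- which is exactly the fact the paper uses implicitly, both here and for $\G(V)=\G(W)=Q$ in \S~\ref{sssez:divisione}.
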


\subsection{The universal 1-covering of the polar variety}\label{ssez:1-cov_polar} As above, let $X=X(\Si)$ be a Fano toric variety and $\pi:Y(\Th)\to X(\Si)$ its universal 1-covering, represented by the quotient matrix $B$ such that $V=B\cdot W$. Let $X^\circ(\Si^\circ)$ be the polar Fano toric variety of $X$ and $Q^\circ=\G(V^\circ)$ its weight matrix. By applying the procedure described in Remark~\ref{rem:Univ-costruzione} to $X^\circ$ one obtains the universal 1-covering
\begin{equation}\label{polar-univ-1cov}
  \pi^\circ: Z^\circ\twoheadrightarrow X^\circ
\end{equation}
Namely, define $\L^\circ=\G(Q^\circ)\in\Z^{n,m^\circ}$ and construct the fan $\Xi^\circ$ of faces of maximal cones
\begin{equation*}
  \Xi^\circ(n):=\left\{\langle (\L^\circ)^I\rangle\,|\, I\in\I_{\Si^\circ}(n)\right\}
\end{equation*}
Following \S~\ref{sssez:divisione}, let $C\in\GL_n(\Q)\cap\Z^{n,n}$ be the quotient matrix of $V^\circ$, that is \begin{equation*}
  V^\circ = C\cdot \L^\circ
\end{equation*}
$C$ represents the natural map of fans $\g:\Xi^\circ\to \Si^\circ$, giving rise to the universal 1-covering in (\ref{polar-univ-1cov}).

\begin{theorem}\label{thm:fattorizzazione}
  Let $Z:=(Z^\circ)^\circ$ be the polar Fano toric variety of the universal 1-covering $Z^\circ$ of $X^\circ$, and $\L:=(\L^\circ)^\circ$ be its fan matrix. Then $A:=C^T\cdot B$ is the quotient matrix of $\L$ and, dually, the transposed matrix $A^T= B^T\cdot C$ is the quotient matrix of $W^\circ$, that is
  \begin{equation*}
    \L= A\cdot W\quad\text{and}\quad W^\circ=A^T\cdot \L^\circ
  \end{equation*}
  In particular, calling $\Xi:=(\Xi^\circ)^\circ$ the fan of $Z$, matrices $A$ and $A^T$ represent maps of fans $\a:\Th\to\Xi$ and $\a^T:\Xi^\circ\to\Th^\circ$, respectively, inducing the universal 1-coverings $p: Y\twoheadrightarrow Z$ and $p^T: Z^\circ\to Y^\circ$, respectively, and the following dual factorizations of 1-coverings
  \begin{equation}\label{fattorizzazioni}
    \xymatrix{Y\ar[dd]_-{p}\ar[dr]^-\pi&&&&Z^\circ\ar[dd]_-{p^T}\ar[dr]^-{\pi^\circ}& \\
                &X\ar[dl]^-{\pi^{\circ T}}&\ar@{<~>}[r]^{\circ}&&&X^\circ\ar[dl]^-{\pi^T}\\
                Z&&&&Y^\circ&}
  \end{equation}
  Moreover,
  \begin{equation*}
    Z\cong Y/\coker(\a^T)\quad\text{and}\quad Y^\circ\cong Z^\circ/\coker(\a)
  \end{equation*}
  being
  \begin{eqnarray*}
  % \nonumber to remove numbering (before each equation)
    \coker(\a^T) &\cong& \Tors(\Cl(Z))\cong\pi_1^{\text{ét}}(Z)^{(1)} \\
    \coker(\a) &\cong& \Tors(\Cl(Y^\circ))\cong\pi_1^{\text{ét}}(Y^\circ)^{(1)}
  \end{eqnarray*}
\end{theorem}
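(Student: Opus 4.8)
The plan is to derive every asserted identity from the two polar lemmas already established, Lemmas~\ref{lem:polar} and~\ref{lem:quozientepolare}, together with the involutivity of Batyrev's polar duality, and then to realize the two vertical 1-coverings $p$ and $p^T$ as \emph{compositions} of 1-coverings already at our disposal, thereby avoiding any direct check of the combinatorial compatibility between the fans $\Th$ and $\Xi$.

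First I would settle the matrix identities. The morphism $\pi^\circ:Z^\circ\twoheadrightarrow X^\circ$ is, by construction, the universal 1-covering of the Fano toric variety $X^\circ$, with quotient matrix $C$, i.e. $V^\circ=C\cdot\L^\circ$. Hence Lemma~\ref{lem:quozientepolare} applies verbatim to the pair $(X^\circ,Z^\circ)$ in place of $(X,Y)$, the roles of $V,W,B$ being played by $V^\circ,\L^\circ,C$. Passing to polars, and using $(V^\circ)^\circ=V$, $(\L^\circ)^\circ=\L$ (the involutivity of polar duality, consistent with the explicit formula of Lemma~\ref{lem:polar}, since for a Fano variety the columns of the fan matrix are exactly the vertices of the reflexive anticanonical polytope), the lemma yields $\L=C^T\cdot V$. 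Substituting $V=B\cdot W$ gives
\begin{equation*}
  \L=C^T\cdot B\cdot W=A\cdot W,\qquad A:=C^T\cdot B .
\end{equation*}
Dually, Lemma~\ref{lem:quozientepolare} for the original pair gives $W^\circ=B^T\cdot V^\circ$, and substituting $V^\circ=C\cdot\L^\circ$ produces $W^\circ=B^T\cdot C\cdot\L^\circ=A^T\cdot\L^\circ$, where indeed $A^T=(C^TB)^T=B^TC$. Since $A$ is a product of integral square matrices, $A\in\Z^{n,n}$, and $\det A=\det C\cdot\det B\neq0$, so $A\in\GL_n(\Q)\cap\Z^{n,n}$; thus $A$ and $A^T$ are genuine quotient matrices of $\L$ and of $W^\circ$ in the sense of \S~\ref{sssez:divisione}.

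Next I would realize the coverings geometrically and obtain the factorizations~(\ref{fattorizzazioni}). Applying Theorem~\ref{thm:1-coveringpolare} to $\pi^\circ:Z^\circ\twoheadrightarrow X^\circ$ produces its polar $\pi^{\circ T}:X\twoheadrightarrow Z$, a 1-covering with quotient matrix $C^T$ and map of fans $\Si\to\Xi$; likewise, the same theorem applied to $\pi$ produces $\pi^T:X^\circ\twoheadrightarrow Y^\circ$, with quotient matrix $B^T$ and map of fans $\Si^\circ\to\Th^\circ$. Composing maps of fans I then set
\begin{equation*}
  p:=\pi^{\circ T}\circ\pi:Y\twoheadrightarrow Z,\qquad
  p^T:=\pi^T\circ\pi^\circ:Z^\circ\twoheadrightarrow Y^\circ,
\end{equation*}
whose associated lattice maps $\a$ and $\a^T$ are represented precisely by $A=C^TB$ and $A^T=B^TC$ found above; this is exactly the content of the two dual triangles in~(\ref{fattorizzazioni}). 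Being finite morphisms that are \'etale in codimension $1$, the compositions $p$ and $p^T$ are again 1-coverings.

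Finally I would identify the acting groups. As universal 1-coverings, both $Y$ and $Z^\circ$ are simply connected in codimension $1$, $\pi_1^{\text{ét}}(Y)^{(1)}=\pi_1^{\text{ét}}(Z^\circ)^{(1)}=0$; hence $p:Y\twoheadrightarrow Z$ and $p^T:Z^\circ\twoheadrightarrow Y^\circ$ are themselves the universal 1-coverings of $Z$ and $Y^\circ$, respectively. Transcribing diagram~(\ref{div-diagram-covering}) with $p$ in place of $\pi$, the Snake Lemma gives $Z\cong Y/\coker(\a^T)$ with $\coker(\a^T)\cong\Tors(\Cl(Z))\cong\pi_1^{\text{ét}}(Z)^{(1)}$; transcribing the dual diagram~(\ref{dual-div-diagram-covering}) with $p^T$ in place of $\pi^T$ gives $Y^\circ\cong Z^\circ/\coker(\a)$ with $\coker(\a)\cong\Tors(\Cl(Y^\circ))\cong\pi_1^{\text{ét}}(Y^\circ)^{(1)}$, the swap between $\coker(\a^T)$ on the direct side and $\coker(\a)$ on the polar side being the one already visible between~(\ref{div-diagram-covering}) and~(\ref{dual-div-diagram-covering}). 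The main obstacle I anticipate is not algebraic---the identities are forced by Lemma~\ref{lem:quozientepolare} and involutivity---but rather the verification that $p$ and $p^T$ are genuinely \emph{universal} 1-coverings: one must check that a composition of 1-coverings is again a 1-covering and that simple connectedness in codimension $1$ of the top varieties $Y,Z^\circ$ promotes $p,p^T$ to the universal 1-coverings of $Z,Y^\circ$, which is where the theory of \S~\ref{sssez:U1covering} (and \cite{R-CovMDS}) does the decisive work.
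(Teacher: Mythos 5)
Your proposal is correct and follows essentially the same route as the paper, which disposes of this theorem in one line by observing that the products $A=C^T\cdot B$ and $A^T=B^T\cdot C$ induce the factorizations $p=\pi^{\circ T}\circ\pi$ and $p^T=\pi^T\circ\pi^\circ$ via Lemma~\ref{lem:quozientepolare} and Theorem~\ref{thm:1-coveringpolare} applied to both pairs $(X,Y)$ and $(X^\circ,Z^\circ)$. Your write-up simply makes explicit the details (involutivity of polar duality, composition of 1-coverings, and the simple connectedness in codimension 1 of $Y$ and $Z^\circ$ forcing $p$ and $p^T$ to be universal) that the paper leaves implicit as ``an immediate consequence of the previous constructions.''
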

The proof is an immediate consequence of the previous constructions. In particular, matrix products  $A:=C^T\cdot B$ and  $A^T=B^T\cdot C$ induce factorizations $p=\pi^{\circ T}\circ\pi$ and $p^T=\pi^T\circ\pi^\circ$, respectively, between associated maps of fans and toric varieties.

\subsection{The weight group $G_Q$ and its order}\label{ssez:GQ} Let $Q\in\Z^{r,m}$ be a Fano weight matrix (recall Definition~\ref{def:Fano-Gorenstein_Q}) and consider the lattice morphism $\a:N\to N$, represented by the quotient matrix of $\L=\G(Q^\circ)^\circ$, that is $A\in\GL_n(\Q)\cap\Z^{n,n}$ such that $\L=A\cdot \G(Q)$. The finite abelian group $$G_Q:=\coker(\a^T)$$
is called the \emph{weight group of Q}. The order of this group
\begin{equation*}
  g_Q:=|G_Q|
\end{equation*}
is called the \emph{weight order of $Q$}.

Analogously, the weight group of the polar weight matrix  $Q^\circ\in\Z^{r^\circ,m^\circ}$, which is clearly still Fano by reflexivity, is defined as
\begin{equation*}
  G_{Q^\circ}:=\coker(\a)
\end{equation*}

\begin{proposition}\label{prop:isoGQ}
  Let $Q$ be a Fano weight matrix. Then there exists an isomorphism  $\overline{T}:G_{Q^\circ}\cong G_Q$
\end{proposition}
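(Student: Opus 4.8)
The plan is to reduce the statement to the elementary fact that a nonsingular square integer matrix and its transpose have isomorphic cokernels. By the definitions in \S~\ref{ssez:GQ} both weight groups are built from the single matrix $A\in\GL_n(\Q)\cap\Z^{n,n}$ representing $\a:N\to N$: one has $G_Q=\coker(\a^T)\cong\coker(A^T)$ and $G_{Q^\circ}=\coker(\a)\cong\coker(A)$, where $\coker(A)$ (resp. $\coker(A^T)$) denotes the cokernel of the lattice map $\Z^n\xrightarrow{A}\Z^n$ (resp. $\Z^n\xrightarrow{A^T}\Z^n$). That the second definition is indeed the construction of \S~\ref{ssez:GQ} applied directly to $Q^\circ$ follows from reflexivity $Q^{\circ\circ}\cong Q$ together with the relation $W^\circ=A^T\cdot\L^\circ$ recorded in Theorem~\ref{thm:fattorizzazione}, which makes $A^T$ the quotient matrix relevant to the polar side. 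Thus it suffices to produce an isomorphism $\coker(A)\cong\coker(A^T)$.

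For the existence of $\overline{T}$ I would invoke the Smith Normal Form. Writing $A=U\,D\,V$ with $U,V\in\GL_n(\Z)$ and $D=\diag(d_1,\ldots,d_n)$, $d_1\mid\cdots\mid d_n$ and every $d_i\neq 0$ (as $\det A\neq 0$ since $A\in\GL_n(\Q)$), the unimodular factors are lattice automorphisms, so $\coker(A)\cong\coker(D)\cong\bigoplus_{i=1}^n\Z/d_i\Z$. Transposing gives $A^T=V^T\,D\,U^T$ with $V^T,U^T\in\GL_n(\Z)$ and $D^T=D$; hence $D$ is also a Smith Normal Form of $A^T$, and the same computation yields $\coker(A^T)\cong\bigoplus_{i=1}^n\Z/d_i\Z$. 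Composing the two identifications produces the sought isomorphism $\overline{T}:G_{Q^\circ}\cong G_Q$.

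To justify the transpose notation with a more intrinsic map I would instead exhibit the perfect duality pairing
\[
  \coker(\a^T)\times\coker(\a)\longrightarrow\Q/\Z,\qquad \bigl([\chi],[\lambda]\bigr)\longmapsto \langle\chi,\a^{-1}(\lambda)\rangle\ \bmod\Z,
\]
where $\langle\cdot,\cdot\rangle:M\times N\to\Z$ is the natural pairing extended $\Q$-linearly and $\a^{-1}$ is taken over $\Q$. Using the adjunction $\langle\a^T(\psi),\nu\rangle=\langle\psi,\a(\nu)\rangle$ one checks that changing the representatives $\chi$ or $\lambda$ alters the value only by an integer, so the pairing is well defined; its non-degeneracy again follows at once from the Smith Normal Form. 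This exhibits $G_{Q^\circ}\cong\Hom(G_Q,\Q/\Z)$, and composing with the self-duality of the finite abelian group $G_Q$ recovers $\overline{T}$.

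I do not expect a genuine obstacle here: the only content is the bookkeeping that correctly identifies both weight groups with $\coker(A)$ and $\coker(A^T)$ for one and the same matrix $A$, after which the result is the standard transpose-invariance of the Smith Normal Form. The single point deserving care is that the Smith Normal Form route yields an isomorphism depending on the chosen unimodular factors and is therefore not canonical; if a natural realization of $\overline{T}$ is wanted, the duality pairing above is the preferable description.
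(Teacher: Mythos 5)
Your proposal is correct and follows essentially the same route as the paper: the paper's proof also rests on the observation that the Smith normal form $\D$ of $A$ is symmetric, hence is simultaneously the Smith normal form of $A^T$, and then uses the Elementary Divisor Theorem to produce bases of $N$ and $M$ adapted to $\im\a$ and $\im\a^T$, which is precisely what your composition of the two identifications with $\bigoplus_i\Z/d_i\Z$ amounts to. Your closing duality pairing $G_{Q^\circ}\cong\Hom(G_Q,\Q/\Z)$ is a nice intrinsic supplement not present in the paper, but it is not needed for the statement as written.
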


\begin{proof}
 $N$ and $M=\Hom(N,\Z)$ are dual lattices, hence isomorphic. The point is, finding a isomorphism $T:N\stackrel{\cong}{\to}M$ restricting to an isomorphism $\im\a\cong\im\a^T$, so that it can pass to an isomorphism of quotients $\overline{T}:\coker\a\cong\coker\a^T$.

 As above, let $A$ and $A^T$ in $\GL_n(\Q)\cap\Z^{n,n}$ be representative matrices of $\a\in\End(N)$ and $\a^T\in\End(M)$, respectively, with respect to the choice of a basis for $N$ and its dual basis for $M$. Then, recalling notation introduced in \S~\ref{ssez:Not-matrici},
 \begin{equation*}
   \im\a=\mathcal{L}_c(A)\le\Z^n\cong N\quad\text{and}\quad \im \a^T=\mathcal{L}_c\left(A^T\right)=\mathcal{L}_r(A)\le\Z^n\cong M
 \end{equation*}
 Let $\D=\diag(c_1,\ldots,c_n)$ be the Smith normal form of $A$, that is, there exists unimodular matrices $P,Q\in\GL_n(\Z)$ such that $P\cdot A\cdot Q =\D$. In particular, $\D$ is also the Smith normal form of $A^T$, as
\begin{equation*}
  P\cdot A\cdot Q =\D\quad\Longleftrightarrow\quad Q^T\cdot A^T\cdot P^T=\D^T=\D
\end{equation*}
Then
\begin{equation*}
  A\cdot Q=P^{-1}\cdot\D\quad\text{and}\quad A^T\cdot P^T = Q^*\D
\end{equation*}
where $Q^*=(Q^T)^{-1}$ is the transverse matrix of $Q$. Accordingly with the Elementary Divisor Theorem, the columns of $P^{-1}$ and of $Q^*$ give bases $\mathcal{P}=\{\pp_1,\ldots,\pp_n\}$ of $\Z^n\cong N$ and $\mathcal{Q}=\{\q_1,\ldots,\q_n\}$ of $\Z^n\cong M$, respectively, such that $\{c_1\pp_1,\ldots,c_n\pp_n \}$ is a basis of $\im\a$ and $\{c_1\q_1,\ldots,c_n\q_n\}$ is a basis of $\im\a^T$: for the details the interested reader is referred e.g. to Theorems~1.15 and 1.17 in \cite{RT-LA&GD} and discussion therein. Then isomorphisms $T$ and $\overline{T}$ can be defined by setting:
\begin{equation*}
  \forall\,i=1,\ldots, n,\quad T(\pp_i):=\q_i\quad,\quad\forall\,x\in N\quad\overline{T}(x+\im\a):=T(x)+\im\a^T
\end{equation*}
\end{proof}

\subsection{Splitting exact sequences on the weight group}\label{ssez:splittingGQ} Assume the same hypotheses of Theorem~\ref{thm:fattorizzazione} and consider the dual factorizations given in (\ref{fattorizzazioni}). They are induced by the following underlying factorizations of lattice endomorphisms
\begin{equation}\label{fattoreticoli}
    \xymatrix{M\ar[dd]_-{\a^T}\ar[dr]^-\g&&&&N\ar[dd]_-\a\ar[dr]^-\b& \\
                &M\ar[dl]^-{\b^T}&\ar@{<-|}[r]^{T}&&&N\ar[dl]^-{\g^T}\\
                M&&&&N&}
  \end{equation}
Then we have inclusions $i:\im\a^T\hookrightarrow\im\b^T$ and $j:\im\a\hookrightarrow\im\g^T$ inducing surjections
\begin{equation*}
  \xymatrix{\coker\a^T\ar@{->>}[r]^-{\overline{i}}&\coker\b^T}\quad\text{and}\quad
  \xymatrix{\coker\a\ar@{->>}[r]^-{\overline{j}}&\coker\g^T}
\end{equation*}
Recall our notation
 \begin{equation*}
   G_Q=\coker\a^T\,,\ G=\coker\b^T\,,\ G_{Q^\circ}=\coker\a\stackrel{\overline{T}}{\cong} G_Q
 \end{equation*}
and define
\begin{equation*}
  G^\circ:=\coker\g^T
\end{equation*}
so that $X^\circ$ is the geometric quotient $X^\circ\cong Z^\circ/G^\circ$\,.

\begin{proposition}\label{prop:splittingGQ}
  There are the following dual diagrams of splitting exact sequences
  \begin{equation*}
    \xymatrix{&&0\ar[d]&&\\
              &&\overline{T}(\ker \overline{j})\ar[d]&&\\
    0\ar[r]&\ker \overline{i}\ar[r]&G_Q\ar[d]^-{\overline{j}'}\ar[r]^-{\overline{i}}&G\ar[ul]_-\cong\ar[r]&0\\
            &&\overline{T}(G^\circ)\ar[d]\ar[ul]^-\cong&&\\
            &&0&&}
  \end{equation*}
  where $\overline{j}'=\overline{T}\circ\overline{j}\circ\overline{T}^{-1}$, and
  \begin{equation*}
    \xymatrix{&&0\ar[d]&&\\
              &&\overline{T}^{-1}(\ker \overline{i})\ar[d]&&\\
    0\ar[r]&\ker \overline{j}\ar[r]&G_{Q^\circ}\ar[d]^-{\overline{i}'}\ar[r]^-{\overline{j}}&G^\circ\ar[r]\ar[ul]_-\cong&0\\
            &&\overline{T}^{-1}(G)\ar[d]\ar[ul]^-\cong&&\\
            &&0&&}
  \end{equation*}
  where $\overline{i}'=\overline{T}^{-1}\circ\overline{i}\circ\overline{T}$. Therefore, one can write
  \begin{equation*}
    G_Q\cong G^\circ\oplus G\cong G_{Q^\circ}
  \end{equation*}
\end{proposition}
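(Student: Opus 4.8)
The plan is to realise $G_Q$ as an internal direct sum of two subgroups coming from the two factorizations recorded in diagram (\ref{fattoreticoli}), one isomorphic to $G^\circ$ and the other to $G$, and then to invoke Proposition~\ref{prop:isoGQ} for the second isomorphism. First I would extract the two short exact sequences. From $\a^T=\b^T\circ\g$ on $M$ one has $\im\a^T=\b^T(\im\g)\subseteq\im\b^T$; as $\b,\g$ are injective lattice endomorphisms (their matrices lie in $\GL_n(\Q)\cap\Z^{n,n}$), this inclusion induces the surjection $\overline{i}\colon G_Q=\coker\a^T\twoheadrightarrow\coker\b^T=G$ with
\[
\ker\overline{i}=\im\b^T/\im\a^T\cong M/\im\g=\coker\g\cong\coker\g^T=G^\circ,
\]
the last isomorphism being the Smith normal form argument already used for Proposition~\ref{prop:isoGQ}. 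Symmetrically, $\a=\g^T\circ\b$ on $N$ gives $\overline{j}\colon G_{Q^\circ}=\coker\a\twoheadrightarrow\coker\g^T=G^\circ$ with $\ker\overline{j}\cong\coker\b\cong G$, and transporting this sequence through the isomorphism $\overline{T}\colon G_{Q^\circ}\xrightarrow{\sim}G_Q$ yields $\overline{j}'=\overline{T}\circ\overline{j}\circ\overline{T}^{-1}\colon G_Q\twoheadrightarrow\overline{T}(G^\circ)$ with kernel $\overline{T}(\ker\overline{j})=T(\im\g^T)/\im\a^T$.

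Next I would reduce everything to a single transversality of lattices in $M$. Writing $K_1:=\ker\overline{i}=\im\b^T/\im\a^T$ and $K_2:=\ker\overline{j}'=T(\im\g^T)/\im\a^T$, the splitting $G_Q\cong G^\circ\oplus G$ is equivalent to the internal direct sum $G_Q=K_1\oplus K_2$; and since $|K_1|=|G^\circ|$, $|K_2|=|G|$ and $g_Q=|\det A|=|\det C||\det B|=|G^\circ||G|$, it suffices to prove $K_1\cap K_2=0$, that is
\[
\im\b^T\cap T(\im\g^T)=\im\a^T .
\]
One inclusion is automatic: $\im\a^T=T(\im\a)=T(\g^T\b(N))\subseteq T(\im\g^T)$ and $\im\a^T=\b^T\g(M)\subseteq\im\b^T$. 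By the index identity $[M:L_1\cap L_2]\,[M:L_1+L_2]=[M:L_1]\,[M:L_2]$ with $L_1=\im\b^T$, $L_2=T(\im\g^T)$, the reverse inclusion is equivalent to $\im\b^T+T(\im\g^T)=M$, so I only need one of these two equalities.

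The hard part is exactly this transversality, and I do \emph{not} expect it to follow from the formal data: taking $B=C=\diag(2,1)$ gives $A=\diag(4,1)$, a legitimate solution of $A=C^T B$ admitting a valid $T$ (the identity) for which $\im\b^T+T(\im\g^T)\neq M$ and the direct sum fails. Hence the equality must be fed by the polar geometry, not merely by $A=C^T B$. To establish it I would return to the explicit matrices in play, namely $V=B\,W$, $\L=A\,W=C^T V$, $V^\circ=C\,\L^\circ$ and $W^\circ=B^T V^\circ=A^T\L^\circ$, together with the facet presentation of $V^\circ$ from Lemma~\ref{lem:polar}; using the elementary divisor bases $\mathcal{P},\mathcal{Q}$ of Proposition~\ref{prop:isoGQ} that put $\a$ and $\a^T$ in Smith normal form, I would show that these polar relations force $\im\b^T$ and $T(\im\g^T)$ to meet in $\im\a^T$. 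Equivalently, in the covering language of Theorem~\ref{thm:fattorizzazione}, the two intermediate $1$-coverings of $Z$ corresponding to $K_1$ and $K_2$ are independent, their compositum being the universal $1$-covering $Y$, which is the geometric meaning of $K_1\cap K_2=0$.

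Once the transversality is in hand the conclusion is immediate: $G_Q=K_1\oplus K_2\cong G^\circ\oplus G$, and the diagonal arrows $\overline{T}(G^\circ)\xrightarrow{\sim}K_1$ and $G\xrightarrow{\sim}K_2$ of the first diagram are the induced identifications (each being the restriction of $\overline{j}'$, respectively $\overline{i}$, to a complement, which is injective by $K_1\cap K_2=0$ and hence an isomorphism by the order count above). The second diagram follows verbatim by interchanging the roles of $\b$ and $\g$ (equivalently of $X$ and $X^\circ$), giving $G_{Q^\circ}\cong G\oplus G^\circ$; combined with $\overline{T}\colon G_{Q^\circ}\cong G_Q$ this yields $G_Q\cong G^\circ\oplus G\cong G_{Q^\circ}$, as claimed.
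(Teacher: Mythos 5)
Your proposal is not a complete proof. The setup is sound and matches the paper's: you extract the two short exact sequences from the factorizations $\a^T=\b^T\circ\g$ and $\a=\g^T\circ\b$, identify $\ker \overline{i}=\im\b^T/\im\a^T\cong M/\im\g\cong G^\circ$ (this is precisely the paper's diagonal isomorphism, induced by $\b^T$), and correctly reduce the splitting to the single transversality $\im\b^T\cap T(\im\g^T)=\im\a^T$, equivalently $\im\b^T+T(\im\g^T)=M$, via the order count $|\det A|=|\det B|\,|\det C|$ and the index identity for sublattices. But at the decisive moment you stop: the paragraph ``To establish it I would return to the explicit matrices in play \dots I would show that these polar relations force \dots'' is a plan, not an argument. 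Since you yourself produce formal data ($B=C=\diag(2,1)$, $A=\diag(4,1)$, $T=\id$) satisfying every identity used up to that point and for which the transversality — and hence the splitting — fails, the omitted step is not a routine verification; it is the entire content of the proposition. As written, the proof is incomplete.

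That said, your diagnosis of where the difficulty sits is sharper than the paper's own treatment, and you should compare carefully. The paper's proof constructs the explicit map $f:M/\im\g\to\im\b^T/\im\a^T$, $\m+\im\g\mapsto\b^T(\m)+\im\a^T$, checks it is a well-defined bijection (surjectivity by construction, injectivity from $\a^T=\b^T\circ\g$ and $\det B\neq 0$), and then asserts that exhibiting the diagonal isomorphisms suffices for the sequences to split. Observe that this same $f$ is still an isomorphism in your $\diag(2,1)$ example, where $G_Q\cong\Z/4\Z$ does not split; what splitting actually requires is that the diagonal isomorphism be compatible with the projections $\overline{i},\overline{j}'$ — which is exactly your transversality restated — and neither your sketch nor the paper's concluding sentence verifies this. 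Note also that the identification $\overline{T}(G^\circ)=M/\im\g$ presupposes $T(\im\g^T)=\im\g$, a compatibility of $T$ with $\g$ that is not guaranteed by the Smith-normal-form construction of $T$ in Proposition~\ref{prop:isoGQ} and must be arranged. So your reduction and your counterexample are genuinely valuable observations, but to turn the proposal into a proof you must actually supply the geometric or matrix-level input (from $V=B\cdot W$, $V^\circ=C\cdot\L^\circ$ and the polar relations) forcing $\im\b^T+T(\im\g^T)=M$, which you have only promised.
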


\begin{proof}
  Vertical and horizontal short sequences are clearly exact. Exhibiting the diagonal isomorphism suffices to show they are all splitting short exact sequences.

  Let us start by showing that $\overline{T}(G^\circ)\cong\ker\overline{i}$. By construction
  \begin{equation*}
    G^\circ=N/\im\g^T\ \Longrightarrow\ \overline{T}(G^\circ)=M/\im\g
  \end{equation*}
  where $\overline{T}$ denotes the isomorphism induced by the isomorphism $N\stackrel{T}{\cong} M$ given in Proposition~\ref{prop:isoGQ}. On the other hand
  \begin{equation*}
    \ker\overline{i}={\im\b^T\over\im\a^T}\subset{M\over\im\a^T}=G_Q
  \end{equation*}
  Then define
  \begin{equation*}
    \xymatrix{f:\displaystyle\frac{M}{\im\g}\ni \m+\im\g\ar@{|->}[r]&\b^T(\m)+\im\a^T\in\displaystyle{\im\b^T\over\im\a^T}}
  \end{equation*}
  $f$ is surjective by definition. Moreover,
  \begin{equation*}
    \ker f=\{\m+\im\g\,|\,\b^T(\m)\in\im\a^T\}
  \end{equation*}
  and, by the first factorization in (\ref{fattorizzazioni}),
  \begin{equation*}
    \exists\,\m'\in M:\b^T(\m)=\a^T(\m')=\b^T\circ\g(\m')\ \Longleftrightarrow\ \m-\g(\m')\in\ker\b^T
  \end{equation*}
  But $\b^T$ is injective, as $\det B\neq 0$. So that $\m=\g(\m')\in\im\g$ and $\ker f=\im\g$ is trivial. Therefore $f$ gives an isomorphism $f: \overline{T}(G^\circ)\cong\ker\overline{i}$\,.

  Composing with  $\overline{T}$, one gets an isomorphism
  \begin{equation*}
    \xymatrix{\overline{T}^{-1}\circ f\circ\overline{T}:G^\circ\ar[r]^-\cong&\ker\overline{i}'=\overline{T}^{-1}(\ker\overline{i})}
  \end{equation*}
  The same argument gives also the two remaining diagonal isomorphisms, just replacing $G^\circ$ with $G$, $\overline{i}$ with $\overline{j}$ and $\overline{T}$ with the inverse of following isomorphism induced by $T$
  \begin{equation*}
    {N\over \im\b} \cong {M\over \im\b^T}=G
  \end{equation*}\end{proof}

\subsection{Bounding the multiplicity of a Fano toric variety}
We are now in a position to propose a first bound on the multiplicity of a Fano toric variety. The following is a first generalization of the Conrads' result Theorem~\ref{thm:Conrads} for Gorenstein (hence Fano) fake wps, that is the case $r=1$.

\begin{corollary}\label{cor:Conrads}
Let $X$ be a Fano toric variety and $\pi:Y\twoheadrightarrow X$ its universal 1-covering. Then $X\cong Y/G$ with $G\leq G_Q$, being $Q$ a weight matrix of $X$ (and $Y$). In particular,
\begin{equation*}
  \mult X\,|\,g_Q
\end{equation*}
and
\begin{eqnarray*}
% \nonumber to remove numbering (before each equation)
  \mult X=1 &\Longleftrightarrow& X\cong Y \\
  \mult X=g_Q &\Longleftrightarrow& X\cong Z
\end{eqnarray*}
where $Z=(Z^\circ)^\circ$ is the polar Fano toric variety of the universal 1-covering $Z^\circ$ of the polar Fano toric variety $X^\circ$.
\end{corollary}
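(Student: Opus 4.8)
The plan is to deduce the corollary directly from the factorization of 1-coverings in Theorem~\ref{thm:fattorizzazione} and the splitting of Proposition~\ref{prop:splittingGQ}, exploiting multiplicativity of degrees of finite morphisms. The two quotient presentations I would lean on are already available: by \S~\ref{sssez:U1covering} and (\ref{molteplicità}) one has $X\cong Y/G$ with $G=\coker(\b^T)$ and $\mult X=|G|=\deg\pi=|\det B|$; dually, Theorem~\ref{thm:fattorizzazione} gives $Z\cong Y/G_Q$ with $G_Q=\coker(\a^T)$, whence $g_Q=|G_Q|=\deg p=|\det A|$, where $p=\pi^{\circ T}\circ\pi$ is the composite $Y\xrightarrow{\pi}X\xrightarrow{\pi^{\circ T}}Z$.

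First I would establish the inclusion $G\le G_Q$. Reading off the left triangle of the lattice factorization (\ref{fattoreticoli}) one has $\a^T=\b^T\circ\g$, so $\im\a^T\subseteq\im\b^T$ and there is a natural surjection $\overline{i}\colon G_Q=M/\im\a^T\twoheadrightarrow M/\im\b^T=G$. By Proposition~\ref{prop:splittingGQ} this surjection splits, giving $G_Q\cong G^\circ\oplus G$; in particular $G$ is realized as a direct summand, hence a subgroup, of $G_Q$, which is exactly the asserted $G\le G_Q$. The divisibility $\mult X=|G|$ dividing $g_Q=|G_Q|$ then follows by Lagrange, or equivalently by multiplicativity of degrees applied to $p=\pi^{\circ T}\circ\pi$, which records $g_Q=\deg(\pi^{\circ T})\cdot\mult X$.

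The two extremal equivalences should then fall out of the same relation. For $\mult X=1$: the condition forces $|\det B|=1$, so $B\in\GL_n(\Z)$ and $V=B\cdot W$ exhibits the fan matrices of $X$ and $Y$ as $\GL$-equivalent; since both fans are built over the common index set $\I_\Si(n)$, the lattice automorphism $B$ carries $\Th$ onto $\Si$ and induces a toric isomorphism $X\cong Y$, while the converse makes $\pi$ birational and finite onto a normal variety, hence $\deg\pi=1$. For $\mult X=g_Q$: from $g_Q=\deg(\pi^{\circ T})\cdot\mult X$ the equality is equivalent to $\deg(\pi^{\circ T})=1$, that is to $\pi^{\circ T}\colon X\to Z$ being a degree-one finite birational morphism onto the normal variety $Z$, hence an isomorphism $X\cong Z$; the converse is immediate.

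I expect the one genuinely delicate point to be the passage from \emph{degree one} to a \emph{true} isomorphism (rather than a mere isomorphism in codimension $1$) in both extremal cases. This is where I would invoke explicitly the normality of toric varieties together with Zariski's main theorem, supplemented by the $\GL$-equivalence of fan matrices to pin down the isomorphism at the level of fans. Once that is granted, every remaining step is a formal consequence of the already-established factorization (\ref{fattorizzazioni}) and the splitting $G_Q\cong G^\circ\oplus G$.
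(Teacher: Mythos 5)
Your proof is correct, but it reaches the divisibility by a different route than the paper. The paper's own justification (the remark immediately following the corollary) is purely arithmetic: it records $\mult X=|\det B|$ and $g_Q=|\det A|$ and invokes the factorization $A=C^T\cdot B$ from Theorem~\ref{thm:fattorizzazione}, with $C$ an \emph{integer} matrix, so that $\det B\mid\det A$; the extremal equivalences are then read off from $\deg(\pi^{\circ T})=|\det C|$ and $\deg\pi=|\det B|$, essentially as you do. You instead derive $\mult X\mid g_Q$ from the splitting $G_Q\cong G^\circ\oplus G$ of Proposition~\ref{prop:splittingGQ} together with Lagrange. This is heavier machinery for the bare divisibility, but it is the right tool for the literal assertion $G\le G_Q$ --- which the determinant argument does not address, since a priori $G=\coker(\b^T)$ is a \emph{quotient} of $G_Q=\coker(\a^T)$ rather than a subgroup, and only becomes a subgroup once it is identified with the direct summand $\overline{T}(\ker\overline{j})$ --- and it matches how the corollary is later exploited in Theorem~\ref{thm:classificazione}. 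Two minor points: your concern about upgrading ``degree one'' to a genuine isomorphism is correctly resolved by unimodularity of $B$ (resp.\ $C$), which gives a lattice automorphism carrying one fan onto the other because both fans are generated over the common index set $\I_\Si(n)$; and for the converse of $\mult X=1\Leftrightarrow X\cong Y$ the cleaner argument is not that ``$\pi$ becomes birational'' but that $X\cong Y$ forces $\Tors(\Cl(X))\cong\Tors(\Cl(Y))=0$, hence $G=0$ by (\ref{molteplicità}).
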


\begin{remark}
  Corollary~\ref{cor:Conrads} follows immediately by \S~\ref{sssez:U1covering} and the Definition~\ref{def:mult} of multiplicity. In fact, if $V$ and $W$ are fan matrices of $X$ and $Y$ respectively, then
  \begin{equation*}
    \mult X=|\det B|\quad\text{and}\quad g_Q=|\det A|
  \end{equation*}
  being $B$ the quotient matrix of $V$ and $A$ the quotient matrix of a fan matrix $\L$ of the polar Fano toric variety $Z$ of the universal 1-covering $Z^\circ$ of the polar toric variety $X^\circ$ of $X$. By Theorem~\ref{thm:fattorizzazione}, there exists a unique $C\in\GL_n(\Q)\cap \Z^{n,n}$, which is the quotient matrix of $V^\circ$, such that $A=C^T\cdot B$, so proving that $\det B\,|\,\det A$\,.
\end{remark}

\subsection{Weight modulus and degree of a Fano toric variety} Let $Q\in \Z^{r,m}$ be a reduced $W$-matrix with $m>r\ge 1$, and $W=\G(Q)\in\Z^{n,m}$ be a Gale dual matrix of $Q$ and consider a simplicial fan $\widehat{\Th}\in\SF(W)$: notice that $\widehat{\Th}$ exists by Remark~\ref{rem:fan-over}. Set
\begin{equation}\label{I}
  \widehat{\I}:=\I_{\widehat{\Th}}(n)=\left\{I\subset\{1,\ldots,m\}\,|\,\langle W^I\rangle\in\widehat{\Th}(n)\right\}
\end{equation}
and define
\begin{equation}\label{Q-modulo}
  |Q|:=\sum_{I\in\widehat{\I}} |\det Q_I|
\end{equation}
which is called \emph{the modulus of $Q$}\,.

\begin{proposition}
  The definition (\ref{Q-modulo}) of the modulus $|Q|$ is well-posed, that is it does not depend on the choice of $\widehat{\Th}\in\SF(W)$.
  \end{proposition}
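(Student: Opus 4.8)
The plan is to reduce the statement to a single, manifestly triangulation-independent quantity --- the normalized lattice volume of the polytope $\conv(W)$ --- and to do so by transporting the computation from the weight matrix $Q$ to its Gale dual $W=\G(Q)$, where the summands acquire a volume meaning.

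First I would record the Gale-duality identity relating complementary maximal minors: for every $I$ with $\langle W^I\rangle\in\widehat{\Th}(n)$ (so that $|I|=r$ and $|I^c|=n$) one has
\[
|\det Q_I| = e\cdot|\det W^I|,
\]
for one global factor $e$ \emph{independent of }$I$ (the complementary-minor relation for reduced Gale-dual pairs; cf. the normalization in \cite{RT-LA&GD}). Consequently $|Q| = e\sum_{I\in\widehat{\I}}|\det W^I|$, and each term $|\det W^I|$ is exactly the normalized volume $\Vol_\Z\bigl(\conv(\0,\{\w_i : i\in I^c\})\bigr)$ of the lattice simplex spanned by $\0$ together with the primitive generators of the maximal cone $\langle W^I\rangle$.

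Next I would identify this sum with $\Vol_\Z(\conv(W))$. Since $\widehat{\Th}$ is a complete simplicial fan over $W$, its maximal cones $\{\langle W^I\rangle : I\in\widehat{\I}\}$ tile $\langle W\rangle=\R^n$, so the closed simplices $\conv(\0,\{\w_i\})$ have pairwise disjoint interiors, while their outer facets $\conv(\{\w_i : i\in I^c\})$ triangulate $\partial\conv(W)$; hence these simplices triangulate $\conv(W)$ itself. This yields $\sum_{I\in\widehat{\I}}|\det W^I| = \Vol_\Z(\conv(W))$, a quantity attached to $W=\G(Q)$ alone, with no reference to $\widehat{\Th}$. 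As $W$ is determined by $Q$ up to $\GL$-equivalence, this shows $|Q|$ is independent of the chosen $\widehat{\Th}\in\SF(W)$.

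The main obstacle is precisely this triangulation-independence step: one must guarantee that replacing $\widehat{\Th}$ by another $\widehat{\Th}'\in\SF(W)$ leaves the total unchanged, i.e.\ that the collection of simplices really triangulates the \emph{same} region for every fan. I would handle it through the small $\Q$-factorial modification picture recalled in the preliminaries (see \cite[\S~15.3]{CLS} and \cite[Cor.~2.4]{Hu-Keel}): any two fans in $\SF(W)$ are joined by a finite chain of wall-crossings which keep the ray set $\{\langle\w_i\rangle\}$ fixed, and across a single wall the two local triangulations subdivide one and the same polytopal piece, so their two volume contributions agree by additivity of normalized volume. Thus $\sum_{I\in\widehat{\I}}|\det Q_I|$ is a flip-invariant, equal to $e\,\Vol_\Z(\conv(W))$ on every geometric chamber, which is exactly the asserted well-posedness.
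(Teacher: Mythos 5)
Your first two steps are the same as the paper's own argument: the complementary\-/minor identity $|\det Q_I|=|\det W^I|$ (with $e=1$, since $W=\G(Q)$ is a $CF$-matrix, so no global factor appears) followed by reading each term as the normalized volume of the simplex $\conv(\0,\{\w_i\}_{i\in I^c})$. The genuine problem sits exactly where you place ``the main obstacle'', and neither of your two proposed resolutions closes it. The union $\bigcup_{I\in\widehat{\I}}\conv(\0,\{\w_i\}_{i\in I^c})$ is the sublevel set $\{x\,:\,\phi_{\widehat{\Th}}(x)\le 1\}$ of the $\widehat{\Th}$-piecewise-linear function taking the value $1$ at every $\w_i$; it is contained in $\conv(W)$ but equals it only when the generators of each maximal cone of $\widehat{\Th}$ lie on a common supporting hyperplane of $\conv(W)$, i.e.\ when $\widehat{\Th}$ refines the face fan of $\conv(W)$. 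So the outer facets $\conv(\{\w_i\}_{i\in I^c})$ triangulate the boundary of this fan-dependent star-shaped region, not $\partial\conv(W)$. For the same reason, across a wall crossing the two local triangulations of the cone over the underlying circuit do \emph{not} in general subdivide ``one and the same polytopal piece'': they cover two different star-shaped subsets of that cone, of different volumes, unless the circuit's generators are affinely coplanar. Additivity of volume therefore does not apply, and the sum is not a flip invariant.

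Concretely, take $n=3$ and $\w_1=(1,0,1)$, $\w_2=(0,1,1)$, $\w_3=(-1,0,1)$, $\w_4=(0,-1,2)$, $\w_5=(0,0,-1)$; this $W$ is a reduced $CF$-matrix, so $Q=\G(W)$ is a reduced $W$-matrix, and $3(\w_1+\w_3)=2(\w_2+\w_4)$ is a circuit. The four upper vectors span a strongly convex quadrilateral cone $\s$ with $-\w_5\in\Int(\s)$, and $\SF(W)$ contains the two complete simplicial fans obtained from $\{\s\}\cup\{\langle F,\w_5\rangle : F\ \text{a facet of}\ \s\}$ by splitting $\s$ along either diagonal. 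The four lower cones contribute $1+1+1+1=4$ in both cases, while the two splittings of $\s$ contribute $|\det(\w_1\w_2\w_3)|+|\det(\w_1\w_3\w_4)|=2+2=4$ and $|\det(\w_1\w_2\w_4)|+|\det(\w_2\w_3\w_4)|=3+3=6$ respectively, giving totals $8$ and $10$; only the second fan (the face fan of $\conv(W)$) returns $3!\Vol(\conv(W))=10$. So the well-posedness claim needs an extra hypothesis on $\widehat{\Th}$ (e.g.\ that it be a crepant refinement of the face fan of $\conv(W)$, as is the case for the fans actually used in the Fano applications). To be fair, the paper's own proof rests on the identical identification $|\det W^I|=n!\Vol(\langle W^I\rangle\cap\conv(W))$ and is open to the same objection, so this is not a defect peculiar to your write-up --- but the wall-crossing argument you add does not repair it.
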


  \begin{proof}
    Since $W=\G(Q)$ is a $CF$-matrix, that is the column span $\Ls_c(W)$ does not have co-torsion in $\Z^n$, then \cite[Cor.~3.3]{RT-LA&GD} gives that
    \begin{equation*}
      \forall\,I\in\widehat{\I}\quad |\det Q_I|=|\det W^I|
    \end{equation*}
    Therefore
    \begin{equation*}
    \sum_{I\in\widehat{\I}} |\det Q_I|=\sum_{I\in\widehat{\I}}|\det W^I|=\sum_{I\in\widehat{\I}} n!\Vol\left(\langle W^I\rangle\cap\conv(W)\right)=n!\Vol\left(\conv(W)\right)
  \end{equation*}
  meaning that
  \begin{equation}\label{Q-mod&vol}
    |Q|=n!\Vol\left(\conv(W)\right)=n!\Vol\left(\conv\left(\G(Q)\right)\right)
  \end{equation}
  independently on the choice of $\widehat{\Th}\in\SF(W)$.
  \end{proof}

  \begin{remark}
    Notice that one can take relation (\ref{Q-mod&vol}) as the definition of the modulus $|Q|$, without any choice of fan. Anyway, definition (\ref{Q-modulo}) clarifies more explicitly its meaning. In particular, if the rank $r=1$ then $Q=(q_0,q_1,\ldots,q_n)$ is a reduced and well-formed weight system of $W=\P(q_0,q_1,\ldots,q_n)$ and (\ref{Q-modulo}) gives that
    \begin{equation*}
      |Q|=\sum_{i=0}^n q_i
    \end{equation*}
  \end{remark}

  \begin{remark}
    Let $X$ be a $n$-dimensional Fano toric variety. The self-intersection of the anti-canonical divisor is usually called the \emph{degree of $X$},  that is
     \begin{equation*}
       \deg X:= (-K_X)^n
     \end{equation*}
      It is also well known (see the Corollary at page 111 in Fulton's book \cite{Fulton}) that the self-intersection of a divisor $D$ can be computed by means of the volume of the associated polytope $\De_D$, so that $D^n$ is also called, the \emph{integral volume} of $\De_D$. In particular, this fact applies to computing the degree of $X$, namely
    \begin{equation}\label{autointersezione}
      \deg X=(-K_X)^n= n! \Vol\left(\De_{-K_X}\right)
    \end{equation}
    Let $V$ be a fan matrix of $X$. Then, recalling (\ref{polarità}),  $\D_{-K_X}=\conv(V^\circ)$, so that
    \begin{equation}\label{autointersezione2}
      (-K_X)^n=n! \Vol\left(\conv(V^\circ)\right)
    \end{equation}
  \end{remark}

  We can now state and prove the following bounds relating multiplicity, weight order and degree of a Fano toric variety, its polar Fano and their universal 1-coverings.

  \begin{theorem}\label{thm:Fano-bounds}
    Let $X$ be a Fano toric variety, $V$ its fan matrix and $Q=\G(V)$ its weight matrix. Let $\pi:Y\twoheadrightarrow X$ be the universal 1-covering of $X$ and $\pi^\circ:Z^\circ\twoheadrightarrow X^\circ$ be the universal 1-covering of the polar Fano toric variety $X^\circ$. Recall notation and factorizations introduced in Theorem~\ref{thm:fattorizzazione} and let $Q^\circ=\G(V^\circ)$ be the weight matrix of the polar variety $X^\circ$. Then:
    \begin{enumerate}
      \item $\mult X=|\det B|= \displaystyle{(-K_{X^\circ})^n\over |Q|}=\displaystyle{\deg X^\circ\over |Q|}$
      \item $\mult(X^\circ)=|\det C|= \displaystyle{(-K_X)^n\over |Q^\circ|}=\displaystyle{\deg X\over |Q^\circ|}$
      \item $g_Q=|\det A|=\mult X\cdot\mult(X^\circ)= \displaystyle{(-K_Y)^n\over |Q^\circ|}=\displaystyle{(-K_{Z^\circ})^n\over |Q|}$
    \end{enumerate}
  \end{theorem}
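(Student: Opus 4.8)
The plan is to convert each of the three identities into a statement about normalized lattice volumes of convex hulls of fan matrices, and then to read off the stated ratios from the single elementary fact that an integer linear endomorphism rescales volume by the absolute value of its determinant. Concretely, I would first assemble a \emph{volume dictionary}. Combining Fulton's formula (\ref{autointersezione}) with the polarity relations (\ref{polarità}), every degree occurring in the statement becomes $n!$ times the volume of the convex hull of the appropriate fan matrix: $(-K_X)^n=n!\Vol(\conv(V^\circ))$, $(-K_{X^\circ})^n=n!\Vol(\conv(V))$, $(-K_Y)^n=n!\Vol(\conv(W^\circ))$, and, applying (\ref{polarità}) to the pair $Z^\circ,Z=(Z^\circ)^\circ$ with fan matrices $\L^\circ,\L$, also $(-K_{Z^\circ})^n=n!\Vol(\conv(\L))$. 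Dually, formula (\ref{Q-mod&vol}) gives $|Q|=n!\Vol(\conv(W))$ and $|Q^\circ|=n!\Vol(\conv(\L^\circ))$, since $W=\G(Q)$ and $\L^\circ=\G(Q^\circ)$.

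Next I would record the scaling principle: for any $L\in\GL_n(\Q)\cap\Z^{n,n}$ and any matrix $M$ with $n$ rows, each column of $L\cdot M$ is the image under $L$ of the corresponding column of $M$, so that $\conv(L\cdot M)=L(\conv(M))$ and hence $\Vol(\conv(L\cdot M))=|\det L|\,\Vol(\conv(M))$. Note that the volumes are insensitive to the $\GL_n(\Z)$-ambiguity in the choice of fan matrices, since competing representatives differ by a factor of determinant $\pm1$; this is what makes all the quantities above well defined.

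With these two ingredients the three items reduce to the matrix factorizations already established. From \S~\ref{sssez:divisione} and \S~\ref{ssez:1-cov_polar} we have $V=B\cdot W$ and $V^\circ=C\cdot\L^\circ$, while Theorem~\ref{thm:fattorizzazione} supplies $\L=A\cdot W$, $W^\circ=A^T\cdot\L^\circ$ and $A=C^T\cdot B$. For item (1), the scaling principle applied to $V=B\cdot W$ yields $\Vol(\conv(V))=|\det B|\,\Vol(\conv(W))$, whence $(-K_{X^\circ})^n/|Q|=\Vol(\conv(V))/\Vol(\conv(W))=|\det B|=\mult X$. Item (2) is identical with $V^\circ=C\cdot\L^\circ$ in place of $V=B\cdot W$, giving $(-K_X)^n/|Q^\circ|=|\det C|=\mult(X^\circ)$. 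For item (3), the definition of $G_Q$ as $\coker\a^T$ with $\a^T$ represented by $A^T$ gives $g_Q=|\det A|$, and $A=C^T\cdot B$ yields $g_Q=|\det C|\,|\det B|=\mult X\cdot\mult(X^\circ)$; the two volume expressions then follow by applying the scaling principle to $W^\circ=A^T\cdot\L^\circ$ and to $\L=A\cdot W$ respectively.

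The computations are mechanical once the dictionary is in place; the part demanding genuine care is bookkeeping the polarity correspondence, and in particular verifying that it is $\conv(\L)$, the convex hull of the fan matrix of $Z=(Z^\circ)^\circ$, that realizes $\De_{-K_{Z^\circ}}$, so that the degree $(-K_{Z^\circ})^n$ is correctly paired with the modulus $|Q|=n!\Vol(\conv(W))$ through the factorization $\L=A\cdot W$. One must likewise keep straight which fan matrix is Gale dual to which weight matrix ($W=\G(Q)$ versus $\L^\circ=\G(Q^\circ)$), so that each modulus is attached to the correct convex hull; once this matching is fixed, every ratio collapses to a single determinant and the three identities read off simultaneously.
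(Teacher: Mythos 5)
Your proposal is correct, and it reaches the three identities by a slightly different computational route than the paper. Both arguments share the same skeleton: the first equalities in each item come from $\mult X=|\det B|$, $\mult(X^\circ)=|\det C|$, $g_Q=|\det A|$ together with the factorization $A=C^T\cdot B$ from Theorem~\ref{thm:fattorizzazione}, and the volume dictionary $(-K_{X^\circ})^n=n!\Vol(\conv(V))$, $|Q|=n!\Vol(\conv(W))$, etc. Where you diverge is in how the ratio of volumes is evaluated. The paper triangulates $\conv(V)$ via a simplicial fan, writes $n!\Vol(\conv(V))=\sum_{I\in\widehat{\I}}|\det V^I|$, and invokes the Gale-duality minor identity $|\det V^I|=[\Z^n:\Ls_c(V)]\,|\det Q_I|$ (citing \cite[Cor.~3.3]{RT-LA&GD}) termwise, identifying $\mult X$ with the lattice index $[\Z^n:\Ls_c(V)]$. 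You instead apply the global scaling $\Vol(\conv(B\cdot W))=|\det B|\,\Vol(\conv(W))$ coming from the quotient-matrix factorization $V=B\cdot W$, which sidesteps both the triangulation and the cited minor identity; the same device, applied to $V^\circ=C\cdot\L^\circ$, $W^\circ=A^T\cdot\L^\circ$ and $\L=A\cdot W$, also replaces the paper's detour through $|G_Q|=\mult(Z)$, $|G_{Q^\circ}|=\mult(Y^\circ)$ and Proposition~\ref{prop:isoGQ} in item (3). Your route is more elementary and self-contained (only linearity of convex hulls and Jacobian scaling of volume are needed, once $\mult X=|\det B|$ from (\ref{molteplicità}) is granted), while the paper's version makes the link between the multiplicity and the lattice index $[\Z^n:\Ls_c(V)]$ explicit, which it reuses elsewhere. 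The bookkeeping you flag as the delicate point --- matching $(-K_{Z^\circ})^n=n!\Vol(\conv(\L))$ with $|Q|=n!\Vol(\conv(W))$ through $\L=A\cdot W$, and $W=\G(Q)$ versus $\L^\circ=\G(Q^\circ)$ --- is handled correctly.
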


  As a consequence, the previous Corollary~\ref{cor:Conrads} can be improved as follows

  \begin{corollary}\label{cor:bounds}
    Same notation as above, then:
    \begin{enumerate}
      \item $\mult X\,\left|,\displaystyle{(-K_Y)^n\over |Q^\circ|}\right.=\displaystyle{(-K_{Z^\circ})^n\over |Q|}$ and equality holds if and only if $X\cong Z$,
      \item $\mult X^\circ\,\left|\,\displaystyle{(-K_Y)^n\over |Q^\circ|}\right.=\displaystyle{(-K_{Z^\circ})^n\over |Q|}$ and equality holds if and only if $X^\circ\cong Y^\circ$,
      \item $|Q^\circ|\quad |\quad (-K_X)^n\quad |\quad (-K_Y)^n=g_Q\cdot |Q^\circ|\ $ and
      \begin{eqnarray*}
        (-K_X)^n = |Q^\circ| &\Longleftrightarrow& X\cong Z\\
        (-K_X)^n = g_Q\cdot|Q^\circ| &\Longleftrightarrow& X\cong Y
      \end{eqnarray*}
      \item $|Q|\quad|\quad(-K_{X^\circ})^n\quad|\quad(-K_{Z^\circ})^n=g_Q\cdot |Q|\ $ and
      \begin{eqnarray*}
        (-K_{X^\circ})^n = |Q| &\Longleftrightarrow& X^\circ\cong Y^\circ\\
        (-K_{X^\circ})^n = g_Q\cdot|Q| &\Longleftrightarrow& X^\circ\cong Z^\circ
      \end{eqnarray*}
      \item $(-K_X)^n = |Q^\circ|\ \Longleftrightarrow\ (-K_{X^\circ})^n = g_Q\cdot|Q|$
      \item $(-K_X)^n = g_Q\cdot|Q^\circ|\ \Longleftrightarrow\ (-K_{X^\circ})^n = |Q|$
    \end{enumerate}
  \end{corollary}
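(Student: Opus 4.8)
The plan is to reduce the whole statement to the product identities furnished by Theorem~\ref{thm:Fano-bounds}, combined with the equivalences of Corollary~\ref{cor:Conrads}, the latter invoked both for $X$ and for its polar partner $X^\circ$. First I would clear the denominators in Theorem~\ref{thm:Fano-bounds} and record the five resulting identities $g_Q=\mult X\cdot\mult(X^\circ)$, $(-K_X)^n=\mult(X^\circ)\cdot|Q^\circ|$, $(-K_{X^\circ})^n=\mult X\cdot|Q|$, $(-K_Y)^n=g_Q\cdot|Q^\circ|$ and $(-K_{Z^\circ})^n=g_Q\cdot|Q|$. Since every multiplicity and $g_Q$ is a positive integer, all the divisibility assertions then drop out at once: for (1) one has $\mult X\mid\mult X\cdot\mult(X^\circ)=g_Q=(-K_Y)^n/|Q^\circ|=(-K_{Z^\circ})^n/|Q|$, and symmetrically for (2); the two chains in (3) and (4) follow by factoring $(-K_Y)^n=\mult X\cdot(-K_X)^n$ and $(-K_{Z^\circ})^n=\mult(X^\circ)\cdot(-K_{X^\circ})^n$, together with $|Q^\circ|\mid(-K_X)^n$ and $|Q|\mid(-K_{X^\circ})^n$ read directly from the second and third identities.

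For the equality criteria I would apply Corollary~\ref{cor:Conrads} in two guises. Applied to $X$ it gives $\mult X=1\Leftrightarrow X\cong Y$ and $\mult X=g_Q\Leftrightarrow X\cong Z$. Applied to $X^\circ$, whose polar is $X$ with universal $1$-covering $Y$, the object playing the role of ``$Z$'' is $Y^\circ$; together with the identification $g_{Q^\circ}=g_Q$ from Proposition~\ref{prop:isoGQ}, this yields $\mult(X^\circ)=1\Leftrightarrow X^\circ\cong Z^\circ$ and $\mult(X^\circ)=g_Q\Leftrightarrow X^\circ\cong Y^\circ$. The last ingredient is that Batyrev's polar duality is an involution compatible with isomorphisms of Fano toric varieties, so that $X\cong Y\Leftrightarrow X^\circ\cong Y^\circ$ and $X\cong Z\Leftrightarrow X^\circ\cong Z^\circ$.

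With these tools each equality is translated into a statement about a single multiplicity and then read off. For (1) equality means $\mult X=g_Q$, i.e. $X\cong Z$; for (2) it means $\mult(X^\circ)=g_Q$, i.e. $X^\circ\cong Y^\circ$. In (3), the relation $(-K_X)^n=|Q^\circ|$ says $\mult(X^\circ)=1$, equivalently $X^\circ\cong Z^\circ$, hence $X\cong Z$ by involutivity, while $(-K_X)^n=g_Q\cdot|Q^\circ|$ says $\mult(X^\circ)=g_Q$, equivalently $X^\circ\cong Y^\circ$, hence $X\cong Y$; item (4) is the mirror image, using $(-K_{X^\circ})^n=\mult X\cdot|Q|$. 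Finally (5) and (6) follow by combining the criteria just established: both $(-K_X)^n=|Q^\circ|$ and $(-K_{X^\circ})^n=g_Q\cdot|Q|$ amount to $X\cong Z$, and both $(-K_X)^n=g_Q\cdot|Q^\circ|$ and $(-K_{X^\circ})^n=|Q|$ amount to $X\cong Y$.

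The computations are entirely formal. The only point requiring genuine care, and where I expect the sole (minor) obstacle to lie, is the bookkeeping of which polar object realises the role of ``$Y$'' and which that of ``$Z$'' when Corollary~\ref{cor:Conrads} is read for $X^\circ$ in place of $X$, together with the systematic invocation of $g_{Q^\circ}=g_Q$ so that both multiplicities are measured against the same weight order.
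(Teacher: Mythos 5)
Your proposal is correct and follows essentially the same route as the paper, which likewise reduces everything to the product identities of Theorem~\ref{thm:Fano-bounds} and the equivalences of Corollary~\ref{cor:Conrads}; in fact your write-up is more complete than the paper's own terse proof, which leaves the equality criteria implicit. The bookkeeping you flag (that for $X^\circ$ the universal $1$-covering is $Z^\circ$ and the role of ``$Z$'' is played by $Y^\circ$, with $g_{Q^\circ}=g_Q$) is handled exactly as you describe.
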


  \begin{remark}
    Notice that item (1) in Corollary~\ref{cor:bounds} generalizes the Conrads' result Theorem~\ref{thm:Conrads}: in fact, for $r=1$, one has $Q^\circ=Q$ \cite[Lemma~5.3]{Conrads}. Then, for $r=1$ one obtains:
    \begin{equation*}
      \mult X\ \left|\ {(-K_Y)^n\over \sum_{i=0}^n q_i}\right.
    \end{equation*}
    where $Y=\P(q_0,\ldots,q_n)$. In this case, recalling that $(-K_Y)^n=|Q|^n/\prod_{i=0}^n q_i$, item (3) in Theorem~\ref{thm:Fano-bounds} gives the following explicit computation of the weight order
    \begin{equation*}
      g_Q={|Q|^{n-1}\over \prod_{i=0}^n q_i}={\left(\sum_{i=0}^n q_i\right)^{n-1}\over \prod_{i=0}^n q_i}
    \end{equation*}
  \end{remark}

  \begin{proof}[Proof of Thm.~\ref{thm:Fano-bounds}] Item (1). The first equality follows from (\ref{molteplicità}). Moreover, by \cite[Thm.~2.4, Prop.~3.1]{RT-LA&GD} one has
  \begin{equation}\label{multX}
    \mult X=|\Tors(\Cl(X))|=[\Z^n:\Ls_c(V)]
  \end{equation}
  On the other hand, by applying (\ref{autointersezione2}) to the polar Fano toric variety $X^\circ$ one gets
  \begin{equation}\label{autointersezione duale}
    (-K_{X^\circ})^n=n! \Vol\left(\conv(V)\right)
  \end{equation}
  Recalling the definition of $\widehat{\I}$ given in (\ref{I}), write
  \begin{equation*}
    n!\Vol(\conv(V))=\sum_{I\in\widehat{\I}}|\det V^I|
  \end{equation*}
  Then, \cite[Cor.~3.3]{RT-LA&GD} gives that
  \begin{equation*}
    \forall\,I\in\widehat{\I}\quad |\det V^I|=[\Z^n:\Ls_c(V)]|\det Q_I|
  \end{equation*}
  so that
  \begin{equation}\label{VolV}
    n!\Vol(\conv(V))=[\Z^n:\Ls_c(V)]\sum_{I\in\widehat{\I}}|\det Q_I|=[\Z^n:\Ls_c(V)]\,|Q|
  \end{equation}
  where the last passage comes from (\ref{Q-modulo}). Therefore, (\ref{multX}), (\ref{VolV}) and then (\ref{autointersezione duale}), give
  \begin{equation*}
    \mult X=[\Z^n:\Ls_c(V)]={n!\Vol(\conv(V))\over |Q|}={(-K_{X^\circ})^n\over |Q|}
  \end{equation*}
  Item (2) is obtained in the same way, by exchanging the roles of $X$ and $X^\circ$.\\
  Item (3). The first equality is the definition of the weight order $g_Q$, as
  \begin{equation*}
    g_Q=|\coker(\a^T)|=|\det A|
  \end{equation*}
  Then, recalling that $A=C^T\cdot B$, by Theorem~\ref{thm:fattorizzazione}, the previous items (1) and (2) give the second equality. Last two equalities are obtained recalling that
  \begin{equation*}
    |G_Q|=\mult(Z)\quad\text{and}\quad|G_{Q^\circ}|=\mult(Y^\circ)
  \end{equation*}
  Then, Proposition~\ref{prop:isoGQ} and item (2) applied to $Y^\circ$ and item (1) applied to $Z$, end up the proof.
  \end{proof}

  \begin{proof}[Proof of Cor.~\ref{cor:bounds}]
    (1) and (2) follow immediately from Theorem~\ref{thm:Fano-bounds}~(3). The first disibility in (3) comes from Theorem~\ref{thm:Fano-bounds}~(2), while the second one is still obtained by Theorem~\ref{thm:Fano-bounds}~(3). The same for (4), which follows by Theorem~\ref{thm:Fano-bounds}~(1) and then (3). Finally (4) and (5) are immediate consequences of previous items (3) and (4). Moreover,
  \end{proof}

  We are now in a position to give a generalization of Theorem~\ref{thm:AKLN}.

  In the following statement, $\{s_n\}$ denotes the Sylvester sequence, iteratively defined by setting
  \begin{equation}\label{sylvester}
    s_1:=2\ ,\quad\forall\,n\le 2\quad s_n:=\prod_{i=1}^{n-1}s_i +1
  \end{equation}
  Moreover, $[\,\cdot\,]$ denotes the integer part of a rational number,  and $\mu_{n,r}$ is the \emph{McMullen number} defined by
    \begin{equation}\label{mcmullen}
      \mu_{n,r}:=\min\left\{\omega\in\N\,\left|\,\begin{array}{cc}
                          \displaystyle{{\omega\over l}\binom{\omega-l-1}{l-1}}\ge 2l+r & \text{if}\ n=2l \\
                          \displaystyle{2\binom{\omega-l-1}{l}}\ge 2l+r+1 &   \text{if}\ n=2l+1
                        \end{array}\right.
      \right\}
    \end{equation}

  \begin{theorem}\label{thm:bounds}
    Let $X$ be a $n$-dimensional Fano toric variety of rank $r$ and let $r':=\max\{r,r^\circ\}$, being $r^\circ$ the rank of the polar partner $X^\circ$. Then
    \begin{enumerate}
      \item if $n=2$ then
      \begin{equation*}
        \mult X\le \left[{9\over \mu_{2,r'}}\right]=\left[{9\over 2+r'}\right]
      \end{equation*}
      and, recalling the degree classification of Fano toric surfaces (see e.g.\cite[\S~8.3]{CLS} and consider the dual picture of Fig.~2),  equality is achieved if and only if
      \begin{itemize}
        \item $r'=1$ and $X\cong\P^2/(\Z/3\Z)$, that is item 3 in the classification,
        \item $r'=2$ and $X\cong(\P^1\times\P^1)/(\Z/2\Z)$, that is item $4a$ in the classification,
        \item $r'\ge 3$ and $X$ is isomorphic to items $6b,5a,7a$ when $r'=3$ and to item $6a$ when $r'=4$.
      \end{itemize}
      \item if $n=3$ then
      \begin{equation*}
        \mult X\le \left[ 72\over \mu_{3,r'}\right]=\left[ 144\over 7+r'\right]
      \end{equation*}
      \item if $n\ge 4$ then
      \begin{equation*}
        \mult X\le \left[{2(s_n - 1)^2\over \mu_{n,r'}}\right]
      \end{equation*}
    \end{enumerate}
  \end{theorem}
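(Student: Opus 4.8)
The entire argument rests on the exact identity $\mult X=\deg X^\circ/|Q|$ furnished by Theorem~\ref{thm:Fano-bounds}(1). Since $\mult X$ is a positive integer, it suffices to establish a rational estimate $\deg X^\circ/|Q|\le D_n/\mu_{n,r'}$ and then pass to integer parts, obtaining $\mult X\le[D_n/\mu_{n,r'}]$ with $D_2=9$, $D_3=72$ and $D_n=2(s_n-1)^2$ for $n\ge 4$. The statement thus decouples into an upper bound $\deg X^\circ\le D_n$ for the numerator and a lower bound $|Q|\ge\mu_{n,r'}$ for the denominator, which I would treat separately.

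For the denominator I would start from the volume description $|Q|=n!\Vol(\conv(W))$ of (\ref{Q-mod&vol}), where $W=\G(Q)$ is a fan matrix of the universal $1$-covering $Y$ and, by (\ref{polarità}), $\conv(W)=\Delta_{-K_{Y^\circ}}$ is a lattice polytope having the origin in its interior. Coning the origin over a lattice triangulation of $\partial\conv(W)$ writes $\conv(W)$ as a union of full-dimensional lattice simplices, each of normalized volume at least $1$, so that $|Q|$ is at least the number of facets of $\conv(W)$. Because $V=B\cdot W$ with $B\in\GL_n(\Q)\cap\Z^{n,n}$ nonsingular, $\conv(W)$ is the linear image $B^{-1}\conv(V)$ of $\conv(V)=\Delta_{-K_{X^\circ}}$, whose facets are dual to the $n+r^\circ$ rays of $X^\circ$; hence $|Q|\ge n+r^\circ$. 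To convert this into $\mu_{n,r'}$ I would invoke McMullen's Upper Bound Theorem \cite{McMullen}: the polar $\conv(W)^\circ$ is an $n$-polytope with $n+r^\circ$ vertices and $n+r$ facets, so $n+r$ cannot exceed the maximal facet number of an $n$-polytope on $n+r^\circ$ vertices, which is exactly the cyclic-polytope quantity in the definition (\ref{mcmullen}) of $\mu_{n,r}$; this forces $n+r^\circ\ge\mu_{n,r}$. Since a cyclic polytope on $\omega$ vertices already has at least $\omega$ facets we also have $\mu_{n,r^\circ}\le n+r^\circ$, and as $\mu_{n,\cdot}$ is nondecreasing we conclude $|Q|\ge n+r^\circ\ge\max\{\mu_{n,r},\mu_{n,r^\circ}\}=\mu_{n,r'}$. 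It is precisely the McMullen step that lets $r'$, rather than the possibly smaller $r$, appear in the bound.

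For the numerator, $X^\circ$ is a Gorenstein Fano toric variety, so $\Delta_{-K_{X^\circ}}=\conv(V)$ is reflexive and $\deg X^\circ=(-K_{X^\circ})^n=n!\Vol(\conv(V))$ is the normalized volume of a canonical Fano polytope of dimension $n$. Here I would invoke the sharp upper bound for such volumes, namely $9$ for $n=2$ (attained by $\P^2$) and $2(s_n-1)^2$ for $n\ge 3$ (attained by explicit reflexive simplices, e.g. $\P(1,1,1,3)$ in dimension $3$, the Sylvester sequence governing the general pattern), which is the volume estimate closely related to Theorem~\ref{thm:AKLN}. Setting $D_n$ to these values and dividing by $|Q|\ge\mu_{n,r'}$ gives $\mult X\le D_n/\mu_{n,r'}$, and passing to integer parts yields (1)--(3). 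When $n=2$ everything is explicit: polygons have as many edges as vertices, so McMullen's bound is an equality and $\mu_{2,r'}=2+r'$, whence $\mult X\le 9/(2+r')$; the enumeration of the cases of equality then reduces to a finite inspection of the reflexive polygons appearing in the degree classification of Fano toric surfaces \cite[\S~8.3]{CLS}, recording for each value of $r'$ those that realize $\deg X^\circ$ and $|Q|$ extremally, which produces exactly the listed surfaces.

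I expect the only genuine difficulty to be this last ingredient: identifying the sharp upper bound on the anticanonical degree of a canonical Fano toric $n$-fold with the Sylvester quantity $2(s_n-1)^2$ (and the exceptional value $9$ in dimension two). This is a deep statement about maximal volumes of reflexive polytopes, essentially the extremal analysis behind Theorem~\ref{thm:AKLN} and \cite{AKLN}, and is the only non-elementary step; by contrast, the lower bound $|Q|\ge\mu_{n,r'}$ is a routine combination of a pyramidal volume estimate with McMullen's Upper Bound Theorem once the facet count $n+r^\circ$ of $\conv(W)$ has been identified.
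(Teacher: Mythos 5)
Your proposal is correct and follows essentially the same route as the paper's proof: the exact relation between multiplicity, anticanonical degree and weight modulus from Theorem~\ref{thm:Fano-bounds}, the sharp degree bounds $9$, $72$, $2(s_n-1)^2$ for canonical (Gorenstein) Fano toric varieties from \cite[\S~8.3]{CLS}, Theorem~\ref{thm:KX3} and Theorem~\ref{thm:KXn}, and McMullen's facet estimate (Theorem~\ref{thm:mcmullen}) to bound the modulus from below. The only (harmless) difference is in how $r'=\max\{r,r^\circ\}$ enters: the paper takes the minimum of the two dual estimates $(-K_{Z^\circ})^n/\mu_{n,r}$ and $(-K_Y)^n/\mu_{n,r^\circ}$ coming from Corollary~\ref{cor:bounds}~(1), whereas you bound the single modulus by $|Q|\ge \#\{\text{facets of }\conv(W)\}=n+r^\circ$ and observe that this number dominates both $\mu_{n,r}$ (McMullen applied to the $n+r$ vertices of $\conv(W)$) and $\mu_{n,r^\circ}$ (since a polytope on $\omega$ vertices has at least $\omega$ facets), which also lets you use the smaller numerator $\deg X^\circ\le \deg Z^\circ$; the resulting intermediate estimate is marginally sharper but the stated bounds are identical.
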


  This result is based on the following

  \begin{theorem}[Thm.~4.6 in \cite{Kasprzyk10}]\label{thm:KX3} Let $X$ be a 3-dimensional, $\Q$-Fano, toric variety with at worst canonical singularities. Then $$(-K_X)^3\le 72$$
  If $(-K_X)^3= 72$ then $X$ is isomorphic to $\P(1,1,1,3)$ or $\P(1,1,4,6)$.
  \end{theorem}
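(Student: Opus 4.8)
The plan is to recast the statement as the volume bound $\Vol(P^\circ)\le 12$ for the polar dual of the Fano polytope, to pin down the equality cases by a Diophantine argument for weighted projective spaces, and to obtain the general bound from a finiteness-plus-enumeration step. Let $V$ be a fan matrix of $X$ and put $P:=\conv(V)$, so that $X$ has at worst canonical singularities exactly when $\Int(P)\cap N=\{\0\}$. Writing $P^\circ=\{\uu\in M_\R:\langle\uu,\v\rangle\ge-1\ \forall\,\v\in P\}$ for the polar dual, one has $\De_{-K_X}=P^\circ$ by (\ref{polarità}), and this depends only on the rays, i.e.\ only on $V$; then (\ref{autointersezione}) gives
\[
  (-K_X)^3=3!\,\Vol(P^\circ)=6\,\Vol(P^\circ),
\]
so the theorem is equivalent to $\Vol(P^\circ)\le12$, with equality iff $X\cong\P(1,1,1,3)$ or $X\cong\P(1,1,4,6)$. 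Since $P$, $P^\circ$ and hence $\deg X$ are functions of $V$ alone, Lemma~\ref{lem:iso1} lets me replace $X$ by an isomorphic-in-codimension-$1$, $\Q$-factorial $\Q$-Fano model with the same fan matrix, without changing $\deg X$ or the canonicity (both governed by $P$). Thus I may assume the fan of $X$ is simplicial.

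I would first settle completely the rank-one case, where $P$ is a simplex and $X$ is a fake weighted projective space; this produces the equality cases. Passing to the universal $1$-covering $\pi:Y\twoheadrightarrow X$ of \S\ref{sssez:U1covering}, the covering lattice is the whole of $N$, so $Y$ is a genuine weighted projective space $\P(q_0,\dots,q_3)$, still canonical (as $\pi$ is \ét\ in codimension $1$, whence $K_Y=\pi^*K_X$); moreover by (\ref{molteplicità}) and the degree formula for finite covers $\deg Y=\mult X\cdot\deg X\ge\deg X$. So in rank one it suffices to maximise $\deg Y=(\textstyle\sum_iq_i)^3/\prod_iq_i$ over genuine canonical weighted projective $3$-spaces. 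The Reid--Tai criterion makes $\P(q_0,\dots,q_3)$ canonical iff each coordinate quotient singularity $\tfrac1{q_i}(q_0,\dots,\widehat{q_i},\dots,q_3)$ is canonical, i.e.\ $\sum_{j\ne i}\{\ell q_j/q_i\}\ge1$ for all $\ell$, where $\{\cdot\}$ is the fractional part. In the Gorenstein range $q_i\mid h:=\sum_iq_i$ the substitution $a_i:=h/q_i$ converts this into the single condition $\sum_{i=0}^31/a_i=1$ and the degree into $\prod_ia_i/\lcm(a_i)$; running through the fourteen partitions of $1$ into four unit fractions, the maximum is $72$, attained exactly at $(a_i)=(2,6,6,6)$ and $(2,3,12,12)$, namely $\P(1,1,1,3)$ and $\P(1,1,4,6)$. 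Note that the Sylvester partition $(2,3,7,42)$ maximises $\Vol(P)$ but \emph{not} $\Vol(P^\circ)$, which is why the Sylvester sequence governs the multiplicity bounds of this paper rather than the degree.

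The remaining and genuinely harder task is to bound $\Vol(P^\circ)$ when $P$ is not a simplex, equivalently when $\rk\Cl(X)>1$; here there is no reduction to weighted projective spaces, because in general no four of the rays of $P$ positively span $\R^3$ (e.g.\ the octahedron), so one cannot inscribe a canonical Fano simplex. For this I would invoke finiteness: a lattice polytope with a single interior lattice point has volume bounded in terms of the dimension (Hensley, Lagarias--Ziegler), whence canonical Fano $3$-topes form a finite set up to $\GL_3(\Z)$-equivalence, and one checks on this finite list that $\Vol(P^\circ)\le12$ with equality only for the two simplices found above. This exhaustive verification is the main obstacle, and it is precisely the content of the classification carried out in \cite{Kasprzyk10}. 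Finally, tracing back the reductions, $(-K_X)^3=72$ forces $\Vol(P^\circ)=12$, which by the enumeration occurs only at a rank-one simplex with $\mult X=1$, i.e.\ $X\cong\P(1,1,1,3)$ or $X\cong\P(1,1,4,6)$.
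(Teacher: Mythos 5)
This statement is not proved in the paper at all: it is imported verbatim as Theorem~4.6 of \cite{Kasprzyk10} and used as a black box in the proof of Theorem~\ref{thm:bounds}. So the real question is whether your proposal is a self-contained proof, and it is not, for two concrete reasons. First, and decisively, your final step reads: the non-simplex case (and the verification of the bound on the finite list of canonical Fano $3$-topes) ``is precisely the content of the classification carried out in \cite{Kasprzyk10}''. That is circular as a proof of the theorem: you reduce the statement to the very enumeration from which the statement is quoted. Hensley/Lagarias--Ziegler finiteness tells you a bound \emph{exists}, but it does not produce the constant $72$ nor the equality cases; all of that is deferred to the cited source, so what you have written is a (largely correct) reduction framework, not a proof.

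Second, even the rank-one case, which you claim to ``settle completely'', has a gap: your Diophantine argument enumerates only \emph{Gorenstein} weight systems, via $q_i\mid h=\sum_j q_j$ and the fourteen partitions $\sum_i 1/a_i=1$. But a canonical weighted projective $3$-space need not be Gorenstein --- e.g.\ $\P(1,1,1,2)$ (with $h=5$, degree $125/2$) or $\P(1,1,2,3)$ (with $h=7$) are canonical and fall outside your enumeration. To conclude $\deg Y\le 72$ over \emph{all} canonical wps you would need either a separate argument (e.g.\ a Reid--Tai/unit-fraction analysis without the divisibility hypothesis, which is essentially Kasprzyk's simplex classification again) or, once more, the citation. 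The sound parts of your proposal --- the reformulation $(-K_X)^3=6\Vol(P^\circ)$, the identification of canonicity with $\Int(P)\cap N=\{\0\}$ for $\Q$-Fano fans, the passage to the universal $1$-covering with $\deg Y=\mult X\cdot\deg X$ so that equality forces $\mult X=1$, and the Gorenstein computation pinning down $(2,6,6,6)$ and $(2,3,12,12)$ --- are correct and nicely aligned with the machinery of this paper, but they only treat a proper subcase; also note that Lemma~\ref{lem:iso1} produces a $\Q$-Fano model that need not be $\Q$-factorial, so the ``simplicial'' reduction you invoke is not literally available (though it is also not needed, since everything you use depends on $V$ alone).
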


  \begin{theorem}[Cor.~1.3 in \cite{BKN}]\label{thm:KXn} Let $X$ be a $n$-dimensional, $\Q$-Fano toric variety, with $n\ge 4$ and at worst canonical singularities. Then
\begin{equation*}
  (-K_X)^n \le 2(s_n -1)^2
\end{equation*}
with equality if and only if $X\cong\P\left(1,1,{2(s_n-1)\over s_1},\ldots,{2(s_n-1)\over s_{n-1}}\right)$\,.
  \end{theorem}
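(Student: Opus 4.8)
The plan is to pass to convex geometry, reduce to weighted projective spaces, and then solve an extremal problem on Egyptian fractions whose answer is governed by the Sylvester sequence. A canonical $\Q$-Fano toric $n$-fold $X$ corresponds to a \emph{canonical Fano polytope} $P=\conv(V)\subset N_\R$, the convex hull of the primitive ray generators $V$; the $\Q$-Fano condition reads $\0\in\Int(P)$ and the canonical condition reads $\Int(P)\cap N=\{\0\}$. Since $-K_X=\sum_i D_i$, the anti-canonical polytope is the polar dual $\De_{-K_X}=P^\vee=\{\uu\in M_\R\mid\langle\uu,\v\rangle\ge-1\ \forall\,\v\in P\}$, and by (\ref{autointersezione}) we have $(-K_X)^n=n!\,\Vol(P^\vee)$. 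The essential point is that $P^\vee$ depends only on the rays $V$ and not on the chosen fan; hence the degree is an invariant of $P$ alone, and the theorem becomes the statement that, over all canonical Fano $n$-polytopes $P$, the normalized dual volume $n!\,\Vol(P^\vee)$ is at most $2(s_n-1)^2$, with equality exactly at the simplex of the asserted weighted projective space.

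The crux is to reduce to the case in which $P$ is a simplex, i.e.\ $X$ is a (fake) weighted projective space. One cannot simply inscribe a canonical Fano simplex inside an arbitrary $P$: already for $\bpt$, whose polytope is the diamond $\conv(\pm\e_1,\pm\e_2)$, no lattice triangle through the four vertices has $\0$ in its interior, and indeed $\deg\bpt=8<9=\deg\p2$, so the simplex realizing the maximum need not sit inside the given $P$. Instead I would argue globally: canonical Fano $n$-polytopes form a finite set up to $\GL_n(\Z)$, so a maximizer of $\Vol(P^\vee)$ exists, and a vertex-pushing/exchange argument shows a maximizer may be taken to be a simplex --- informally, pulling the vertices of $P$ as close to the origin as integrality and $\0\in\Int(P)$ allow enlarges $P^\vee$, and at the optimum the support reduces to $n+1$ rays. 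Finally, replacing a fake weighted projective space by its universal $1$-covering (passing from $N$ to the sublattice $N_1$ spanned by the vertices) multiplies the degree by $\mult X\ge1$ and preserves canonicity, so it suffices to bound, over \emph{genuine} canonical weighted projective spaces, the quantity $(-K_X)^n=\dfrac{(\sum_i w_i)^n}{\prod_i w_i}$.

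For the weighted projective case I would set $h=\sum_i w_i$, order $w_0\le\cdots\le w_n$, and translate the canonicity $\Int(S)\cap N_1=\{\0\}$ into arithmetic inequalities on the weights by testing the lattice points on the segments joining $\0$ to the vertices; these yield Sylvester-type recursive upper bounds on the successive weights. Grouping the normalized weights $w_i/h$ into a partition of unity $1=\sum_{j=1}^{n}\tfrac1{a_j}$ into $n$ unit fractions, a short computation rewrites the degree in the extremal configuration as $h\prod_{j}a_j$ subject to $\tfrac2h=1-\sum_j\tfrac1{a_j}$ (the residual $2/h$ being carried by the two forced unit weights $w_0=w_1=1$), that is as $\dfrac{2\prod_j a_j}{\,1-\sum_j 1/a_j\,}$. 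The final step is the classical extremality of the Sylvester sequence: among all expansions of $1$ into $n$ unit fractions, the greedy choice $a_j=s_j$ (with last denominator $\prod_{j<n}s_j=s_n-1$) simultaneously maximizes $\prod_j a_j=s_n-1$ and minimizes the slack $1-\sum_j1/a_j=1/(s_n-1)$, giving the bound $2(s_n-1)\cdot(s_n-1)=2(s_n-1)^2$ and the weights $\bigl(1,1,\tfrac{2(s_n-1)}{s_1},\dots,\tfrac{2(s_n-1)}{s_{n-1}}\bigr)$.

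The main obstacle is twofold: making the global reduction to a simplex rigorous (the vertex-exchange step, where naive inscription fails), and extracting from the geometric canonicity condition the precise recursive inequalities that pin the optimum to the greedy Egyptian-fraction expansion together with its \emph{uniqueness}. This uniqueness is exactly what forces the hypothesis $n\ge4$: for $n=2$ the formula already fails, the true maximum being $9=(-K_{\p2})^2$; for $n=3$ one gets $2(s_3-1)^2=72$, but the optimum is attained by two distinct varieties, the Sylvester space $\P(1,1,4,6)$ and also $\P(1,1,1,3)$, which is why the three-dimensional bound is treated separately in Theorem~\ref{thm:KX3}. For $n\ge4$ the strict Sylvester extremality restores uniqueness and yields the stated characterization.
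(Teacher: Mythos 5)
This statement is not proved in the paper at all: it is imported verbatim as Cor.~1.3 of \cite{BKN} and used as a black box in the proof of Theorem~\ref{thm:bounds}. So your attempt can only be judged as an independent proof of the BKN result, and as such it has a genuine gap, located exactly at the step you yourself flag as the ``crux''. The reduction from an arbitrary canonical Fano polytope $P=\conv(V)$ to a simplex is asserted via an informal vertex-pushing/exchange heuristic (``pulling the vertices of $P$ as close to the origin as integrality allows enlarges $P^\vee$, and at the optimum the support reduces to $n+1$ rays''). Nothing in the sketch makes this rigorous, and it is not a routine compactness-plus-perturbation fact: replacing a vertex by a lattice point nearer the origin can destroy the condition $\0\in\Int(P)$, can make that point fail to be a vertex or a primitive generator at all, and the set of canonical Fano polytopes is finite and discrete up to $\GL_n(\Z)$, so there is no continuous ``pushing'' available and no monotonicity principle forcing a maximizer of $\Vol(P^\vee)$ to have only $n+1$ vertices. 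Your own observation about $\bpt$ shows the difficulty is real (no vertex simplex contains $\0$ in its interior), but you then step around it rather than resolve it; in \cite{BKN} this reduction is the main theorem, proved not by perturbing a maximizer but by bounding $\Vol(P^\vee)$ through the barycentric coordinates of the origin with respect to a Carath\'eodory subset of the vertices, including a delicate analysis of precisely the degenerate case where $\0$ lies on the boundary of every such simplex.

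The arithmetic endgame is also asserted rather than proved: you state, but do not derive, that canonicity of a weighted projective space translates into the unit-fraction constraints, that the extremum has two weights equal to $1$, and that the greedy (Sylvester) expansion is the \emph{unique} maximizer for $n\ge 4$ --- this last point being what the equality characterization rests on. Your framing is otherwise sound and correctly calibrated: the passage from $X$ to its universal $1$-covering multiplies the degree by $\mult X\ge 1$ and preserves canonicity, the extremal degree computation $h^n/\prod_i w_i=2(s_n-1)^2$ for the stated weights is correct, and the dichotomy with $n=3$ (where both $\P(1,1,1,3)$ and $\P(1,1,4,6)$ attain $72$, cf.\ Theorem~\ref{thm:KX3}) is exactly the right explanation for the hypothesis $n\ge 4$. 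But as it stands the proposal is a roadmap whose two hard segments --- the simplex reduction and the strict Sylvester extremality with uniqueness --- are the theorem, not steps toward it.
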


\begin{theorem}[see Thm.~1 and \S~5 in \cite{McMullen}]\label{thm:mcmullen} A $n$-dimensional polytope with $n+r$ vertices admits at least $\mu_{n,r}$ facets, where $\mu_{n,r}$ is defined in (\ref{mcmullen}).
\end{theorem}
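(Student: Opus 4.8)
The plan is to reduce the statement to the classical \emph{Upper Bound Theorem} of McMullen via polar duality, so that the number of facets we must bound from below is converted into a number of vertices to which the Upper Bound Theorem applies directly. Let $P\subset\R^n$ be an $n$-dimensional polytope with $n+r$ vertices; after a translation we may assume the origin lies in the interior of $P$, so that the polar dual $P^\circ$ is a well-defined $n$-polytope. Polar duality induces an anti-isomorphism of face lattices exchanging $k$-faces of $P$ with $(n-1-k)$-faces of $P^\circ$; in particular the number of facets of $P^\circ$ equals the number of vertices of $P$, namely $n+r$, while the number of vertices of $P^\circ$ equals the number of facets of $P$. Write $\omega$ for this latter quantity, so that $\omega$ is simultaneously the number of facets of $P$ and the number of vertices of $P^\circ$; the goal becomes the inequality $\omega\ge\mu_{n,r}$.

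First I would apply the Upper Bound Theorem to $P^\circ$. In its general form it asserts that among all $n$-polytopes with $\omega$ vertices the cyclic polytope $C(\omega,n)$ maximises the number of faces in each dimension. If $P^\circ$ fails to be simplicial, one perturbs its vertices into general position: this preserves the vertex count $\omega$ and can only increase the number of facets, since every non-simplicial facet is subdivided into simplices, reducing the claim to the simplicial case. Consequently
\begin{equation*}
  n+r=f_{n-1}(P^\circ)\le f_{n-1}\bigl(C(\omega,n)\bigr).
\end{equation*}

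Next I would make the right-hand side explicit and match it with the two cases in (\ref{mcmullen}). Writing $n=2l$ or $n=2l+1$, the Upper Bound Theorem together with the Dehn--Sommerville relations gives the closed forms
\begin{equation*}
  f_{n-1}\bigl(C(\omega,n)\bigr)=\binom{\omega-l}{l}+\binom{\omega-l-1}{l-1}=\frac{\omega}{l}\binom{\omega-l-1}{l-1}\quad(n=2l),
\end{equation*}
and $f_{n-1}(C(\omega,n))=2\binom{\omega-l-1}{l}$ when $n=2l+1$; the middle identity in the even case is an elementary binomial computation (factor out $(\omega-l-1)!/\bigl((v-2l)!\,l!\bigr)$ and collect). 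Since $2l+r=n+r$ in the even case and $2l+r+1=n+r$ in the odd case, the inequality $n+r\le f_{n-1}(C(\omega,n))$ displayed above is \emph{exactly} the defining inequality appearing inside the definition of $\mu_{n,r}$ in (\ref{mcmullen}), evaluated at $\omega$.

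Finally I would invoke minimality and monotonicity. For fixed $n$ the quantity $f_{n-1}(C(\omega,n))$ is weakly increasing in $\omega$, so the set of $\omega$ satisfying the defining inequality of $\mu_{n,r}$ is an up-set in $\N$; as $\mu_{n,r}$ is by definition its least element and $\omega$ itself satisfies the inequality, we conclude $\omega\ge\mu_{n,r}$. Since $\omega$ is the number of facets of $P$, this is the assertion. The main obstacle here is not conceptual but a matter of careful bookkeeping: one must (i) justify the perturbation reduction so that the general Upper Bound Theorem applies to a possibly non-simplicial $P^\circ$ with the correct vertex count, and (ii) verify that the two closed-form facet counts of $C(\omega,n)$ coincide with the even and odd expressions in (\ref{mcmullen}), including the parity-dependent shift between $2l+r$ and $2l+r+1$. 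Both steps are routine, which is precisely why the statement is attributed directly to McMullen.
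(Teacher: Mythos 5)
Your proof is correct and is essentially the same argument the paper delegates to its citation: the paper offers no proof of Theorem~\ref{thm:mcmullen}, attributing it to McMullen's Upper Bound Theorem (Thm.~1) together with its polar-dual formulation (\S~5), which is precisely your reduction via polarity, the perturbation to the simplicial case, and the cyclic-polytope facet counts matching the two cases of (\ref{mcmullen}). One minor simplification: the monotonicity remark at the end is unnecessary, since $\omega$ lying in the defining set of $\mu_{n,r}$ already gives $\omega\ge\mu_{n,r}$ because $\mu_{n,r}$ is that set's minimum.
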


\begin{proof}[Proof of Thm.~\ref{thm:bounds}] By Corollary~\ref{cor:bounds}~(1) we know that
\begin{equation*}
  \mult X\,|\,\displaystyle{(-K_Y)^n\over |Q^\circ|}={(-K_{Z^\circ})^n\over |Q|}
\end{equation*}
Recall the definition (\ref{Q-modulo}) of the weight modulus $|Q|$, so that
\begin{equation*}
 \mult X\,|\,{(-K_{Z^\circ})^n\over \sum_{I\in\widehat{\I}} |\det Q_I|}\le {(-K_{Z^\circ})^n\over |\widehat{\I}|}
\end{equation*}
where $|\widehat{\I}|$ is the number of $n$-dimensional cones of a simplicial fan over a fan matrix $W$ of the universal 1-covering $Y$ of $X$. Then, $|\widehat{\I}|$ is greater than or equal to the number of facets of the $n$-dimensional polytope $\conv(W)$, admitting $n+r$ vertices. The same argument applies to the weight modulus $|Q^\circ|$. By McMullen's Theorem~\ref{thm:mcmullen} one gets
\begin{equation}\label{disuguaglianza}
  \mult X\le \min\left({(-K_{Z^\circ})^n\over \mu_{n,r}},{(-K_{Y})^n\over \mu_{n,r^\circ}}\right)
\end{equation}

Assume $n=2$. Then, by the well known classification of $2$-dimensional Fano toric variety (see e.g. \cite[\S~8.3]{CLS}, and dualize Fig.~2), one gets $(-K_{Z^\circ})^2\le 9$ and $(-K_{Y})^2\le 9$, so proving item(1) in the statement by observing that (\ref{mcmullen}) gives
\begin{equation*}
  \mu_{2,r}= 2+r
\end{equation*}

Assume $n=3$ and apply the upper bound given by Theorem~\ref{thm:KX3} to the inequality (\ref{disuguaglianza}), recalling that, in our notation, a Fano toric variety is Gorenstein and hence it admits at worst canonical singularities. To end up the proof of item (2) in the statement, notice that (\ref{mcmullen}) gives
\begin{equation*}
  \mu_{3,r}= \min\left\{\omega\in\N\,\left|\,2(\omega-2)\ge 3+r\right.\right\}
\end{equation*}

Assume $n\ge 4$ and apply the upper bound given by Theorem~\ref{thm:KXn} to (\ref{disuguaglianza}) so getting item (3).
\end{proof}

\begin{remark}\label{rem:sharpness}
  Upper bounds given by Theorem~\ref{thm:bounds} are sharp if $n=2$, as the reader can easily check by the degree classification of Fano toric surfaces \cite[\S~8.3, Fig.~2]{CLS}. But for $n\ge 3$ the given upper bounds are no more sharp due to the non-sharp lower bound for the weight modulus $|Q|\ge |\widehat{\mathcal{I}}|$, used in the proof. This inequality for the weight modulus can be, possibly, improved, so getting better upper bounds, but the equality $|Q|=|\widehat{\mathcal{I}}|$ can actually be attained, as shown by the Fano toric, 3-dimensional variety $Z$, exhibited at the end of Example~\ref{ex:QFanoCanonica2}.
  For instance, if $n=3$ and $r=1$ then Thm.~\ref{thm:bounds}~(2) gives
   \begin{equation*}
    \mult X\le \left[{144\over 8}\right] =18
   \end{equation*}
   while Thm.~\ref{thm:AKLN}~(i) gives $\mult X\le 16$, which is better.
  Anyway, Theorem~\ref{thm:bounds} gives upper bounds stratified over the rank $r$.
  In principle, Theorem~\ref{thm:bounds} implies that
  \begin{equation}\label{limite}
    \forall\, n\ge 2\,,\ \exists\, r_0\gg 1 : \forall\, r\ge r_0\quad \mult X=1
  \end{equation}
  which is precisely what is experienced from examples, although the high non-sharpness of those upper bounds gives values of $r_0$ which cannot be achieved by any Fano toric variety of dimension $n\ge 3$, as $r\le 2n$ by a result of C.~Casagrande \cite{Casagrande06}. For instance, if $n=3$ then $r_0=66\gg 6$. Actually, that result of Casagrande implies that Theorem~\ref{thm:bounds} can be summarized by finite lists of upper bounds, one for any dimension. For instance, when $n=2$ the list is already given in item (1), while if $n=3$ then item (2) of Theorem~\ref{thm:bounds} is equivalent to the following finite list of inequalities:
  \begin{itemize}
    \item if $r'=2$ then $\mult X\le \displaystyle{\left[{144\over 9}\right]}= 16$,
    \item if $r'=3$ then $\mult X\le \displaystyle{\left[{144\over 10}\right]}= 14$,
    \item if $r'=4$ then $\mult X\le \displaystyle{\left[{144\over 11}\right]}= 13$,
    \item if $r'=5$ then $\mult X\le \displaystyle{\left[{144\over 12}\right]}= 12$,
    \item if $r'=6$ then $\mult X\le \displaystyle{\left[{144\over 13}\right]}= 11$.
  \end{itemize}
\end{remark}

\section{A classification of Fano toric varieties and their polar partners}\label{sez:Fano}

Results developed in the previous \S~\ref{sez:bounds} allows us to start from a Fano weight matrix $Q$ to get a complete topological classification of Fano toric varieties admitting $Q$ as a weight matrix and their polar Fano toric varieties, hence admitting $Q^\circ$ as a weight matrix.

\begin{remark}\label{rem:QFano iff}
  A first obvious consideration is that:
  \begin{itemize}
    \item \emph{$Q$ is a Fano weight matrix if and only if $Q^\circ$ is a Fano weight matrix}
  \end{itemize}
  This is a direct consequence of Batyrev's reflexivity of polytopes $\conv(\G(Q))$ and $\conv(\G(Q)^\circ)$, recalling that by definition, $Q^\circ=\G(\G(Q)^\circ)$. In fact, by Proposition~\ref{prop:GorenFano}, $Q$ is Fano if and only if it is Gorenstein. Then:
  \begin{eqnarray*}
    Q\ \text{is Fano} &\stackrel{\text{Cor.~\ref{cor:WeightGorenstein}}}{\Longrightarrow}& Y\ \text{is Fano}\ \stackrel{\text{Reflexivity}}{\Longrightarrow}\ Y^\circ\ \text{is Fano}\ \stackrel{\text{Prop.~\ref{prop:GorenstienPWS}}}{\Longrightarrow}\ Q^\circ\ \text{is Fano} \\
    Q^\circ\ \text{is Fano} & \Longrightarrow & Z^\circ\ \text{is Fano}\ \Longrightarrow\ Z\ \text{is Fano}\ \Longrightarrow\ Q\ \text{is Fano}
  \end{eqnarray*}
  where $Z^\circ$ is the toric variety determined by the fan $\Xi^\circ$ over $\L^\circ:=\G(Q^\circ)$, making $Q^\circ$ Fano, recalling Definition~\ref{def:Fano-Gorenstein_Q} and construction exhibited in \S~\ref{ssez:1-cov_polar}.
\end{remark}

\begin{theorem}\label{thm:classificazione}
  Let $Q$ be a Fano weight matrix and $G_Q$ its weight group. Then the following assertions hold.
  \begin{enumerate}
    \item Any Fano toric variety $X$ admitting $Q$ as a weight matrix is determined by the choice of a Fano toric variety $Y(\Th)$ making $Q$ Fano, and of a subgroup $G\leq G_Q$ and conversely, any subgroup $G$ is determined by a Fano toric variety $X$. In particular, $Y$ is the universal 1-covering and $X\cong Y/G$ is a geometric quotient obtained by a well determined action $G\times Y\to Y$ and $G\cong\pet(X)^{(1)}$.
    \item The polar Fano toric variety $X^\circ$ of any Fano $X$, among those exhibited in (1), is determined by the choice of the subgroup $\overline{T}(G^\circ)\le G_Q$, determined by the choice of $G\le G_Q$ as described in Proposition~\ref{prop:splittingGQ}, and such that
        \begin{eqnarray}\label{prodottomolteplicità}
          \nonumber
          &X^\circ\cong Z^\circ/G^\circ\ ,\quad G_Q=G\oplus \overline{T}(G^\circ)\\
           &\Longrightarrow\ \mult X\cdot\mult(X^\circ)=g_Q=\mult(Z)
        \end{eqnarray}
        being $Z^\circ$ and $Z$ as in Remark~\ref{rem:QFano iff}. In particular, $Z^\circ$ is the universal 1-covering of $X^\circ$, the latter being  a geometric quotient obtained by a well-defined action $G^\circ\times Z^\circ\to Z^\circ$ and $G^\circ\cong\pet(X^\circ)^{(1)}$.
    \item For any different choice of a fan $\Th'$ making $Q$ Fano, there is an isomorphism in codimension 1, $Y(\Th)\cong_1 Y'(\Th')$ inducing, for any choice of a subgroup $G\le G_Q$, isomorphisms in codimension 1
        \begin{equation*}
          Y/G \cong_1 Y'/G\quad\text{and}\quad Z^\circ/G^\circ\cong_1 {Z'}^{\circ}/G^\circ
        \end{equation*}
        Therefore, up to isomorphism in codimension 1, $X$ and $X^\circ$ are determined by the choice of a subgroup $G\le G_Q$ and viceversa.
  \end{enumerate}
\end{theorem}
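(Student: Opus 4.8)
The plan is to read the three assertions as a repackaging of the covering-theoretic and lattice-theoretic machinery built in \S~\ref{sssez:U1covering}--\ref{ssez:splittingGQ}, organised around the single abelian cover $p:Y\twoheadrightarrow Z$ of Theorem~\ref{thm:fattorizzazione}, whose deck group is exactly $G_Q=\coker(\a^T)\cong\pet(Z)^{(1)}$. Here $Y$ is simply connected in codimension $1$ and is the universal $1$-covering of every intermediate quotient.

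For assertion (1), the forward implication is already contained in Corollary~\ref{cor:Conrads}: a Fano $X$ with weight matrix $Q$ has universal $1$-covering $\pi:Y\twoheadrightarrow X$ with $X\cong Y/G$, $G\le G_Q$, and $G\cong\pet(X)^{(1)}\cong\Tors(\Cl(X))$ by \S~\ref{sssez:U1covering}. For the converse I would fix $Y=Y(\Th)$ making $Q$ Fano and, for a subgroup $G\le G_Q$, set $X:=Y/G$, where $G$ acts by restricting the $G_Q$-action giving $Z=Y/G_Q$. Three points need checking. First, $X$ still has weight matrix $Q$: passing to the quotient only enlarges the ambient lattice while keeping the fan combinatorics, so its fan matrix is $V=B\cdot W$ for a suitable $B\in\GL_n(\Q)\cap\Z^{n,n}$ and $\G(V)=\G(W)=Q$ as in Remark~\ref{rem:Univ-costruzione}. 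Second, $X$ is Fano: conditions (1) and (2) of Theorem~\ref{thm:Gorenstein} depend only on the pair $(Q,\I_\Si(n))$, which $X$ shares with the Fano variety $Y$; since $-K_X$ is then Cartier and has an ample positive multiple, $-K_X$ itself is ample. Third, $Y$ is the universal $1$-covering of $X$ with $G\cong\pet(X)^{(1)}$: the restricted quotient $Y\twoheadrightarrow X$ is a $1$-covering and $Y$ is simply connected in codimension $1$, so it is the universal one with deck group $G$. Bijectivity is then the elementary correspondence between subgroups $G\le G_Q=N_Z/N_Y$ and intermediate lattices $N_Y\subseteq N_X\subseteq N_Z$.

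For assertion (2), I would first apply assertion (1) to the polar partner $X^\circ$, which is Fano with weight matrix $Q^\circ$ (Corollary~\ref{cor:dualGale}) and universal $1$-covering $Z^\circ$: this gives $X^\circ\cong Z^\circ/G^\circ$ with $G^\circ\le G_{Q^\circ}$ and $G^\circ\cong\pet(X^\circ)^{(1)}$. The heart of the matter is identifying $G\le G_Q$ and $\overline{T}(G^\circ)\le G_Q$ as complementary direct summands. For this I would invoke Proposition~\ref{prop:splittingGQ}: tracking the factorizations (\ref{fattorizzazioni}) and (\ref{fattoreticoli}), the Galois subgroup $\coker\b^T=G$ is realised through the splitting as the summand $\overline{T}(\ker\overline{j})$, while $\overline{T}(G^\circ)$ is the complementary summand $\ker\overline{i}=\im\b^T/\im\a^T$, yielding $G_Q=G\oplus\overline{T}(G^\circ)$. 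The identity $\mult X\cdot\mult(X^\circ)=g_Q=\mult(Z)$ then follows either by comparing orders in this decomposition or directly from Theorem~\ref{thm:Fano-bounds}(3), recalling $A=C^T\cdot B$.

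For assertion (3), the plan is to note that two fans $\Th,\Th'$ over the common matrix $W=\G(Q)$ share the same $1$-skeleton, whence $Y(\Th)\cong_1 Y'(\Th')$ by the theory of isomorphisms in codimension $1$ recalled in \S~\ref{sez:preliminari}; since the $G_Q$-action is defined at the level of the lattice and is common to both, it descends to $Y/G\cong_1 Y'/G$, and the same argument applied to fans over $\L^\circ=\G(Q^\circ)$ gives $Z^\circ/G^\circ\cong_1 {Z'}^\circ/G^\circ$. Because polar duality of Fano toric varieties depends only on the rays and is therefore preserved under $\cong_1$, one concludes that up to isomorphism in codimension $1$ the pair $(X,X^\circ)$ depends only on the choice of $G\le G_Q$. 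I expect the main obstacle to be the matching in assertion (2): one must verify that the subgroup arising geometrically as $\mathrm{Gal}(Y/X)$ coincides, under the splitting of Proposition~\ref{prop:splittingGQ}, with the distinguished complement of $\overline{T}(G^\circ)$ inside $G_Q$---reconciling the fact that $G=\coker\b^T$ is naturally a quotient of $G_Q$ with its required role as a subgroup.
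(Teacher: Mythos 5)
Your proposal follows the paper's own strategy almost step for step: item (1) by restricting the $G_Q$-action defining $Z\cong Y/G_Q$ to subgroups and invoking the universal $1$-covering for the converse, item (2) by combining Theorem~\ref{thm:fattorizzazione} with the splitting of Proposition~\ref{prop:splittingGQ}, and item (3) by descending an isomorphism in codimension $1$ between the coverings to the quotients.

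Two points deserve comment. First, in item (1) your justification that $X=Y/G$ is \emph{Gorenstein} does not follow from what you cite: Theorem~\ref{thm:Gorenstein}(1) is only the implication ``$X$ Gorenstein $\Rightarrow$ condition on $(Q,\I_\Si(n))$'', and its converse is established (Proposition~\ref{prop:GorenstienPWS}) only for varieties simply connected in codimension $1$; for a genuine quotient the obstruction is the torsion component of $[K_X]$, which is invisible to the pair $(Q,\I_\Si(n))$. The paper obtains Gorenstein-ness of the quotients from the polar side: since $A^T=B^T\cdot C$ and $W^\circ=B^T\cdot V^\circ$, one gets $V^\circ=C\cdot\L^\circ\in\Z^{n,m^\circ}$, so $\De_{-K_X}=\conv(V^\circ)$ is a lattice polytope and $\conv(V)$ is reflexive --- this is precisely where the hypothesis $G\le G_Q$, rather than $G$ an arbitrary finite quotient group of $Y$, is used. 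By contrast, the ``quotient versus subgroup'' tension you flag at the end is not a real obstacle: the paper takes $G$ to be a subgroup of $G_Q$ from the outset, acting by restriction of $\exp\Ga$, and Proposition~\ref{prop:splittingGQ} then identifies $\coker\b^T$ with the direct summand complementary to $\overline{T}(G^\circ)$, exactly as you anticipate. Second, for item (3) your argument (equal $1$-skeleta give $\cong_1$ directly, since the complements of the codimension-$\ge 2$ orbit closures depend only on the rays) is more elementary than the paper's, which passes through $\Q$-factorial resolutions and wall crossings in the secondary fan; both are valid, the paper's version having the advantage of exhibiting the isomorphism explicitly as a composition of small birational contractions, resolutions and wall crossings.
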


\begin{proof}
   $Q$ is a Fano weight matrix. Then, by Definition~\ref{def:Fano-Gorenstein_Q}, there exists a fan $\Th$ over the Gale dual fan matrix $W=\G(Q)$ such that $Y(\Th)$ is a Fano toric variety. $Y$ is simply connected in codimension 1, as observed when proving Corollary~\ref{cor:WeightGorenstein}. Consider the polar Fano toric variety $Y^\circ(\Th^\circ)$, admitting fan matrix $W^\circ$ and weight matrix $Q^\circ=\G(W^\circ)$. In particular, by definition of $G_{Q^\circ}$,
   \begin{equation}\label{Yduale}
     Y^\circ\cong Z^\circ/G_{Q^\circ}
   \end{equation}
   where $p^\circ:Z^\circ\twoheadrightarrow Y^\circ$ is the universal 1-covering of $Y^\circ$. By Cox's quotient construction, the quotient in (\ref{Yduale}) is a geometric quotient given by the action on $Z^\circ$ of
   $$G_{Q^\circ}\cong\Tors(\Cl(Y^\circ))\cong\pet(Y^\circ)^{(1)}$$
   This action is determined by the class epimorphism $d_{Y^\circ}:\Weil(Y^\circ)\twoheadrightarrow\Cl(Y^\circ)$. Namely, referring to the standard basis of torus invariant, prime, divisors for $\Weil(Y^\circ)$, the class morphism $d_{Y^\circ}$ is composed by a free part landing onto a free part $F\le\Cl(Y^\circ)$ and represented by $Q^\circ$, and a torsion part, landing onto the torsion subgroup $\Tors(\Cl(Y^\circ))$ and represented by a torsion matrix $\Ga^\circ$. In particular, $Q^\circ$ describes the torus action of $\T^{r^\circ}=\Hom(F,\K^*)$ on the characteristic space $\widehat{Y^\circ}$ of $Y^\circ$, whose quotient gives rise to $Z^\circ\cong\widehat{Y^\circ}/\T^{r^\circ}$, as a geometric quotient. On the other hand, $\Ga^\circ$ describes the action of $G_{Q^\circ}=\Hom(\Tors(\Cl(Y^\circ)),\K^*)$ on $Z^\circ$, whose quotient gives rise to $Y^\circ\cong Z^\circ/G_{Q^\circ}$, as a geometric quotient. For the details the reader is referred to the original Cox's paper \cite{Cox}. In particular, the torsion matrix $\Ga^\circ$ can be concretely produced by following the algorithm in \cite[Thm.~3.2~(6)]{RT-QUOT,RT-Erratum}. Then we have a well determined action and geometric quotient:
   \begin{equation}\label{azione duale}
     \xymatrix{G_{Q^\circ}\times Z^\circ\ar[r]^-{\Ga^\circ}&Z^\circ}\quad\Longrightarrow\quad Y^\circ\cong Z^\circ/G_{Q^\circ}
   \end{equation}
   Dually, let $Z=(Z^\circ)^\circ$ be the polar Fano toric variety of $Z^\circ$. The same argument produces a well determined action and geometric quotient
   \begin{equation}\label{azione}
     \xymatrix{G_{Q}\times Y\ar[r]^-{\Ga}&Y\quad\Longrightarrow\quad Z\cong Y/G_{Q}}\ \text{and}\ G_Q\cong\pet(Z)^{(1)}
   \end{equation}
   Choose now a subgroup $G\le G_Q$. Clearly the $G_Q$-action in (\ref{azione}) restricts to give a well determined $G$-action on $Y$ and associated geometric quotient
   \begin{equation}\label{azioneG}
     \xymatrix{G\times Y\ar[r]^-{\Ga}&Y\quad\Longrightarrow\quad X\cong Y/G}\ \text{and}\ G\cong\pet(X)^{(1)}
   \end{equation}
   being $X$ a well defined Fano toric variety admitting $Q$ as a weight matrix.\\
   Conversely, let $X$ be a Fano toric variety admitting $Q$ as a weight matrix. Then its universal 1-covering $\pi:Y\twoheadrightarrow X$ determines a fan $\Th$ over the fan matrix $W=\G(Q)$ of $Y$ w.r.t. $Q$ is a Fano weight matrix, so proving item (1).

   For item (2), let $X^\circ$ be the polar Fano toric variety of the quotient $X\cong Y/G$, the latter determined by the action (\ref{azioneG}). Let $V$ be a fan matrix of $X$. Then $V^\circ$ is a fan matrix of $X^\circ$. Consider the factorization $p^T=\pi^T\circ\pi^\circ:Z^\circ\twoheadrightarrow Y^\circ$ given in Theorem~\ref{thm:fattorizzazione}, where $p^T$ is the canonical projection coming from (\ref{azione duale}). In particular, the projection $\pi^\circ:Z^\circ\twoheadrightarrow X^\circ$ is uniquely determined, as in Theorem~\ref{thm:fattorizzazione}, by the map of fans $\g:\Xi^\circ\to\Si^\circ$ induced by an homonymous lattice endomorphism $\g:M\to M$, represented by the quotient matrix $C$ of $V^\circ$. Consider the factorization
   \begin{equation*}
     \xymatrix{N\ar[rr]^-{\a}\ar[dr]_-{\b}&&N\\
                &N\ar[ur]_{\g^T}&}
   \end{equation*}
   and recall Proposition~\ref{prop:splittingGQ} to determine the subgroup $G^\circ\cong G_{Q^\circ}/\overline{T}(G)$ of $G_{Q^\circ}$, as described there.
   Then,  $X^\circ\cong Z^\circ/G^\circ$ under the action obtained by restricting (\ref{azione duale}) to the subgroup $G^\circ\le G_{Q^\circ}$. In particular, $G^\circ\cong\pet(X^\circ)^{(1)}$.

   Finally, recall that $\mult X=|G|$, $\mult(X^\circ)=|G^\circ|$ and $\mult(Z)=|G_Q|=g_Q$, ending up the proof of (\ref{prodottomolteplicità}), accordingly with Theorem~\ref{thm:Fano-bounds}~(3).

   To prove item (3), consider $Y$ and $Y'$: they clearly correspond to two cells $$\Nef(Y),\ \Nef(Y')\subset\Cl(Y)_\R\cong\Cl(Y')_\R\cong\R^r$$
   in their secondary fan, which is the same, as it is supported on the pseudo-effective cone $\overline{\Eff(Y)}\cong\overline{\Eff(Y')}\cong\langle Q\rangle$ \cite[\S~15]{CLS} and it only depends on the weight matrix $Q$. Now, $Y$ is $\Q$-factorial if and only if $\Nef(Y)$ is a \emph{chamber} of the secondary fan, that is a full dimensional cell. Otherwise, by taking a simplicial subdivision $\widetilde{\Th}\in\SF(W)$ of the fan $\Th$, there exists a $\Q$-factorial resolution $\phi:\widetilde{Y}\to Y$, which is a small birational morphism such that $\widetilde{Y}(\widetilde{\Th})$ is $\Q$-factorial. In particular, $\Nef(Y)$ is included in the boundary of the chamber $\Nef(\widetilde{Y})$. The same for $Y'$, so getting a (possibly trivial) $\Q$-factorial resolution $\phi':\widetilde{Y}'\to Y'$, with $\phi'$ a small birational morphism and $\Nef(Y')$ included in the boundary of the chamber $\Nef(\widetilde{Y}')$. By the structure of the secondary fan, passing from $\Nef(\widetilde{Y})$ to $\Nef(\widetilde{Y}')$ needs at most a finite number of \emph{wall crossings} \cite[\S~15.3]{CLS}, corresponding to a (non-unique) \emph{small $\Q$-factorial modification} $\vf:\widetilde{Y}\dashrightarrow\widetilde{Y}'$, which is a birational map whose indeterminacy loci have codimension at least 2, meaning that it is an isomorphism in codimension 1. Then
   \begin{equation*}
     \xymatrix{f:=\phi'\circ\vf\circ\phi^{-1}:Y\ar@{-->}[r]^-{\cong_1}&Y'}
   \end{equation*}
   is the stated isomorphism in codimension 1. Clearly, for each choice of a subgroup $G\le G_Q$, $f$ descends to give an isomorphism in codimension 1
   \begin{equation*}
     \xymatrix{\overline{f}:X\cong Y/G\ar@{-->}[r]^-{\cong_1}&Y'/G\cong X'}
   \end{equation*}
   where quotients are obtained by restricting to $G$ the action (\ref{azione}), which is the same for both $Y$ and $Y'$, being determined at the level of lattice morphisms underlying the two different map of fans.

   Finally, the same construction can be reproduced on the secondary fan of the universal 1-coverings  $Z^\circ$ and ${Z'}^\circ$ of the polar Fano toric varieties $Y^\circ$ and ${Y'}^\circ$, respectively, so ending up the proof. \end{proof}

   \begin{remark}\label{rem:classificazione}
     The main meaning of Theorem~\ref{thm:classificazione} is that, up to isomorphisms in codimension 1, which preserve dimension, Picard number, divisors, classes and their numerical properties, Fano toric varieties are parameterized by subgroups of the weight group $G_Q$, when the weight matrix $Q$ is fixed. Moreover, polar duality of Fano varieties is described by a direct decomposition of $G_Q$.

     This is a \emph{topological classification}, as $G$ and $G^\circ$ turn out to  be isomorphic to the \ét fundamental groups in codimension 1 of $X$ and $X^\circ$, respectively.

This classification is a direct consequence of the Gorenstein condition which, in a sense, bounds the canonical divisor. As soon as this condition is relaxed, e.g. by allowing $\Q$-Gorenstein singularities, results given in Theorem~\ref{thm:classificazione} can no longer hold, due to the arbitrariness of the Gorenstein index, as we will analyze in the next \S~\ref{sez:QFanotv} and, in particular, with Theorem~\ref{thm:Qclassificazione}.
\end{remark}

Theorem~\ref{thm:classificazione} can be easily checked in dimension 2 by means of the well known classification by degree of Fano toric surfaces \cite[\S~8.3, Fig.~2]{CLS}.

\begin{corollary}\label{cor:dim2}
  Setting $n=2$, up to weight equivalence, all the 2-dimensional Fano weight matrices are the following:
  \begin{itemize}
    \item $r=1$, then:
    \begin{enumerate}
      \item $Q=\left(
                 \begin{array}{ccc}
                   1 & 1 & 1 \\
                 \end{array}
               \right)=Q^\circ$ with $G_Q\cong\Z/3\Z$ and associated Fano toric surfaces given by degrees 3 and 9 in the degree classification, admitting multiplicities 3 and 1, respectively; in particular, 9 is $\P^2$ and is the universal 1-covering;
      \item $Q=\left(
                 \begin{array}{ccc}
                   1 & 1 & 2 \\
                 \end{array}
               \right)=Q^\circ$ with $G_Q\cong\Z/2\Z$ and associated Fano toric surfaces given by cases $4c$ and $8c$ in the degree classification, admitting multiplicities 2 and 1, respectively; in particular, $8c$ is $\P(1,1,2)$ and is the universal 1-covering;
      \item $Q=\left(
                 \begin{array}{ccc}
                   1 & 2 & 3 \\
                 \end{array}
               \right)=Q^\circ$ with $G_Q=\{0\}$ and the unique associated Fano toric surface $6d=\P(1,1,2)$, clearly admitting multiplicity 1;
    \end{enumerate}
    \item $r=2$, then:
    \begin{enumerate}
      \item $Q=\left(
                 \begin{array}{cccc}
                   1 & 1 & 0 & 0 \\
                   0 & 0 & 1 & 1 \\
                 \end{array}
               \right)=Q^\circ$ with $G_Q\cong\Z/2\Z$ and associated Fano toric surfaces given by cases $4a$ and $8a$ in the degree classification, admitting multiplicities 2 and 1, respectively; in particular, $8a$ is $\P^1\times\P^1$ and is the universal 1-covering;
      \item $Q=\left(
                 \begin{array}{cccc}
                   1 & 1 & 1 & 0 \\
                   0 & 0 & 1 & 1 \\
                 \end{array}
               \right)$ with $G_Q=\{0\}$ and the unique associated Fano toric surface given by $8b$, admitting multiplicity 1;
      \item $Q=\left(
                 \begin{array}{cccc}
                   1 & 1 & 1 & 1 \\
                   0 & 1 & 2 & 3 \\
                 \end{array}
               \right)$ with $G_Q=\{0\}$ and the unique associated Fano toric surface given by $4b$, admitting multiplicity 1; in particular this is the polar weight $Q^\circ$ with $Q$ as in the previous case (2);
      \item $Q=\left(
                 \begin{array}{cccc}
                   1 & 1 & 1 & 0 \\
                   0 & 1 & 2 & 1 \\
                 \end{array}
               \right)$ with $G_Q=\{0\}$ and the unique associated Fano toric surface given by $7b$, admitting multiplicity 1;
     \item $Q=\left(
                 \begin{array}{cccc}
                   1 & 1 & 2 & 0 \\
                   1 & 2 & 1 & 1 \\
                 \end{array}
               \right)$ with $G_Q=\{0\}$ and the unique associated Fano toric surface given by $5b$, admitting multiplicity 1; in particular this is the polar weight $Q^\circ$ with $Q$ as in the previous case (4);
     \item $Q=\left(
                 \begin{array}{cccc}
                   1 & 1 & 0 & 0 \\
                   0 & 1 & 1 & 2 \\
                 \end{array}
               \right)=Q^\circ$ with $G_Q=\{0\}$ and the unique associated Fano toric surface given by $6c$, admitting multiplicity 1;
    \end{enumerate}
    \item $r=3$, then
    \begin{enumerate}
      \item $Q=\left(
                 \begin{array}{ccccc}
                   1 & 1 & 0 & 0 & 0 \\
                   0 & 1 & 1 & 1 & 0 \\
                   0 & 0 & 0 & 1 & 1
                 \end{array}
               \right)$ with $G_Q=\{0\}$ and the unique associated Fano toric surface given by $7a$, admitting multiplicity 1;
      \item $Q=\left(
                 \begin{array}{ccccc}
                   1 & 1 & 0 & 0 & 0 \\
                   0 & 1 & 2& 1 & 0 \\
                   0 & 0 & 1 & 1 & 1
                 \end{array}
               \right)$ with $G_Q=\{0\}$ and the unique associated Fano toric surface given by $5a$, admitting multiplicity 1; in particular this is the polar weight $Q^\circ$ with $Q$ as in the previous case (1);
      \item $Q=\left(
                 \begin{array}{ccccc}
                   1 & 1 & 0 & 0 & 0 \\
                   0 & 1 & 1& 1 & 0 \\
                   0 & 0 & 1 & 2 & 1
                 \end{array}
               \right)=Q^\circ$ with $G_Q=\{0\}$ and the unique associated Fano toric surface given by $6b$, admitting multiplicity 1;
    \end{enumerate}
    \item $r=4$, then $Q=\left(
                           \begin{array}{cccccc}
                             1 & 1 & 0 & 0 & 0 & 0 \\
                             0 & 1 & 1 & 1 & 0 & 0 \\
                             0 & 0 & 1 & 0 & 1 &  0\\
                             0 & 0 & 0 & 1 & 0 & 1 \\
                           \end{array}
                         \right)=Q^\circ$ with $G_Q=\{0\}$ and the unique associated Fano toric surface given by $6a$, admitting multiplicity 1.
  \end{itemize}
\end{corollary}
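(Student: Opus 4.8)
The plan is to reduce the statement to a finite check against the known degree classification of Gorenstein Fano toric surfaces, as tabulated in \cite[\S~8.3, Fig.~2]{CLS}, and to extract from each entry its weight matrix, weight group, and multiplicity. Since $n=2$, Casagrande's bound $r\le 2n$ \cite{Casagrande06} forces $r\in\{1,2,3,4\}$, which is exactly the organizing principle of the asserted list. Moreover, in dimension $2$ there are no small contractions or flips: any small $\Q$-factorial modification between projective $\Q$-factorial surfaces is biregular, so an isomorphism in codimension $1$ is a genuine isomorphism. Hence Theorem~\ref{thm:classificazione} specializes to a bijection between isomorphism classes of Fano toric surfaces sharing a fixed Fano weight matrix $Q$ and subgroups $G\le G_Q$, with $\mult X=|G|$.

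For the concrete enumeration I would run each of the $16$ reflexive polygons through the following pipeline. From a surface $X$ in the list, read off a fan matrix $V\in\Z^{2,2+r}$ (the primitive ray generators of its fan), and compute the weight matrix $Q=\G(V)$ by Gale duality \cite[\S~3.1]{RT-LA&GD}; then normalise $Q$ up to $\GL$-equivalence (weight equivalence) so that surfaces with the same $Q$ are grouped together. By Corollary~\ref{cor:dualGale}, the polar partner $X^\circ$ has weight matrix $Q^\circ=\G(V^\circ)$, where the vertices of $\conv(V^\circ)=\Delta_{-K_X}$ are read from the same reflexive polygon; this identifies the polar pairings and the self-polar cases $Q=Q^\circ$. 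Finally, following \S~\ref{ssez:GQ}, I would form $W=\G(Q)$ and $\L=\G(Q^\circ)^\circ$, solve $\L=A\cdot W$ for $A\in\GL_2(\Q)\cap\Z^{2,2}$, and set $G_Q=\coker(\a^T)$ with $g_Q=|\det A|$; Proposition~\ref{prop:isoGQ} guarantees $G_{Q^\circ}\cong G_Q$, so this computation need only be done once per polar pair.

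With $G_Q$ in hand, each subgroup $G\le G_Q$ is matched, via the geometric quotient $X\cong Y/G$ of Theorem~\ref{thm:classificazione}(1), to a single surface in the degree list, and its multiplicity is recorded as $|G|$: the trivial subgroup yields the universal $1$-covering $Y$ (multiplicity $1$), and $G=G_Q$ yields $Z$ (multiplicity $g_Q$). For instance, $G_Q\cong\Z/3\Z$ in the $r=1$ case~(1) produces $\P^2$ (degree $9$, $\mult=1$) and $\P^2/(\Z/3\Z)$ (degree $3$, $\mult=3$), while $G_Q=\{0\}$ in every $r\ge 3$ case forces $\mult X=1$, so those surfaces coincide with their own universal $1$-coverings. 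As a consistency check one verifies the product relation $\mult X\cdot\mult(X^\circ)=g_Q$ of Theorem~\ref{thm:classificazione}(2), and confronts the outcomes with the sharp bound $\mult X\le[\,9/(2+r')\,]$ of Theorem~\ref{thm:bounds}(1): this gives $3,2,1,1$ for $r'=1,2,3,4$, pinning down exactly which entries realise equality.

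The argument contains no new idea beyond the theory already developed; the only real effort is the bookkeeping. The principal obstacle is therefore computational accuracy: carrying out the Gale duals of all $16$ polygons correctly, reducing the weight matrices to a common $\GL$-normal form so that genuinely equivalent cases are identified (and inequivalent ones kept apart), and correctly pairing each weight class with its polar partner and with the specific labels $4a,\ldots,8c$ of \cite[Fig.~2]{CLS}. Because the list is finite and small, this can be completed by direct inspection, and the resulting table is precisely the one asserted.
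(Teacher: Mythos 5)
Your proposal is correct and is essentially the route the paper intends: the paper offers no separate argument for Corollary~\ref{cor:dim2} beyond the remark that Theorem~\ref{thm:classificazione} ``can be easily checked in dimension 2'' against the degree classification of the $16$ reflexive polygons, and your pipeline (fan matrix $\to$ Gale dual $\to$ weight group $\to$ subgroups $\to$ multiplicities, with the consistency checks via $\mult X\cdot\mult(X^\circ)=g_Q$ and Theorem~\ref{thm:bounds}(1)) is precisely that finite verification. The additional observations you make — Casagrande's bound forcing $r\le 4$ and the fact that isomorphism in codimension $1$ is biregular for normal surfaces — are correct and only sharpen the bookkeeping.
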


The reader can find a further application of the Classification Theorem~\ref{thm:classificazione} in Example~\ref{ex:QFanoCanonica2}, namely, with $G_Q\cong\Z/3\Z\cong G_{Q^\circ}$ and $Y, Z$ and their polar partners $Z^\circ, Y^\circ$, with $Z\cong Y/G_Q$ and $Y^\circ\cong Z^\circ/G_{Q^\circ}$: these are 3-dimensional, $\Q$-factorial Fano toric varieties of rank $r=2$ and $r^\circ=3$, respectively.

\begin{example}[Double blow up and small contraction of $\P^3$]\label{ex:blupP3}
  This is a relatively easy example aimed to explain how concretely Theorem~\ref{thm:classificazione} can be applied when the weight group $G_Q$ admits non trivial subgroups.

  Let $\widehat{X}$ be the blow-up of $\P^3$ in two points $p,q\in\P^3$ and define $X$ by contracting the strict transform $\widehat{l}=\phi^*(l)$ of the line $l$ through $p$ and $q$, so getting a small birational contraction $f:\widehat{X}\to X$ factorizing the blow up, that is
  \begin{equation*}
    \xymatrix{&\widehat{X}\ar[dr]^-\phi\ar[dl]_-f&\\
            X\ar[rr]^-\vf&&\P^3}
  \end{equation*}
  Notice that $\widehat{X}$ is not Fano, as $K_{\widehat{X}}\cdot \widehat{l}=0$. On the contrary, $X$ is Fano even if it is no longer smooth nor $\Q$-factorial. By toric geometry, the situation can be well described as follows. Let
  \begin{equation*}
    V=\left( \begin {array}{cccccc} 1&0&0&0&-1&1\\ \noalign{\medskip}0&1&0&0&-1&1\\ \noalign{\medskip}0&0&1&-1&-1&1\end {array} \right)
  \end{equation*}
  \begin{figure}
\begin{center}
\includegraphics[width=8truecm]{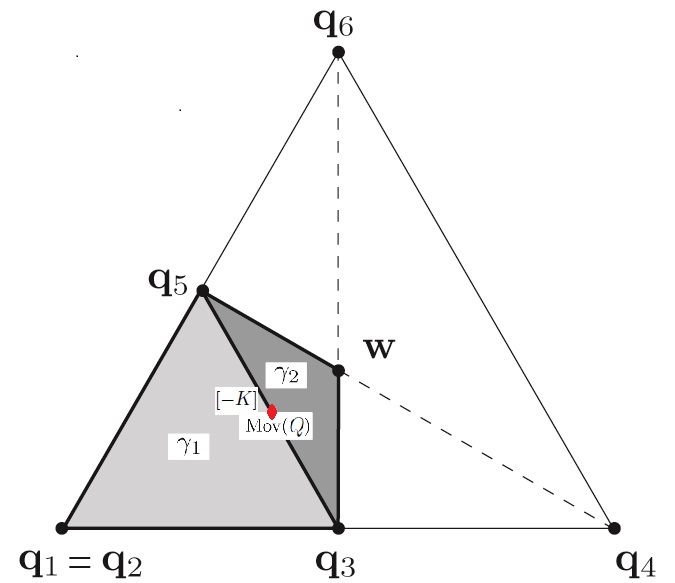}
\caption{\label{fig:mov}Ex.~\ref{ex:blupP3}: the section of the cone $\Mov(V)$ and its chambers, inside the pseudo-effective cone $\overline{\Eff}(V)$, as cut out by the plane $\sum_{i=1}^3x_i=1$.}
\end{center}
\end{figure}
be a fan matrix of $\widehat{X}$, where columns $\v_4,\v_6$ give the primitive generators, in the lattice $N\cong\Z^3$, of the rays whose associated torus invariant prime divisors give the exceptional divisors $E_1, E_2$ of the blow up. Then $\widehat{X}(\widehat{\Si})$ is defined by the regular and complete fan generated, as the fan of all faces, by its $n$-skeleton
  \begin{equation*}
    \widehat{\Si}(n):=\left\{\langle  2, 4, 5\rangle , \langle  2, 3, 5\rangle , \langle  1, 4, 5\rangle , \langle  1, 3, 5\rangle , \langle  2, 4, 6\rangle , \langle  2, 3, 6\rangle , \langle  1, 4, 6\rangle , \langle  1, 3, 6\rangle  \right\}
  \end{equation*}
  where $\langle i,j,k\rangle$ represents the cone $\langle \v_i,\v_j,\v_k\rangle\subset N_\R$ spanned by columns $i,j,k$ of $V$. A weight matrix of $\widehat{X}$ is given by
  \begin{equation*}
    Q=\G(V)=\left( \begin {array}{cccccc} 1&1&1&0&1&0\\ \noalign{\medskip}0&0&1&1&0&0\\ \noalign{\medskip}0&0&0&0&1&1\end {array} \right)
  \end{equation*}
  In the secondary fan, the situation is described by Fig.~\ref{fig:mov}, where $\q_i$ is the $i$-th column of $Q$ and $\w=(1,1,1)^T$\,. In particular, the pseudo-effective and the moving cones of $\widehat{X}$ are given by
  \begin{eqnarray*}
    \overline{\Eff}\left(\widehat{X}\right)&=&\overline{\Eff}(V)=\langle\q_1,\q_4,\q_6\rangle\\ \Mov\left(\widehat{X}\right)&=&\Mov(V)=\langle\q_1,\q_3,\w,\q_5\rangle
  \end{eqnarray*}
  and the Nef cone of $\widehat{X}$ is then given by the chamber $\g_1$, that is
    \begin{equation*}
    \Nef\left(\widehat{X}\right)=\g_1=\langle\q_1,\q_3,\q_5\rangle
    \end{equation*}
    The anti-canonical class is represented by the point labelled $[-K]$ in Fig.~\ref{fig:mov}, belonging to the boundary of the chamber $\g_1$, namely
    \begin{equation*}
    [-K]:=[-K_{\widehat{X}}]=Q\cdot\left(
                               \begin{array}{c}
                                 1 \\
                                 1 \\
                                 1 \\
                                 1 \\
                                 1 \\
                                 1 \\
                               \end{array}
                             \right)= \left(
                                        \begin{array}{c}
                                          4 \\
                                          2 \\
                                          2 \\
                                        \end{array}
                                      \right)\in \partial\Nef\left(\widehat{X}\right)
    \end{equation*}
    explicitly showing that $\widehat{X}$ cannot be Fano. Actually, $[-K]$ belongs to the relative interior of the cell $\g_1\cap\g_2=\langle\q_3,\q_5\rangle$\,. The latter turns out to be the nef cone of $X(\Si)$, where the fan $\Si$ is dually obtained by inserting this cone in the associated bunch of cones, that is, by joining the only two maximal cones whose union give the cone $\langle V^{\{3,5\}}\rangle=\langle 1,2,4,6\rangle\subset N_\R$, so that $\Si$ turns out to be the fan generated by its $n$-skeleton
    \begin{equation}\label{Sigma(n)}
      \Si(n):=\left\{\langle  2, 4, 5\rangle , \langle  2, 3, 5\rangle , \langle  1, 4, 5\rangle , \langle  1, 3, 5\rangle , \langle 1, 2, 4, 6\rangle , \langle  2, 3, 6\rangle ,  \langle  1, 3, 6\rangle  \right\}
    \end{equation}
    In other terms, $\Si$ is obtained by $\widehat{\Si}$ by deleting the $2$-dimensional face
    $$\langle 4,6\rangle=\langle 1,4,6\rangle\cap\langle 2,4,6\rangle$$
    Geometrically this process is just the contraction of the strict transform $\widehat{l}=E_1\cap  E_2$ of the line passing through the two blown up points, that is $f:\widehat{X}\to X$.

    \noindent Notice that, the existence of a second chamber $\g_2$ inside $\Mov(V)$ shows that there is a birational equivalent alternative description giving rise to a flip
    \begin{equation*}
      \xymatrix{&\widetilde{X}\ar[dl]\ar[dr]&\\
                \widehat{X}\ar[dr]^-f\ar[ddr]_-\phi\ar@{<-->}[rr]^-{(f')^{-1}\circ f}&&\widehat{X}'\ar[dl]_-{f'}\ar[ddl]^-{\phi'}\\
                &X\ar[d]^-\vf&\\
                &\P^3&}
    \end{equation*}
    where $\widetilde{X}$ is the blowup of $\P^3$ in three points in general position and $(f')^{-1}\circ f$ is an isomorphism in codimension 1. The complete toric description of this flip is left to the interested reader.

    \noindent Let us now focus on the Fano toric variety $X(\Si)$. Since it is a small contraction of the smooth variety $\widehat{X}$, the class group $\Cl(X)$ is free, meaning that $X$ is the universal 1-covering of itself. Therefore, its polar Fano variety $X^\circ$ turns out to be a maximal Fano quotient, from which one can compute $G_Q\cong G_{Q^\circ}$. Namely, a fan matrix of $X^\circ$ is given by
    \begin{equation*}
      V^\circ=\left( \begin {array}{ccccccc} -1&-1&-1&-1&1&1&3\\ \noalign{\medskip}-1&1&1&3&-1&-1&-1\\ \noalign{\medskip}1&-1&1&-1&-1&1&-1\end {array} \right)
    \end{equation*}
    and a weight matrix of $X^\circ$ is then deduced by Gale duality
    \begin{equation*}
      Q^\circ=\G(V^\circ)=\left( \begin {array}{ccccccc} 1&1&1&0&0&0&1\\ \noalign{\medskip}0&1&0&0&0&1&0\\ \noalign{\medskip}0&0&1&0&1&0&0\\ \noalign{\medskip}1&0&0&1&1&1&0\end {array} \right)
    \end{equation*}
    with torsion matrix
    \begin{equation}\label{Gammaduale}
      \Ga^\circ=\left( \begin {array}{ccccccc} \overline{0}&\overline{0}&\overline{0}&\overline{0}&\overline{1}&\overline{0}&\overline{1}\\ \noalign{\medskip}\overline{0}&\overline{0}&\overline{0}&\overline{0}&\overline{0}&\overline{1}&\overline{1}\end {array} \right)
    \end{equation}
    whose entries are in $\Z/2\Z$. Therefore
    \begin{equation*}
      G_{Q^\circ}\cong{\Z\over 2\Z}\oplus {\Z\over 2\Z}\cong G_Q
    \end{equation*}
     The universal 1-covering $\pi^\circ:Z^\circ\twoheadrightarrow X^\circ$ is obtained by a fan matrix of $Z^\circ$, that is, by Gale duality,
     \begin{equation*}
       \L^\circ=\G(Q^\circ)= \left( \begin {array}{ccccccc} 1&0&0&-1&0&0&-1\\ \noalign{\medskip}0&1&0&1&0&-1&-1\\ \noalign{\medskip}0&0&1&1&-1&0&-1\end {array} \right)
     \end{equation*}
     Then, by polar duality one obtains a fan matrix of the maximal Fano quotient $Z$ of $X$, namely
     \begin{equation*}
       \L=\left( \begin {array}{cccccc} -1&-1&-1&-1&1&1\\ \noalign{\medskip}-1&-1&1&1&-1&1\\ \noalign{\medskip}-1&1&-1&1&1&-1\end {array} \right)
     \end{equation*}
     This means that $Z(\Xi)$ is the Fano toric variety defined by the complete fan $\Xi$ generated by its $n$-skeleton $\Xi(n)$ admitting the same description of $\Si(n)$ given in (\ref{Sigma(n)}) but over the fan matrix $\L$\,. A torsion matrix of $Z$ is now given by
     \begin{equation*}
       \Ga=\left( \begin {array}{cccccc} \overline{0}&\overline{1}&\overline{1}&\overline{0}&\overline{1}&\overline{1}\\ \noalign{\medskip}\overline{0}&\overline{0}&\overline{0}&\overline{1}&\overline{0}&\overline{1}\end {array} \right)
     \end{equation*}
     Fano toric varieties admitting $Q$ as a weight matrix are then obtained by considering all the subgroups of $G_Q\cong \Z/2\Z\oplus\Z/2\Z$, that is
     \begin{equation*}
       \0=\langle \overline{0},\overline{0}\rangle\ ,\quad G_1:=\langle\overline{1},\overline{0}\rangle\,\quad G_1':=\langle\overline{1},\overline{1}\rangle\ ,\quad G_2:=\langle\overline{0},\overline{1}\rangle\ ,\quad G_Q
     \end{equation*}
     acting by restriction from the $G_Q$-action on $X$ defined by $\exp(\Ga)$, that is, in Cox's coordinates
     \begin{eqnarray*}
     &\xymatrix{G_Q\times X\ar[rrrrr]&&&&&X}\hskip4.5truecm&\\
       &\xymatrix{ \left((\overline{a},\overline{b}),[x_1:\cdots:x_6]\right)\ar@{|->}[r]&[x_1:(-1)^a x_2:(-1)^a x_3:(-1)^b x_4:(-1)^a x_5:(-1)^{a+b} x_6]}&
     \end{eqnarray*}
     Clearly, the quotients by $\0$ and $G_Q$ give $X$ and $Z$, respectively. Then, consider the quotient by $G_1$, accordingly with the action defined by the first row of $\Ga$, that is
     \begin{eqnarray*}
     &\xymatrix{G_1\times X\ar[rrrr]&&&&X}\hskip4truecm&\\
       &\xymatrix{ \left(\overline{a},[x_1:\cdots:x_6]\right)\ar@{|->}[r]&[x_1:(-1)^a x_2:(-1)^a x_3:x_4:(-1)^a x_5:(-1)^{a} x_6]}&
     \end{eqnarray*}
     Define $X_1:=X/G_1$, so that a fan matrix of $X_1(\Si_1)$ is given by
     \begin{equation*}
       V_1=\left( \begin {array}{cccccc} 1&0&0&0&-1&1\\ \noalign{\medskip}0&1&1&-1&-2&2\\ \noalign{\medskip}0&0&2&-2&-2&2\end {array} \right)
     \end{equation*}
     and the fan $\Si_1$ is obtained as the fan generated by its $n$-skeleton $\Si_1(n)$ described as $\Si(n)$ in (\ref{Sigma(n)}) but over $V_1$\,. Analogously, consider the quotient by $G_2$, accordingly with the action defined by the second row of $\Ga$, that is
     \begin{eqnarray*}
     &\xymatrix{G_2\times X\ar[rrr]&&&X}\hskip3.5truecm&\\
       &\xymatrix{ \left(\overline{b},[x_1:\cdots:x_6]\right)\ar@{|->}[r]&[x_1:x_2: x_3:(-1)^b x_4:x_5:(-1)^{b} x_6]}&
     \end{eqnarray*}
     Define $X_2:=X/G_2$, so that a fan matrix of $X_2(\Si_2)$ is given by
     \begin{equation*}
       V_2=\left( \begin {array}{cccccc} 1&1&0&0&-2&2\\ \noalign{\medskip}0&2&0&0&-2&2\\ \noalign{\medskip}0&0&1&-1&-1&1\end {array} \right)
     \end{equation*}
     and the fan $\Si_2$ is obtained as the fan generated by its $n$-skeleton $\Si_2(n)$ described as $\Si(n)$ in (\ref{Sigma(n)}) but over $V_2$\,. Notice that $V_1$ and $V_2$ are not $\GL$-equivalent, so that $X_1$ and $X_2$ are not isomorphic Fano toric varieties, although they turn out to have the same multiplicity
     \begin{equation*}
       \mult X_1=\mult X_2= 2
     \end{equation*}
     The quotient by $G_1'$, which is the diagonal subgroup of $G_Q$, is obtained by exponentiating the sum mod 2 on the two rows of $\Ga$, that is
     \begin{eqnarray*}
     &\xymatrix{G_1'\times X\ar[rrrr]&&&&X}\hskip4truecm&\\
       &\xymatrix{ \left((\overline{a},\overline{a}),[x_1:\cdots:x_6]\right)\ar@{|->}[r]&[x_1:(-1)^a x_2:(-1)^a x_3:(-1)^a x_4:(-1)^a x_5: x_6]}&
     \end{eqnarray*}
     Define $X_1':=X/G_1'$, so that a fan matrix of $X_1'(\Si_1')$ is given by
     \begin{equation*}
       V_1'=\left( \begin {array}{cccccc} 1&0&1&-1&-2&2\\ \noalign{\medskip}0&1&0&0&-1&1\\ \noalign{\medskip}0&0&2&-2&-2&2\end {array} \right)
     \end{equation*}
     and the fan $\Si_1'$ is obtained as the fan generated by its $n$-skeleton $\Si_1'(n)$ described as $\Si(n)$ in (\ref{Sigma(n)}) but over $V_1'$\,. Notice, now that $V_1$ and $V_1'$ are $\GL$-equivalent, as
     \begin{equation*}
       V_1'=A\cdot V_1\cdot S
     \end{equation*}
     with
     \begin{equation*}
       A=\left(
                    \begin{array}{ccc}
                      1 & 0 & -1 \\
                      0 & 1 & -1 \\
                      0 & 0 & -1 \\
                    \end{array}
                  \right)
       \in\GL_3(\Z)\,,\  S=\left(
                                \begin{array}{cccccc}
                                  1 & 0 & 0 & 0 & 0 & 0 \\
                                  0 & 1 & 0 & 0 & 0 & 0 \\
                                  0 & 0 & 0 & 0 & 1 & 0 \\
                                  0 & 0 & 0 & 0 & 0 & 1 \\
                                  0 & 0 & 1 & 0 & 0 & 0 \\
                                  0 & 0 & 0 & 1 & 0 & 0 \\
                                \end{array}
                              \right)\in\mathfrak{S}_6\le\GL_6(\Z)
     \end{equation*}
     Then, all the Fano toric varieties admitting $Q$ as a weight matrix are given by $X,X_1\cong X_1',X_2,Z$, with multiplicity $1,2,2,4$, respectively.

     \noindent For the polar dual situation, consider the polar partner $X_1^\circ$ of $X_1$. It has fan matrix given by the vertices of the polar polytope $\D_{-K_{X_1}}=\conv(V_1^\circ)$, that is
     \begin{equation*}
       V_1^\circ=\left( \begin {array}{ccccccc} -1&-1&-1&-1&1&1&3\\ \noalign{\medskip}-1&1&1&3&-1&-1&-1\\ \noalign{\medskip}1&-1&0&-2&0&1&0\end {array} \right)
     \end{equation*}
     In particular, $X_1^\circ\cong Z^\circ/G_2$ accordingly with the action defined by the second row of $\Ga^\circ$, as given in (\ref{Gammaduale}). Analogously, the polar partner $X_2^\circ$ of $X_2$ has fan matrix given by the vertices of the polar polytope $\D_{-K_{X_2}}=\conv(V_2^\circ)$, that is
     \begin{equation*}
       V_2^\circ=\left( \begin {array}{ccccccc} -1&-1&-1&-1&1&1&3\\ \noalign{\medskip}0&1&1&2&-1&-1&-2\\ \noalign{\medskip}1&-1&1&-1&-1&1&-1\end {array} \right)
     \end{equation*}
     In particular, $X_2^\circ\cong Z^\circ/G_1$ accordingly with the action defined by the first row of $\Ga^\circ$. Then, all the Fano toric varieties admitting $Q^\circ$ as a weight matrix, are given by $Z^\circ, X_1^\circ\cong X_1'^\circ, X_2^\circ, X^\circ$ with multiplicity $1,2,2,4$, respectively. The 1-covering factorizations given by (\ref{fattorizzazioni}) in Theorem~\ref{thm:fattorizzazione} are then the following ones
     \begin{equation*}
    \xymatrix{&X\ar[dl]_-{\pi_1}\ar[dd]_-{p}\ar[dr]^-{\pi_2}&&&&
    &Z^\circ\ar[dl]_-{\pi_1^\circ}\ar[dd]_-{p^T}\ar[dr]^-{\pi_2^\circ}& \\
                X_1\ar[dr]_-{\pi_1^{\circ T}}&&X_2\ar[dl]^-{\pi_2^{\circ T}}&\ar@{<~>}[r]^{\circ}&&X_1^\circ\ar[dr]_-{\pi_1^T}&&X_2^\circ\ar[dl]^-{\pi_2^T}\\
                &Z&&&&&X^\circ&}
  \end{equation*}
  Moreover:
  \begin{eqnarray*}
  % \nonumber to remove numbering (before each equation)
    (-K_X)^3 &=& 3!\Vol(\conv(V^\circ)) = 48 = \mult X^\circ\cdot|Q^\circ| \,,\ \mult X^\circ=4\,,\  |Q^\circ|=12 \\
    (-K_{X_1})^3 &=& 3!\Vol(\conv(V_1^\circ)) = 24 = \mult X_1^\circ\cdot|Q^\circ| \,,\ \mult X_1^\circ=2\\
    (-K_{X_2})^3 &=& 3!\Vol(\conv(V_2^\circ)) = 24 = \mult X_2^\circ\cdot|Q^\circ| \,,\ \mult X_2^\circ=2\\
    (-K_{Z})^3 &=& 3!\Vol(\conv(\L^\circ)) = 12 = \mult Z^\circ\cdot|Q^\circ| \,,\ \mult Z^\circ=1\\
    (-K_{Z^\circ})^3 &=& 3!\Vol(\conv(\L)) = 32 = \mult Z\cdot|Q| \,,\ \mult Z=4\,,\  |Q|=8\\
    (-K_{X_1^\circ})^3 &=& 3!\Vol(\conv(V_1)) = 16 = \mult X_1\cdot|Q| \,,\ \mult X_1=2\\
    (-K_{X_2^\circ})^3 &=& 3!\Vol(\conv(V_2)) = 16 = \mult X_2\cdot|Q| \,,\ \mult X_2=2\\
 (-K_{X^\circ})^3 &=& 3!\Vol(\conv(V)) = 8 = \mult X\cdot|Q| \,,\ \mult X=1\\
  \end{eqnarray*}
  matching with bounds given in Theorem~\ref{thm:Fano-bounds} and Corollary~\ref{cor:bounds} and upper bounds given in Theorem~\ref{thm:bounds}~(2), by observing that $r'=\max(r,r^\circ)=r^\circ=4$.
 \end{example}

   \section{Extending bounds to $\Q$-Gorenstein toric varieties}\label{sez:QFanotv}

   In the present section, $X(\Si)$ will be a $\Q$-Gorenstein toric variety.

   \begin{definition}[Gorenstein index]\label{def:indice}
     The minimum integer $k\in \N$ such that $kK_X$ is Cartier is called the \emph{Gorenstein index} of $X$. In the following, if there is no danger of confusion, we will simply call it \emph{the index of $X$}.
   \end{definition}

   Let us adopt the same notation as in \S~\ref{ssez:molteplicità}, that is
   $$n:=\dim X\ ,\quad r:=\rk(\Cl(X))\ ,\quad m:=n+r=|\Si(1)|$$
   and let
\begin{eqnarray*}
% \nonumber to remove numbering (before each equation)
  V \in \Z^{n,m}&&\text{be a fan matrix of $X(\Si)$} \\
  Q = \G(V)\in Z^{r,m}&&\text{be a weight matrix of $X$}
\end{eqnarray*}
In the present general setting, we no longer have a polar duality, as $\conv(V)$ is no longer a reflexive polytope and, in general, the polytope $\De_{-K_X}$ is no longer a lattice polytope but its vertices are rational points in $M_\Q$, that is
\begin{equation*}
  \De_{-K_X}=\conv(V^\circ)\quad\text{where}\quad V^\circ\in\Q^{n,m^\circ}
\end{equation*}
Notice that, by definition, the index $k$ of $X$ is the minimum positive integer such that $kV^\circ\in\Z^{n,m^\circ}$, i.e. $\De_{-kK_X}=\conv(kV^\circ)$ is a lattice polytope in $M$. Moreover, $V^\circ$ can be still written as in Lemma~\ref{lem:polar}, but with rational entries, that is
\begin{equation*}
    V^\circ=\left(-(V^I)^{*'}\cdot\1_n\right)_{I\in\I_\Si(n)}\in \Q^{n,m^\circ}
\end{equation*}
On the other hand, Lemma~\ref{lem:quozientepolare} still hold when we set
\begin{eqnarray}\label{Bmat duale}
\nonumber
  Q^\circ:=\G(kV^\circ)\in\Z^{r^\circ,m^\circ}&,& W^\circ={1\over k}\G(Q^\circ)\in\Q^{n,m^\circ}\\
   \Longrightarrow\ \exists!B\in GL_n(\Q)\cap\Z^{n,n}&:& W^\circ=B^T\cdot V^\circ
\end{eqnarray}
In fact, $kV^\circ\in\Z^{n,m^\circ}$ and $\rk(kV^\circ)=n$, which is enough to a get a good definition for a Gale dual matrix $Q^\circ=\G(V^\circ)$ \cite[\S~3.1]{RT-LA&GD}: notice that $Q^\circ$ does not depend on the choice of the integer $k$. Then in the following we will say that $Q^\circ$ is a Gale dual matrix of $V^\circ$, and write
\begin{equation}\label{dualQ}
  Q^\circ= \G(V^\circ)
\end{equation}
Notice that, being $kV^\circ$ an $F$-matrix, then $Q^\circ$ is a $W$-matrix and we can assume $Q^\circ$ to admit non-negative entries only \cite[Thm.~3.8]{RT-LA&GD}.

Moreover, calling $B$ the quotient matrix of $V$, the argument proving Lemma~\ref{lem:quozientepolare} shows that
$$W^\circ=B^T\cdot V^\circ$$

\begin{proposition}\label{prop:indici}
   $X$ is a $\Q$-Gorenstein ($\Q$-Fano) toric variety if and only its universal 1-covering $Y$ is a $\Q$-Gorenstein ($\Q$-Fano) toric variety. In particular, the Gorenstein index of $Y$ divides the Gorenstein index of $X$.
\end{proposition}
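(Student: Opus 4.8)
The plan is to reduce both properties to combinatorial conditions on the \emph{common} weight matrix $Q$, and then to read off the divisibility of indices from the integrality of the quotient matrix $B$. The key structural fact, on which everything rests, is that by the construction of the universal $1$-covering recalled in Remark~\ref{rem:Univ-costruzione} the toric variety $Y=Y(\Th)$ shares with $X$ both the weight matrix $Q$ and the index set of its maximal cones: indeed $W=\G(Q)$ satisfies $\G(W)=\G(V)=Q$, while $\Th(n)=\{\langle W^I\rangle\,|\,I\in\I_\Si(n)\}$ gives $\I_\Th(n)=\I_\Si(n)$. Thus Theorem~\ref{thm:Gorenstein}, together with the equivalences (\ref{QCartier})--(\ref{QFano}) established in its proof, applies to $Y$ verbatim with exactly the same data $(Q,\I_\Si(n))$.

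For the $\Q$-Gorenstein case I would use that $X$ is $\Q$-Gorenstein if and only if $K_X$ is $\Q$-Cartier, which by (\ref{QCartier}) amounts to the existence, for every $I\in\I_\Si(n)$, of some $\q_I\in\Q^{|I|}$ with $Q_I\cdot\q_I=Q\cdot\1_m$. Since this condition involves only $Q$ and $\I_\Si(n)$, it holds for $X$ exactly when it holds for $Y$. For the $\Q$-Fano case one adjoins the positivity requirement $\q_I>\0_{|I|}$ of (\ref{QFano}); here I would further note that $\Nef(Y)=\bigcap_{I\in\I_\Si(n)}\langle Q_I\rangle=\Nef(X)$ and that $[-K_X]$ and $[-K_Y]$ have the same free part $Q\cdot\1_m$, so that an anti-canonical multiple lands in the relative interior of the nef cone of $X$ precisely when it does so for $Y$. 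Both equivalences follow, since every relevant condition is expressed entirely through the shared pair $(Q,\I_\Si(n))$.

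For the divisibility of the indices, recall from the discussion preceding the statement that the Gorenstein index of $X$ is the least $k\in\N$ with $kV^\circ\in\Z^{n,m^\circ}$, and likewise the index of $Y$ is the least $k$ with $kW^\circ\in\Z^{n,m^\circ}$, where $W^\circ=B^T\cdot V^\circ$ with $B^T\in\GL_n(\Q)\cap\Z^{n,n}$ integral. Writing $k_X,k_Y$ for these indices, integrality of $B^T$ gives
\[
k_X W^\circ=B^T\cdot(k_X V^\circ)\in\Z^{n,m^\circ},
\]
whence $k_Y\,|\,k_X$, because the integers clearing the denominators of $W^\circ$ are exactly the multiples of $k_Y$. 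Geometrically this merely reflects that $\pi$, being \ét in codimension $1$, carries no ramification divisor, so $K_Y=\pi^*K_X$ and the pull-back of a Cartier multiple of $K_X$ stays Cartier.

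The only genuinely delicate points are the two inputs I have taken for granted: that $Y$ really inherits the pair $(Q,\I_\Si(n))$ from $X$, which is precisely what Remark~\ref{rem:Univ-costruzione} guarantees, and the identification $\conv(W^\circ)=\De_{-K_Y}$ underlying the index characterization for $Y$ (together with $W^\circ=B^T\cdot V^\circ$), which comes from the rational version of Lemma~\ref{lem:quozientepolare} set up in (\ref{Bmat duale}). Once these are in place, the statement is a formal consequence of the linear-algebraic facts above, so I expect the main obstacle to lie in cleanly justifying that the combinatorial $\Q$-Cartier and ampleness conditions depend only on $(Q,\I_\Si(n))$ and not on the torsion data distinguishing $X$ from $Y$.
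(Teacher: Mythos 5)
Your proposal is correct and follows essentially the same route as the paper: the first part is reduced to the combinatorial $\Q$-Cartier/positivity conditions of Theorem~\ref{thm:Gorenstein}, which depend only on the shared data $(Q,\I_\Si(n))$, and the divisibility of indices is read off from $k_X W^\circ=B^T\cdot(k_X V^\circ)\in\Z^{n,m^\circ}$ exactly as in the paper's proof via (\ref{Bmat duale}). The extra geometric gloss ($K_Y=\pi^*K_X$ since $\pi$ is \ét{} in codimension~1) is a correct but inessential addition.
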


\begin{proof}
 The first part of the statement follows by observing that both $X$ and its universal 1-covering $Y$ have the same weight matrix and the same indexing set $\I_\Si(n)$. In particular, being $\Q$-Fano for a toric variety is characterized by being $\Q$-Fano for its weight matrix $Q$.

 The second part of the statement follows by (\ref{Bmat duale}). In fact,
 \begin{equation*}
   kW^\circ=B^T\cdot(kV^\circ)\in \Z^{n,m^\circ}
 \end{equation*}
 and $\conv(kW^\circ)=\De_{-kK_Y}$. Then $kK_Y$ is Cartier, meaning that the index of $Y$ has to divide $k$. \end{proof}

 \begin{remark}
   The previous Proposition~\ref{prop:indici} is a generalization to any $\Q$-Gorenstein toric variety of what proved by B\"{a}uerle for a fake wps in \cite[Cor.~13]{Bauerle}.
 \end{remark}

\subsection{The weight group of a $\Q$-Gorenstein toric variety} Define the integer matrix
\begin{equation}\label{Lpolar}
  \L^\circ:=\G(Q^\circ)\in\Z^{n,m^\circ}
\end{equation}
Notice that $kV^\circ$ and $\widehat{k}W^\circ$, where $k$ and $\widehat{k}$ are indexes of $X$ and $Y$, respectively, are $F$-matrices, but, they may be non-reduced, meaning that, in general, they cannot be considered as fan matrices of toric varieties. By \cite[Prop.~3.11]{RT-LA&GD}, $\L^\circ$ is a $CF$-matrix but, again, it can be non-reduced.

\begin{definition}[Index of an $F$-matrix]\label{def:indiceFmat }
  Given an $F$-matrix $V$, consider the polytope $\conv(V)$ in $N_\R$ and let $V^\circ$ be a matrix whose columns are given by all the vertices of the polar polytope
  \begin{equation*}
    \conv(V)^\circ:=\{u\in M_\R\,|\forall\,v\in\conv(V)\ <u,v>\,\ge -1\rangle\}
  \end{equation*}
  Define the \emph{index} of $V$ to be the minimum positive integer $k\in\N\setminus\{0\}$ such that $kV^\circ$ admits integer entries. Since $V^\circ$ is defined up to permutations on columns and $\GL$-equivalence, the index $k$ of $V$ turns out to be well defined. Clearly, if $V$ is the fan matrix of a $\Q$-Fano toric variety $X$, the index of $V$ coincides with the Gorenstein index of $X$, as defined in Definition~\ref{def:indice}.
\end{definition}

\begin{proposition}\label{prop:dualcover}
  Consider $\L^\circ$ as defined in (\ref{Lpolar}). Then it has the same index  $\widehat{k}$ of the universal 1-covering $Y$ of $X$.
\end{proposition}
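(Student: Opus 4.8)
The plan is to exhibit a single integer matrix $D$ that controls both indices simultaneously, and then to show that each index is the least positive integer clearing the same denominator $1/k$ attached to $D$.

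First I would record the divisibility that links the two Gale-dual presentations of $Q^\circ$. Since the index $\widehat{k}$ of $Y$ divides the index $k$ of $X$ (Proposition~\ref{prop:indici}) and $\widehat{k}W^\circ$ is integral, the matrix $kW^\circ$ is an integral $F$-matrix. Moreover $kW^\circ=B^T\cdot(kV^\circ)$ by (\ref{Bmat duale}), and Gale duality depends only on the lattice of integral linear relations among the columns, which is unchanged under left multiplication by $B^T\in\GL_n(\Q)\cap\Z^{n,n}$; hence $\G(kW^\circ)=\G(kV^\circ)=Q^\circ$. Therefore $\L^\circ=\G(Q^\circ)$ is precisely the $CF$-matrix associated with $kW^\circ$, and exactly as in \S~\ref{sssez:divisione} there is a unique $D\in\GL_n(\Q)\cap\Z^{n,n}$ with
\[
 kW^\circ=D\cdot\L^\circ,\qquad\text{equivalently}\qquad W^\circ=\tfrac1k\,D\cdot\L^\circ,\qquad \L^\circ=k\,D^{-1}\cdot W^\circ.
\]

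Next I would read off $\widehat{k}$ from $D$. By definition $\widehat{k}$ is the least positive integer $j$ with $jW^\circ\in\Z^{n,m^\circ}$. Writing $jW^\circ=\tfrac jk\,D\,\L^\circ$ and using that the columns of the $CF$-matrix $\L^\circ$ generate $\Z^n$ (so $\Ls_c(\L^\circ)=\Z^n$), this membership holds if and only if $\tfrac jk\,D\in\Z^{n,n}$. Thus $\widehat{k}$ equals the least positive integer $j$ with $\tfrac jk\,D$ integral. Then I would compute the index of $\L^\circ$ through its polar, as prescribed by Definition~\ref{def:indiceFmat}. From $\L^\circ=k\,D^{-1}W^\circ$ and $\conv(W^\circ)=\De_{-K_Y}=\conv(W)^\circ$ one gets $\conv(\L^\circ)=k\,D^{-1}\De_{-K_Y}$; dualizing, via $(gP)^\circ=(g^T)^{-1}P^\circ$ for $g\in\GL_n(\Q)$ together with involutivity of the polar on the lattice polytope $\conv(W)$, yields
\[
 \conv(\L^\circ)^\circ=\tfrac1k\,D^{T}\cdot(\De_{-K_Y})^\circ=\tfrac1k\,D^{T}\cdot\conv(W).
\]
Since $\tfrac1k D^{T}$ is a linear isomorphism carrying vertices to vertices and the columns of $W$ are the vertices of $\conv(W)$, the vertex matrix of $\conv(\L^\circ)^\circ$ is $\tfrac1k D^{T}W$. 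Hence the index of $\L^\circ$ is the least positive $\ell$ with $\tfrac\ell k D^{T}W\in\Z^{n,m}$; as $\Ls_c(W)=\Z^n$ this is equivalent to $\tfrac\ell k D^{T}\in\Z^{n,n}$, i.e. to $\tfrac\ell k D\in\Z^{n,n}$.

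Comparing the two computations, the index of $\L^\circ$ and the index $\widehat{k}$ of $Y$ are both the least positive integer clearing the denominators of $\tfrac1k D$, so they coincide. The step I expect to be delicate is the geometric input in the third paragraph: the identities $\conv(W^\circ)=\De_{-K_Y}=\conv(W)^\circ$ and the assertion that every column of $W$ is a vertex of $\conv(W)$ require $Y$ to be $\Q$-Fano, not merely $\Q$-Gorenstein. I would dispose of this by invoking Lemma~\ref{lem:iso1} to replace $Y$ by a $\Q$-Fano variety isomorphic in codimension $1$ and sharing the same fan matrix $W$; since $\De_{-K_Y}=\conv(W)^\circ$ depends only on the rays of $Y$, the Gorenstein index $\widehat{k}$ is unaffected, and by the closing remark of Definition~\ref{def:indiceFmat} it equals the index of the $F$-matrix $W$. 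With this reduction both sides of the claimed equality are intrinsic to the $F$-matrices $W$ and $\L^\circ$, and the determinantal bookkeeping above completes the argument.
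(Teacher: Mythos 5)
Your proof is correct and follows essentially the same route as the paper's: both rest on the integral quotient matrix linking $W^\circ$ and $\L^\circ$, whose transpose links $\L$ and $W$ under polar dualization, with the $CF$-property ($\Ls_c(\L^\circ)=\Z^n=\Ls_c(W)$) converting integrality of the scaled matrices into integrality of the quotient matrix itself. The only differences are cosmetic: you package the two divisibilities $\widetilde{k}\,|\,\widehat{k}$ and $\widehat{k}\,|\,\widetilde{k}$ (which the paper proves separately via the two quotient matrices $A$ and $\widetilde{A}$) into a single denominator-clearing condition on $\tfrac1k D$, and you make explicit the bipolarity/vertex hypotheses that the paper uses implicitly when writing $(W^\circ)^\circ=\tfrac{1}{\widehat{k}}\cdot\widehat{k}W=W$.
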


\begin{proof}

  Let us assume $\widetilde{k}\in\N$ to be the index of $\L^\circ$.
  Recall that $\widehat{k}W^\circ$ is an $F$-matrix, where $\widehat{k}$ is the index of $Y$, that is the index of $W$. By  \cite[Prop.~3.1~(3)]{RT-LA&GD}, $\widehat{k}W^\circ$ admits a quotient matrix that is
\begin{equation}\label{A}
  \exists!\, A^T\in\GL_n(\Q)\cap\Z^{n,n}:\quad \widehat{k}W^\circ=A^T\cdot \L^\circ
\end{equation}
Dualizing (\ref{A}) and recalling Lemma~\ref{lem:polar}, we get
\begin{equation}\label{Lambda}
  \L:=(\L^\circ)^\circ =A\cdot (\widehat{k}W^\circ)^\circ={1\over\widehat{k}}A\cdot W
\end{equation}
meaning that $\widehat{k}\L\in\Z^{n,m^\circ}$, that is, $\widetilde{k}\,|\,\widehat{k}$.

On the other hand, by definition $\widetilde{k}\L\in\Z^{n,m}$, so that it is an $F$-matrix admitting a quotient matrix $\widetilde{A}\in\GL_n(\Q)\cap\Z^{n,n}$ such that
\begin{equation}\label{Aduale}
  \widetilde{k}\L =\widetilde{A}\cdot W\in\Z^{n,m}
\end{equation}
Then, dualizing this relation, we get
\begin{equation*}
  \widetilde{A}^T\cdot \L^\circ=\widetilde{k}W^\circ \in Z^{n,m^\circ}
\end{equation*}
meaning that $\widehat{k}\,|\,\widetilde{k}$, so ending up the proof.
\end{proof}

With respect to fixed bases, the quotient matrix $A^T$ defined in (\ref{A}) represents a lattice endomorphism $\a^T:M\to M$.

\begin{remark}\label{rem:C}
   Recall that $kV^\circ$ is an $F$-matrix, so that it admits a quotient matrix, that is
\begin{equation}\label{C}
  \exists!\, C\in\GL_n(\Q)\cap\Z^{n,n}:\quad kV^\circ=C\cdot \L^\circ
\end{equation}
In the fixed bases, $C$ represents a lattice endomorphism $\g:M\to M$.

On the other hand, the Cox's quotient structure of $X\cong Y/G$ induces, as in the Gorenstein case, a lattice endomorphism $\b:N\to N$, represented by the quotient matrix $B$ of $V$.

Putting together (\ref{Bmat duale}), (\ref{A}) and (\ref{C}), we get the following factorization of lattice endomorphisms
\begin{equation}\label{fattorizzazione}
  \xymatrix{M\ar@/^1pc/[rr]^-{h\a^T}\ar[r]_-{\a^T}\ar[dr]_-\g&M\ar[r]_-{h\cdot}&M\\
            &M\ar[ur]_-{\b^T}&}
\end{equation}
where $\coker(\b^T)\cong G$ and $h\cdot$ is the multiplication by the integer
$$h:=k/\widehat{k}\in\N$$
(recall Proposition~\ref{prop:indici}). In the following, we will call $h$ the \emph{factor} of $X$\,.
 \end{remark}

\begin{definition}\label{def:QFanoGQ}
  As in the Gorenstein case, define the \emph{weight group} $G_Q$ of $Q$ and its order as
\begin{equation*}
  G_Q:=\coker(\a^T)\ \Longrightarrow\ g_Q:=|G_Q|=|\det A|
\end{equation*}
Given a positive integer $h\in\N\setminus\{0\}$, define the \emph{$h$-extension} of $G_Q$ the bigger group
\begin{equation*}
  \widehat{G}^h_Q:=\coker(h\a^T)\ \Longrightarrow\ \widehat{g}^h_Q:=|\widehat{G}^h_Q|=h^n|\det A|
\end{equation*}
Since $\a^T$ is an injective lattice map, there is a canonical short exact sequence
\begin{equation}\label{succ_esatta_pesi}
  \xymatrix{0\ar[r]&G_Q\ar[r]&\widehat{G}^h_Q\ar[r]&\displaystyle{{\im(\a^T)\over \im(h\a^T)}\cong\left({\Z\over h\Z}\right)^{\oplus n}}\ar[r]&0}
\end{equation}
\end{definition}

 \begin{remark}\label{rem:dualcover}
   It may happen that $\L^\circ$ is also reduced. For instance, when the rank $r:=\rk(\Cl(X))=1$: in fact, by a result of Conrads \cite[Lemma~5.3]{Conrads}, if $r=1$ then $Q^\circ=Q$, so that $\L^\circ=W$.  Then, being $\L^\circ$ a reduced $CF$-matrix, one can define a complete toric variety $Z^\circ(\Xi^\circ)$, where $\Xi^\circ$ is the fan spanned by the faces of the polytope $\conv(\L^\circ)$. In particular
   \begin{itemize}
     \item[] \emph{$Z^\circ(\Xi^\circ)$ is a simply connected in codimension 1, $\Q$-Fano toric variety admitting the same index $\widehat{k}$ of the universal 1-covering $Y$ of $X$.}
   \end{itemize}
   In fact, $Z^\circ$ is simply connected in codimension 1 because its fan matrix $\L^\circ$ is a $CF$-matrix, implying that $|\pi_1^{\text{ét}}(Z^\circ)^{(1)}|=|\Tors(\Cl(Z^\circ))|=[\Z^n: \Ls_c(\L^\circ)]=1$. Moreover, $Z^\circ$ is $\Q$-Fano, because the polytope associated with the anti-canonical divisor $-K_{Z^\circ}$ is a polytope with $m=|\Xi(1)|=|\Xi^\circ(n)|$ vertices, being $\Xi$ the normal fan of the polytope $\conv(\L^\circ)$. Namely:
  \begin{equation*}
    \De_{-K_{Z^\circ}}=\conv(\L)\quad\text{with}\quad\L:=(\L^\circ)^\circ\in\Q^{n,m}
  \end{equation*}
  Then, a positive integer multiple of $-K_{Z^\circ}$ is ample. Finally, Proposition~\ref{prop:dualcover} shows that $Z^\circ$ and $Y$ have the same index $\widehat{k}$.

In the following, if existing, we will refer to $Z^\circ$ as the \emph{polar 1-covering} of $X$. In particular, if $Q^\circ=Q$ (e.g. when $r=1$) then $Z^\circ\cong Y$, as the fan spanned by the faces of the polytope $\conv(\L^\circ)=\conv(W)$ is the normal fan to the polytope $\conv(\L)=\conv(W^\circ)=\D_{-K_Y}$\,.
 \end{remark}

 \begin{definition}\label{def:selfintduale}
   Recalling the combinatorial expression of the degree of a Fano toric variety given in (\ref{autointersezione2}), independently on the effective existence of the polar 1-covering $Z^\circ$ of a $\Q$-Fano toric variety $X$, in the following we will set
   \begin{equation*}
     (-K_{Z^\circ})^n:=n!{\Vol(\conv(\L))}\in\Q
   \end{equation*}
   where $\L=(\L^\circ)^\circ$, as in (\ref{Lambda}).
 \end{definition}

 \subsection{Bounds on $\Q$-Gorenstein varieties}\label{ssez:Qbounds}

 We are now in a position to give a $\Q$-relative version of Theorem~\ref{thm:Fano-bounds}.

 \begin{theorem}\label{thm:QFano-bounds}
     Let $X$ be a $\Q$-Gorenstein toric variety, $V$ its fan matrix and $Q=\G(V)$ its weight matrix. Let $\pi:Y\twoheadrightarrow X$ be the universal 1-covering of $X$ and $Q^\circ,\L^\circ$ as defined in (\ref{dualQ}) and (\ref{Lpolar}), respectively. Recalling Proposition~\ref{prop:indici}, let $k,\widehat{k}$ be indexes of $X,Y$, respectively, and $h$ be the factor of $X$. Recall quotient matrices $B,A,C$ introduced in (\ref{Bmat duale}), (\ref{A}), (\ref{C}), respectively, and the meaning of $(-K_{Z^\circ})^n$, as defined in Definition~\ref{def:selfintduale}.
       Then:
    \begin{enumerate}
      \item $\mult X=|\det B|= n!\,\displaystyle\frac{\Vol(\conv(V))}{|Q|}$
      \item $(-kK_X)^n=|\det C|\,|Q^\circ|$
      \item $g_Q=|\det A|= \displaystyle{(-\widehat{k}K_Y)^n\over |Q^\circ|}=\displaystyle{\widehat{k}^n(-K_{Z^\circ})^n\over |Q|}$
      \item $\widehat{g}^h_Q=h^ng_Q=h^n\,|\det A|= \displaystyle{(-kK_Y)^n\over |Q^\circ|}=\displaystyle{k^n(-K_{Z^\circ})^n\over |Q|}$
      \item $\mult X\cdot\displaystyle{(-kK_X)^n\over |Q^\circ|}= \widehat{g}^h_Q$
    \end{enumerate}
 \end{theorem}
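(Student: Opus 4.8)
The plan is to reduce all five items to two mechanisms --- volume scaling under the integer quotient matrices $B,A,C$, and the determinant of the factorization (\ref{fattorizzazione}) --- using throughout the normalized-volume interpretation of the modulus established right after its definition (\ref{Q-modulo}), namely $|Q|=n!\Vol(\conv(W))$ with $W=\G(Q)$ and $|Q^\circ|=n!\Vol(\conv(\L^\circ))$ with $\L^\circ=\G(Q^\circ)$. Item (1) would then be proved exactly as in the Gorenstein case of Theorem~\ref{thm:Fano-bounds}: diagram (\ref{div-diagram-covering}), and hence (\ref{molteplicità}), hold for any complete toric variety, giving $\mult X=|\det B|$, while the general identities (\ref{multX}) and (\ref{VolV}) (whose proofs in \cite{RT-LA&GD} do not use a Fano hypothesis) give $\mult X=[\Z^n:\Ls_c(V)]=n!\Vol(\conv(V))/|Q|$. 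The only change is that, lacking polar duality, $n!\Vol(\conv(V))$ is left as such rather than read off as $(-K_{X^\circ})^n$.

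For item (2) I would use that $\De_{-kK_X}=\conv(kV^\circ)$ is now a \emph{lattice} polytope, so $(-kK_X)^n=n!\Vol(\conv(kV^\circ))$; since $kV^\circ=C\cdot\L^\circ$ by (\ref{C}), the map $C$ carries $\conv(\L^\circ)$ onto $\conv(kV^\circ)$ and scales volume by $|\det C|$, whence $(-kK_X)^n=|\det C|\,|Q^\circ|$. Item (3) then splits in two. Its first nontrivial equality comes from applying item (2) to $Y$ itself: $Y$ is its own universal 1-covering, has index $\widehat{k}$, and has polar weight matrix again $Q^\circ$, because $W^\circ=B^T\cdot V^\circ$ with $B^T\in\GL_n(\Q)$ forces $\G(W^\circ)=\G(V^\circ)=Q^\circ$ (Gale duality is unchanged under left multiplication by a rationally invertible integer matrix; cf.\ Corollary~\ref{cor:dualGale}); the matrix playing the role of $C$ for $Y$ is $A^T$ by (\ref{A}), so $(-\widehat{k}K_Y)^n=|\det A|\,|Q^\circ|$, i.e.\ $g_Q=(-\widehat{k}K_Y)^n/|Q^\circ|$. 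The second equality feeds $\widehat{k}\L=A\cdot W$ from (\ref{Lambda}) into the volume computation, giving $\widehat{k}^{\,n}\,n!\Vol(\conv(\L))=|\det A|\,n!\Vol(\conv(W))=|\det A|\,|Q|$, which reads $\widehat{k}^{\,n}(-K_{Z^\circ})^n=g_Q\,|Q|$ by Definition~\ref{def:selfintduale}.

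Item (4) is pure rescaling by $h=k/\widehat{k}$: the homogeneity $(-\lambda K)^n=\lambda^n(-K)^n$ turns item (3) into $(-kK_Y)^n=h^n g_Q\,|Q^\circ|$ and $k^n(-K_{Z^\circ})^n=h^n g_Q\,|Q|$, and dividing by $|Q^\circ|$, resp.\ $|Q|$, together with $\widehat{g}^h_Q=h^n|\det A|=h^n g_Q$ from Definition~\ref{def:QFanoGQ}, yields all four stated equalities. Finally, item (5) is the keystone tying the two halves together: items (1) and (2) give $\mult X\cdot(-kK_X)^n/|Q^\circ|=|\det B|\,|\det C|$, and reading (\ref{fattorizzazione}) on representative matrices as $B^T\cdot C=h\,A^T$ and taking determinants gives $|\det B|\,|\det C|=h^n|\det A|=\widehat{g}^h_Q$.

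The steps needing the most care are the two volume-scaling identities --- ensuring the quotient matrices act on columns so that $\conv(C\cdot\L^\circ)=C\,\conv(\L^\circ)$, and that the homogeneity of self-intersection is applied to the correct integer multiple --- and, above all, the transfer of item (2) to $Y$ inside item (3): one must justify that $Y$ carries polar weight matrix $Q^\circ$ and that its associated quotient matrix is exactly $A^T$, which is precisely where the invariance of Gale duality under left $\GL_n(\Q)$-multiplication and the defining relation (\ref{A}) do the work.
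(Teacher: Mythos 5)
Your proposal is correct and follows essentially the same route as the paper: items (1)–(2) via the lattice-index/volume identities behind (\ref{multX}) and (\ref{VolV}) (your direct volume-scaling by $|\det C|$ is equivalent to the paper's use of $[\Z^n:\Ls_c(kV^\circ)]$), item (3) via relations (\ref{A}) and (\ref{Lambda}) together with the combinatorial meaning of the self-intersections, item (4) by $h$-rescaling, and item (5) by taking determinants in the factorization (\ref{fattorizzazione}), i.e.\ $hA^T=B^T\cdot C$. The points you flag as delicate (that $Y$ again has polar weight matrix $Q^\circ$, with $A^T$ playing the role of $C$) are exactly what the paper relies on through (\ref{A}) and the $\GL_n(\Q)$-invariance of Gale duality, so no gap remains.
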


 As a consequence, we get the following

  \begin{corollary}\label{cor:Q-bounds}
    Same notation as above, then:
    \begin{enumerate}
      \item $\mult X\,|\,\widehat{g}^h_Q=\displaystyle{(-kK_Y)^n\over |Q^\circ|}=\displaystyle{k^n(-K_{Z^\circ})^n\over |Q|}$
      \item $(-kK_X)^n\ |\ (-kK_Y)^n=\widehat{g}^h_Q\,|Q^\circ|$ and
      \begin{equation*}
        (-kK_X)^n = g_Q\,|Q^\circ| \Longleftrightarrow X\cong Y
      \end{equation*}
      \item $|Q^\circ|\ |\ (-kK_X)^n$
    \end{enumerate}
  \end{corollary}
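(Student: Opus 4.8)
The plan is to read off every assertion directly from Theorem~\ref{thm:QFano-bounds}, using only that the quotient matrices $B,C,A\in\GL_n(\Q)\cap\Z^{n,n}$ have nonzero \emph{integer} determinants, together with the single multiplicative relation carried by the factorization (\ref{fattorizzazione}). Reading that diagram on representative matrices gives $hA=C^T\cdot B$, whence
\begin{equation*}
  h^n\,|\det A|=|\det C|\cdot|\det B|.
\end{equation*}
Recall from Theorem~\ref{thm:QFano-bounds} that $\mult X=|\det B|$, that $g_Q=|\det A|$ by definition, and that $(-kK_X)^n=|\det C|\,|Q^\circ|$, so $|\det C|=(-kK_X)^n/|Q^\circ|$ is a positive integer.

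Item (3) is then immediate, since $(-kK_X)^n=|\det C|\,|Q^\circ|$ with $|\det C|\in\N$ forces $|Q^\circ|\mid(-kK_X)^n$. For item (1) I would start from Theorem~\ref{thm:QFano-bounds}(5), which reads $\mult X\cdot|\det C|=\widehat{g}^h_Q$; as $|\det C|$ is a positive integer this yields $\mult X\mid\widehat{g}^h_Q$, and the two displayed expressions for $\widehat{g}^h_Q$ are exactly Theorem~\ref{thm:QFano-bounds}(4). The divisibility in item (2) is obtained identically: by Theorem~\ref{thm:QFano-bounds}(2) and (4) we have $(-kK_X)^n=|\det C|\,|Q^\circ|$ and $(-kK_Y)^n=\widehat{g}^h_Q\,|Q^\circ|=|\det C|\,|\det B|\,|Q^\circ|$, whose quotient is the positive integer $|\det B|=\mult X$.

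The one point needing genuine care is the biconditional $(-kK_X)^n=g_Q\,|Q^\circ|\Leftrightarrow X\cong Y$. The implication $\Leftarrow$ is painless: if $X\cong Y$ the two varieties share their Gorenstein index, so $k=\widehat{k}$ and the factor is $h=1$, and Theorem~\ref{thm:QFano-bounds}(3) gives $g_Q\,|Q^\circ|=(-\widehat{k}K_Y)^n=(-kK_X)^n$. For $\Rightarrow$, the hypothesis is equivalent, via Theorem~\ref{thm:QFano-bounds}(2) and $g_Q=|\det A|$, to $|\det C|=|\det A|$; substituting into $h^n|\det A|=|\det C|\,|\det B|$ produces $|\det B|=h^n$, that is $\mult X=h^n$.

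The main obstacle is therefore to upgrade $\mult X=h^n$ to triviality of the covering, $\mult X=1$. Here I would invoke the universal 1-covering $\pi:Y\twoheadrightarrow X$ directly: being finite of degree $\mult X$ and \'etale in codimension~$1$, it satisfies $K_Y=\pi^*K_X$, so the projection formula gives $(-K_Y)^n=\mult X\,(-K_X)^n$. Feeding this into the identity $(-kK_X)^n=(-\widehat{k}K_Y)^n$ of Theorem~\ref{thm:QFano-bounds}(3) gives $k^n=\widehat{k}^n\,\mult X$, i.e. $\mult X=(k/\widehat{k})^n=h^n$ once more, so the whole content of the equivalence sits in the claim that the covering degree is $1$ exactly when $h=1$. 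The cleanest way I see to close it is to compare against the \emph{top} of the divisibility tower of item (2): the index-sharp equality $(-kK_X)^n=(-kK_Y)^n=\widehat{g}^h_Q\,|Q^\circ|$ turns, by the same determinant computation, into $|\det B|=1$, hence $\mult X=1$ and $X\cong Y$. I expect this reconciliation between the $g_Q$ and $\widehat{g}^h_Q$ normalizations to be the delicate step, and it is exactly where the factor $h$ must be shown to collapse.
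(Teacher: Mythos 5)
Items (1), (3) and the divisibility chain in (2) are handled exactly as the paper does: everything is read off from Theorem~\ref{thm:QFano-bounds}(2),(4),(5) together with the integrality of $|\det B|$ and $|\det C|$, and your determinant identity $h^n|\det A|=|\det C|\cdot|\det B|$, coming from $hA=C^T\cdot B$, is correct (it is confirmed numerically in Example~\ref{ex:Bauerle}). The implication $X\cong Y\Rightarrow(-kK_X)^n=g_Q\,|Q^\circ|$ is also fine.

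The genuine gap is the one you flagged yourself, and it cannot be closed: from $(-kK_X)^n=g_Q\,|Q^\circ|$ you correctly deduce $|\det C|=|\det A|$ and hence $\mult X=|\det B|=h^n$, but your ``cleanest way to close it'' silently replaces the hypothesis by the stronger equality $(-kK_X)^n=\widehat{g}^h_Q\,|Q^\circ|=h^n g_Q\,|Q^\circ|$; the two coincide only when $h=1$, which is precisely what would need to be proved. In fact the forward implication as printed fails whenever $h>1$: in the paper's own Example~\ref{ex:Bauerle} one has $n=2$, $h=2$, $|\det A|=|\det C|=6$, $|Q^\circ|=8$, so that $(-6K_X)^2=|\det C|\,|Q^\circ|=48=g_Q\,|Q^\circ|$ while $\mult X=4\neq1$ (the value $16$ reported there for $(-6K_X)^2$ is an arithmetic slip: both the shoelace computation of $\Vol(\conv(V^\circ))=2/3$ and Theorem~\ref{thm:QFano-bounds}(2),(5) give $48$). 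The biconditional is evidently meant to read $(-kK_X)^n=\widehat{g}^h_Q\,|Q^\circ|\Leftrightarrow X\cong Y$, i.e.\ equality at the top of the divisibility chain $(-kK_X)^n\mid(-kK_Y)^n=\widehat{g}^h_Q\,|Q^\circ|$; with that reading your determinant computation ($|\det C|=|\det C|\,|\det B|$ forces $|\det B|=1$) finishes the proof in one line, which is also all that the paper's own one-sentence proof (``items (1) and (2) follow from items (4) and (5)'') amounts to. The auxiliary facts you invoke along the way ($K_Y=\pi^*K_X$ and $(-K_Y)^n=\mult X\,(-K_X)^n$ for the 1-covering) are correct but, as you note, only reproduce $\mult X=h^n$.
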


  \begin{proof}[Proof of Thm.~\ref{thm:QFano-bounds}]
    Arguments proving items (1) and (2) are the same of those proving items (1) and (2) in Theorem~\ref{thm:Fano-bounds}, taking into account that a polar partner of $X$ is now no longer defined. Namely, for item 1, the first equality comes from (\ref{molteplicità}), and the second is obtained by the join of (\ref{multX}) and (\ref{VolV}). Item 2 follows by the same argument after exchanging the roles of $V$ and $V^\circ$ and recalling (\ref{autointersezione2}). Item (3) comes from Definition~\ref{def:QFanoGQ} and relations (\ref{A}) and (\ref{Aduale}), recalling that $\widehat{k}=\widetilde{k}$ and keeping in mind the combinatorial meaning of self-intersections expressed in (\ref{autointersezione}) and Definition~\ref{def:selfintduale}. Item (4) follows immediately by item (3) and Definition~\ref{def:QFanoGQ}. Finally item (5) follows from factorization (\ref{fattorizzazione}) and previous items (1), (2) and (3).
  \end{proof}

  \begin{proof}[Proof of Cor.~\ref{cor:Q-bounds}]
    Item (1) and (2) follow from items (4) and (5) in Theorem~\ref{thm:QFano-bounds} and item (3) from Theorem~\ref{thm:QFano-bounds}~(2).
  \end{proof}

%\begin{remark}
% Recalling Remark~\ref{rem:classificazione}, items (2) and (4) in Theorem~\ref{thm:QFano-bounds} % explain why one cannot expect, in the $\Q$-Fano case, a classification theorem analogous to Theorem~\ref{thm:classificazione}, when the weight matrix is fixed, as the Gorenstein index $k$ and the factor $h$ enter actively in bounding the multiplicity and the degree of a $\Q$-Fano toric variety.

%  \noindent To get a sort of classification, one then needs to bound the Gorenstein index, by suitably restricting the range of possible singularities, like e.g. assuming at most \emph{canonical} singularities, which is what Kasprzyk does for rank $r=1$ \cite[Cor.~2.11]{Kasprzyk}. The main trouble in generalizing a similar approach to ranks bigger than 1 is that polytopes involved are no more simplexes, so that those volume limiting techniques can no longer be applied.
%\end{remark}

The following is a further generalization of Theorem~\ref{thm:AKLN} to $\Q$-Fano toric varieties.

  \begin{theorem}\label{thm:Qbounds}
    Let $X$ be a $n$-dimensional $\Q$-Gorenstein toric variety of rank $r$ and Gorenstein index $k$, admitting at worst canonical singularities. Let $Q^\circ$ be the polar weight matrix defined in (\ref{dualQ}) and $r^\circ:=\rk Q^\circ$. Set $r':=\max(r,r^\circ)$. Then
    \begin{enumerate}
      \item if $n=2$ then every canonical $\Q$-Gorenstien toric surfaces is Gorenstein and item (1) in Theorem~\ref{thm:bounds} applies,
      \item if $n=3$ then
      \begin{equation*}
        \mult X\le \left[ 72\, k^3\over \mu_{3,r'}\right]=\left[ 144\, k^3\over 7+r'\right]
      \end{equation*}
      \item if $n\ge 4$ then
      \begin{equation*}
        \mult X\le \left[{2(s_n - 1)^2\,k^n\over \mu_{n,r'}}\right]
      \end{equation*}
    \end{enumerate}
    where $[\,\cdot\,]$ denotes the integer part of a rational number, $\{s_n\}$ is the Sylvester sequence and $\mu_{n,r}$ is the \emph{McMullen number} defined in (\ref{sylvester}) and (\ref{mcmullen}), respectively.
  \end{theorem}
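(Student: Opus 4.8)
The plan is to mimic the proof of Theorem~\ref{thm:bounds}, replacing its opening divisibility by the $\Q$-relative one of Corollary~\ref{cor:Q-bounds}~(1) and feeding in the canonical degree bounds of Theorems~\ref{thm:KX3} and \ref{thm:KXn} in place of the Gorenstein surface/threefold classifications. First I would dispose of the surface case: in dimension $2$ a normal toric variety with at worst canonical singularities has only Du Val singularities, hence is automatically Gorenstein; thus a canonical $\Q$-Gorenstein toric surface that is $\Q$-Fano is already Fano, and item (1) is literally item (1) of Theorem~\ref{thm:bounds}. From now on I take $n\ge 3$. Using Lemma~\ref{lem:iso1}, together with the fact that $\mult X$, the index $k$, the ranks $r,r^\circ$, the moduli $|Q|,|Q^\circ|$ and the polytope $\De_{-K_X}=\conv(V^\circ)$ all depend only on the fan matrix $V$ and so are invariant under isomorphism in codimension $1$, I reduce to the case in which $X$ is $\Q$-Fano (the canonicity hypothesis being the condition $\Int\conv(V)\cap N=\{0\}$ on $V$).

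The core of the argument is the chain from Corollary~\ref{cor:Q-bounds}~(1),
\begin{equation*}
  \mult X\ \Big|\ \widehat{g}^h_Q=\frac{(-kK_Y)^n}{|Q^\circ|}=\frac{k^n(-K_{Z^\circ})^n}{|Q|}.
\end{equation*}
Writing $|Q|=\sum_{I\in\widehat{\I}}|\det Q_I|\ge|\widehat{\I}|$ and likewise for $|Q^\circ|$, and noting that $|\widehat{\I}|$ is at least the number of facets of the $n$-dimensional polytope $\conv(W)$ (resp.\ $\conv(\L^\circ)$), which has $n+r$ (resp.\ $n+r^\circ$) vertices, McMullen's Theorem~\ref{thm:mcmullen} yields $|Q|\ge\mu_{n,r}$ and $|Q^\circ|\ge\mu_{n,r^\circ}$. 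Hence, using $(-kK_Y)^n=k^n(-K_Y)^n$,
\begin{equation*}
  \mult X\le k^n\,\min\!\left(\frac{(-K_{Z^\circ})^n}{\mu_{n,r}},\ \frac{(-K_Y)^n}{\mu_{n,r^\circ}}\right).
\end{equation*}
Since $\mu_{n,\cdot}$ is non-decreasing, $\max(\mu_{n,r},\mu_{n,r^\circ})=\mu_{n,r'}$, so it remains only to bound \emph{both} degrees by the common constant $D:=72$ (for $n=3$) or $D:=2(s_n-1)^2$ (for $n\ge 4$); then $\mult X\le k^nD/\mu_{n,r'}$ and, $\mult X$ being an integer, $\mult X\le\left[k^nD/\mu_{n,r'}\right]$, which is exactly items (2)--(3).

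To bound the degrees I must exhibit $Y$ and $Z^\circ$ as $\Q$-Fano toric varieties with at worst canonical singularities, so that Theorems~\ref{thm:KX3} and \ref{thm:KXn} apply to each. For $Y$ this is straightforward: by Proposition~\ref{prop:indici} the universal $1$-covering $Y$ is $\Q$-Fano, and from $\conv(V)=B\,\conv(W)$ with $B$ an injective integral matrix any nonzero lattice point of $\Int\conv(W)$ would be carried to a nonzero lattice point of $\Int\conv(V)$; thus $X$ canonical forces $\Int\conv(W)\cap N=\{0\}$, i.e.\ $Y$ canonical, whence $(-K_Y)^n\le D$.

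The delicate point—and the one I expect to be the main obstacle—is the canonical bound for $Z^\circ$. By Remark~\ref{rem:dualcover} the polar $1$-covering $Z^\circ$ exists as a $\Q$-Fano toric variety once $\L^\circ=\G(Q^\circ)$ is reduced, and the non-reduced case only decreases the virtual degree of Definition~\ref{def:selfintduale} (reducing columns shrinks $\conv(\L^\circ)$ and enlarges its polar $\conv(\L)$), so it suffices to treat the reduced case. What must still be proved is that $Z^\circ$ has at worst canonical singularities, i.e.\ that its spanning polytope $\conv(\L^\circ)$ has the origin as its only interior lattice point. In the Gorenstein situation of Theorem~\ref{thm:bounds} this was automatic, since there $Z^\circ$ is itself a reflexive (Gorenstein) Fano variety; in the present $\Q$-Gorenstein setting the reflexivity chain of Remark~\ref{rem:QFano iff} breaks down, and the canonicity of $Z^\circ$ has to be extracted from that of $X$ through the factorization (\ref{fattorizzazione}) and the identity $\conv(\L^\circ)=k\,C^{-1}\De_{-K_X}$ coming from $kV^\circ=C\,\L^\circ$. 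Concretely, this identity translates ``$Z^\circ$ canonical'' into the statement that the sublattice $C\,\Z^n$ meets $\Int\De_{-kK_X}$ only at the origin, and I would derive this from the canonicity of $X$ together with the fact that $\L^\circ$ is the reduced $CF$-reduction of $kV^\circ$. Once $(-K_{Z^\circ})^n\le D$ is secured, the two displayed inequalities combine as above and complete the proof.
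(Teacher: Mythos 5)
Your proposal follows the same route as the paper's proof: reduce to the $\Q$-Fano case via Lemma~\ref{lem:iso1} (noting that multiplicity, index, ranks, moduli and canonicity depend only on the fan matrix, hence are preserved under isomorphism in codimension~1), dispose of $n=2$ by the observation that canonical toric surfaces are Gorenstein, and then combine Corollary~\ref{cor:Q-bounds}~(1), the estimate $|Q|\ge|\widehat{\I}|\ge\mu_{n,r}$ (and likewise $|Q^\circ|\ge\mu_{n,r^\circ}$) from McMullen's Theorem~\ref{thm:mcmullen}, with the canonical degree bounds of Theorems~\ref{thm:KX3} and \ref{thm:KXn}. Your verification that $Y$ inherits canonicity from $X$ via $\conv(V)=B\cdot\conv(W)$ and the injectivity of $B$ is precisely the justification the paper relies on when it applies those degree bounds to the universal $1$-covering.

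The one place where your write-up stops short of a proof is the step you yourself single out: the bound $(-K_{Z^\circ})^n\le 72$ (resp.\ $2(s_n-1)^2$), which is what the branch of the minimum responsible for $\mu_{n,r}$ — and hence for $\mu_{n,r'}$ whenever $r>r^\circ$ — actually requires. You reduce it to showing that $\conv(\L^\circ)$ has no nonzero interior lattice point and propose to ``derive this from the canonicity of $X$,'' but canonicity of $X$ is a statement about $\Int\conv(V)\cap N$ in $N_\R$, whereas the needed statement concerns points of a finite-index sublattice of $M$ lying in $\Int\De_{-kK_X}$; no implication between the two is established, and since the anticanonical polytope of a canonical $\Q$-Fano variety typically contains many interior lattice points, the assertion is at best delicate. (Your dismissal of the non-reduced case is also only a heuristic: reducing the columns of $\L^\circ$ changes its Gale dual, so the comparison of virtual degrees needs more care than a one-line monotonicity remark.) For what it is worth, the paper's own proof is no more explicit at this point: it invokes Theorems~\ref{thm:KX3} and \ref{thm:KXn} ``as the universal $1$-covering $Y'$ is a $\Q$-Fano toric variety with at worst canonical singularities,'' i.e.\ it justifies only the $Y'$ branch of the minimum and is silent on why the same degree bound applies to the quantity of Definition~\ref{def:selfintduale} attached to $Z^\circ$. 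So you have correctly isolated the weak point of the argument; what you have not done is close it, and the closing move you sketch does not obviously work.
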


  \begin{proof} By Lemma~\ref{lem:iso1}, there exists a $\Q$-Fano toric variety $X'$ and an isomorphism in codimension 1  $f:X\cong_1 X'$. In particular, both $X$ and $X'$ have the same fan matrix $V$. Since $X$ admits at worst canonical singularities, lattice points in the polytope $\D=\conv(V)\subset N_\R$ are given by
  \begin{equation*}
  \D\cap N=\{\0\}\cup(\partial\D\cap N)
  \end{equation*}
  meaning that also $X'$ admits at worst canonical singularities. For the multiplicity, notice that  $f:X\cong_1 X'$ gives rise to a commutative diagram involving their universal 1-coverings
\begin{equation*}
  \xymatrix{Y\ar[d]_-{\psi}\ar[r]^-{\widetilde{f}}_-{\cong_1}&Y'\ar[d]^-{\psi'}\\
            X\ar[r]^-f_-{\cong_1}&X'}
\end{equation*}
where $\widetilde{f}$ is an isomorphism in codimension 1, too. Then $\mult X=\mult X'$.

  For item (1), it is a well known fact that a 2-dimensional canonical, $\Q$-Fano toric surface is actually a Fano toric surface, so that item (1) in Theorem~\ref{thm:bounds} applies to the Fano toric surface $X'$.

  For items (2) and (3), the proof goes as in Fano cases of Thm.~\ref{thm:bounds}~(2),(3), just replacing inequality (\ref{disuguaglianza}) with the following one
    \begin{equation*}
      \mult X'\le \min\left(\displaystyle{(-kK_{Y'})^n\over |Q^\circ|},\displaystyle{k^n(-K_{Z^\circ})^n\over |Q|}\right)
    \end{equation*}
    obtained by Corollary~\ref{cor:Q-bounds}~(1). Now, the proof ends up by applying Theorems~\ref{thm:KX3} and \ref{thm:KXn}, respectively, as the universal 1-covering $Y'$ is a $\Q$-Fano toric variety with at worst canonical singularities.
  \end{proof}

  \begin{example}\label{ex:QFanoCanonica}
    The present example is aimed to check bounds given by Theorems~\ref{thm:QFano-bounds} and \ref{thm:Qbounds} and Corollary~\ref{cor:Q-bounds}.

    Consider a toric variety admitting the following fan matrix
    \begin{equation*}
      V:=\left( \begin {array}{ccccc} 1&1&-2&0&0\\ \noalign{\medskip}0&3&-3&1&-1\\ \noalign{\medskip}0&0&0&2&-2\end {array} \right)
    \end{equation*}
    whose associated weight matrix is given by
    \begin{equation*}
      Q:=\left( \begin {array}{ccccc} 1&1&1&0&0\\ \noalign{\medskip}0&0&0&1&1\end {array} \right)
    \end{equation*}
    The reader can check that there exists a unique complete and simplicial fan $\Si$ over $V$, that is $\SF(V)=\{\Si\}$,  and let $X=X(\Si)$ be the associated $\Q$-factorial and complete toric variety. $X$ is $\Q$-Fano but non-Gorenstein as
    $$[-K_X]=\left(\begin{array}{c}
                             3 \\
                             2 \\
                           \end{array}
                         \right)\in\Nef(X)=\left\langle \left(
                                                     \begin{array}{c}
                                                       1 \\
                                                       0 \\
                                                     \end{array}
                                                   \right),\left(
                                                             \begin{array}{c}
                                                               0 \\
                                                               1 \\
                                                             \end{array}
                                                           \right)\right\rangle
                                                           $$
    but $K_X$ is not Cartier and the Gorenstein index of $X$ turns out to be $k=2$, as
    \begin{equation*}
      \De_{-K_X}=\conv(V^\circ)=\conv\left(
                                       \begin{array}{cccccc}
                                          -1&-1&-1&-1&2&2\\ \noalign{\medskip}0&0&1&1&-1&-1\\ \noalign{\medskip}-1/2&1/2&-1&0&0&1
                                       \end{array}
                                     \right)
    \end{equation*}
    \begin{figure}
\begin{center}
\includegraphics[width=10truecm]{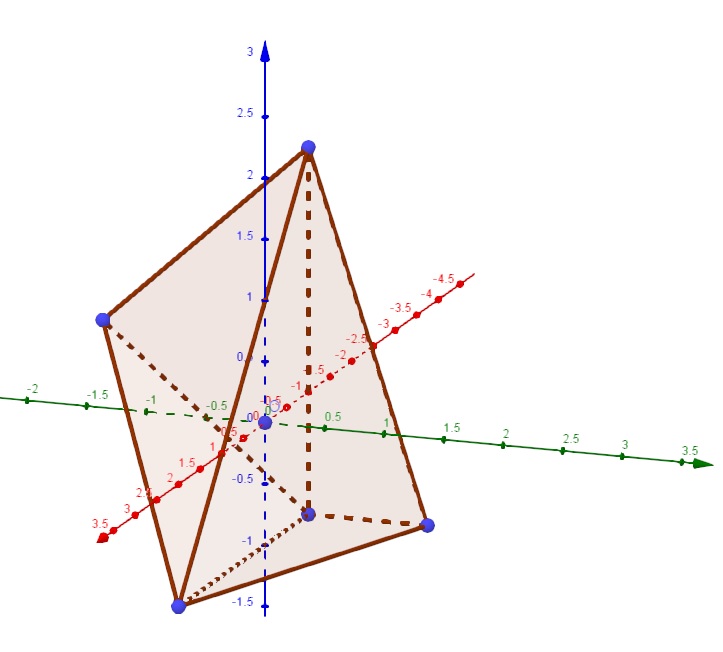}
\caption{\label{fig:politopo} Example \ref{ex:QFanoCanonica}: the polytope $\conv(\L)$ spanning the fan $\Xi^\circ$. }
\end{center}
\end{figure}
    \noindent In particular,
    \begin{equation*}
      W=\G(Q)=\left[ \begin {array}{ccccc} 1&0&-1&0&0\\ \noalign{\medskip}0&1&-1&0&0\\ \noalign{\medskip}0&0&0&1&-1\end {array} \right]
    \end{equation*}
    so that the universal 1-covering of $X$ is given by $Y=\P^2\times\P^1$, which is a smooth Fano toric variety, and $X\cong Y/G$ with $G\cong\Z/6\Z$, whose action on $Y$ is defined by $\exp(\Ga)$, in Cox's coordinates, with
    \begin{equation*}
      \Ga=\left[ \begin {array}{ccccc} 0&2&1&4&1\end {array} \right]
    \end{equation*}
    that is
    \begin{equation*}
      \xymatrix{G\times Y\ni (\overline{a},[x_1:\cdots:x_5]) \ar@{|->}[r]&[x_1,\xi^{2a} x_2,\xi x_3,\xi^{4a} x_4,\xi x_5]\in Y}
    \end{equation*}
    In particular, $\mult X=6$\,. Moreover, the factor of $X$ is $h=k=2$, as $Y$ is Gorenstein. Notice that $X$ is canonical, as $\conv(V)$ admits the unique interior point $\0\in M$\,. \\
    The polar weight is given by
    \begin{equation*}
      Q^\circ=\G(2\,V^\circ)=\left( \begin {array}{cccccc} 1&1&0&2&2&0\\ \noalign{\medskip}0&2&1&1&2&0\\ \noalign{\medskip}1&1&1&1&1&1\end {array} \right)
    \end{equation*}
    Therefore $Q^\circ\neq Q$ and $3=r^\circ\neq r=2$ so that $r'=\max(r,r^\circ)=3$. Since
    \begin{equation*}
      \L^\circ=\G(Q^\circ)=\left( \begin {array}{cccccc} 1&1&0&0&-1&-1\\ \noalign{\medskip}0&2&0&2&-3&-1\\ \noalign{\medskip}0&0&1&1&-1&-1\end {array} \right)
    \end{equation*}
    is a reduced $CF$-matrix, there exists the universal polar 1-covering $Z^\circ(\Xi^\circ)$, where $\Xi^\circ$ is the fan spanned by the polytope
    \begin{equation*}
      \De_{-K_{Z^\circ}}=\conv(\L)\quad\text{with}\quad\L=(\L^\circ)^\circ=\left( \begin {array}{ccccc} -1&-1&-1&1&2\\ \noalign{\medskip}0&0&1&-1&0\\ \noalign{\medskip}-1&2&-1&1&-1\end {array} \right)
    \end{equation*}
    By polar duality, $Z^\circ$ is still a Fano toric variety, but it is no longer smooth. Anyway,  Fig.~\ref{fig:politopo} shows that $\Xi^\circ$ is simplicial, that is $Z^\circ$ is $\Q$-factorial. Then,
    \begin{eqnarray*}
% \nonumber to remove numbering (before each equation)
  (-K_Y)^3 &=& 3!\Vol\left(\conv(W^\circ)\right)=g_Q|Q^\circ|=54\\
  (-K_{Z^\circ})^3&=& 3!\Vol\left(\conv(\L)\right)=g_Q|Q|=18\\
  (-2K_{X})^3 &=& 2^3\, 3! \Vol\left(\conv(V^\circ)\right)=72\quad \text{and}\quad \mult X=6
\end{eqnarray*}
consistently with Theorem~\ref{thm:QFano-bounds} and Corollary~\ref{cor:Q-bounds}. Moreover, the upper bound in Theorem~\ref{thm:Qbounds}~(2) is clearly satisfied.
  \end{example}

\subsection{Dropping out the canonical condition for fake wps}\label{ssez:Qnocanbounds} Recalling the Conrads result \cite[Lemma~5.3]{Conrads}, if the rank $r=1$ then $r^\circ=r=1$. Therefore, in the proof of Theorem~\ref{thm:Qbounds} one can replace Theorems~\ref{thm:KX3} and \ref{thm:KXn} with a recent result of B\"{a}uerle \cite[Thm.~1]{Bauerle} to get an upper bound for the self-intersection $(-kK_Y)^n$. The following result can be directly deduced.

\begin{theorem}\label{thm:Qnocanbounds}
  Let $X$ be a $n$-dimensional fake wps of Gorenstein index $k$. Then
    \begin{enumerate}
      \item if $n=2$ and $k=1$ then $X$ is Gorenstein and Theorem~\ref{thm:bounds}~(1) applies to give $$\mult X\le 3$$
      \item if $n=2$ and $k\ge 2$ then
      \begin{equation*}
        \mult X\le \left[{2 k(k+1)^2\over 3}\right]
      \end{equation*}
      \item if $n=3$ and $k=1$ then $X$ is Gorenstein and Theorem~\ref{thm:bounds}~(2) applies to give $$\mult X\le 18$$
      \item if $n=3$ and $k\ge 2$ then
      \begin{equation*}
        \mult X\le \left[{t_{k,3}^2\over 2k}\right]=\left[ {k(k+1)^2[k(k+1)+1]^2\over 2}\right]
      \end{equation*}
      \item if $n\ge 4$ then
      \begin{equation*}
        \mult X\le \left[{2t_{k,n}^2\,k^n\over k\mu_{n,1}}\right]
      \end{equation*}
    \end{enumerate}
    where $[\,\cdot\,]$ denotes the integer part of a rational number, $\{t_{k,n}\}$ is the sequence iteratively defined by
    \begin{equation}\label{t-successione}
      t_{k,n}:=k s_{k,1}\cdots s_{k,n-1}\ ,\quad\text{with}\quad s_{k,1}:=k+1\ ,\quad s_{k,n}:=ks_{k,1}\cdots s_{k,n-1}+1
    \end{equation}
     and $\mu_{n,1}$ is the \emph{McMullen number} defined in (\ref{mcmullen}).
  \end{theorem}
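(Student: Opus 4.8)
The plan is to run exactly the machine used in the proof of Theorem~\ref{thm:Qbounds}, but to feed it B\"{a}uerle's index-$k$ volume bound in place of the canonical-singularity estimates of Theorems~\ref{thm:KX3} and \ref{thm:KXn}. First I would record the simplifications special to rank $r=1$. A fake wps $X$ has an honest weighted projective space $Y=\P(Q)$ as universal $1$-covering, and by Conrads' \cite[Lemma~5.3]{Conrads} one has $r^\circ=r=1$, so that $Q^\circ=Q$, $|Q^\circ|=|Q|$ and $Z^\circ\cong Y$ (Remark~\ref{rem:dualcover}). Consequently the two expressions in Corollary~\ref{cor:Q-bounds}~(1) collapse to the single divisibility
\[
  \mult X \,\big|\, \frac{(-kK_Y)^n}{|Q^\circ|},
\]
and in particular $\mult X\le (-kK_Y)^n/|Q^\circ|$.

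Next I would bound the denominator from below by the same facet-counting argument as in the proof of Theorem~\ref{thm:bounds}. Since $\L^\circ=\G(Q^\circ)$ is the fan matrix of the $n$-dimensional polytope $\conv(\L^\circ)$, which has $n+r^\circ=n+1$ vertices, the modulus $|Q^\circ|=\sum_I|\det Q^\circ_I|$ is at least the number $|\widehat{\I}|$ of maximal cones of a simplicial fan in $\SF(\L^\circ)$, which in turn is at least the number of facets of $\conv(\L^\circ)$; by McMullen's Theorem~\ref{thm:mcmullen} this is at least $\mu_{n,1}$ (indeed $\mu_{n,1}=n+1$ here, since $\conv(\L^\circ)$ is a simplex). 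This yields the uniform estimate
\[
  \mult X\le \frac{(-kK_Y)^n}{\mu_{n,1}}.
\]

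The substantive input is then B\"{a}uerle's Theorem~1 in \cite{Bauerle}, which bounds the anticanonical degree $(-kK_Y)^n$ of an $n$-dimensional wps of Gorenstein index $k$ purely in terms of $n$ and $k$ through the sequence $t_{k,n}$ of~(\ref{t-successione}) and, crucially, \emph{without} any canonical-singularity hypothesis; this is precisely what lets the canonical condition be dropped here, in contrast with Theorem~\ref{thm:Qbounds}. Inserting this estimate into the displayed inequality and simplifying gives items (2), (4) and (5); the explicit form of $t_{k,3}$ in item (4) is a direct expansion of the recursion, and the specialization $t_{1,n}=s_{1,1}\cdots s_{1,n-1}=s_n-1$ recovers the Sylvester values. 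Items (1) and (3) then follow by observing that $k=1$ forces $X$ Gorenstein, so the sharper Gorenstein bounds of Theorem~\ref{thm:bounds}~(1),(2) apply directly and give $\mult X\le 3$ and $\mult X\le 18$, respectively.

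The main obstacle I anticipate is bookkeeping of normalizations: one must keep careful track of the powers of $k$ relating $(-K_Y)^n$, $(-kK_Y)^n$ and the lattice volume of $\Delta_{-kK_Y}$, and confirm that B\"{a}uerle's estimate takes the low-dimensional shape used for $n=2,3$ and the general Sylvester-type shape used for $n\ge4$ — exactly the dichotomy that Theorem~\ref{thm:KX3} (dimension $3$) and Theorem~\ref{thm:KXn} (dimension $\ge 4$) exhibited in the canonical setting. Once the correct form of the degree bound is fixed, the remaining work — taking integer parts and verifying the $k=1$ reductions — is routine.
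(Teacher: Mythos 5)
Your proposal is correct and follows essentially the same route as the paper: the paper deduces this theorem precisely by rerunning the proof of Theorem~\ref{thm:Qbounds} with B\"{a}uerle's index-$k$ degree bound for weighted projective spaces substituted for the canonical-singularity bounds of Theorems~\ref{thm:KX3} and \ref{thm:KXn}, using Conrads' result that $r=1$ forces $Q^\circ=Q$ (hence $Z^\circ\cong Y$ and a single divisibility from Corollary~\ref{cor:Q-bounds}~(1)). Your reconstruction, including the lower bound $|Q^\circ|\ge\mu_{n,1}=n+1$ coming from the simplex/McMullen count and the reduction of the $k=1$ cases to the Gorenstein bounds of Theorem~\ref{thm:bounds}, is exactly the intended argument.
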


  \begin{remark} As for the case of canonical $\Q$-Gorenstein toric varieties (see Thms.~\ref{thm:KX3} and \ref{thm:KXn}), it appears plausible that these upper bounds for the anti-canonical self-intersection of a wps, given by \cite[Thm.~1]{Bauerle}, may extend to $r>1$  for any $\Q$-Gorenstein toric variety. If it would be the case, the same proof giving Theorems~\ref{thm:Qbounds} and \ref{thm:Qnocanbounds} would show the following:
  \end{remark}

\begin{conjecture}
 Let $X$ be a $n$-dimensional $\Q$-Gorenstein toric variety of rank $r$ and Gorenstein index $k\ge 2$. Let $Q^\circ$ be the polar weight matrix defined in (\ref{dualQ}) and $r^\circ:=\rk Q^\circ$. Set $r':=\max(r,r^\circ)$. Then
    \begin{enumerate}
      \item if $n=2$ then
      \begin{equation*}
        \mult X\le \left[{2 k(k+1)^2\over 2+r'}\right]
      \end{equation*}
      \item if $n=3$  then
      \begin{equation*}
        \mult X\le \left[{4t_{k,3}^2\over (7+r')k}\right]=\left[ {4k(k+1)^2[k(k+1)+1]^2\over 7+r'}\right]
      \end{equation*}
      \item if $n\ge 4$ then
      \begin{equation*}
        \mult X\le \left[{2t_{k,n}^2\,k^n\over k\mu_{n,r'}}\right]
      \end{equation*}
    \end{enumerate}
    where $[\,\cdot\,]$ denotes the integer part of a rational number, $\{t_{k,n}\}$ is the sequence iteratively defined in (\ref{t-successione}) and $\mu_{n,r}$ is the \emph{McMullen number} defined in (\ref{mcmullen}).
\end{conjecture}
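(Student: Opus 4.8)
The statement is conditional, and the plan is to make that dependence explicit. The single missing ingredient is the \emph{working hypothesis} announced in the preceding remark: that the bounds of \cite[Thm.~1]{Bauerle} on the anti-canonical self-intersection of an index-$k$ fake wps hold \emph{verbatim} for every $n$-dimensional $\Q$-Fano toric variety $Y$ with $kK_Y$ Cartier and of arbitrary rank. Concretely I would assume
$$(-kK_Y)^n\le B_{k,n},\qquad B_{k,2}=2k(k+1)^2,\quad B_{k,3}=\tfrac{2t_{k,3}^2}{k},\quad B_{k,n}=2t_{k,n}^2\,k^{n-1}\ (n\ge 4),$$
where $\{t_{k,n}\}$ is the sequence (\ref{t-successione}); these are exactly the numbers that, divided by a McMullen number, produce the right-hand sides of the statement. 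Under this hypothesis the proof is a transcription of the proofs of Theorems~\ref{thm:Qbounds} and \ref{thm:Qnocanbounds}, with the rank-$1$ input replaced by its rank-$r$ analogue.

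First I would reduce to the $\Q$-Fano case via Lemma~\ref{lem:iso1}: there is a small $\Q$-factorial modification $f\colon X\cong_1 X'$ with $X'$ $\Q$-Fano and sharing the fan matrix $V$, hence the weight matrices $Q,Q^\circ$, the ranks $r,r^\circ$, the Gorenstein index $k$ and the whole polar package $\L^\circ,Z^\circ,(-K_{Z^\circ})^n$ (all of which depend only on $V$). The induced codimension-one isomorphism $\widetilde f\colon Y\cong_1 Y'$ of universal $1$-coverings gives $\mult X=\mult X'$, and $Y'$ is $\Q$-Fano with $kK_{Y'}$ Cartier by Proposition~\ref{prop:indici}. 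Corollary~\ref{cor:Q-bounds}~(1), applied to $X'$, furnishes the two (equal) upper bounds
$$\mult X=\mult X'\le\frac{(-kK_{Y'})^n}{|Q^\circ|}=\frac{k^n(-K_{Z^\circ})^n}{|Q|}.$$

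The core estimate now bounds denominators below and numerators above. By (\ref{Q-mod&vol}), $|Q|$ and $|Q^\circ|$ are the integral volumes of $\conv(W)$ and $\conv(\L^\circ)$, polytopes with $n+r$ and $n+r^\circ$ vertices; exactly as in the proof of Theorem~\ref{thm:bounds}, writing each as a sum of maximal minors (each $\ge 1$) and invoking McMullen's Theorem~\ref{thm:mcmullen} gives $|Q|\ge\mu_{n,r}$ and $|Q^\circ|\ge\mu_{n,r^\circ}$. Relaxing the denominator of each of the two bounds above separately, then applying the hypothesis in the forms $(-kK_{Y'})^n\le B_{k,n}$ and $k^n(-K_{Z^\circ})^n=(-kK_{Z^\circ})^n\le B_{k,n}$, yields
$$\mult X\le\min\!\left(\frac{B_{k,n}}{\mu_{n,r^\circ}},\ \frac{B_{k,n}}{\mu_{n,r}}\right)=\frac{B_{k,n}}{\mu_{n,r'}},$$
where the last equality uses that $r\mapsto\mu_{n,r}$ is non-decreasing, so $\max(\mu_{n,r},\mu_{n,r^\circ})=\mu_{n,\max(r,r^\circ)}=\mu_{n,r'}$. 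Taking integer parts and inserting $\mu_{2,r'}=2+r'$, $\mu_{3,r'}=(7+r')/2$ and the explicit values of $B_{k,n}$ reproduces items (1)--(3).

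The hard part is, of course, the hypothesis itself: the rank-$1$ bounds of \cite{Bauerle} exploit the Sylvester-type recursion available for weighted simplices, and no replacement is known once $\conv(W)$ has more than $n+1$ vertices, so the conjecture cannot presently be discharged. A second, subtler difficulty is that the polar $1$-covering $Z^\circ$ need not exist as a genuine toric variety (Remark~\ref{rem:dualcover}); its degree $(-K_{Z^\circ})^n$ is only the combinatorial quantity of Definition~\ref{def:selfintduale}, so the bound on the $Z^\circ$-term must come from a purely combinatorial form of the extended Bäuerle inequality, phrased for the $CF$-matrix $\L^\circ$ rather than for a variety. Once both inputs are granted, the remaining bookkeeping --- in particular the passage from $r,r^\circ$ to $r'$ through the monotonicity of $\mu_{n,\cdot}$ --- is routine.
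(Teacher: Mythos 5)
Your proposal is correct and follows essentially the same route the paper indicates: the statement is a conjecture precisely because it is conditional on extending B\"auerle's rank-one bounds on $(-kK)^n$ to arbitrary rank, and granting that hypothesis the paper derives it exactly as you do --- reduction to the $\Q$-Fano case via Lemma~\ref{lem:iso1}, the divisibility of Corollary~\ref{cor:Q-bounds}~(1), and the McMullen lower bound $|Q|\ge\mu_{n,r}$, $|Q^\circ|\ge\mu_{n,r^\circ}$, i.e.\ the proofs of Theorems~\ref{thm:Qbounds} and \ref{thm:Qnocanbounds} verbatim. Your explicit values of $B_{k,n}$ match the stated right-hand sides, and your remark that the $Z^\circ$-term must be read purely combinatorially (Definition~\ref{def:selfintduale}) correctly identifies the second place where the hypothesis must be interpreted.
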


\section{A classification of $\Q$-Gorenstein and $\Q$-Fano toric varieties}\label{sez:Qclassification}
The present section is dedicated to present a $\Q$-relative version of the Classification Theorem~\ref{thm:classificazione}. For this purpose, it is no longer possible restrict ourselves to considering only a weight matrix, but the factor must also be fixed.

\begin{theorem}\label{thm:Qclassificazione}
  Let $Q$ be a reduced $W$-matrix and $G_Q$ its weight group as defined in Definition~\ref{def:QFanoGQ}. Let $h$ be a positive integer and $\widehat{G}^h_Q$ be the associated $h$-extension of $G_Q$. Then the following assertions hold.
  \begin{enumerate}
    \item All $\Q$-Gorenstein ($\Q$-Fano) toric varieties $X$ of factor $h$ and weight matrix $Q$ are determined by the choice of
        \begin{itemize}
          \item a $\Q$-Gorenstein ($\Q$-Fano) toric variety $Y(\Th)$ admitting $Q$ as a weight matrix and $W=\G(Q)$ as a fan matrix,
          \item a subgroup $G\leq \widehat{G}^h_Q$.
        \end{itemize}
        In particular, $Y$ is the universal 1-covering and $X\cong Y/G$ is a geometric quotient obtained by a well determined action $G\times Y\to Y$ and $G\cong\pet(X)^{(1)}$.
    \item For any different choice of a fan $\Th'$ over $W$, there is an isomorphism in codimension 1, $Y(\Th)\cong_1 Y'(\Th')$ descending to give an isomorphism in codimension 1 between the quotient varieties
        \begin{equation*}
          X=Y/G \cong_1 Y'/G=X'
        \end{equation*}
        meaning that, up to an isomorphism in codimension 1, $X$ is determined by the choice of the subgroup $G\le \widehat{G}^h_Q$.
    \item Let $X=Y/G$ with $G\le\widehat{G}^h_Q$ as in item (1); set
    $$G_1:=G\cap G_Q\le \widehat{G}^h_Q$$
    then the geometric quotient $X_1=Y/G_1$, defined by restricting to $G_1$ the $G$-action defining $X$, is a $\Q$-Gorenstein ($\Q$-Fano) toric variety of factor 1, coming with a canonical 1-covering $\pi_1:X_1\twoheadrightarrow X$ and a factorization of 1-coverings
    \begin{equation*}
      \xymatrix{Y\ar[rr]^-\pi\ar[dr]_-{\pi_1}&&X\\
                &X_1\ar[ur]&}
    \end{equation*}
    In particular, $G_1\cong\pet(X_1)^{(1)}$.
  \end{enumerate}
\end{theorem}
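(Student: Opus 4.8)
The plan is to run the same three–step program as in the proof of Theorem~\ref{thm:classificazione}, replacing the weight group $G_Q$ by its $h$-extension $\widehat{G}^h_Q$ and using the index/factor bookkeeping of \S~\ref{sez:QFanotv} in place of Batyrev polar duality. For item (1) I would first fix the data $Q$ and $h$ and produce the universal $1$-covering: by Remark~\ref{rem:fan-over} there is a $\Q$-factorial (hence $\Q$-Gorenstein) complete toric variety $Y(\Th)$ over $W=\G(Q)$, which is simply connected in codimension $1$ because $W$ is a $CF$-matrix, and which is $\Q$-Fano by Proposition~\ref{prop:GorenFano}. Its polar datum furnishes $\a^T\colon M\to M$, whence $G_Q=\coker(\a^T)$ and $\widehat{G}^h_Q=\coker(h\a^T)$ depend only on $Y$, since $Q^\circ=\G(W^\circ)$ follows from $W^\circ=B^T\cdot V^\circ$ and is insensitive to the choice of $X$. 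The heart of item (1) is the factorization $h\a^T=\b^T\circ\g$ of Remark~\ref{rem:C}: a $\Q$-Gorenstein $X$ of factor $h$ and weight matrix $Q$ is exactly the datum of an intermediate sublattice $\im(h\a^T)\subseteq\im(\b^T)\subseteq M$, equivalently of a subgroup $G\le\widehat{G}^h_Q=M/\im(h\a^T)$; one then sets $X\cong Y/G$ with $G\cong\coker(\b^T)\cong\Tors(\Cl(X))\cong\pet(X)^{(1)}$ acting through the Cox torsion matrix, exactly as the $G_Q$-action was built in the proof of Theorem~\ref{thm:classificazione}. The converse is the universal-$1$-covering statement of \S~\ref{sssez:U1covering}: any such $X$ has $Y$ as universal $1$-covering, a fan over $W$, and $X\cong Y/\Tors(\Cl(X))$.

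For item (2) I would copy the secondary-fan argument of Theorem~\ref{thm:classificazione}(3): two fans $\Th,\Th'$ over $W$ give two $g$-cells of the GKZ-decomposition of $\Mov(Q)$, joined through $\Q$-factorial resolutions and a finite chain of wall crossings, i.e.\ a small $\Q$-factorial modification $Y\cong_1 Y'$. Since the action is defined at the level of the lattice maps underlying the two fans and is therefore common to $Y$ and $Y'$, it descends to $X=Y/G\cong_1 Y'/G=X'$, so that up to isomorphism in codimension $1$ the variety $X$ depends only on $G\le\widehat{G}^h_Q$.

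Item (3) is where the new group-theoretic input appears. Using the canonical exact sequence (\ref{succ_esatta_pesi}) I regard $G_Q$ as a subgroup of $\widehat{G}^h_Q$, set $G_1:=G\cap G_Q$, and let $X_1:=Y/G_1$ be the quotient by the restricted action. Because $G_1\le G_Q$, Theorem~\ref{thm:classificazione} (the case $h=1$) identifies $X_1$ as a $\Q$-Gorenstein (resp.\ $\Q$-Fano) variety of factor $1$; because $G_1\le G$, the quotient map $Y/G_1\to Y/G$ supplies the claimed $1$-covering $\pi_1\colon X_1\twoheadrightarrow X$ and the factorization $Y\to X_1\to X$, while $Y$ being simply connected in codimension $1$ yields $G_1\cong\pet(X_1)^{(1)}$.

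The main obstacle is precisely the point that makes item (3) meaningful: realizing the acting group $G\cong\coker(\b^T)$ as a genuine subgroup of $\widehat{G}^h_Q$ (so that $G\cap G_Q$ is defined), and checking that the factor of $X$ is read off correctly as $h$. In the Gorenstein case this was handled by the splitting $G_Q\cong G^\circ\oplus G$ of Proposition~\ref{prop:splittingGQ}, which rests on polar duality; since that duality is unavailable for $\Q$-Gorenstein varieties, I must instead extract the subgroup structure directly from the factorization $h\a^T=\b^T\circ\g$ together with the sequence (\ref{succ_esatta_pesi}), and verify the index arithmetic of Propositions~\ref{prop:indici} and~\ref{prop:dualcover} that pins the factor to $h$.
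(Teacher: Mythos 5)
Your treatment of items (1) and (2) follows the paper's proof: item (1) rests on the factorization $h\a^T=\b^T\circ\g$ of Remark~\ref{rem:C}, which places $G=\coker(\b^T)$ inside $\widehat{G}^h_Q=\coker(h\a^T)$ and then reduces everything to diagram (\ref{div-diagram-covering}) and Cox's quotient construction, and item (2) is verbatim the secondary-fan argument of Theorem~\ref{thm:classificazione}~(3). (One caveat on item (1): the paper only claims that each $X$ \emph{determines} a subgroup, not that every subgroup arises --- see the remark following the theorem and Corollary~\ref{cor:Qclassificazione} --- so your phrase ``is exactly the datum of an intermediate sublattice'' overstates the correspondence.)

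The genuine gap is in item (3), precisely at the point you flag as ``the main obstacle'' but do not resolve. The one argument you do offer --- that ``Theorem~\ref{thm:classificazione} (the case $h=1$)'' identifies $X_1$ as a variety of factor $1$ --- does not apply: that theorem requires $Q$ to be a \emph{Fano} weight matrix and classifies Gorenstein Fano varieties, whereas here $Q$ is merely a reduced $W$-matrix; moreover ``factor $1$'' is a statement about the Gorenstein index of $X_1$, not about which subgroup it is a quotient by. What is actually needed (and what the paper supplies) is the following index computation: since $G_1\le G_Q=\coker(\a^T)$, the row lattice $\Ls_r(\widehat{k}\L)=\Ls_r(A\cdot W)$ is a finite-index sublattice of $\Ls_r(V_1)$, so $\widehat{k}\L=C'\cdot V_1$ for a unique $C'\in\GL_n(\Q)\cap\Z^{n,n}$; polar-dualizing gives $\widehat{k}V_1^\circ=(C')^T\cdot\L^\circ\in\Z^{n,m^\circ}$, hence the index $k_1$ of $X_1$ divides $\widehat{k}$ (using Proposition~\ref{prop:dualcover}, which says $\L^\circ$ has index $\widehat{k}$), while Proposition~\ref{prop:indici} gives $\widehat{k}\mid k_1$ because $Y$ is the universal $1$-covering of $X_1$; therefore $k_1=\widehat{k}$ and the factor of $X_1$ is $1$. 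You name the two relevant propositions, but the bridge from the group-theoretic hypothesis $G_1\le G_Q$ to the divisibility $k_1\mid\widehat{k}$ --- namely the lattice factorization $\widehat{k}\L=C'\cdot V_1$ and its polar dual --- is the missing step, and you also misidentify the quantity to be pinned down: the factor of $X$ being $h$ is a hypothesis, whereas the conclusion to be proved is that the factor of $X_1$ is $1$.
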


\begin{definition}
  The 1-covering $\pi_1:X_1\twoheadrightarrow X$ defined in Theorem~\ref{thm:Qclassificazione}~(3) is called the \emph{unitary 1-covering} of $X$.
\end{definition}

\begin{proof}[Proof of Thm.~\ref{thm:Qclassificazione}] If $X$ is a $\Q$-factorial toric variety of weight matrix $Q$, then its universal 1-covering $Y$ is a $\Q$-factorial toric variety whose fan matrix is $W=\G(Q)$. If $h\in\N\setminus\{0\}$ is the factor of $X$, then consider the factorization (\ref{fattorizzazione}). Then, the inclusion $\im(h\a^T)\hookrightarrow\im(\b^T)$ induces a canonical inclusion
\begin{equation}\label{inclusione}
  \xymatrix{G:=\coker(\b^T)\ar@{^(->}[r]&\coker(h\a^T)=\widehat{G}^h_Q}
\end{equation}
Therefore, item (1) is completely proved by recalling diagram (\ref{div-diagram-covering}) and Cox's quotient construction. In particular, the action $G\times Y\to Y$ is obtained by dualizing over $\K^*$ the torsion part of the class morphism $d_X$, that is by exponentiating the torsion matrix $\Ga$, as explicitly explained in Examples~\ref{ex:blupP3} and \ref{ex:QFanoCanonica}. \\
In particular, $X$ is $\Q$-Fano if and only if $Y$ is $\Q$-Fano.

The proof of item (2) is the same proving item (3) in Theorem~\ref{thm:classificazione}.

For item (3), clearly the restriction to $G_1\le G$ of the $G$-action on $Y$ gives rise to a geometric quotient $X_1=Y/G_1$ and a natural factorization of 1-coverings
\begin{equation*}
      \xymatrix{Y\ar[rr]^-\pi_{/G}\ar[dr]^-{\pi_1}_-{/G_1}&&X\\
                &X_1\ar[ur]_{/(G/G_1)}&}
    \end{equation*}
which is induced by a factorization of lattice maps
\begin{equation}\label{diagramma beta}
      \xymatrix{M\ar[rr]^-{\b^T}\ar[dr]_-{\g_1^T}&&M\\
                &M\ar[ur]_-{\b_1^T}&}
    \end{equation}
    represented on the fixed bases by
    \begin{itemize}
      \item the transposed matrix $B^T$ of the quotient matrix $B$ of a fan matrix $V$ of $X$,
      \item the transposed matrix $B_1^T$ of the quotient matrix $B_1$ of a fan matrix $V_1$ of $X_1$,
      \item the transposed matrix $C_1^T$ of the unique matrix $C_1\in\GL(n,\Q)\cap\Z^{n,n}$ such that
          $$V=C_1V_1$$
          $C_1$ exists because $G_1=\coker(\b_1^T)$ is a subgroup of $G=\coker(\b^T)$, so that the lattice $\Ls_r(V)$, spanned by the rows of $V$, is a full sublattice of finite order of the lattice $\Ls_r(V_1)$, spanned by the rows of $V_1$.
    \end{itemize}
    Notice that $G_1$ is also a subgroup of $G_Q=\coker(\a^T)$. Therefore, by (\ref{Lambda}),  $\Ls_r(\widehat{k}\L)$ is a full sublattice of finite order of $\Ls_r(V_1)$ and
    \begin{equation}\label{V1}
      \exists!\, C'\in\GL_n(\Q)\cap\Z^{n,n}:\quad \widehat{k}\L=C'\cdot V_1
    \end{equation}
    meaning that commutative diagram (\ref{diagramma beta}) can be expanded to the following one
    \begin{equation}\label{diagramma beta +}
      \xymatrix{M\ar[r]^-{(\g')^T}\ar[dr]_{\a^T}&M\ar[d]^-{\b_1^T}&M\ar[l]_-{\g_1^T}\ar[dl]^-{\b^T}\\
                &M&}
    \end{equation}
    where $\g_1^T$ and $(\g')^T$ are lattice morphisms represented by $C_1^T$ and $(C')^T$, respectively.
    In particular, calling $k_1$ the index of $V_1$, the polar dual of (\ref{V1}) gives that
    \begin{equation*}
      \widehat{k}V_1^\circ=(C')^T\L^\circ\in\Z^{n,m^\circ}\ \Longrightarrow\ k_1\,|\,\widehat{k}
    \end{equation*}
    On the other hand, $\pi_1:Y\twoheadrightarrow X_1$ is the universal 1-covering and Proposition~\ref{prop:indici} implies that $\widehat{k}\,|\,k_1$ and that $X_1$ is $\Q$-Gorenstein. In conclusion $X_1$ is a $\Q$-Gorensten toric variety of index $\widehat{k}$, that is, factor 1 and $\pet(X_1)^{(1)}\cong G_1$.\\
    Notice that: $X_1$ is $\Q$-Fano $\Longleftrightarrow$ $Y$ is $\Q$-Fano $\Longleftrightarrow$ $X$ is $\Q$-Fano.
\end{proof}

\begin{remark}
  Beyond the non-existence of a polar partner, the real difference between Theorem~\ref{thm:classificazione} and Theorem~\ref{thm:Qclassificazione}, that is, dropping out the Gorenstein condition, is that for both $\Q$-Gorenstein and $\Q$-Fano toric varieties, one can no longer say that they  are parameterized by subgroups of the $h$-extended weight group $\widehat{G}^h_Q$, as it could be expected, after the Fano case. For instance, if one considers the whole $\widehat{G}^h_Q$ then the quotient $Y/\widehat{G}^h_Q$ is not even a toric variety, in general, as its fan matrix should be $\widehat{k}\L$ which is non-reduced, in general. Actually, one can only says that
\end{remark}

\begin{corollary}\label{cor:Qclassificazione}
  Given a reduced $W$-matrix $Q$ and a positive integer $h$, there exists a subset $\mathcal{H}^h_Q$ of subgroups of the $h$-extended weight group $\widehat{G}^h_Q$, parameterizing all the $\Q$-Gorenstein ($\Q$-Fano) toric varieties of weight matrix $Q$ and factor $h$, up to isomorphism in codimension 1. In particular, $\mathcal{H}^1_Q$ parameterizes all the $\Q$-Gorenstein ($\Q$-Fano) toric varieties of weight matrix $Q$ and factor $1$ and there are surjective maps
  \begin{equation*}
    \forall\,h\in\N\setminus\{0\}\quad \xymatrix{\mathcal{H}^h_Q\ar@{->>}^-{G_Q\cap\,\cdot}[rr]&&\mathcal{H}^1_Q}
  \end{equation*}
  defined by intersecting with the weight group. Geometrically, this map sends $X$ to its unitary 1-covering $X_1$.
\end{corollary}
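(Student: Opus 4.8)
The plan is to read the statement off Theorem~\ref{thm:Qclassificazione}, the only genuine work being surjectivity of the intersection map. First I would \emph{define} $\mathcal{H}^h_Q$ to be the set of subgroups $G\leq\widehat{G}^h_Q$ that actually occur in Theorem~\ref{thm:Qclassificazione}~(1), i.e. those for which $X=Y/G$ is a bona fide $\Q$-Gorenstein (resp. $\Q$-Fano) toric variety of factor $h$ with weight matrix $Q$. With this definition, items (1) and (2) of that theorem say exactly that $X\mapsto G$ is a bijection between isomorphism-in-codimension-$1$ classes of such varieties and $\mathcal{H}^h_Q$: item (1) attaches to each $X$ its subgroup $G$ together with the reconstruction $X\cong Y/G$, while item (2) shows that different fans $\Th,\Th'$ over $W=\G(Q)$ give varieties isomorphic in codimension $1$, so that the class of $X$ depends only on $G$; injectivity is then clear since an isomorphism in codimension $1$ preserves $\pet(\,\cdot\,)^{(1)}\cong G$. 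For $h=1$ one has $\widehat{G}^1_Q=\coker(\a^T)=G_Q$, so $\mathcal{H}^1_Q$ consists of subgroups of $G_Q$.

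Next I would introduce the map $\mathcal{H}^h_Q\to\mathcal{H}^1_Q$, $G\mapsto G\cap G_Q$, and check well-definedness together with the stated geometric content. This is precisely Theorem~\ref{thm:Qclassificazione}~(3): setting $G_1:=G\cap G_Q$, the quotient $X_1=Y/G_1$ is a factor-$1$ variety with weight matrix $Q$, fitting into $Y\twoheadrightarrow X_1\twoheadrightarrow X$ with $G_1\cong\pet(X_1)^{(1)}$. Hence $G_1\in\mathcal{H}^1_Q$, the image of $G$ is the subgroup attached to the unitary $1$-covering $X_1$, and the map sends the class of $X$ to the class of $X_1$.

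The substantial point is surjectivity. Given $G_1\in\mathcal{H}^1_Q$, i.e. a factor-$1$ variety $X_1=Y/G_1$ of index $\widehat{k}$ (the index of $Y$), I must exhibit a factor-$h$ variety $X=Y/G$ with $G\cap G_Q=G_1$. I would build $X$ as a further quotient $X=X_1/H$ with $H\cong G/G_1$ a subgroup of $\widehat{G}^h_Q/G_Q\cong(\Z/h\Z)^{\oplus n}$, the cokernel in the exact sequence (\ref{succ_esatta_pesi}): concretely, lift a generator of a $\Z/h\Z$-summand to a torsion class $\tau\in\widehat{G}^h_Q$ whose image in $(\Z/h\Z)^{\oplus n}$ has order exactly $h$, and set $G:=\langle G_1,\tau\rangle$, so that $G\cap G_Q=G_1$ by construction. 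The corresponding action on $Y$ is produced, as in Examples~\ref{ex:blupP3} and \ref{ex:QFanoCanonica}, by exponentiating the torsion matrix $\Ga$, giving $X=Y/G$.

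The hard part will be checking that this $G$ indeed lies in $\mathcal{H}^h_Q$, namely that $X=Y/G$ is a genuine toric variety of factor \emph{exactly} $h$. Two things must be controlled: (a) that the quotient matrix $B$ with $V=B\cdot W$ yields a \emph{reduced} fan matrix — this is the very obstruction highlighted in the Remark preceding the statement, where the whole $\widehat{G}^h_Q$ fails because $\widehat{k}\L$ is non-reduced — and (b) that the index of $X$ equals $h\widehat{k}$, and not a proper multiple of $\widehat{k}$, so that the factor is $h$. For (b) I would use Proposition~\ref{prop:dualcover} and the factorization (\ref{fattorizzazione}): choosing $\tau$ of order exactly $h$ in $(\Z/h\Z)^{\oplus n}$ forces $\im(\b^T)\supseteq\im(h\a^T)$ while excluding $\im(h'\a^T)$ for proper divisors $h'\mid h$, pinning the factor to $h$. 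Reducedness in (a) can be arranged, and in any case repaired up to isomorphism in codimension $1$ via Theorem~\ref{thm:Qclassificazione}~(2), since only the class of $X$ in $\mathcal{H}^h_Q$ is needed. Surjectivity onto $\mathcal{H}^1_Q$ follows; the $h=1$ case is the identity, as $G\cap G_Q=G$ for every $G\leq G_Q=\widehat{G}^1_Q$.
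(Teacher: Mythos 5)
Your reduction of the parameterization statement and of the well-definedness of the intersection map to items (1)--(3) of Theorem~\ref{thm:Qclassificazione} is exactly what the paper does: its entire proof of this corollary is the single sentence that it ``follows immediately'' from that theorem, and your first two paragraphs just make that deduction explicit. The divergence, and the problem, lies in your treatment of surjectivity, which is the one assertion for which the paper supplies no argument and for which your construction does not work as written.

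Concretely: setting $G:=\langle G_1,\tau\rangle$ with $\tau$ a lift of an order-$h$ element of $\widehat{G}^h_Q/G_Q\cong(\Z/h\Z)^{\oplus n}$ gives $G\cap G_Q=\langle G_1,h\tau\rangle$, and $h\tau$ is an element of $G_Q$ that need not lie in $G_1$, because the extension (\ref{succ_esatta_pesi}) need not split. Example~\ref{ex:Bauerle} already defeats the claim that $G\cap G_Q=G_1$ ``by construction'': there $G_Q\cong\Z/6\Z$ sits inside $\widehat{G}^2_Q\cong\Z/2\Z\oplus\Z/12\Z$ as $\{(\overline{0},\overline{2j})\}$, and every lift $\tau=(\overline{a},\overline{b})$ of the class of $(\overline{0},\overline{1})$ has $b$ odd, so $2\tau=(\overline{0},\overline{2b})$ is a nonzero element of $G_Q$ and $\langle\tau\rangle\cap G_Q$ is never trivial; for $G_1=\{0\}$ your recipe fails for that summand. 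Second, even when $G\cap G_Q=G_1$ can be arranged, membership of $G$ in $\mathcal{H}^h_Q$ is not bookkeeping: one must verify that the quotient fan matrix $V=B\cdot W$ is reduced and that the factor of $Y/G$ is exactly $h$, not a proper divisor. Your proposed repair via Theorem~\ref{thm:Qclassificazione}~(2) cannot do this --- that item only compares different fans over the same reduced $W$ and does not convert a non-reduced quotient fan matrix into a toric variety with weight matrix $Q$; the Remark immediately preceding the corollary, and again Example~\ref{ex:Bauerle} (where $\langle\overline{2}\rangle\le G_Q$ and $G_Q$ itself are excluded from $\mathcal{H}^1_Q$ for exactly this reason), show that this failure genuinely occurs. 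To be fair, the paper asserts surjectivity with no proof at all; the only reading under which it is truly ``immediate'' is the one where $\mathcal{H}^h_Q$ collects the subgroups attached to all varieties of factor \emph{dividing} $h$, in which case $\mathcal{H}^1_Q\subseteq\mathcal{H}^h_Q$ and the intersection map restricts to the identity there. Under the literal reading of ``factor $h$'', which is the one you adopted, surjectivity is an existence statement that your construction does not yet establish.
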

The proof follows immediately by Theorem~\ref{thm:Qclassificazione}.

\begin{example}\label{ex:Bauerle}
The present example is aimed to explain the argument proving Theorem~\ref{thm:Qclassificazione}. Up to $\GL$-equivalence, it is the same example given by B\"{a}uerle in \cite[Ex.~3]{Bauerle}, so that one compare computations and results.

Consider the complete toric variety $X$ whose fan matrix is given by
  \begin{equation*}
    V=\left( \begin {array}{ccc} 1&9&-7\\ \noalign{\medskip}0&16&-12\end {array} \right)
  \end{equation*}
  There is a unique complete fan over the matrix $V$ so that $X$ is well defined. In particular, $X$ is the fake wps obtained as a quotient of $Y=\P(1,3,4)$ by the action of $G=\Z/4\Z$, so that $\mult X=4$. A fan matrix of $Y$ is
\begin{equation*}
  W=\G(Q)=\left( \begin {array}{ccc} 1&1&-1\\ \noalign{\medskip}0&4&-3\end {array} \right)
\end{equation*}
where $Q=\G(V)=\left( \begin {array}{ccc} 1&3&4\end {array} \right)$ is a weight matrix of both $X$ and $Y$.
Therefore both $X$ and $Y$ are $\Q$-Fano and $\Q$-factorial. Notice they both are not canonical: in fact $\conv(W)$ has two interior lattice points and $\conv(V)$ has 9 interior lattice points. Polar polytopes are given by
\begin{equation*}
  \conv(W)^\circ=\conv(W^\circ)\quad \text{with}\quad W^\circ=\left( \begin {array}{ccc} -1&-1&7\\ \noalign{\medskip}0&2/3&-2\end {array} \right)
\end{equation*}
\begin{equation*}
  \conv(V)^\circ=\conv(V^\circ)\quad \text{with}\quad V^\circ=\left( \begin {array}{ccc} -1&-1&7\\ \noalign{\medskip}1/2&2/3&-4\end {array} \right)
\end{equation*}
implying that $Y$ has index $\widehat{k}=3$ and $X$ has index $k=6$. Then the factor of $X$ is $h=2$. Factorization (\ref{fattorizzazione}) is then represented, on the fixed bases, by matrices
\begin{equation*}
  A^T = \left( \begin {array}{cc} 21&-6\\ \noalign{\medskip}-6&2\end {array} \right)\ ,\quad
  B^T=\left( \begin {array}{cc} 1&0\\ \noalign{\medskip}2&4\end {array} \right)\ ,\quad
  C=\left( \begin {array}{cc} 42&-12\\ \noalign{\medskip}-24&7\end {array} \right)
\end{equation*}
and one can easily check that $2\, A^T=B^T\cdot C$\,. Moreover, diagram (\ref{diagramma beta +}) is represented by further matrices
\begin{equation*}
  B_1^T = \left( \begin {array}{cc} 1&0\\ \noalign{\medskip}0&2\end {array} \right)\ ,\quad
  C_1^T=\left( \begin {array}{cc} 1&0\\ \noalign{\medskip}1&2\end {array} \right)\ ,\quad
  (C')^T=\left( \begin{array}{cc} 21&-6\\ \noalign{\medskip}-3&1\end {array} \right)
\end{equation*}
so that $B^T=B_1^T\cdot\C_1^T$ and $A^T=B_1^T\cdot(C')^T$. Consider the Smith normal forms of $B^T,A^T$ and $2\,A^T$, respectively, to get
\begin{equation*}
  \left( \begin {array}{cc} 1&0\\ \noalign{\medskip}0&4\end {array} \right)\ ,\quad \left( \begin {array}{cc} 1&0\\ \noalign{\medskip}0&6\end {array} \right)\quad\text{and}\quad\left( \begin {array}{cc} 2&0\\ \noalign{\medskip}0&12\end {array} \right)
\end{equation*}
so that the short exact sequence (\ref{succ_esatta_pesi}) is given by
\begin{eqnarray*}
  &\xymatrix{0\ar[r]&\Z/6\Z\ar[r]&\Z/2\Z\oplus\Z/12\Z\ar[r]&\Z/2\Z\oplus\Z/2\Z\ar[r]&0}&\\
   &\xymatrix{\overline{1}\ar@{|->}[rr]&&(\overline{0},\overline{2})}\hskip3.6truecm&
\end{eqnarray*}
and the inclusion (\ref{inclusione}) is
\begin{equation*}
  \xymatrix{G\cong\Z/4\Z\ar@{^(->}[r]&\Z/2\Z\oplus\Z/12\Z\cong\widehat{G}^2_Q }\quad\text{defined by}\quad \xymatrix{\overline{1}\ar@{|->}[r]&(\overline{0},\overline{3})}
\end{equation*}
The inclusion of $G_1:=G\cap G_Q\cong\Z/2\Z$ in $G$ is then given by the cyclic subgroup $\langle\overline{2}\rangle\subset\Z/4\Z$, while the inclusion of $G_1$ in $G_Q$ is given by the cyclic subgroup  $\langle(\overline{0},\overline{6})\rangle\subset \Z/2\Z\oplus\Z/12\Z$\,.

\noindent A torsion matrix of $X$ is given by
\begin{equation*}
  \Ga=\left(
        \begin{array}{ccc}
          \overline{0} & \overline{3} & \overline{1} \\
        \end{array}
      \right)
\end{equation*}
so that the action of $G\cong\Z/4\Z$ on $Y$ is given, in Cox's coordinates, by
 \begin{equation*}
  \xymatrix{\Z/4\Z\times Y\,\ar@{^(->}[r]&Y }\quad\text{defined by}\quad \xymatrix{(\xi,[x_1,x_2,x_3])\ar@{|->}[r]&[x_1,\xi^3 x_2,\xi x_3]}
\end{equation*}
where $\xi$ is a primitive 4-th root of unity.
Restricting this action to $G_1\subset G$ means considering the action
\begin{equation*}
  \xymatrix{\Z/2\Z\times Y\,\ar@{^(->}[r]&Y }\quad\text{defined by}\quad \xymatrix{(\eta,[x_1,x_2,x_3])\ar@{|->}[r]&[x_1,\eta x_2,\eta x_3]}
\end{equation*}
where $\eta=\xi^2$\,. The quotient by this action gives rise to the unitary 1-covering $\pi_1:X_1\twoheadrightarrow X$ where $X_1$ is the $\Q$-factorial complete toric variety of fan matrix
\begin{equation*}
  V_1=\left( \begin {array}{ccc} 1&1&-1\\ \noalign{\medskip}0&8&-6\end {array} \right)
\end{equation*}
Also $X_1$ is not canonical as $\conv(V_1)$ contains three interior lattice points. Since
\begin{equation*}
  \conv(V_1)^\circ=\conv(V_1^\circ)\quad \text{with}\quad V_1^\circ=\left(\begin {array}{ccc} -1&-1&7\\ \noalign{\medskip}0&1/3&-1\end {array} \right)
\end{equation*}
$X_1$ has index 3 as $Y$, confirming that $X_1$ has factor 1.

Finally, recalling Corollary~\ref{cor:Qclassificazione}, first of all observe that subgroups of $G_Q\cong\Z/6\Z$ are given by
\begin{equation*}
  \langle\overline{1}\rangle=G_Q\ ,\ \langle\overline{2}\rangle\ ,\ \langle\overline{3}\rangle=G_1\ ,\ \{\overline{0}\}
\end{equation*}
but $\mathcal{H}^1_Q=\{G_1,\{\overline{0}\}\}$. In fact:
\begin{itemize}
  \item the quotient by $G_Q$ would have fan matrix
  $$A\cdot W=\left[ \begin {array}{ccc} 21&-3&-3\\ \noalign{\medskip}-6&2&0\end {array} \right]=3\,W^\circ$$
  which is clearly non-reduced;
  \item the quotient by $\langle\overline{2}\rangle$ can be studied by restricting to this subgroup the action of $G_Q$ on $Y$: namely, recalling (\ref{Lambda}), consider the lattice morphism $div_Z:\Z^2\to \Z^3$, represented on the fixed bases by $3\,\L^T$, and the induced short exact sequence
      \begin{equation}\label{div-seq-Z}
        \xymatrix{0\ar[r]&\Z^2\ar[rr]^-{div_Z}_-{3\,\L^T}&&\Z^3\ar[rr]^-{d_Z}_-{Q\oplus\Ga_Q}&&\coker(div_Z)\ar[r]&0}
      \end{equation}
      where the torsion matrix $\Ga_Q=\left(
                                        \begin{array}{ccc}
                                          \overline{0} & \overline{3} & \overline{5} \\
                                        \end{array}
                                      \right)$ admits entries in $G_Q\cong\Z/6\Z$. Dualizing (\ref{div-seq-Z}) over $\K^*$ one obtains the action of $G_Q$ on $Y$ as given by $\exp(\Ga_Q)$, which restricted to the subgroup $\langle \overline{2}\rangle\cong\Z/3\Z$ gives rise to the action
      \begin{equation*}
        \xymatrix{\Z/3\Z\times Y\ar[r]&Y}\ \text{where}\quad\xymatrix{\left(\overline{a},[x_1:x_2:x_3]\right)\ar@{|->}[r]&[x_1:x_2:\zeta^{2a}x_3]}
      \end{equation*}
      where $\zeta=\vartheta^2$ and $\vartheta$ is a primitive 6-th root of unity: in fact $$2\,\Ga_Q=\left(
                                        \begin{array}{ccc}
                                          \overline{0} & \overline{0} & \overline{4} \\
                                        \end{array}
                                      \right)\stackrel{\mod 3}{\equiv}\left(
                                                              \begin{array}{ccc}
                                                                \overline{0} & \overline{0} & \overline{2} \\
                                                              \end{array}
                                                            \right)=:\Ga_{\langle\overline{2}\rangle}
                                      $$
and one gets the short exact sequence
\begin{equation*}
        \xymatrix{0\ar[r]&\Z^2\ar[rr]_-{V_{\langle\overline{2}\rangle}}&&\Z^3\ar[rr]_-{Q\oplus\Ga_{\langle\overline{2}\rangle}}&&\coker(div_Z)\ar[r]&0}
      \end{equation*}
      with
      \begin{equation*}
        V_{\langle\overline{2}\rangle}=\left(
                                         \begin{array}{ccc}
                                           3 & 3 & -3 \\
                                           0 & 4 & -3 \\
                                         \end{array}
                                       \right)
      \end{equation*}
      This means that the quotient by $\langle\overline{2}\rangle$ would have fan matrix $V_{\langle\overline{2}\rangle}$, which is still a non-reduced matrix.
\end{itemize}
Finally, observe that $r=r^\circ=1$ implies $Q=Q^\circ$ by \cite[Lemma~5.3]{Conrads}, so that $Z^\circ\cong Y$ and in this case $X$ admits a polar universal covering. \\
Moreover:
\begin{eqnarray*}
% \nonumber to remove numbering (before each equation)
  (-3K_Y)^2 &=& 3^2\,2!\Vol\left(\conv(W^\circ)\right)={48}\\
  &=&g_Q|Q|=(-3K_{Z^\circ})^2= 3^2\,2!\Vol\left(\conv(\L)\right)\\
  (-3K_{X_1})^2 &=& 3^2\, 2! \Vol\left(\conv(V_1^\circ)\right)={24}\quad \text{and}\quad \mult X_1=2 \\
  (-6K_X)^2 &=& 6^2\, 2!\Vol\left(\conv(V^\circ)\right) = {16}\quad \text{and}\quad \mult X=4
\end{eqnarray*}
consistently with Theorem~\ref{thm:QFano-bounds}. In particular, notice that $\mult X$ does not respect the upper bound given in Theorem~\ref{thm:Qbounds}~(1), as $X$ is not canonical, but the upper bound given in Theorem~\ref{thm:Qnocanbounds}~(2) applies.
\end{example}

\begin{example}[Example \ref{ex:QFanoCanonica} continued]\label{ex:QFanoCanonica2} Let $X$, $Y$ and $Z^\circ$ be as defined in Example~\ref{ex:QFanoCanonica}. Recall that $Y$ and $Z^\circ$ are Fano toric varieties and the classification Theorem~\ref{thm:classificazione} gives that all the Fano toric varieties admitting weight matrix $Q$ are parameterized by subgroups of $G_Q\cong\Z/3\Z$ and then just given by $Y$ and $Z=(Z^\circ)^\circ$, related by the universal 1-covering construction $Y\twoheadrightarrow Z$. In particular, $Z$ turns out to be the unitary 1-covering of $X$ and the factorization given in Theorem~\ref{thm:Qclassificazione}~(3) turns out to be the following one
  \begin{equation*}
      \xymatrix{Y\ar[rr]^-\pi\ar[dr]_-{\pi_1}&&X\\
                &X_1=Z\ar[ur]&}
    \end{equation*}
    and the short exact sequence (\ref{succ_esatta_pesi}) becomes
    \begin{equation*}
  \xymatrix{0\ar[r]&\Z/3\Z\ar[r]&\Z/6\Z\ar[r]&\Z/2\Z\ar[r]&0}
\end{equation*}
Analogously, for polar partners of $Y$ and $Z$, parameterized by the ``complementary'' subgroups given by Proposition~\ref{prop:splittingGQ}, as described by Theorem~\ref{thm:classificazione}~(2). That is, all the Fano toric varieties admitting weight matrix $Q^\circ$ are parameterized by subgroups of $G_{Q^\circ}\cong\Z/3\Z$ and then given by $Z^\circ$ and $Y^\circ$. Moreover:
\begin{eqnarray*}
  (-K_{Z})^3 &=& 3! \Vol\left(\conv(\L^\circ)\right)={18}\quad \text{and}\quad \mult Z=3\\
  (-K_{Y^\circ})^3 &=& 3! \Vol\left(\conv(\L^\circ)\right)={6}\quad \text{and}\quad \mult Y^\circ=3
\end{eqnarray*}
consistently with self-intersections computed at the end of Example~\ref{ex:QFanoCanonica} and Theorems~\ref{thm:Fano-bounds} and \ref{thm:QFano-bounds}. In particular, $\mult Z, \mult Y^\circ$ satisfy the upper bound given in Theorem~\ref{thm:bounds}~(2).

As a final remark, recalling Remark~\ref{rem:sharpness}, let us notice that the Fano and $\Q$-factorial toric variety $Z$
attains the minimum $|Q|=|\widehat{\mathcal{I}}|=6$, making an interesting obstruction to any attempt of improving upper bounds obtained in Theorem~\ref{thm:bounds}.
\end{example}

\section{Extending bounds to Mori Dream Spaces and $\Q$-Fano varieties}\label{sez:MDS}

Let $X$ be a $\Q$-Fano variety, that is a normal, projective variety whose anti-canonical divisor admits an ample integer multiple $-kK_X$, for some $k\in\N$. Then Definition~\ref{def:indice} of the Gorenstein index $k$ of $X$ still holds.

A celebrated result of Birkar-Cascini-Hacon-McKernan \cite[\S~1.3]{BCHMcK} guarantees that:
\begin{itemize}
  \item[($*$)] \emph{a $\Q$-factorial $\Q$-Fano variety $X$ is a Mori Dream Space} (MDS)
\end{itemize}
where a MDS is a $\Q$-factorial algebraic variety $X$ such that
\begin{itemize}
  \item[(i)] every invertible global function is constant i.e. $H^0(X,\cO_X^*)\cong\K^*$,
  \item[(ii)] the class group $\Cl(X)$ is a finitely generated abelian group of rank
  $$r=\rk\Cl(X)$$
  \item[(iii)] the Cox ring $\Cox(X)$ is a finitely generated $\K$-algebra
  \begin{equation*}
    \Cox(X)\cong\K[x_1,\ldots,x_m]/I
  \end{equation*}
\end{itemize}
 It is a well-known fact that:
 \begin{itemize}
   \item[($**$)] \emph{a MDS $X$ admits a canonical toric embedding}.
 \end{itemize}
The reader is referred to \cite[\S~3.2.5]{ADHL} and \cite[Prop.~2.11]{Hu-Keel} for the details. In the following, we will adopt same notation as in \cite[\S~2.3]{R-wMDS}. Namely, one gets the following commutative diagram (see \cite[Thm.~2,\,Thm.~3]{R-wMDS}):
\begin{equation*}
  \xymatrix{X\ar@{^(->}[r]^-i&W(\Si)\ar@{^(->}[r]^-\iota&Z(\Si')\\
            \overline{X}:=\Spec(\Cox(X))\ar@{->>}[u]^-{\pi_X}\ar@{^(->}[r]^-{\overline{i}}&
            \overline{W}:=\Spec(\K[\X])\cong\K^m
            \ar@{->>}[u]^-{\pi}\ar@{->>}[ur]_-{\overline{\pi}}&}
\end{equation*}
where:
\begin{itemize}
  \item $\Cox(X)$ is the Cox ring of $X$,
  \item $W(\Si)$ is a $\Q$-factorial toric variety and $i$ a closed embedding of $X$ in $W$,
  \item $\overline{X}$ and $\overline{W}$ are the \emph{total coordinate spaces} of $X$ and $W$, respectively,
  \item $\X=\{x_1,\ldots,x_m\}$ is a \emph{Cox basis} of $\Cox(X)$ \cite[Def.~12]{R-wMDS}, so that $m=n+r$, being $n=\dim W$ and $r=\rk\Cl (W)$, and $\Cox(X)\cong\K[\X]/I$, being $I$ the ideal giving the embedding $\overline{i}$,
  \item $Z(\Si')$ is a (non-unique) \emph{sharp completion} of $W(\Si)$, that is $Z$ is $\Q$-factorial and complete and have the same Picard number as $W$ \cite[\S~2.6]{R-wMDS}: in particular, there is an inclusion of fans $\Si\subseteq\Si'$, but $\Si(1)=\Si'(1)$.
\end{itemize}

\begin{remark}
  Although $i:X\hookrightarrow W$ is a canonical embedding up to isomorphisms, in the sense that it depends only on the cardinality $|\X|$, which is fixed to be the minimum, the sharp completion $\iota:W\hookrightarrow Z$ is not unique. But, if $\iota':W\hookrightarrow Z'$ is another sharp completion, then there exists an isomorphism in codimension 1 $Z\cong_1 Z'$, which is actually a small $\Q$-factorial modification.

  In particular, if $X$ is already a complete toric variety, then both the embedding $i$ and $\iota$ are trivial, that is
  \begin{equation}\label{MDStorico}
  X=W=Z
  \end{equation}
\end{remark}

\begin{theorem}[\cite{R-CovMDS}, Thm.~3.17]\label{thm:can_1-covering} A MDS $X$ admits a canonical 1-covering $\phi:\widetilde{X}\twoheadrightarrow X$ and a canonical embedding $\widetilde{i}:\widetilde{X}\hookrightarrow\widetilde{W}$ into the universal 1-covering $\widetilde{W}$ of $W$. They fit into the following commutative diagram
\begin{equation}\label{diag-quot}
  \xymatrix{\widetilde{X}\ar@{^(->}[r]^-{\widetilde{i}}\ar@{->>}[d]^-\phi&\widetilde{W}
  \ar@{^(->}[r]^-{\widetilde{\iota}}\ar@{->>}[d]^-\vf&\widetilde{Z}\ar@{->>}[d]
  ^-\psi\\
  X\ar@{^(->}[r]^-i&W\ar@{^(->}[r]^-\iota&Z}
\end{equation}
where $Z$ is a sharp completion of $W$ and $\psi:\widetilde{Z}\twoheadrightarrow Z$ its universal 1-covering.
\end{theorem}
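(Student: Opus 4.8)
The plan is to produce $\widetilde X$ by restricting, to the total coordinate space $\overline X\subseteq\overline W$, the quotient presentation that already yields the toric universal $1$-covering $\varphi\colon\widetilde W\twoheadrightarrow W$ of \S~\ref{sssez:U1covering}. First I would recall the characteristic quotient presentations $W\cong\widehat W/H_W$ and $X\cong\widehat X/H_X$, where $\widehat W=\overline W\setminus B_W$ and $\widehat X=\overline X\setminus B_X$ are the complements of the respective irrelevant loci and $H_W=\Hom(\Cl(W),\K^*)$, $H_X=\Hom(\Cl(X),\K^*)$ are the acting quasitori. Because the canonical embedding $i$ is built from a Cox basis $\X$ with $|\X|=m=n+r$, it induces an isomorphism $\Cl(X)\cong\Cl(W)$ respecting the $\Cl$-gradings, so I may identify $H_X=H_W=:H$; the torsion is then governed by the single finite group
\begin{equation*}
  G:=\Hom(\Tors(\Cl(W)),\K^*)\cong\Tors(\Cl(W))\cong\pet(W)^{(1)}.
\end{equation*}
Over $\K=\overline\K$ with $\car\K=0$ the quasitorus splits as $H\cong\T^r\times G$, with $\T^r=\Hom(F,\K^*)$ the free part ($F$ a free complement of the torsion in $\Cl(W)$); this is just the free/torsion splitting of the class morphism recalled in Remark~\ref{azioneGa}. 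I then set $\widetilde W:=\widehat W/\T^r$ and $\widetilde X:=\widehat X/\T^r$, the quotients by the free part alone, so that the residual $G$-quotients are $\varphi\colon\widetilde W\twoheadrightarrow W$ and $\phi\colon\widetilde X\twoheadrightarrow X$.

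Next I would check that the construction restricts along the embedding. The ideal $I$ with $\Cox(X)\cong\K[\X]/I$ is $\Cl(X)$-homogeneous, hence $H$-invariant, so both the $\T^r$- and the $G$-actions preserve $\overline X\subseteq\overline W\cong\K^m$ and the irrelevant locus is compatible, $B_X=\overline X\cap B_W$. Therefore the closed embedding $\overline i\colon\overline X\hookrightarrow\overline W$ descends to a closed embedding $\widetilde i\colon\widetilde X\hookrightarrow\widetilde W$, and commutativity of the square formed by $\phi,\varphi,i,\widetilde i$ is immediate from the equivariance of $\overline i$. Finally I would glue in a sharp completion $\iota\colon W\hookrightarrow Z(\Si')$: since $\Si(1)=\Si'(1)$, the ray data governing the quotient are unchanged, so the very same free-part quotient applied to $Z$ produces its universal $1$-covering $\psi\colon\widetilde Z\twoheadrightarrow Z$ together with $\widetilde\iota\colon\widetilde W\hookrightarrow\widetilde Z$, completing the commutative diagram~(\ref{diag-quot}).

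The main obstacle is to certify that $\phi$ is a genuine $1$-covering and that $\widetilde X$ is simply connected in codimension $1$, i.e.\ $\pet(\widetilde X)^{(1)}=0$. For the toric $\widetilde W$ this is part of the universal $1$-covering construction, whose branch locus is supported on torus-invariant strata of codimension $\ge 2$; the delicate point is that, passing to the non-toric $\widetilde X$, this ramification must still pull back to a closed subset of codimension $\ge 2$ in $\widetilde X$. This is exactly where the canonicity of the embedding enters: one must show that $\widetilde X$ meets the branch strata only in codimension $\ge 2$, so that $\phi$ is \'etale in codimension $1$, and that $\Cl(\widetilde X)$ is torsion-free, forcing $\widetilde X$ connected and $\pet(\widetilde X)^{(1)}=0$. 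Granting the codimension estimate, the identification $\pet(X)^{(1)}\cong\Tors(\Cl(X))\cong G$ shows that $\phi$ is precisely the canonical $1$-covering, and functoriality of the construction in $Z$ gives its canonicity: any two sharp completions differ by a small $\Q$-factorial modification, while $\widetilde X$, $\phi$ and $\widetilde i$ depend only on $\Cl(X)$, the Cox basis $\X$ and the fixed $G$-action, none of which is affected by the choice of $Z$.
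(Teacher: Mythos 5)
Your construction coincides with the one the paper itself sketches (in the proof of the corollary immediately following the theorem): identify all the total coordinate spaces with $\Spec\K[\X]\cong\K^m$ via the neat embedding, let the quasitorus $T=\Hom(\Cl(W),\K^*)$ act simultaneously on them, and obtain the top row of diagram~(\ref{diag-quot}) by quotienting the stable loci $\widehat X\subseteq\widehat W\subseteq\widehat Z$ only by the subtorus of characters vanishing on $\Tors(\Cl(W))$ --- your $\T^r=\Hom(F,\K^*)$, the paper's $H=\Hom(\Cl(\widetilde W),\K^*)$; these are the same subgroup of $T$ once the splitting $\Cl(W)\cong F\oplus\Tors(\Cl(W))$ is fixed --- leaving the residual finite group $\mm\cong\Hom(\Tors(\Cl(W)),\K^*)$ to act on $\widetilde X\subseteq\widetilde W\subseteq\widetilde Z$. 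The $\Cl(X)$-homogeneity of $I$, the compatibility $B_X=\overline X\cap B_W$, and the equality $\Si(1)=\Si'(1)$ give the descent of the embeddings and the commutativity exactly as you describe.

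The one substantive point you do not close --- and which you candidly flag --- is the verification that $\phi$ is \'etale in codimension $1$ and that $\Cl(\widetilde X)$ is torsion-free, i.e.\ the codimension-$\ge 2$ estimate for the intersection of $\widetilde X$ with the non-free locus of the $\mm$-action on $\widetilde W$. This is not automatic: a closed subset of codimension $\ge 2$ in $\widetilde W$ can perfectly well meet the lower-dimensional subvariety $\widetilde X$ in codimension $\le 1$, so an argument specific to the canonical (neat) embedding is required; in \cite{R-CovMDS} it rests on the fact that $\widetilde i$ is again neat, so that $\Cl(\widetilde X)\cong\Cl(\widetilde W)$ is free and $\pet(X)^{(1)}\cong\Tors(\Cl(X))$ is realized exactly by $\mm$. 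The present paper does not supply this step either: it imports the whole statement from \cite[Thm.~3.17]{R-CovMDS} and only records the group-theoretic bookkeeping above. So your proposal matches the paper's own level of justification and correctly isolates where the real work lies, but as a self-contained proof it is incomplete at precisely that point.
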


In particular, closed embeddings $i$ and $\iota\circ i$ in the statement of Theorem~\ref{thm:can_1-covering} turn out to be \emph{neat} embeddings \cite[Def.~13]{R-wMDS} meaning that the induced pull-back on divisor classes are isomorphisms, namely

\begin{theorem}[\cite{R-wMDS}, Thm.~3]\label{thm:pullback} Let $X$ be a MDS and consider the canonical toric embedding $i:X\hookrightarrow W$ and a sharp completion $\iota:W\hookrightarrow Z$. Then, there is an induced commutative diagram of group isomorphisms
      \begin{equation*}
        \xymatrix{\Cl(Z)\ar[r]^-{\iota^*}_-{\cong}&\Cl(W)\ar[r]^-{i^*}_-{\cong}&\Cl(X)\\
                    \Pic(Z)\ar@{^{(}->}[u]\ar[r]^-{\iota^*}_-{\cong}&\Pic(W)\ar@{^{(}->}[u]
                    \ar[r]^-{i^*}_-{\cong}&\Pic(X)\ar@{^{(}->}[u]}
      \end{equation*}
Moreover, isomorphisms $\iota^*$ and $i^*$ extend to give $\R$-linear isomorphisms
      \begin{equation*}
        \xymatrix{N^1(Z)\ar[r]^-{\iota^*_\R}_-{\cong}&N^1(W)\ar[r]^-{i^*_\R}_-{\cong}&N^1(X)\\
                    \overline{\Eff}(Z)\ar@{^{(}->}[u]\ar[r]^-{\iota^*_\R}_-{\cong}&\overline{\Eff}(W)\ar@{^{(}->}[u]
                    \ar[r]^-{i^*_\R}_-{\cong}&\overline{\Eff}(X)\ar@{^{(}->}[u]\\
                    \overline{\Mov}(Z)\ar@{^{(}->}[u]\ar[r]^-{\iota^*_\R}_-{\cong}&\overline{\Mov}(W)\ar@{^{(}->}[u]
                    \ar[r]^-{i^*_\R}_-{\cong}&\overline{\Mov}(X)\ar@{^{(}->}[u]\\
                    \g=\Nef(Z)\ar@{^{(}->}[u]\ar@{^{(}->}[r]^-{\iota^*_\R}&\Nef(W)\ar@{^{(}->}[u]
                    \ar[r]^-{i^*_\R}_-{\cong}&\Nef(X)\ar@{^{(}->}[u]}
      \end{equation*}
\end{theorem}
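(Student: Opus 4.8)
The plan is to reduce all three rows to the Cox/GIT description of $X$, $W$, $Z$ and to exploit that the three varieties share the same grading data. First I would make precise what $i$ encodes: a Cox basis $\X=\{x_1,\ldots,x_m\}$ of $\Cox(X)$ fixes homogeneous generators whose degrees $w_1,\ldots,w_m\in\Cl(X)$ assemble into a weight matrix $Q$, and presents $\overline{X}=\Spec(\Cox(X))$ as the closed subscheme of $\overline{W}=\K^m$ cut out by the relation ideal $I$. The toric variety $W$ is then the good quotient $\overline{W}^{\,ss}/\!\!/H$ of the characteristic quasitorus $H=\Hom(\Cl(X),\K^*)$, for the same linearization producing $X\cong\overline{X}^{\,ss}/\!\!/H$. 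By construction the grading group of $\K[\X]$ is $\Cl(X)$, and this group is canonically the character lattice of $H$, that is $\Cl(W)$; hence $i^*\colon\Cl(W)\to\Cl(X)$ is the tautological identification. This is precisely the statement that $i$ is a \emph{neat} embedding.

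For the completion $\iota$, I would use that a sharp completion satisfies $\Si(1)=\Si'(1)$, so $W$ and $Z$ have the same torus-invariant prime divisors. Thus the groups of torus-invariant Weil divisors, both equal to $\Z^{|\Si(1)|}$, coincide, and the images of the $\Div$ maps (given by the common fan matrix) coincide as well; taking cokernels yields $\iota^*\colon\Cl(Z)\stackrel{\cong}{\to}\Cl(W)$. Tensoring with $\R$ promotes both isomorphisms to $N^1(Z)\cong N^1(W)\cong N^1(X)$. I would then transport the cones: the pseudo-effective cone is $\overline{\Eff}=\langle Q\rangle$ and the moving cone is $\Mov(Q)=\bigcap_i\langle Q^{\{i\}}\rangle$, both defined purely from the weight matrix $Q$ common to $X$, $W$, $Z$ under these identifications, so $i^*_\R$ and $\iota^*_\R$ carry each to the next. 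For the nef cones, $\Nef(W)=\bigcap_{I\in\I_\Si}\langle Q_I\rangle$ is the GIT chamber of the common semistable locus, and $\Nef(X)=i^*_\R(\Nef(W))$ because semiample classes on $X$ and on $W$ correspond under the neat embedding; this is where the hypothesis that $i$ is the canonical toric embedding, and not an arbitrary closed embedding, is genuinely used.

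On the completion side, since $\Si\subseteq\Si'$ one has $\I_\Si\subseteq\I_{\Si'}$, and $\Nef(Z)=\bigcap_{I\in\I_{\Si'}}\langle Q_I\rangle\subseteq\bigcap_{I\in\I_\Si}\langle Q_I\rangle=\Nef(W)$, yielding the inclusion $\g=\Nef(Z)\hookrightarrow\Nef(W)$. In general this inclusion is strict: $\Nef(W)$ is a union of GKZ chambers, and the sharp completion selects a single $\Q$-factorial complete chamber $\g$ inside it.

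The main obstacle is the $\Pic$ row under $\iota$, where I must prove equality rather than mere inclusion. By $\Pic=\bigcap_{I\in\I_\Si(n)}\Ls_c(Q_I)$ and $\Si(n)\subseteq\Si'(n)$ one only obtains $\Pic(Z)\subseteq\Pic(W)$ a priori, and, as just noted, the nef cones \emph{do} shrink strictly. The point to establish is that, although the nef cone shrinks, no new Cartier class is created: every class locally principal on each maximal cone of $\Si$ stays so on the new maximal cones of $\Si'$. Here I would use that a sharp completion adds maximal cones spanned only by the existing rays $\Si(1)=\Si'(1)$ and preserves the Picard number $r$, so that $\Pic(Z)$ and $\Pic(W)$ are full-rank subgroups of the common $\Cl$; ruling out a proper finite-index inclusion is the delicate step, and it is where the specific construction of sharp completions must be exploited. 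For $i^*$ on $\Pic$ the argument is softer, since a neat closed embedding pulls Cartier data back bijectively, giving $\Pic(W)\cong\Pic(X)$ compatibly with the vertical inclusions $\Pic\hookrightarrow\Cl$. This completion-versus-Picard subtlety is where I expect the genuine effort to concentrate.
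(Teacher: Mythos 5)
The paper offers no proof of this statement: it is quoted verbatim from \cite[Thm.~3]{R-wMDS}, so there is no in-paper argument to measure your reconstruction against. That said, most of what you write is sound and follows the expected mechanisms: $\Cl(Z)\cong\Cl(W)$ because $\Si(1)=\Si'(1)$ forces the two class groups to be presented by the same fan matrix; $\Cl(W)\cong\Cl(X)$ because $\Cl(X)$ is by construction the grading group of $\Cox(X)$, i.e.\ the character group of the quasitorus defining $W$; the cones $\overline{\Eff}$ and $\overline{\Mov}$ are read off from the common weight matrix $Q$; $\Nef(X)\cong\Nef(W)$ is Hu--Keel neatness of the canonical embedding; and you correctly obtain only an inclusion $\Nef(Z)\hookrightarrow\Nef(W)$ from $\I_\Si\subseteq\I_{\Si'}$.

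The genuine gap is the one you flag but do not close: the equality $\Pic(Z)=\Pic(W)$ inside the common class group. This is \emph{not} a formal consequence of the properties of a sharp completion recalled in this paper ($\Q$-factoriality, completeness, $\Si(1)=\Si'(1)$, equal Picard number). Indeed, for a $\Q$-factorial toric variety one always has $\rk\Pic=\rk\Cl$, so ``equal Picard number'' carries no extra information once the rays are fixed, and the inclusion $\Pic(Z)=\bigcap_{I\in\I_{\Si'}(n)}\Ls_c(Q_I)\subseteq\bigcap_{I\in\I_{\Si}(n)}\Ls_c(Q_I)=\Pic(W)$ can be strict for a completion satisfying all the listed conditions: take $V=\left(\begin{smallmatrix}1&0&-1\\0&1&-2\end{smallmatrix}\right)$, $Q=\G(V)=(1,2,1)$, and let $W$ be the open toric subvariety of $Z=\P(1,2,1)$ obtained by deleting the maximal cone $\langle V^{\{2\}}\rangle$; then $W$ is smooth with $\Pic(W)=\Cl(W)=\Z$, while $\Pic(Z)=2\Z$, although $Z$ is $\Q$-factorial, complete, has the same rays and the same Picard number as $W$. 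Hence the $\Pic$ row must use that $W$ is specifically the canonical ambient toric variety of a MDS together with the actual construction of sharp completions in \cite[\S~2.6]{R-wMDS}, and neither your argument nor the material recalled in the present paper supplies that input. Until that step is imported from \cite{R-wMDS} or reproved, the second row of the first diagram remains unjustified; everything else in your proposal is acceptable modulo the standard references you invoke.
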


\subsection{The multiplicity of a Mori Dream Space}\label{ssez:MDSmult}
The argument proving Theorem~\ref{thm:can_1-covering}, as given in \cite{R-CovMDS}, allows one to conclude the following

\begin{corollary}
Projections $\phi,\vf,\psi$ in diagram (\ref{diag-quot}) are given as geometric quotients under successively restrictions of the natural action of the finite abelian group
$$\mm:=\Hom(\Tors(\Cl(W)),\K^*)\cong\Tors(\Cl(W))\cong\Tors(\Cl(Z))$$
on the total coordinate space $\overline{W}\cong\Spec(\K[\X])\cong\overline{Z}$, up to the action of the torus $H:=\Hom(\Cl(\widetilde{W}),\K^*)\cong\Hom(\Cl(\widetilde{Z}),\K^*)$.
\end{corollary}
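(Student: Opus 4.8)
The plan is to read all three quotients off the single Cox quotient presentation shared by the total coordinate spaces $\overline{W}\cong\Spec(\K[\X])\cong\overline{Z}$, and to exhibit $\phi,\vf,\psi$ as restrictions of one and the same $\mm$-action obtained after dividing out the torus $H$. First I would recall Cox's quotient construction for the $\Q$-factorial toric varieties $W$ and $Z$: the characteristic quasi-torus $G_W:=\Hom(\Cl(W),\K^*)$ is diagonalizable, with connected component the torus $H=\Hom(\Cl(W)/\Tors(\Cl(W)),\K^*)=\Hom(\Cl(\widetilde{W}),\K^*)$ and group of components $\mm=\Hom(\Tors(\Cl(W)),\K^*)$, giving the canonical extension $1\to H\to G_W\to\mm\to 1$. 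By Theorem~\ref{thm:pullback} the isomorphism $\iota^*\colon\Cl(Z)\stackrel{\cong}{\to}\Cl(W)$ respects torsion subgroups, so $\Tors(\Cl(Z))\cong\Tors(\Cl(W))$ and the very same $\mm$ and $H=\Hom(\Cl(\widetilde{Z}),\K^*)$ govern the Cox presentation of $Z$ on the identical total coordinate space $\overline{Z}\cong\overline{W}$.

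Second, I would factor each Cox quotient through its torus part. Dividing $\overline{W}$ (minus its irrelevant locus) by $H$ alone yields a $\Q$-factorial toric variety whose class group is the free quotient $\Cl(W)/\Tors(\Cl(W))\cong\Cl(\widetilde{W})$; having trivial torsion it is simply connected in codimension $1$, so by the uniqueness characterization of the universal $1$-covering used in Theorem~\ref{thm:can_1-covering} (and in \cite{R-CovMDS}) it is precisely $\widetilde{W}$, and the residual finite action $\mm\times\widetilde{W}\to\widetilde{W}$ realizes $\vf$ as the geometric quotient $W\cong\widetilde{W}/\mm$ (geometric since $\mm$ is finite and $\widetilde{W}$ is $\Q$-factorial). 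The identical argument on $\overline{Z}$ gives $\psi$ as $Z\cong\widetilde{Z}/\mm$, while the inclusion of fans $\Si\subseteq\Si'$ with $\Si(1)=\Si'(1)$ makes the horizontal embeddings $\widetilde{\iota}$ and $\iota$ equivariant for the common $\mm$-action.

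Third, for $\phi$ I would restrict everything to the closed subvariety $\overline{X}=\Spec(\Cox(X))=V(I)\subset\overline{W}$. The defining ideal $I$ is homogeneous for the $\Cl(W)\cong\Cl(X)$-grading of $\K[\X]$ — this is exactly what makes $\overline{i}$ a morphism of graded objects and $i$ a closed embedding of the associated quotients — so $I$ is invariant under both $H$ and its torsion part $\mm$. Hence both actions restrict to $\overline{X}$, the $H$-quotient of $\overline{X}$ (minus the relevant locus) is the canonical $1$-covering $\widetilde{X}\hookrightarrow\widetilde{W}$, and its residual $\mm$-action yields $\phi$ as the geometric quotient $X\cong\widetilde{X}/\mm$. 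Since these restricted actions agree with the ambient ones, $\widetilde{i}$ is $\mm$-equivariant and the whole diagram~(\ref{diag-quot}) becomes a diagram of $\mm$-equivariant maps, showing that $\phi,\vf,\psi$ are geometric quotients by successive restrictions of a single $\mm$-action, taken after the torus quotient by $H$.

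The main obstacle I anticipate is the precise verification in the third step: that forming the closed $\mm$-invariant subscheme $\overline{X}\subset\overline{W}$ commutes with passing to the GIT/geometric quotient, so that $\widetilde{X}/\mm$ genuinely recovers $X$ and not a larger quotient. This rests on the homogeneity of $I$ with respect to the torsion grading together with the good-quotient behaviour of finite diagonalizable group actions restricted to invariant closed subvarieties; once the common characteristic quasi-torus $G_W\cong G_Z$ and its canonical splitting $1\to H\to G_W\to\mm\to 1$ are in place, the remaining compatibilities are formal.
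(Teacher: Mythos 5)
Your proposal is correct and follows essentially the same route as the paper's sketch: both rest on the exact sequence $1\to H\to\Hom(\Cl(W),\K^*)\to\mm\to 1$, the simultaneous action of this quasi-torus on the identified total coordinate spaces, the realization of $\widetilde{X},\widetilde{W},\widetilde{Z}$ as $H$-quotients of the stable loci, and the residual $\mm$-action yielding $\phi,\vf,\psi$. The only cosmetic difference is that you obtain the extension from the connected-component structure of the diagonalizable group, whereas the paper derives it from the pull-back $\vf^*$ along the universal $1$-covering; the step you flag as delicate (compatibility of the $\mm$-invariant closed subscheme $\overline{X}$ with the quotient) is exactly what the paper defers to the proof of Thm.~3.17 in \cite{R-CovMDS}.
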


\begin{proof}[Sketch of proof] Let us give the global picture of the action of $\mm$. For details, the interested reader is referred to the proof of Thm.~3.17 in \cite{R-CovMDS}.

\noindent Given the universal 1-covering $\vf:\widetilde{W}\twoheadrightarrow W$, we get the following short exact sequence of
  abelian groups:
  \begin{equation*}
    \xymatrix{1\rightarrow\Hom(\Cl(\widetilde{W}),\K^*)\
    \ar@{^(->}[r]&\Hom(\Cl(W),\K^*)\ar[r]^-{\vf^*}&\Hom(\Tors(\Cl(W)),\K^*)\rightarrow1}
  \end{equation*}
  Since $\Cl(\widetilde{W})$ is free, $H:=\Hom(\Cl(\widetilde{W}),\K^*)$ turns out to be a full subtorus of the
  quasi-torus $T:=\Hom(\Cl(W),\K^*)\cong\Hom(\Cl(X),\K^*)$, giving rise to the finite quotient
\begin{equation*}
  \boldsymbol\mu:=T/H\cong \Hom(\Tors(\Cl(W)),\K^*)\cong\Hom(\Tors(\Cl(X)),\K^*)
\end{equation*}
where the last isomorphism comes from the pull-back $i^*:\Cl(W)\cong\Cl(X)$.
Furthermore, we have an identification of total coordinate spaces
\begin{equation*}
 \left. \begin{array}{c}
    \overline{W}=\Spec(\Cox(W)) \\
    \overline{\widetilde{W}}\cong\Spec(\Cox(\widetilde{W}) \\
    \overline{Z}=\Spec(\Cox(Z)) \\
    \overline{\widetilde{Z}}\cong\Spec(\Cox(\widetilde{Z}) \\
  \end{array}\right\}\cong\Spec\K[\X]\cong\K^m
\end{equation*}
where $m=|\X|$. Then the quasi-torus $T$ can be thought of simultaneously acting on those total coordinate spaces, so that the open embedding $\iota:W\hookrightarrow Z$ comes from an open embedding of stable points loci $\widehat{\iota} : \widehat{W}\hookrightarrow\widehat{Z}$ determined by the inclusion of fans $\Si\subseteq\Si'$. Thinking of the partial torus action of $H$, one has a natural inclusion of universal 1-coverings
\begin{equation}\label{inclusionestabile}
  \xymatrix{\widetilde{W}:=\widehat{W}/H\ \ar@{^(->}[r]&\widehat{Z}/H=:\widetilde{Z}}
\end{equation}
on which naturally acts the finite abelian group $\mm$. Moreover, recalling that
\begin{equation*}
  \xymatrix{\overline{X}=\Spec(\Cox(X))\cong \Spec\left(\displaystyle{\K[\X]\over I}\right)\ \ar@{^(->}[r]&\K^m}
\end{equation*}
the restriction of the torus action $H$ to the subset of stable points $\widehat{X}\hookrightarrow\widehat{W}$ extends inclusion (\ref{inclusionestabile}) to give the following chain of inclusions
\begin{equation*}
  \xymatrix{\widetilde{X}:=\widehat{X}/H\ \ar@{^(->}[r]&\widetilde{W}:=\widehat{W}/H\ \ar@{^(->}[r]&\widehat{Z}/H=:\widetilde{Z}}
\end{equation*}
over which naturally acts $\mm$, giving rise to quotient maps $\phi,\vf,\psi$ and diagram (\ref{diag-quot}).
\end{proof}

\begin{definition}\label{def:multMDS}
  Let $X$ be a MDS, possibly a $\Q$-factorial, $\Q$-Fano variety. Define the \emph{multiplicity} of $X$, denoted by $\mult X$, to be the multiplicity of any sharp completion $Z$ of the canonical ambient toric variety $W$, that is
  \begin{equation*}
    \mult X:=|\mm|
  \end{equation*}
\end{definition}

\begin{remark}
  Since the group $\mm$ only depends on the canonical ambient variety $W$, the given definition of $\mult X$ does not depend on the choice of the sharp completion $Z$ of $W$.
\end{remark}

\begin{proposition}\label{prop:MDSbound}
  Let $X$ be a MDS, possibly a $\Q$-factorial, $\Q$-Fano variety and let
   \begin{itemize}
     \item $i:X\hookrightarrow W$ be its canonical toric embedding,
     \item $\iota:W \hookrightarrow Z$ be the choice of a sharp completion of $W$,
     \item $V$ be a fan matrix and $Q=\G(V)$ be a weight matrix of $Z$ (hence $W$) and $g_Q$ the associated weight order.
   \end{itemize}
   Then, the following facts hold
   \begin{enumerate}
     \item $\mult X=\mult Z= n!\displaystyle{\Vol(\conv(V))\over |Q|}$, being $n=\dim Z$,
     \item $\mult X\,|\,h^n g_Q=\widehat{g}^h_Q$\,, being $h$ the factor of $Z$,
   \end{enumerate}
\end{proposition}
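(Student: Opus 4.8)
The plan is to reduce both assertions to the complete toric variety $Z$, where the entire apparatus of \S~\ref{sez:QFanotv} is available, so that the only genuine point is the identification $\mult X = \mult Z$. By Definition~\ref{def:multMDS} one has $\mult X = |\mm|$, and the Corollary preceding the statement identifies the acting group as $\mm \cong \Tors(\Cl(W)) \cong \Tors(\Cl(Z))$. Since $Z$ is a complete toric variety, the general identification recalled in \S~\ref{sssez:U1covering} gives $\mult Z = |G| = |\Tors(\Cl(Z))|$, where $G = \pet(Z)^{(1)}$. Hence $\mult X = |\mm| = |\Tors(\Cl(Z))| = \mult Z$. This is consistent with the remark after Definition~\ref{def:multMDS}: as $\mm$ depends only on $W$, and any two sharp completions are isomorphic in codimension $1$ (hence share $\Tors(\Cl)$), the quantity is well posed.

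First I would establish the second equality of item (1). Being $\Q$-factorial and complete, $Z$ is automatically $\Q$-Gorenstein, so Theorem~\ref{thm:QFano-bounds}~(1) applies to $Z$ with fan matrix $V$ and weight matrix $Q = \G(V)$, yielding $\mult Z = n!\,\Vol(\conv(V))/|Q|$ with $n = \dim Z$. Alternatively, this follows directly from equations~(\ref{molteplicità}), (\ref{multX}) and (\ref{VolV}), which hold for an arbitrary complete toric variety: combining $\mult Z = [\Z^n:\Ls_c(V)]$ with $n!\,\Vol(\conv(V)) = [\Z^n:\Ls_c(V)]\,|Q|$ gives the claim without even invoking the $\Q$-Gorenstein hypothesis. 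Together with $\mult X = \mult Z$, this settles item (1).

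For item (2), I would again work on $Z$. Since $Z$ is $\Q$-Gorenstein with a well-defined factor $h$ (Remark~\ref{rem:C}) and weight order $g_Q$, Corollary~\ref{cor:Q-bounds}~(1) gives the divisibility $\mult Z \,|\, \widehat{g}^h_Q = h^n g_Q$, where $\widehat{g}^h_Q = |\widehat{G}^h_Q| = h^n|\det A|$ is the $h$-extension order of Definition~\ref{def:QFanoGQ}. Substituting $\mult X = \mult Z$ produces $\mult X \,|\, h^n g_Q = \widehat{g}^h_Q$, as required.

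The two appeals to \S~\ref{sez:QFanotv} are routine; the only conceptual content lies in the first paragraph. The mild obstacle there is to verify that the finite group realizing the quotient defining $\mult X$ for the MDS coincides, after passing through the chain $X \hookrightarrow W \hookrightarrow Z$, with the torsion subgroup of $\Cl(Z)$ — that is, that the canonical and sharp-completion embeddings do not disturb the torsion part. This is precisely guaranteed by the neatness of the embeddings (Theorem~\ref{thm:pullback}), whose group isomorphisms $\Cl(Z) \cong \Cl(W) \cong \Cl(X)$ restrict to isomorphisms on torsion subgroups, so that $\mm$ is genuinely read off from $Z$ and the reduction to the toric case is legitimate.
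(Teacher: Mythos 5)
Your proof is correct and follows essentially the same route as the paper: the identification $\mult X=\mult Z$ from Definition~\ref{def:multMDS}, then Theorem~\ref{thm:QFano-bounds}~(1) for the volume formula and Corollary~\ref{cor:Q-bounds}~(1) for the divisibility. Your added justification that the neat embeddings preserve $\Tors(\Cl)$ only makes explicit what the paper treats as immediate from the definition.
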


\begin{proof}
The first equality in item (1) comes directly from Definition~\ref{def:multMDS}. The second equality is obtained by applying Theorem~\ref{thm:QFano-bounds}~(1) to the $\Q$-factorial, hence $\Q$-Gorenstein, toric variety $Z$. Item (2) follows by Corollary~\ref{cor:Q-bounds}~(1).
\end{proof}

\begin{theorem}\label{thm:MDSbounds}
  Let $X$ be a MDS, possibly a $\Q$-factorial, $\Q$-Fano variety, and let
  \begin{equation*}
    \xymatrix{X\,\ar@{^(->}[r]^-i&W\,\ar@{^(->}[r]^-\iota&Z}
  \end{equation*}
  be its canonical toric embedding followed by the choice of a sharp completion of the ambient toric variety. Then, $Z$ is isomorphic in codimension 1 to a $\Q$-Fano toric variety $Z'$. In particular, given any positive integer $M\in\N$ such that $\mult Z'\le M$, then
  $$\mult X\le M$$
\end{theorem}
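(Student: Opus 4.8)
The plan is to reduce everything to the invariance of the multiplicity under isomorphisms in codimension $1$, and then to combine Lemma~\ref{lem:iso1} with the identity $\mult X=\mult Z$ recorded in Definition~\ref{def:multMDS} and Proposition~\ref{prop:MDSbound}~(1).

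First I would note that a sharp completion $Z$ of the canonical ambient toric variety $W$ is, by construction, a $\Q$-factorial and complete toric variety. Hence Lemma~\ref{lem:iso1} applies to $Z$ and produces a $\Q$-Fano toric variety $Z'$ together with an isomorphism in codimension $1$
\begin{equation*}
  f:Z\cong_1 Z',
\end{equation*}
which already establishes the first assertion of the statement.

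The heart of the argument is the equality $\mult Z=\mult Z'$. By the very construction underlying Lemma~\ref{lem:iso1} --- compare the proof of Proposition~\ref{prop:GorenFano}, where the isomorphic-in-codimension-$1$ partner is produced with the \emph{same} weight matrix --- the complete toric varieties $Z$ and $Z'$ share the same $1$-skeleton, hence the same fan matrix $V$ and the same weight matrix $Q=\G(V)$. Since, by Definition~\ref{def:mult}, the multiplicity is the index of the sublattice $N_1=\Ls(\Si(1)\cap N)$ inside $N$, a quantity that depends only on the $1$-skeleton, or equivalently, by formula (\ref{multX}), $\mult Z=[\Z^n:\Ls_c(V)]$, we obtain $\mult Z=\mult Z'$.

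Finally I would chain the equalities: Definition~\ref{def:multMDS} and Proposition~\ref{prop:MDSbound}~(1) give $\mult X=\mult Z$, whence
\begin{equation*}
  \mult X=\mult Z=\mult Z'.
\end{equation*}
Consequently, for any integer $M$ with $\mult Z'\le M$ we get $\mult X\le M$. The only genuinely delicate point is the invariance of the multiplicity under isomorphism in codimension $1$, namely the fact that such a toric birational map preserves the whole $1$-skeleton of the fan; once this is granted, the conclusion follows by a direct concatenation of the results already established.
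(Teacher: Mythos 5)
Your proposal is correct, and the underlying mathematics is the same as the paper's, but the packaging differs in a way worth noting. The paper does not invoke Lemma~\ref{lem:iso1} here; instead it carries out the reduction explicitly: it first proves that $[-K_Z]$ is always movable (Lemma~\ref{lem:anticanonico}), then locates $[-K_Z]$ in the GKZ decomposition of $\Mov(Q)$ and, according to whether it lies in the relative interior of a chamber, on a facet, or on a deeper face, produces $Z'$ as a composition of a small $\Q$-factorial modification with zero or more small birational contractions. This matters because the paper's own proof of Lemma~\ref{lem:iso1} (in \S~\ref{ssez:lemmaiso1}) explicitly defers to ``the same argument used for the sharp completion $Z$ in the proof of Theorem~\ref{thm:MDSbounds}''; so while Lemma~\ref{lem:iso1} is a legitimately quotable earlier statement, your proof is only non-circular relative to the paper's organization once that lemma is given its independent proof -- which is exactly the chamber analysis you have black-boxed. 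Your treatment of the multiplicity invariance also takes a slightly different route: you argue that $Z$ and $Z'$ share the same fan matrix $V$ (true for the $Z'$ arising from wall crossings and small contractions, since no rays are created or destroyed) and then use $\mult Z=[\Z^n:\Ls_c(V)]$ from (\ref{multX}); the paper instead lifts $f:Z\cong_1 Z'$ to an isomorphism in codimension $1$ of the universal $1$-coverings and compares degrees. Both arguments are valid; yours is the more elementary of the two, while the paper's applies verbatim to any isomorphism in codimension $1$ without inspecting how it was constructed. The final concatenation $\mult X=\mult Z=\mult Z'\le M$ is exactly as in the paper.
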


\begin{proof}
  If the sharp completion $Z$ is $\Q$-Fano there is nothing to prove, as one can take $Z'=Z$ and the inequality on the multiplicity comes from Proposition~\ref{prop:MDSbound}~(1).

  Assume $Z$ is not $\Q$-Fano, that is, the anti-canonical class $[-K_Z]$ does not belong to the relative interior of $\Nef(Z)$ and consider the following

  \begin{lemma}\label{lem:anticanonico}
    Let $Z$ be a complete toric variety. Then the anti-canonical divisor $-K_Z$ is always movable, that is
    $$[-K_Z]\in\overline{\Mov}(Z)=\Mov(Q)$$
    being $Q$ a weight matrix of $Z$.
  \end{lemma}

  Lemma~\ref{lem:anticanonico} should be a well known fact. However, for the reader convenience and lack of references,
  a proof will be postponed to \S~\ref{ssez:lemma}.

  Recall that $\Mov(Q)$ is the union of a finite number of chambers of the GKZ decomposition (see \S~\ref{ssez:GKZ}). After \cite{Hu-Keel}, one can write
  \begin{equation*}
    \Mov(Q)=\bigcup_i g_i^*\Nef(Z_i)
  \end{equation*}
being $g_i:Z\dashrightarrow Z_i$ a small $\Q$-factorial modification, for any $i$. Then, Lemma~\ref{lem:anticanonico} says that there exists a $\Q$-factorial $Z_i$ such that $[-K_Z]=g_i^*[-K_{Z_i}]\in g_i^*\Nef(Z_i)$. There are three possible cases.
\begin{enumerate}
  \item $[-K_Z]$ is in the relative interior of the chamber $g_i^*\Nef(Z_i)$: then $[-K_{Z_i}]$ is in the relative interior of $\Nef(Z_i)$, meaning that $Z_i$ is $\Q$-Fano. Since $g_i$ is an isomorphism in codimension 1, this gives the thesis.
  \item $[-K_Z]$ belongs to the boundary $\partial( g_i^*\Nef(Z_i))$ and there exists a facet $\g$ of the chamber $g^*_i\Nef(Z_i)$ such that $[-K_Z]$ is in the relative interior of $\g$: this means that there exists a small birational contraction $f:Z_i\longrightarrow Z'_i$ such that $[-K_{Z'_i}]$ is in the relative interior of $\Nef Z'_i$, as $\g=(f\circ g_i)^*\Nef Z'_i$. Then $Z'_i$ is a $\Q$-Fano, but no longer $\Q$-factorial, complete toric variety and $f\circ g_i$ is an isomorphism in codimension 1 between $Z$ and $Z'_i$.
  \item $[-K_Z]$ is still in the boundary $\partial( g_i^*\Nef(Z_i))$ but there exists a face $\tau$ of codimension $l\ge 2$ of the chamber $g^*_i\Nef(Z_i)$ such that $[-K_Z]$ is in the relative interior of $\tau$: this means that there is a sequence of $l$ small birational contractions
      \begin{equation*}
        \xymatrix{Z_i\ar[r]^-{f'}&Z_i'\ar[r]^-{f''}&\cdots\ar[r]^-{f^{(l)}}&Z_i^{(l)}}
      \end{equation*}
      and $Z_i^{(l)}$ is a $\Q$-Fano, but no longer $\Q$-factorial, complete toric variety and $f^{(l)}\circ\cdots\circ f'\circ g_i$ is an isomorphism in codimension 1 between $Z$ and $Z_i^{(l)}$.
\end{enumerate}
For the multiplicity, notice that if $f:Z\cong_1 Z'$ is an isomorphism in codimension 1, then there is a commutative diagram involving universal 1-coverings
\begin{equation*}
  \xymatrix{\widetilde{Z}\ar[d]_-{\psi}\ar[r]^-{\widetilde{f}}&\widetilde{Z}'\ar[d]^-{\psi'}\\
            Z\ar[r]_-f&Z'}
\end{equation*}
where $\widetilde{f}$ is an isomorphism in codimension 1, too. Then $\mult Z=\mult Z'$ and the proof ends up by recalling Proposition~\ref{prop:MDSbound}~(1).
 \end{proof}

 \begin{corollary}\label{cor:MDSbounds}
   Let $X$ be a MDS of rank $r$, possibly a $\Q$-factorial, $\Q$-Fano variety, and let
  \begin{equation*}
    \xymatrix{X\,\ar@{^(->}[r]^-i&W\,\ar@{^(->}[r]^-\iota&Z}
  \end{equation*}
  be its canonical toric embedding followed by the choice of a sharp completion of the ambient toric variety. Let $k$ be the Gorenstein index of $Z$ and assume that $Z$ admits at worst canonical singularities. Then
  \begin{enumerate}
      \item if $n=\dim Z=2$ then $Z$ is Fano and
      \begin{equation*}
        \mult X\le \left[{9\over \mu_{2,r'}}\right]=\left[{9\over 2+r'}\right]
      \end{equation*}
      \item if $n=\dim Z=3$ then
      \begin{equation*}
        \mult X\le \left[ 72\, k^3\over \mu_{3,r'}\right]=\left[ 144\, k^3\over 7+r'\right]
      \end{equation*}
      \item if $n=\dim Z \ge 4$ then
      \begin{equation*}
        \mult X\le \left[{2(s_n - 1)^2\,k^n\over \mu_{n,r'}}\right]
      \end{equation*}
    \end{enumerate}
    where $[\,\cdot\,]$ denotes the integer part of a rational number, $\{s_n\}$ is the Sylvester sequence, $r'=\max(r,r^\circ)$ being $r^\circ$ the dual rank of $Z$, and $\mu_{n,r}$ is the \emph{McMullen number} defined in (\ref{mcmullen}).
 \end{corollary}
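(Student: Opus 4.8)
The plan is to transport the bound from the Mori Dream Space $X$ to its canonical ambient toric variety $Z$ and then to quote the $\Q$-Gorenstein estimate of Theorem~\ref{thm:Qbounds}. The first step I would record is the identity $\mult X=\mult Z$, which is exactly Definition~\ref{def:multMDS} and is restated in Proposition~\ref{prop:MDSbound}~(1). This single equality is what moves the whole problem into the toric world, and it is also the reason the estimates are governed by $n=\dim Z$ and \emph{not} by $\dim X$: the canonical embedding $i:X\hookrightarrow W$ need not preserve dimension, whereas $\dim Z=\dim W=n$.

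Next I would verify that $Z$ meets, word for word, the hypotheses of Theorem~\ref{thm:Qbounds}. The sharp completion $Z$ is $\Q$-factorial and complete, hence $\Q$-Gorenstein; its class group satisfies $\Cl(Z)\cong\Cl(W)\cong\Cl(X)$ by the pull-back isomorphisms of Theorem~\ref{thm:pullback}, so $\rk\Cl(Z)=r$; its Gorenstein index is the prescribed integer $k$; and by assumption it carries at worst canonical singularities. Writing $Q=\G(V)$ for a weight matrix of $Z$ and $Q^\circ=\G(V^\circ)$ for its polar weight matrix, the dual rank $r^\circ=\rk Q^\circ$ and $r'=\max(r,r^\circ)$ are precisely the quantities occurring in Theorem~\ref{thm:Qbounds}. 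If one prefers to make the passage to a $\Q$-Fano model explicit before quoting that theorem, Theorem~\ref{thm:MDSbounds} replaces $Z$ by a $\Q$-Fano toric variety $Z'$ with $Z\cong_1 Z'$; since both $\mult$ and the characterization of canonical singularities through $\conv(V)$ are invariants of isomorphism in codimension~1, none of the relevant data change and $\mult X=\mult Z=\mult Z'$.

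With these identifications in hand the conclusion is a direct application of Theorem~\ref{thm:Qbounds} to $Z$. For $n=2$ a canonical $\Q$-Gorenstein toric surface is automatically Gorenstein (hence Fano), so Theorem~\ref{thm:Qbounds}~(1) yields $\mult Z\le[\,9/\mu_{2,r'}\,]=[\,9/(2+r')\,]$; for $n=3$ and for $n\ge 4$, items (2) and (3) of Theorem~\ref{thm:Qbounds} give $\mult Z\le[\,144\,k^3/(7+r')\,]$ and $\mult Z\le[\,2(s_n-1)^2k^n/\mu_{n,r'}\,]$, respectively. Combining each inequality with $\mult X=\mult Z$ produces the three asserted bounds.

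I do not expect a genuine obstacle: the substantive content — McMullen's facet bound, the anti-canonical volume estimates of Theorems~\ref{thm:KX3} and~\ref{thm:KXn}, and the reduction to a $\Q$-Fano representative via Lemma~\ref{lem:iso1} — is already packaged inside Theorem~\ref{thm:Qbounds}, so nothing of that machinery needs to be re-derived here. The only point demanding care is strictly bookkeeping: one must read off $n$, $r$, $r^\circ$, $k$ and the canonical-singularity hypothesis from the ambient variety $Z$ rather than from $X$, and invoke the invariance of $\mult$ under isomorphism in codimension~1 so that the eventual $\Q$-Fano reduction leaves every quantity untouched.
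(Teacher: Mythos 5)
Your proposal is correct and follows essentially the same route as the paper: reduce to the ambient toric variety via $\mult X=\mult Z$ (Definition~\ref{def:multMDS} and Proposition~\ref{prop:MDSbound}) and then apply Theorem~\ref{thm:Qbounds}, with Theorem~\ref{thm:MDSbounds} supplying the passage to a $\Q$-Fano model in codimension~1. The only detail the paper records that you omit is the observation that the Gorenstein index $k$ is independent of the choice of sharp completion (since any two sharp completions differ by a small $\Q$-factorial modification), which makes the hypothesis on $k$ well posed; this is a well-posedness remark rather than a gap in the argument.
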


 \begin{proof}
 First of all, notice that the index $k$ does not depend on the choice of the sharp completion $Z$, as any further sharp completion $Z'$ of $W$ is a small $\Q$-factorial modification of $Z$, that is there exists an isomorphism in codimension 1 $f:Z\cong_1 Z'$, so that $kK_Z=f^*(kK_{Z'})$ is Cartier if and only if $kK_{Z'}$ is Cartier. Then $Z'$ and $Z$ have the same Gorenstein index.

   The statement is, then, an immediate consequence of both Theorem~\ref{thm:MDSbounds} and Theorem~\ref{thm:Qbounds}.
 \end{proof}

 \begin{example}\label{ex:MDS}
   The present example is meant to give an account of results stated and proved in the present \S~\ref{ssez:MDSmult}.

   Consider the grading map $d:\Z^5\twoheadrightarrow\Z^2\oplus\Z/3\Z$, whose free part is represented, over the standard bases, by the reduced $W$-matrix
matrix
\begin{equation*}
  Q=\left( \begin {array}{ccccc} 1&1&1&0&2\\ 0&2&1&1&1\end {array} \right) =\left(
             \begin{array}{ccc}
               \q_1 & \cdots & \q_5 \\
             \end{array}
           \right)
\end{equation*}
and whose torsion part is represented by the torsion matrix
\begin{equation*}
  \mathcal{T}=\left( \begin {array}{ccccc}
  \overline{1}&\overline{0}&\overline{1}&\overline{0}&\overline{0} \end {array}
  \right)
\end{equation*}
Then, consider the quotient algebra
$$R=\C[x_1,\ldots,x_5]/(x_1^3x_4^3+x_3^3+x_2x_5)$$
graded by $d$. This is consistent since the relation defining $R$ is homogeneous \wrt such a grading. Moreover $R$
turns out to be a Cox ring with $\X:=\{\overline{x}_1,\ldots,\overline{x}_5\}$ giving a Cox basis of $R$. Notice that $Q=\G(V)$ where $V$ is the following reduced $F$-matrix
\begin{equation*}
  V=\left( \begin {array}{ccccc} 1&0&5&-2&-3\\ 0&1&3&-3&-2\\ 0&0&6&-3&-3\end {array} \right)=\left(
             \begin{array}{ccc}
               \v_1 & \cdots & \v_5 \\
             \end{array}
           \right)
\end{equation*}
and let $\Si$ be the fan generated, as the fan of all faces, by its $3$-skeleton
\begin{equation*}
  \Si(3):=\left\{\langle V^I\rangle\,|\,I\in\{\{1,2\},\{1,4\},\{2,3\},\{3,4\},\{4,5\}\}\right\}
\end{equation*}
Notice that the toric variety $W=W(\Si)$ is $\Q$-factorial but non-complete, whose nef cone is given by
\begin{equation*}
  \Nef(W)=\langle \q_2,\q_3\rangle\subset\langle Q\rangle=\Eff(W)
\end{equation*}
In particular, $\Mov(W)=\Mov(Q)=\langle \q_2,\q_5\rangle$ and
\begin{equation*}
[-K_W]=\left(
         \begin{array}{c}
           5 \\
           5 \\
         \end{array}
       \right)=5\,\q_3
\end{equation*}
which is in the relative interior of $\Mov(W)$ but on the border of $\Nef(W)$.
Then,
$$\overline{X}:=\Spec(R)\subseteq \Spec\C[\x]=:\overline{W}$$
defines the total coordinate space of a MDS
$X:=\widehat{X}/G$ whose canonical ambient toric variety is $W:=\widehat{W}/G$, where
\begin{eqnarray*}
  \widehat{X}&=&\overline{X}\backslash B_X\quad \text{being}\quad B_X=\mathcal{V}(\irr(X))\\
  \widehat{W}&=&\overline{W}\backslash \widetilde{B}\quad \text{being}\quad
  \widetilde{B}=\mathcal{V}(\widetilde{\irr})\\
  \irr(X)&=&\left(\begin{array}{c}
  \overline{x}_1\overline{x}_2,\overline{x}_1\overline{x}_4,
  \overline{x}_2\overline{x}_3,\overline{x}_3\overline{x}_4,\overline{x}_4\overline{x}_5
  \end{array}\right)\\
  \widetilde{\irr}&=&\left(\begin{array}{c}
  {x}_1{x}_2,{x}_1{x}_4,
  {x}_2{x}_3,{x}_3 x_4,{x}_4{x}_5
  \end{array}\right)\\
  G&=&\Hom(\Cl(W),\C^*)\cong\Hom(\Z^2\oplus\Z/3\Z,\C^*)
\end{eqnarray*}
The toric variety $W$ admits a unique sharp completion $Z=Z(\overline{\Si})$ determined by the fan $\overline{\Si}\in \SF(V)$ generated by its 3-skeleton
\begin{equation*}
  \overline{\Si}(3):=\Si(3)\cup\{\langle V^{\{2,5\}}\rangle\}
\end{equation*}
Notice that adding just one maximal cone means adding just one point $p$, determined in Cox coordinates by equations $p=\{x_1=x_3=x_4=0\}$. Then $X$ is complete, as $p\not\in X$\,.

By construction, $\Nef(X)=\Nef(W)=\Nef(Z)$, so that $Z$ is a $\Q$-Gorenstein toric variety but not a $\Q$-Fano one: in particular the Gorenstein index of $Z$ is $k=6$ as can be deduced by the anti-canonical polytope
\begin{equation*}
  \D_{-K_Z}=\conv(V^\circ)\quad\text{with}\quad V^\circ:=\left(
              \begin{array}{ccccc}
                -1&-1&-1&4&3/2 \\
                -1&-1&2/3&-1&3/2 \\
                2&7/6&1/3&-3&-13/6 \\
              \end{array}
            \right)
\end{equation*}
By adjunction (see \cite[Prop.~3.3.3.2]{ADHL}), it turns out that
\begin{equation*}
  [-K_X]=-3\q_3 +5\q_3=2\q_3 = {2\over 5}(\iota\circ i)^*[-K_Z]
\end{equation*}
Recalling that $\Mov(X)\stackrel{i^*}{\cong}\Mov(W)\stackrel{\iota^*}{\cong}\Mov(Z)\cong\Mov(Q)$ one has that the anti-canonical divisor $-K_X$ of $X$ is movable and $-6K_X$ is Cartier but it is not ample. Hence $X$ is not $\Q$-Fano.

Furthermore, if one considers the complete fan $\Si'$ generated by its 3-skeleton
\begin{equation*}
  \Si'(3):=\left\{\langle V^I\rangle\,|\,I\in\{\{1,2\},\{1,4\},\{2,5\},\{3\},\{4,5\}\}\right\}
\end{equation*}
then the complete toric variety $Z'(\Si')$ is isomorphic in codimension 1 to $Z$, as there is the small birational contraction $f:Z\to Z'$ obtained by contracting the complete curve given by the closure of the torus orbit associated with the 2-cone $\langle V^{\{2,3,4\}}\rangle=\langle\v_1,\v_5\rangle$. Now
\begin{equation*}
  \Nef(Z')=\langle\q_3\rangle\subset\partial\Nef(Z)
\end{equation*}
and $[-K_{Z'}]= 5\q_3$ is in the relative interior of $\Nef(Z')$, so that $Z'$ is $\Q$-Fano. Since
\begin{equation*}
  Q^\circ=\G(6\,V^\circ)=\left(
                           \begin{array}{ccccc}
                             1&0&3&1&0 \\
                             1&2&0&0&2 \\
                           \end{array}
                         \right)
\end{equation*}
one gets $r^\circ=2=r$, so that Corollary~\ref{cor:MDSbounds}~(2) gives that
\begin{equation*}
  \mult X \le \left[ 144\cdot 6^3\over 7+2\right]= 3\,456
\end{equation*}
A better upper bound for $\mult X$ can be obtained by Proposition~\ref{prop:MDSbound}. In fact, the weight group $G_Q$ is obtained as the co-kernel of the lattice morphism represented by the quotient matrix $A$ defined in (\ref{A}), where:
\begin{itemize}
  \item $\widehat{k}$ is now the index of the universal 1-covering $\widetilde{Z}'$ of $Z'$, whose fan matrix is given by
      \begin{equation*}
  \widetilde{V}=\G(Q)=\left(
                        \begin{array}{ccccc}
                          1&0&1&0&-1 \\
                           0&1&1&-2&-1\\
                           0&0&2&-1&-1\\
                        \end{array}
                      \right)
\end{equation*}
  \item $W^\circ$ is now given by
    \begin{equation*}
  \widetilde{V}^\circ=\D_{-K_{\widetilde{Z}'}}=\left(
                        \begin{array}{ccccc}
                          -1&-1&-1&4&3/2 \\
                           -1&-1&2/3&-1&3/2\\
                           3&1/2&-1/3&-2&-2\\
                        \end{array}
                      \right)
\end{equation*}
showing that $\widetilde{Z}'$ has the same index of $Z'$, that is $\widehat{k}=k=6$; in particular the factor of $Z'$ is $h=1$;
  \item finally
  $$\L^\circ=\G(Q^\circ)=\left(
                          \begin{array}{ccccc}
                            2&0&0&-2&-1 \\
                            0&1&0&0&-1 \\
                            0&0&1&-3&0 \\
                          \end{array}
                        \right)$$
                        which is a non-reduced $F$-matrix.
\end{itemize}
Therefore
\begin{equation*}
  A=\left(
      \begin{array}{ccc}
        -3&-6&-6 \\
        -3&-6&4 \\
        9&3&-2 \\
      \end{array}
    \right)\ \Longrightarrow\ G_Q=\coker(A)\cong \Z/15\Z\oplus\Z/30\Z\ \Longrightarrow\ g_Q=450
\end{equation*}
and Proposition~\ref{prop:MDSbound}~(2) gives that
\begin{equation*}
  \mult X\,|\,450
\end{equation*}
Actually $\mult X=\mult Z'$ and $Z'\cong\widetilde{Z}'/(\Z/3\Z)$, meaning that
\begin{equation*}
  \mult X= 3
\end{equation*}
 \end{example}

 \begin{remark}
   Notice that the same Cox algebra
   $$R=\C[x_1,\ldots,x_5]/(x_1^3x_4^3+x_3^3+x_2x_5)$$
   given in the Example~\ref{ex:MDS} can define lots of Mori Dream Spaces with higher multiplicity, just by changing the torsion part of the grading map $d$, that is varying the canonical ambient toric variety among those admitting the same weight matrix $Q$ and the same factor $h$, following what stated by Corollary~\ref{cor:Qclassificazione}.
   The procedure is the following one.
   \begin{itemize}
     \item First of all determine the action of the weight group $G_Q$ by exponentiating a torsion matrix associated with the non reduced $F$-matrix $\widehat{k}W^\circ$, the latter given by $\widetilde{V}^\circ$ in Example~\ref{ex:MDS}. In this case, one obtains the torsion matrix
         \begin{equation*}
           \Ga=\left(
                 \begin{array}{ccccc}
                   \overline{3}&\overline{3}&\overline{3}&\overline{10}&\overline{6} \\
                   \overline{12}&\overline{12}&\overline{12}&\overline{13}&\overline{16} \\
                 \end{array}
               \right)\in (\Z/15\Z)^5\oplus(\Z/30\Z)^5
         \end{equation*}
     \item The second step is finding subgroups described in $\mathcal{H}^1_Q$ in Corollary~\ref{cor:Qclassificazione}
     \item Finally, performing the quotient by restricting the action of $\exp\Ga$ to those subgroups in $\mathcal{H}^1_Q$.
   \end{itemize}
   For instance, in the case under study one can consider the action of a subgroup $G\le G_Q$, with $G\cong\Z/15\Z$ and determined by the sub-action obtained by $\exp\Ga'$, with
   \begin{equation*}
     \Ga'=\left(
            \begin{array}{ccccc}
              \overline{12}&\overline{12}&\overline{12}&\overline{13}&\overline{1} \\
            \end{array}
          \right)\in (\Z/15\Z)^5
   \end{equation*}
   Then one find the fan matrix
   \begin{equation*}
     V'=\left(
          \begin{array}{ccccc}
            1&1&4&-3&-3 \\
            0&5&5&-10&-5 \\
            0&0&6&-3&-3 \\
          \end{array}
        \right)
   \end{equation*}
   and the ambient toric variety $W'$ whose fan is generated by the 3-skeleton
   \begin{equation*}
     \{\langle(V')^I\rangle\,|\,I\in\I_\Si(3)\}
   \end{equation*}
   Then the MDS $X'$ associated with the Cox algebra $R$ and admitting $W'$ as a canonical ambient toric variety turns out to have
   \begin{equation*}
     \mult X' =15
   \end{equation*}
   Notice that $15\,|\,450$, as prescribed by Proposition~\ref{prop:MDSbound}.
 \end{remark}

\subsubsection{Proof of Lemma~\ref{lem:anticanonico}}\label{ssez:lemma} Given a weight matrix $Q$ of $Z$, the anti-canonical class is given by
\begin{equation}\label{anticanonico}
  [-K_Z]=Q\cdot\1=:\q\in\Eff(Q)=\langle Q\rangle
\end{equation}
Recalling that
\begin{equation*}
  \overline{\Mov}(Z)=\Mov(Q)=\bigcap_{i=1}^{n+r}\left\langle Q^{\{i\}}\right\rangle
\end{equation*}
it suffices to show that
\begin{equation*}
  \forall\,i=1,\ldots,n+r=:m\quad \q\in \left\langle Q^{\{i\}}\right\rangle
\end{equation*}
where, as usual, $n=\dim Z$ and $r=\rk \Cl(Z)$.
Start by considering the first column $\q_1$ of $Q$. One needs to find a non-negative rational solution of the linear system
\begin{equation}\label{muovendo-K}
  \left(
     \begin{array}{ccc}
       \q_2 & \cdots & \q_m \\
     \end{array}
   \right)\cdot\left(
                 \begin{array}{c}
                   x_2 \\
                   \vdots \\
                   x_m \\
                 \end{array}
               \right)= \q\ \Longleftrightarrow\
     Q\cdot\left(
                 \begin{array}{c}
                   0 \\
                   x_2 \\
                   \vdots \\
                   x_m \\
                 \end{array}
               \right)=\q
\end{equation}
By (\ref{anticanonico}), $\1\in\Q^m$ is a solution of the linear system $Q\cdot\x=\q$. Moreover, by Gale duality, solutions of this linear system are described by points of the $n$-dimensional affine subvariety
\begin{equation*}
  S=\1+\Ls_r(V)\otimes\Q \subset \Q^m
\end{equation*}
Then, looking for non-negative solutions in $S$ means finding
\begin{equation*}
  \tt=\left(
       \begin{array}{c}
         t_1 \\
         \vdots \\
         t_n \\
       \end{array}
     \right)\in\Q^n\,:\quad \x:=\1+V^T\cdot\tt\ge\0
\end{equation*}
and all these $\tt$ are precisely assigned by rational polytope points in $\Q^n\cap\D_{-K_Z}$, as by definition
\begin{eqnarray*}
  \D_{-K_Z}&:=&\{\m\in M_\R\,|\,\forall\,i=1\ldots,n+r\quad\langle\m,\v_i\rangle\geq -1\}\\
  &=&\{\m\in M_\R\,|\,V^T\cdot\m \geq -\1\}
\end{eqnarray*}
Completeness of $Z$ implies that $\D_{-K_Z}$ is indeed a polytope \cite[Prop.~4.3.8]{CLS}.
In particular, solutions $\x$ satisfying condition (\ref{muovendo-K}) are assigned by all rational points on the facet of $\D_{-K_Z}$ orthogonal to the first column $\v_1$ of $V$.
Moreover, if $h\in \N$ is the least common multiple of denominators in $\x$, then by setting $h_i:=h x_i\in \N$ one has
\begin{eqnarray*}
  Q\cdot\left(
          \begin{array}{c}
            0 \\
            h_2 \\
            \vdots \\
            h_m \\
          \end{array}
        \right)=h\q &\Longleftrightarrow& \left[\sum_{i=2}^m h_iD_i\right]=[-hK_Z]\\
                    &\Longleftrightarrow& h\sum_{j=1}^n D_j \sim \sum_{i=2}^m h_iD_i\,,\ h_i\ge 0
\end{eqnarray*}
Repeat the same argument for any column $\q_i$ of $Q$ and takes the least common multiple $\widehat{h}$ of the various $h$ so obtained, so getting that
\begin{equation*}
  \exists \widehat{h}\in\N\,:\quad \forall\,l=1\,\ldots,m\quad\widehat{h}\sum_{j=1}^n D_j\sim \sum_{i\neq l}\widehat{h}_i(l)D_i
\end{equation*}
that is, $-\widehat{h}K_Z$ is movable. Then, $-K_Z$ is movable, too.

\subsubsection{Proof of Lemma \ref{lem:iso1}}\label{ssez:lemmaiso1} Let $X(\Si)$ be a complete toric variety and $Q$ be an associated weight matrix. By Lemma~\ref{lem:anticanonico}, $-K_X$ is movable, that is
$$[-K_X]\in \Mov(Q)$$
If $X$ is $\Q$-factorial, than one can apply the same argument used for the sharp completion $Z$ in the proof of Theorem~\ref{thm:MDSbounds} and find a $\Q$-Fano toric variety $X'$ and an isomorphism in codimension 1, $f':X\cong_1 X'$.

One can then assume that $X$ is not $\Q$-factorial. Then for any non-simplicial cone $\s\in\Si$ choose a simplicial subdivision $\s_1,\ldots,\s_l$ of $\s$ and replace $\s$ with $\s_1,\ldots,\s_l$ to get a new fan $\Si''$ which is a refinement of $\Si$, in the sense that for any cone $\tau''\in\Si''$ there exists a cone $\tau\in \Si$ such that $\tau''\subseteq\tau$. Since $\Si$ is finite, this process terminates to give a fan $\Si''\in \SF(V)$, refining $\Si$, where $V$ is a fan matrix of $X$. In particular, $\Si''$ defines a $\Q$-factorial complete toric variety $X''(\Si'')$ and a small birational contraction $f'': X''\to X$ which is an isomorphism in codimension 1. Apply now the previous argument to the $\Q$-factorial $X''$, to get an isomorphism in codimension 1 $f':X''\cong_1 X'$. Then $f:=f'\circ(f'')^-1: X\cong_1 X'$ and $X'$ is a $\Q$-Fano toric variety.

For the second part of the statement, notice that $K_X=f^*(K_{X'})$ is Cartier if and only if $K_{X'}$ is Cartier, as $f$ and $f^{-1}$ admit indeterminacy loci of codimension at least 2.

\bibliography{MILEA}
\bibliographystyle{acm}
\end{document}